\definecolor{dark-red}{rgb}{0.4,0.15,0.15}
\definecolor{dark-blue}{rgb}{0.15,0.15,0.4}
\definecolor{medium-blue}{rgb}{0,0,0.5}
\DeclareRobustCommand{\widecheck}[1]{{\mathpalette\@widecheck{#1}}}
\def\@widecheck#1#2{%
	\setbox\z@\hbox{\m@th$#1#2$}%
	\setbox\tw@\hbox{\m@th$#1%
		\widehat{%
			\vrule\@width\z@\@height\ht\z@
			\vrule\@height\z@\@width\wd\z@}$}%
	\dp\tw@-\ht\z@
	\@tempdima\ht\z@ \advance\@tempdima2\ht\tw@ \divide\@tempdima\thr@@
	\setbox\tw@\hbox{%
		\raise\@tempdima\hbox{\scalebox{1}[-1]{\lower\@tempdima\box
				\tw@}}}%
	{\ooalign{\box\tw@ \cr \box\z@}}}
\newcommand{\A}{\mathbb{A}}
\newcommand{\BB}{\mathcal{B}}
\renewcommand{\C}{\mathbb{C}}
\newcommand{\CC}{\mathcal{C}}
\newcommand{\DD}{\mathcal{D}}
\newcommand{\Dgp}{\mathrm{D}}
\newcommand{\EE}{\mathcal{E}}
\newcommand{\GL}{\mathrm{GL}}
\newcommand{\Hb}{\mathbb{H}}
\newcommand{\hol}{\textnormal{hol}}
\newcommand{\JJ}{\mathcal{J}}
\newcommand{\Ks}{\mathscr{K}}
\newcommand{\Ls}{\mathscr{L}}
\newcommand{\MM}{\mathcal{M}}
\newcommand{\N}{\mathbb{N}}
\newcommand{\OO}{\mathcal{O}}
\newcommand{\Q}{\mathbb{Q}}
\newcommand{\R}{\mathbb{R}}
\newcommand{\RR}{\mathcal{R}}
\newcommand{\SL}{\mathrm{SL}}
\newcommand{\SO}{\mathrm{SO}}
\newcommand{\spec}{\textnormal{spec}}
\renewcommand{\SS}{\mathcal{S}}
\newcommand{\WW}{\mathcal{W}}
\newcommand{\Z}{\mathbb{Z}}
\newcommand{\Zgp}{\mathrm{Z}}
\newcommand{\e}{\varepsilon}
\DeclareMathOperator{\Ram}{Ram}
\DeclareMathOperator{\sech}{sech}
\DeclareMathOperator{\sgn}{sgn}
\DeclareMathOperator{\Si}{Si}
\DeclareMathOperator{\sym}{sym}
\DeclareMathOperator{\Var}{Var}
\numberwithin{equation}{section}
\newtheorem{theorem}[equation]{Theorem}
\newtheorem{conjecture}[equation]{Conjecture}
\newtheorem{corollary}[equation]{Corollary}
\newtheorem{lemma}[equation]{Lemma}
\newtheorem{proposition}[equation]{Proposition}
\theoremstyle{remark}
\newtheorem{remark}[equation]{Remark}
\begin{document}

\title[Optimal Small Scale Equidistribution of Lattice Points on the Sphere]{Optimal Small Scale Equidistribution of Lattice Points on the Sphere, Heegner Points, and Closed Geodesics}

\author{Peter Humphries}

\address{Department of Mathematics, University of Virginia, Charlottesville, VA 22904, USA}

\email{\href{mailto:pclhumphries@gmail.com}{pclhumphries@gmail.com}}

\author{Maksym Radziwi\l{}\l{}}

\address{Department of Mathematics Caltech, 1200 E California Blvd Pasadena, CA 91125, USA}

\email{\href{mailto:maksym.radziwill@gmail.com}{maksym.radziwill@gmail.com}}

\subjclass[2010]{11E16 (primary); 11F67 (secondary)}

\thanks{The first author is supported by the European Research Council grant agreement 670239.
The second author acknowledges support of NSF grant DMS-1902063 and of a Sloan fellowship.}

\begin{abstract}
We asymptotically estimate the variance of the number of lattice points in a thin, randomly rotated annulus lying on the surface of the sphere. This partially resolves a conjecture of Bourgain, Rudnick, and Sarnak. We also obtain estimates that are valid for all balls and annuli that are not too small. Our results have several consequences: for a conjecture of Linnik on sums of two squares and a ``microsquare'', a conjecture of Bourgain and Rudnick on the number of lattice points lying in small balls on the surface of the sphere, the covering radius of the sphere, and the distribution of lattice points in almost all thin regions lying on the surface of the sphere. Finally, we show that for a density $1$ subsequence of squarefree integers, the variance exhibits a different asymptotic behaviour for balls of volume $(\log n)^{-\delta}$ with $0 < \delta < \tfrac {1}{16}$.

We also obtain analogous results for Heegner points and closed geodesics. Interestingly, we are able to prove some slightly stronger results for closed geodesics than for Heegner points or lattice points on the surface of the sphere. A crucial observation that underpins our proof is the different behaviour of weighting functions for annuli and for balls. 
\end{abstract}

\maketitle

\section{Introduction}

\label{introsect}

\subsection*{I. Lattice points on the surface of the sphere}
\subsection{Variance and equidistribution results}

For a positive odd squarefree integer $n$, let 
\[
\mathcal{E}(n) \coloneqq \{(x_1, x_2, x_3) \in \mathbb{Z}^3 : x_1^2 + x_2^2 + x_3^2 = n\}
\]
denote the set of lattice points lying on the surface of a sphere of radius $\sqrt{n}$ and centred at the origin.
This set is nonempty whenever $n \not \equiv 7 \pmod{8}$. 
For convenience and ease of exposition, we impose throughout the additional condition that $n \equiv 3 \pmod{8}$, so that
$-n$ is a fundamental discriminant\footnote{With additional effort, all of our results
can be extended to handle the general case of $n \not \equiv 7 \pmod{8}$ squarefree.}. 
By classical work of Gauss together with Dirichlet's class number formula, 
\[
\# \mathcal{E}(n) = \frac{24 \sqrt{n} L(1, \chi_{-n})}{\pi}, 
\]
where $\chi_{-n}$ is the primitive quadratic Dirichlet character associated to the fundamental discriminant $-n$. In particular, Siegel's theorem implies that $\# \mathcal{E}(n) = n^{1/2 - o(1)}$ as $n$ goes to infinity.

Linnik \cite{Lin68} used a novel ``ergodic method'' to show that if in addition $n \equiv \pm 1 \pmod{5}$, then the set
\[
\widehat{\mathcal{E}}(n) \coloneqq \left\{ \left( \frac{x_1}{\sqrt{n}}, \frac{x_2}{\sqrt{n}}, \frac{x_3}{\sqrt{n}}\right) \in S^2 : (x_1, x_2, x_3) \in \mathcal{E}(n) \right\} 
\]
is equidistributed on $S^2$ as $n \to \infty$. Removing this additional 
congruence condition proved quite challenging and was accomplished only twenty years later
by Duke \cite{Duk88, DS-P90} and Golubeva--Fomenko \cite{GF90} following
a breakthrough of Iwaniec \cite{Iwa87}. 

It is desirable, both from a theoretical and applied point of view, to understand the finer distribution of the normalised
lattice points $\widehat{\mathcal{E}}(n)$ on $S^2$. Bourgain, Rudnick, and Sarnak \cite{BRS17} proposed that the distribution of the points
$\widehat{\mathcal{E}}(n)$ should be essentially similar to that of ``random points'', that is, points thrown uniformly at random
on the surface of the unit sphere. In order to make this precise, they stated the following conjecture (among others).

\begin{conjecture}[Bourgain--Rudnick--Sarnak] \label{conj:brs}
Let $\varepsilon > 0$ be given. Let $\Omega_n \subset S^2$ be a sequence of balls or annuli. Let $\sigma(\Omega_n)$ denote the surface measure of $\Omega_n$ on $S^2$ normalised such that $\sigma(S^2) = 4\pi$. If $\# \widehat{\mathcal{E}}(n)^{-1 + \varepsilon} \leq \sigma(\Omega_n) \leq \# \widehat{\mathcal{E}}(n)^{-\varepsilon}$, then
\begin{equation} \label{eq:conjbrs}
\int_{\SO(3)} \left( \# (\widehat{\mathcal{E}}(n) \cap g \Omega_n ) - \# \widehat{\mathcal{E}}(n) \frac{\sigma(\Omega_n)}{\sigma(S^2)} \right)^2 \, d g \sim \# \widehat{\mathcal{E}}(n) \frac{\sigma(\Omega_n)}{\sigma(S^2)}
\end{equation}
as $n \to \infty$ along integers for which $n \not\equiv 0,4,7 \pmod{8}$. 
\end{conjecture}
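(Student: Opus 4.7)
The plan is to spectrally decompose the variance on $\SO(3)$, translate the resulting Weyl sums over $\widehat{\mathcal{E}}(n)$ into central values of Rankin--Selberg $L$-functions via the Maass--Waldspurger correspondence, and then prove a power-saving asymptotic formula for a short spectral moment of these $L$-values.

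Since $\Omega_n$ is rotation-invariant about its centre $e \in S^2$, I would write $\mathbf{1}_{\Omega_n}(x) = f_n(\langle x, e \rangle)$ and expand $f_n$ in Legendre polynomials, $f_n(t) = \sum_{\ell \geq 0} (2\ell+1) \widehat{f}_n(\ell) P_\ell(t)$. The addition formula for spherical harmonics together with Schur orthogonality on $\SO(3)$ isolates the expected mean in the $\ell = 0$ term and reduces the variance to
\[
V_n = 4\pi \sum_{\ell \geq 1} |\widehat{f}_n(\ell)|^2 \, W_\ell(n), \qquad W_\ell(n) := \sum_{|m| \leq \ell} \biggl| \sum_{x \in \widehat{\mathcal{E}}(n)} Y_\ell^m(x) \biggr|^2.
\]
Expanding the inner periods over a basis of Hecke eigenforms on the definite quaternion algebra ramified at $\infty$, and invoking the Waldspurger period formula, each contribution from an eigenform $\phi_j$ of spectral parameter $t_j \asymp \ell$ is essentially $\sqrt{n} \, L(1/2, \phi_j \otimes \chi_{-n}) / L(1, \sym^2 \phi_j)$ times an explicit archimedean factor. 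Summing and collecting,
\[
V_n \approx \sqrt{n} \sum_{\phi_j} H_n(t_j) \, \frac{L(1/2, \phi_j \otimes \chi_{-n})}{L(1, \sym^2 \phi_j)} + (\text{Eisenstein contribution}),
\]
where the spectral weight $H_n$ is essentially the Legendre--Plancherel transform of $|\widehat{f}_n(\ell)|^2$, concentrated on scales $t_j \asymp T := \sigma(\Omega_n)^{-1/2}$.

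To finish, I would evaluate this moment by applying the Petersson--Kuznetsov trace formula combined with an approximate functional equation for $L(1/2, \phi_j \otimes \chi_{-n})$, extracting the diagonal and estimating the off-diagonal sum of Kloosterman terms by Voronoi summation. The diagonal ought to reproduce $\#\widehat{\mathcal{E}}(n) \sigma(\Omega_n)/(4\pi)$ after one applies Plancherel for Legendre expansions to convert $\sum_\ell |\widehat{f}_n(\ell)|^2$ back into the $L^2$-mass of $f_n$. The genuine obstacle is the \emph{power-saving} requirement on this spectral moment, uniform in both $n$ and $T$, up to $T \asymp n^{1/4}$, which is exactly the conductor-dropping threshold at which the convexity bound for $L(1/2, \phi_j \otimes \chi_{-n})$ yields an error of the same order as the main term; overcoming it will likely require a delta-method input or a Petrow--Young-type fourth-moment bound averaged over twists. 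The abstract's remark on the differing behaviour of weighting functions for annuli and balls fits here naturally: annuli enjoy additional cancellation between contributions from their two boundary circles, so $\widehat{f}_n(\ell)$ decays faster than for a ball of the same measure, which tightens the support of $H_n$ and allows one to push the moment estimate nearer the full conjectural range for annuli than for balls.
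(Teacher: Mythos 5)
The statement you are asked about is a \emph{conjecture}, and the paper does not prove it; it only establishes special cases. Your sketch correctly reproduces the paper's general strategy (spectral expansion of the variance over spherical harmonics, Waldspurger/Jacquet--Langlands to convert Weyl sums into central $L$-values, moment bounds in dyadic spectral ranges), but it does not close the argument, and you concede as much when you flag ``the genuine obstacle'' of a power-saving spectral moment uniformly up to $T \asymp n^{1/4}$. That obstacle is not a technical loose end one can expect to patch with a delta-method or a fourth-moment input: the paper states explicitly that the asymptotic for balls of volume near $\# \widehat{\mathcal{E}}(n)^{-1}$ is \emph{equivalent} to a first-moment asymptotic that implies sub-Weyl subconvexity for the relevant $L$-functions, and that the full range of the conjecture implies the Lindel\"of hypothesis for this family. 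What the paper actually proves (Theorem \ref{variancethm1}) is the asymptotic only for \emph{annuli} with inner radius $r \geq n^{-1/12+\delta}$ and $\sigma(A_{r,R}) \leq r n^{-5/12-\delta}$, precisely because in that regime the Selberg--Harish-Chandra transform localises the spectral sum at frequencies $\asymp 1/(R-r) \gg n^{5/12}$, where the first moment is Lindel\"of-quality on average (Proposition \ref{momentprop}) and an exact identity with a controllable shifted convolution sum is available.

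Two further inaccuracies in your reduction are worth noting. First, by Waldspurger the square of the Weyl sum is proportional to the \emph{product} $L(\tfrac{1}{2},f)\,L(\tfrac{1}{2},f\otimes\chi_{-n})$ for the level-$2$ holomorphic newform $f$ attached to $\phi$ by Jacquet--Langlands (Lemma \ref{Waldspurgerlemma}), not to $L(\tfrac{1}{2},\phi\otimes\chi_{-n})$ alone; this factorisation is essential to the paper's method, since it permits separating the two $L$-functions by H\"older and invoking the Petrow--Young cubic moment for the twisted factor. Second, there is no Eisenstein contribution in the $S^2$ case: the spectral decomposition of $L^2(S^2)$ is purely discrete, and the continuous spectrum enters only in the hyperbolic analogues for Heegner points and closed geodesics. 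Your closing heuristic about annuli versus balls is in the right spirit, but the decisive point is quantitative: for an annulus with $r$ large the transform satisfies $\tilde{h}_{r,R}(m)^2 \ll 1/(r(R-r)^2 m^3)$ beyond $m \asymp 1/(R-r)$ and is already small in the intermediate range, pushing the dominant frequencies past the conductor-dropping threshold, whereas for a ball the dominant frequencies sit exactly at $m \asymp \sigma(B_R)^{-1/2} \asymp n^{1/4}$, which is the unreachable range.
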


The left-hand side of \eqref{eq:conjbrs} corresponds to the variance of the number of points in $\widehat{\mathcal{E}}(n)$ lying in a randomly rotated set $\Omega_n$. If the points $\widehat{\mathcal{E}}(n)$ are distributed as ``random points'', then we expect this variance to coincide asymptotically with $\# \widehat{\mathcal{E}}(n) \sigma(\Omega_n) / \sigma(S^2)$. This motivates \hyperref[conj:brs]{Conjecture \ref*{conj:brs}}. The restriction $\# \widehat{\mathcal{E}}(n)^{-1 + \varepsilon} \leq \sigma (\Omega_n)$ ensures that on average over all rotations $g \in \SO(3)$, $\# (\widehat{\mathcal{E}}(n) \cap g \Omega_n)$ tends to infinity. However, this restriction appears to be unnecessary for the validity of \eqref{eq:conjbrs} as $n \to \infty$. 

One can draw a parallel between \hyperref[conj:brs]{Conjecture \ref*{conj:brs}} and certain classical results of analytic number theory such as the Barban--Davenport--Halberstam theorem \cite[Chapter 29]{Dav80}.
As in the case of Barban--Davenport--Halberstam theorem, the most interesting range is that in which $\sigma(\Omega_n)$ is as small as possible, close to $\#\widehat{\mathcal{E}}(n)^{-1 + \varepsilon}$ (respectively the arithmetic progression is as short as possible). However, the most difficult range, associated to challenging problems about $L$-functions (see \cite{GV97}), is the range in which $\sigma(\Omega_n)$ is large of size $n^{-\varepsilon}$ (respectively the arithmetic progression is long). Indeed, establishing \hyperref[conj:brs]{Conjecture \ref*{conj:brs}} in the full range implies the Lindel\"of hypothesis at the central point for a certain family of $L$-functions. We will therefore focus on microscopic $\Omega_n$ for which $\sigma(\Omega_n)$ is close to $\# \widehat{\mathcal{E}}(n)^{-1 + \varepsilon}$. Our first result is the following.

\begin{theorem} \label{variancethm1}
Let $\delta,\e > 0$ be given. Let $A_{r, R}(w)$ denote the annulus on $S^2$ centred at a fixed point $w \in S^2$ with inner radius $r$ and outer radius $R$. Suppose that $n^{-1/12 + \delta} \leq r \leq \pi - \e$ and $\sigma(A_{r,R}) \leq r n^{-5/12 - \delta}$. Then 
\[
\int_{\SO(3)} \left( \# ( \widehat{\mathcal{E}}(n) \cap g A_{r,R}(w) ) - \# \widehat{\mathcal{E}}(n) \frac{\sigma(A_{r,R})}{\sigma(S^2)} \right)^2 \, dg \sim \# \widehat{\mathcal{E}}(n) \frac{\sigma(A_{r, R})}{\sigma(S^2)}
\]
as $n \to \infty$ along squarefree $n \equiv 3 \pmod{8}$.
\end{theorem}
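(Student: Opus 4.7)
My plan is to prove the theorem by a spectral decomposition of $\mathbf{1}_{A_{r,R}(w)}$ into spherical harmonics, followed by a Waldspurger-type identity that converts the resulting Weyl sums into central $L$-values twisted by $\chi_{-n}$. Since $A_{r,R}(w)$ is rotationally symmetric about $w$, its indicator admits the zonal expansion
\[
\mathbf{1}_{A_{r,R}(w)}(x) \;=\; \sum_{\ell \geq 0} a_\ell\,\frac{2\ell+1}{4\pi}\,P_\ell(\langle x,w\rangle),\qquad a_\ell \;=\; 2\pi\int_r^R P_\ell(\cos\phi)\sin\phi\,d\phi.
\]
Substituting $x \mapsto g^{-1}x$, applying the addition theorem, and invoking Schur orthogonality for the matrix coefficients of $\SO(3)$, the variance on the left side of the theorem collapses to the Parseval identity
\[
\Var(n) \;=\; \frac{1}{4\pi}\sum_{\ell \geq 1} |a_\ell|^2\,W_\ell(n),\qquad W_\ell(n) \;:=\; \sum_{|m|\leq \ell}\Bigl|\sum_{x \in \widehat{\mathcal{E}}(n)} Y_\ell^m(x)\Bigr|^2,
\]
where $\{Y_\ell^m\}_{|m|\leq \ell}$ is an orthonormal basis of degree-$\ell$ spherical harmonics.

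Using the theta correspondence for the rational quaternion algebra ramified at $\{2,\infty\}$ together with Waldspurger's formula in the refined form of Katok--Sarnak, I would next write
\[
W_\ell(n) \;=\; \beta_\ell\,\#\mathcal{E}(n)\;+\;\sum_f \gamma_f(\ell,n)\,L\!\left(\tfrac{1}{2},\,f\otimes\chi_{-n}\right),
\]
where $f$ runs over a Hecke eigenbasis of holomorphic cusp forms of weight $2\ell$ on an appropriate arithmetic group, $\gamma_f(\ell,n) \geq 0$, and $\beta_\ell = (2\ell+1)/(4\pi)$ is the random-model coefficient. Reinserting the diagonal piece $\beta_\ell\,\#\mathcal{E}(n)$ into the Parseval identity and using the spherical Parseval relation $\sum_{\ell \geq 1} (2\ell+1)|a_\ell|^2/(4\pi) = \sigma(A_{r,R}) - \sigma(A_{r,R})^2/(4\pi)$ reassembles precisely the main term $\#\widehat{\mathcal{E}}(n)\sigma(A_{r,R})/\sigma(S^2)$ claimed in the theorem.

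It remains to show that the $L$-function piece contributes $o(\#\widehat{\mathcal{E}}(n)\sigma(A_{r,R}))$. A stationary-phase analysis of $a_\ell$ using the Laplace asymptotic for $P_\ell$ yields the two-regime bound
\[
|a_\ell|^2 \;\ll\; \min\!\left\{(R-r)^2\sin r / \ell,\;\; \sin r / \ell^3\right\} \qquad (\ell \gg 1/r),
\]
with transition at $\ell \sim 1/(R-r)$; this is the ``different behaviour of weighting functions for annuli and balls'' announced in the abstract, and is decisive, since for balls alone only the second (weaker) bound is available. Splitting the spectral sum at a threshold $L$, for $\ell \leq L$ I would insert a sharp $f$-aspect second-moment bound for $L(1/2, f\otimes\chi_{-n})$, and for $\ell > L$ a convexity bound together with the faster decay $|a_\ell|^2 \ll \sin r/\ell^3$ suffices. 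The hypotheses $r \geq n^{-1/12+\delta}$ and $\sigma(A_{r,R}) \leq r n^{-5/12-\delta}$ are calibrated so that balancing at $L \asymp n^{1/12+\delta/2}$ forces both the low-$\ell$ moment tail and the high-$\ell$ convexity tail to lie strictly below $\#\widehat{\mathcal{E}}(n)\sigma(A_{r,R}) \asymp n^{1/2}\sigma(A_{r,R})$.

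The principal obstacle is the second-moment estimate in the final step: it must be sharp in the joint $(n,\ell)$-aspect, since $\ell$ controls the archimedean analytic conductor of $f$ while $n$ controls the finite one. Without the annulus cancellation in the transition window $1/r \ll \ell \ll 1/(R-r)$, the $L$-value moment alone would not suffice in the microscopic regime of the theorem; it is the combination of this coefficient gain with a sharp central-value moment bound that makes the optimal threshold $r \geq n^{-1/12+\delta}$ attainable, and any weakening of either ingredient would degrade the result.
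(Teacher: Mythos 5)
Your skeleton is broadly the right one and matches the paper's in outline: spectrally expand the variance over spherical harmonics, convert the Weyl sums to central $L$-values via Waldspurger, split the degree at $\ell \asymp n^{1/12}$, and exploit the extra decay of the annulus transform past the transition $\ell \sim 1/(R-r)$ (your two-regime bound on $|a_\ell|^2$ agrees with the paper's \eqref{hrRmupperboundeq}). But there are two genuine gaps. First, the ``refined Waldspurger/Katok--Sarnak'' identity $W_\ell(n) = \beta_\ell\,\#\mathcal{E}(n) + \sum_f \gamma_f(\ell,n) L(\tfrac12, f\otimes\chi_{-n})$ does not exist. Waldspurger gives an \emph{exact} formula $|W_{n,\phi}|^2 = c_n\, L(\tfrac12,f)L(\tfrac12,f\otimes\chi_{-n})/L(1,\sym^2 f)$ for each Hecke eigenfunction $\phi$ of degree $\ell$ (with $f$ of weight $2+2\ell$ and level $2$); there is no diagonal term $\beta_\ell\,\#\mathcal{E}(n)$ to subtract, and the twisted $L$-value appears only in a product with the untwisted one. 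The main term of the variance is not built into the period formula: it emerges only as the main term of the \emph{first moment} $\sum_{k_f\asymp T} L(\tfrac12,f)L(\tfrac12,f\otimes\chi_{-n})/L(1,\sym^2 f) \sim c\,L(1,\chi_{-n})T^2$, which must be proved with a power-saving error \emph{uniformly} for $T$ up to $1/(R-r) \gg n^{5/12}$. Establishing that asymptotic (via the Petersson formula for the $(\infty,1)$-pair of cusps, Vorono\u{\i} summation, analytic continuation to the central point, and a delicate bound for the resulting shifted convolution sum) is the entire technical content of the paper's proof and is absent from your proposal.

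Second, your error analysis is miscalibrated in both ranges. For $\ell > L$, convexity gives $L(\tfrac12,f)L(\tfrac12,f\otimes\chi_{-n}) \ll_\e n^{1/2+\e}k_f^{1+\e}$, so the dyadic block $k_f\asymp T$ contributes $\asymp n^{1/2+\e}T^{2+\e}$ versus the true size $\asymp L(1,\chi_{-n})T^2$: convexity loses a full factor of $n^{1/2}$ precisely in the range $T\asymp 1/(R-r)$ that \emph{carries the main term} (indeed the paper's Section~\ref{connectionssect} explains that the required asymptotic there implies subconvexity, even sub-Weyl strength for balls). Moreover the $1/\ell^3$ decay of $|a_\ell|^2$ only begins at $\ell\gg 1/(R-r)\gg n^{5/12}$, far above your threshold $L\asymp n^{1/12+\delta/2}$, so in $n^{1/12}\ll\ell\ll 1/(R-r)$ you have neither the strong decay nor an adequate $L$-value bound. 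For $\ell\le L$, a ``sharp joint-aspect second moment of $L(\tfrac12,f\otimes\chi_{-n})$'' of Lindel\"of-on-average quality is not known unconditionally; the available input is the Weyl-strength \emph{cubic} moment of Young and Petrow--Young, combined via H\"older with exponents $(2,3,6)$ with the second moment of $L(\tfrac12,f)$ --- which is exactly why the product structure in Waldspurger's formula (lost in your decomposition) is essential.
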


\hyperref[variancethm1]{Theorem \ref*{variancethm1}} verifies \hyperref[conj:brs]{Conjecture \ref*{conj:brs}} for annuli with large inner radius in the (nontrivial) regime of $\sigma(A_{r,R})$ slightly larger than $\# \widehat{\mathcal{E}}(n)^{-1}$. Establishing \hyperref[variancethm1]{Theorem \ref*{variancethm1}} for balls with volume slightly larger than $\# \widehat{\mathcal{E}}(n)^{-1}$ appears to be currently out of reach as it is equivalent with estimating asymptotically a first moment of $L$-functions that implies \textit{sub-Weyl subconvexity} (see \hyperref[connectionssect]{Section \ref*{connectionssect}} for details). This gives a natural geometric interpretation of the meaning of sub-Weyl subconvexity. We believe that this is a point that deserves further exploration. We refer the reader to the forthcoming work of Shubin \cite{Shu21} for conditional results for balls; specifically, he establishes upper bounds of the correct order of magnitude for balls in \eqref{eq:conjbrs} conditionally on the generalised Riemann hypothesis. 

Minor modifications of the proof of \hyperref[variancethm1]{Theorem \ref*{variancethm1}} allow us to show that almost all annuli $A_{r, R}$ with large inner radius $r$ contain the expected number of lattice points. Note that we do not impose any significant constraints on the volume of $A_{r, R}$.

\begin{theorem} \label{aethm1}
Let $0 < \delta < \tfrac {1}{12}$ and $c,\e > 0$ be given. Suppose that $n^{-1/12 + \delta} \ll r < R \leq \pi - \e$ and $n^{-1/2 + \delta} \ll \sigma(A_{r,R}) \ll 1$. Then as $n \to \infty$ along squarefree $n \equiv 3 \pmod{8}$, 
\[\sigma\left(\left\{w \in S^2 : \left|\frac{\sigma(S^2)}{\sigma(A_{r,R})} \frac{\# (\widehat{\EE}(n) \cap A_{r,R}(w))}{\# \widehat{\EE}(n)} - 1\right| > c\right\}\right) = o_{\delta} (1).\]
\end{theorem}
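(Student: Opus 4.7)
My plan is to deduce Theorem~\ref*{aethm1} from an upper bound on the variance integral appearing in Theorem~\ref*{variancethm1} via Chebyshev's inequality. First I observe that, because the annulus $A_{r,R}(w)$ is rotationally symmetric about its centre $w$, the $\SO(3)$-average of Theorem~\ref*{variancethm1} coincides with the normalised $S^2$-average over the centre. More precisely, if $w_0 \in S^2$ is fixed and
\begin{equation*}
V_n \coloneqq \int_{\SO(3)} \bigl( \#(\widehat{\EE}(n) \cap g A_{r,R}(w_0)) - E_n \bigr)^2 \, dg, \qquad E_n \coloneqq \#\widehat{\EE}(n)\,\frac{\sigma(A_{r,R})}{\sigma(S^2)},
\end{equation*}
then by $\SO(3)$-invariance of $\sigma$ one has $\int_{S^2}(\#(\widehat{\EE}(n)\cap A_{r,R}(w)) - E_n)^2 \, d\sigma(w) = \sigma(S^2)\, V_n$. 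Chebyshev's inequality then gives
\begin{equation*}
\sigma\!\left(\left\{ w \in S^2 : \left|\frac{\# (\widehat{\EE}(n)\cap A_{r,R}(w))}{E_n} - 1\right| > c \right\}\right) \leq \frac{\sigma(S^2)\, V_n}{c^2 E_n^2}.
\end{equation*}
Since $E_n \gg n^{1/2-o(1)} \sigma(A_{r,R})$ by Siegel's theorem, the desired conclusion reduces to proving the upper bound $V_n \ll \#\widehat{\EE}(n) \sigma(A_{r,R}) \cdot n^{o(1)}$ throughout the enlarged range; the ratio $V_n/E_n^2$ is then $\ll n^{o(1)}/(n^{1/2} \sigma(A_{r,R}))$, which tends to $0$ precisely when $\sigma(A_{r,R}) \gg n^{-1/2+\delta}$.

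To establish such a variance upper bound on the range $n^{-1/12+\delta} \ll r < R \leq \pi - \e$ and $n^{-1/2+\delta} \ll \sigma(A_{r,R}) \ll 1$, I would reuse the spectral machinery developed for Theorem~\ref*{variancethm1}. Expanding the zonal indicator $\mathbf{1}_{A_{r,R}(w_0)}$ in Legendre polynomials and applying Parseval on $S^2$ rewrites $V_n$ as a weighted spectral sum
\begin{equation*}
V_n = \sum_{\ell \geq 1} |\widehat{k}(\ell)|^2 \, W_\ell(n), \qquad \widehat{k}(\ell) \coloneqq \tfrac{1}{2} \int_r^R P_\ell(\cos\theta)\sin\theta \, d\theta,
\end{equation*}
where the arithmetic factor $W_\ell(n) = \sum_{m=-\ell}^{\ell}\bigl|\sum_{p\in \widehat{\EE}(n)} Y_{\ell,m}(p)\bigr|^2$ is the quantity linked, via Waldspurger's formula, to central $L$-values of theta lifts. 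The decisive input, emphasised in the introduction, is that for annuli the weights $\widehat{k}(\ell)$ enjoy additional cancellation between their two boundary contributions at $r$ and $R$ that is absent in the ball case, making them small enough to beat the trivial contribution of $W_\ell(n)$ up to a cutoff of size roughly $1/r$.

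The reason one can relax the hypotheses of Theorem~\ref*{variancethm1} is that it proves an \emph{asymptotic} formula for $V_n$, and it is the demand to match the arithmetic sum $W_\ell(n)$ against the Poisson main term with an admissible error that forces the restriction $\sigma(A_{r,R}) \leq r n^{-5/12-\delta}$. In the present almost-all setting only an upper bound of the right order of magnitude is required, and since the main term now grows with $\sigma(A_{r,R})$ the same error terms are comfortably dominated, which is precisely what permits the enlarged range of $\sigma(A_{r,R})$. The main technical point I expect to require attention is the uniform treatment of the initial segment $\ell \ll 1/r$ of the spectral sum, where the weights $\widehat{k}(\ell)$ are still close to the volume of the annulus and the endpoint cancellation has not yet set in; the hypothesis $r \gg n^{-1/12+\delta}$ caps this segment at $\ell \ll n^{1/12}$, small enough that convexity for the relevant $L$-values, combined with Cauchy--Schwarz over the spectral parameters, should comfortably yield the required bound without invoking any of the subconvex input that drives Theorem~\ref*{variancethm1}.
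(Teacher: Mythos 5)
Your reduction is the same as the paper's: rotational symmetry of the annulus identifies the $\SO(3)$-average with the $S^2$-average over centres, Chebyshev's inequality converts the claim into the bound $\Var(\widehat{\EE}(n);A_{r,R}) = o(1)$ for the normalised variance \eqref{Varndefeq}, and the spectral expansion \eqref{Varneq} turns this into a weighted sum of $L(\tfrac{1}{2},f)L(\tfrac{1}{2},f\otimes\chi_{-n})$ over holomorphic newforms of level $2$. Your explanation of why the volume hypothesis can be relaxed relative to \hyperref[variancethm1]{Theorem \ref*{variancethm1}} (only an upper bound, not an asymptotic, is required) is also correct.

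The gap is in your final paragraph. The initial segment $m_{\phi} \ll 1/r$ cannot be handled by convexity: the analytic conductor of $f \otimes \chi_{-n}$ is $\asymp k_f^2 n^2$, so convexity gives $L(\tfrac{1}{2},f\otimes\chi_{-n}) \ll_{\e} (k_f n)^{1/2+\e}$, and after dividing by the prefactor $\asymp n^{-1/2} L(1,\chi_{-n})^{-2}$ in \eqref{Varneq} a \emph{single} form of bounded weight already contributes $n^{\e}$, which is not $o(1)$. Cauchy--Schwarz plus the large sieve does not rescue this either, since the spectral family of size $T^2$ is far too small compared with the conductor $n^2$. What the paper actually uses on this range is H\"{o}lder with exponents $(2,3,6)$ together with the Weyl-strength cube-moment bound \eqref{PYholcubiceq} of Petrow--Young, yielding $\ll_{\e} n^{1/3+\e}T^{2+\e}$ for a dyadic block $T \leq k_f \leq 2T$; this is precisely the subconvex input you claim to avoid, and the hypothesis $r \gg n^{-1/12+\delta}$ is calibrated exactly so that the cutoff $1/r$ stays inside the range $T \ll n^{1/12}$ where $n^{1/3}T^2$ beats $n^{1/2}$.

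The intermediate range $1/r \ll m_{\phi} \ll n^{1/4}$ is likewise not covered by your sketch: there $|\tilde{h}_{r,R}(m)|^2 \ll 1/(rm)$ while the trivial (convexity) bound for a dyadic block is $\ll T^3 n^{1/2+\e}$, so the weighted block is $\ll T^2 n^{\e}/r$, which \emph{grows} with $T$ --- the decay of the annulus weights does not by itself beat the trivial contribution. One needs the bound $\ll_{\e} n^{1/2+\e}$ of \hyperref[momentprop]{Proposition \ref*{momentprop} (2)} for $n^{1/12} \ll T \ll n^{1/4}$, i.e.\ Lindel\"{o}f on average, which the paper extracts from the exact first-moment identity of \hyperref[holomorphicmomentidentitycor]{Corollary \ref*{holomorphicmomentidentitycor}} after controlling the shifted convolution sum. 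In short, the moment machinery is indispensable in every spectral range; no part of the sum is controlled by convexity alone.
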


\begin{remark}
Ellenberg, Michel, and Venkatesh \cite[Theorem 1.8]{EMV13} have proven an analogue of \hyperref[aethm1]{Theorem \ref*{aethm1}} concerning the equidistribution of lattice points on the discrete sphere modulo a fixed integer $q$. Their result uses an extension of Linnik's ergodic method; in particular, it requires the additional hypothesis $n \equiv \pm 1 \pmod{5}$.
\end{remark}

\hyperref[aethm1]{Theorem \ref*{aethm1}} can be interpreted as a result for the \textit{average covering exponent} of lattice points. The \textit{average covering exponent} of lattice points on the $2$-sphere is
\[\bar{K}_3 \coloneqq \lim_{\delta \to 0} \limsup_{R \to 0} \frac{\log \# \widehat{\EE}(n(R^{-\delta}))}{\log \frac{1}{\sigma(B_R)}}\]
where $n(R^{-\delta})$ denotes the smallest integer for which the $w$-volume of the exceptional set of balls $B_R(w)$ of radius $R$ on $S^{2}$ that do not contain a point in $\widehat{\EE}(n(R^{-\delta}))$ is at most $R^{-\delta}$. 
Bourgain, Rudnick, and Sarnak have shown that $\bar{K}_3 = 1$ assuming the generalised Lindel\"{o}f hypothesis \cite[Theorem 1.8]{BRS17}. With this in mind, we may interpret \hyperref[aethm1]{Theorem \ref*{aethm1}} as an unconditional (and optimal) version of the result $\bar{K}_3 = 1$ for annuli instead of balls. For work on the case of higher dimensional spheres, see \cite[Corollary 1.6]{Sar19}.

Using estimates for the third moment of $L(\tfrac 12, f \otimes \chi_{-n})$ and the same set-up as in the proof of \hyperref[aethm1]{Theorem \ref*{aethm1}} allows us to obtain estimates for the number of lattice points lying in a given shrinking ball.

\begin{theorem} \label{Youngthm} 
Fix $w \in S^2$. Let $0 \leq \delta < \tfrac{1}{12}$. Then as $n \to \infty$ along squarefree $n \equiv 3 \pmod{8}$, and for $n^{-\delta} \ll \sigma(B_R) \ll 1$, 
\[\frac{\sigma(S^2)}{\sigma(B_R)} \frac{\# (\widehat{\EE}(n) \cap B_R(w))}{\# \widehat{\EE}(n)} = 1 + o_{\delta}(1).\]
Assuming the generalised Lindel\"of hypothesis, we obtain the same result on the weaker assumption that $0 \leq \delta < \tfrac 14$. 
\end{theorem}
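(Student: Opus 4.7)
The plan is to run the spectral argument of the proof of Theorem \ref{aethm1}, but specialised to a single ball $B_R(w)$ at a fixed centre rather than averaged against rotations. Since we forgo any averaging, the required input is a third moment estimate for $L(\tfrac12, f \otimes \chi_{-n})$ in place of the second moment that suffices in the annulus case.

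\textbf{Spherical harmonic expansion.} First, I would expand
\[
\mathbf{1}_{B_R(w)}(z) \;=\; \sum_{\ell \geq 0} \sum_{|m| \leq \ell} \widehat{g}(\ell)\, \overline{Y_\ell^m(w)}\, Y_\ell^m(z), \qquad \widehat{g}(\ell) = \int_0^R P_\ell(\cos\theta) \sin\theta\, d\theta.
\]
The $\ell = 0$ term delivers the main term $\# \widehat{\EE}(n)\, \sigma(B_R)/\sigma(S^2)$ exactly. The remainder is controlled by the Weyl sums $W_{\ell,m}(n) := \sum_{P \in \widehat{\EE}(n)} Y_\ell^m(P)$ that were already analysed for Theorem \ref{aethm1}. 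Crucially, for the ball, $\widehat{g}(\ell) \asymp R^2$ for $\ell \ll R^{-1}$ and only decays like $R^{1/2} \ell^{-3/2}$ beyond that transition: there is no internal oscillatory cancellation of the kind that made the annulus argument work at variance level, so the entire saving must come from the $L$-values.

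\textbf{Reduction to central $L$-values.} By the Maass--Shintani theta correspondence together with Waldspurger's formula,
\[
\sum_{|m|\leq \ell} |W_{\ell,m}(n)|^2 \;\asymp\; \frac{(2\ell+1) \sqrt{n}\, L(\tfrac12, f_\ell \otimes \chi_{-n})}{L(1, \mathrm{sym}^2 f_\ell)},
\]
where $f_\ell$ is the level-one Hecke--Maass form of spectral parameter $\asymp \ell$ appearing as the Shimura lift of the $\ell$-th harmonic piece. Cauchy--Schwarz in $m$, using $\sum_{|m|\leq\ell} |Y_\ell^m(w)|^2 = (2\ell+1)/4\pi$ and a smooth spectral truncation at $\ell \leq L \asymp R^{-1} (Rn)^\e$, bounds the error by $n^{1/4}$ times a weighted sum of $L(\tfrac12, f_\ell \otimes \chi_{-n})^{1/2}$ against $|\widehat{g}(\ell)| (2\ell+1)$, after removing a negligible exceptional subfamily on which $L(1, \mathrm{sym}^2 f_\ell)^{-1}$ is too large.

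\textbf{Third moment input.} I would then apply H\"older's inequality to the $\ell$-sum so as to produce a cube of $L(\tfrac12, f_\ell \otimes \chi_{-n})$, and input an $f$-aspect Weyl-on-average bound of the shape
\[
\sum_{|t_f| \leq L} L(\tfrac12, f \otimes \chi_{-n})^3 \;\ll\; L^{2+\e}\, n^{1+\e},
\]
which is the analogue, in this family, of the Petrow--Young cubic moment. Combining this with the size of $\widehat{g}(\ell)$ and the choice $L \asymp R^{-1}$, $\sigma(B_R) \gg n^{-\delta}$, yields a saving of $n^{-\eta(\delta)}$ for some $\eta(\delta) > 0$ as soon as $\delta < \tfrac{1}{12}$. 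Under GLH, one substitutes the pointwise bound $L(\tfrac12, f_\ell \otimes \chi_{-n}) \ll (n\ell)^\e$ for the third moment, and a direct estimation of $\sum_{\ell \leq R^{-1}} |\widehat{g}(\ell)|(2\ell+1)$ produces a saving of $n^{-1/4 + \delta + \e}$, admissible for all $\delta < \tfrac14$.

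\textbf{Main obstacle.} The central difficulty, and the reason the annulus case is strictly easier, is the absence of oscillation in the Legendre transform of the ball: $\widehat{g}(\ell)$ is nonnegative on the bulk range $\ell \leq R^{-1}$, so its mass there is of exact order $\sigma(B_R)$ and cannot be reduced by averaging arguments alone. Every bit of saving must be extracted from cancellation in $L(\tfrac12, f_\ell \otimes \chi_{-n})$ itself, and the threshold $\delta = \tfrac{1}{12}$ is exactly the point where Weyl-strength bounds on average cease to win against the slow Legendre decay. Pushing beyond this threshold unconditionally would require sub-Weyl pointwise subconvexity, which is the same obstruction flagged after Theorem \ref{variancethm1}.
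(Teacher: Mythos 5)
Your high-level strategy (spectral expansion of the ball, Waldspurger, H\"{o}lder against a cubic moment of twisted $L$-values) is the paper's strategy, but two of your steps contain genuine errors rather than omitted routine details.

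First, the reduction to central $L$-values is not the one you state. The Hecke eigenfunctions on $S^2$ correspond, via Jacquet--Langlands for the Hamilton quaternion algebra, to \emph{holomorphic} newforms of weight $2 + 2\ell$ and \emph{level} $2$ (\hyperref[Waldspurgerlemma]{Lemma \ref*{Waldspurgerlemma}}), not to level-one Hecke--Maa\ss{} forms, and each degree-$\ell$ space contains many such eigenforms rather than a single $f_{\ell}$. More importantly, Waldspurger's formula produces the \emph{product} $L(\tfrac{1}{2},f) L(\tfrac{1}{2},f \otimes \chi_{-n})$, not $L(\tfrac{1}{2},f \otimes \chi_{-n})$ alone; your version implicitly absorbs the Petersson norm of the theta lift, which is not bounded. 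The extra factor $L(\tfrac{1}{2},f)$ is structurally essential: the paper's H\"{o}lder step uses exponents $(2,4,6,12)$ to pair the local Weyl law, a second moment of $L(\tfrac{1}{2},f)$, the cubic moment of $L(\tfrac{1}{2},f \otimes \chi_{-n})$, and a count of forms. The cubic moment needed is consequently the holomorphic, level-$2$ one, \eqref{PYholcubiceq}, which must be extracted from Petrow--Young with explicit weight dependence; Young's Maa\ss{}-form cubic moment is not the correct input here.

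Second, the sharp spherical-harmonic expansion evaluated at a single point $w$ does not converge well enough to run your argument, and a "smooth spectral truncation at $\ell \leq L$" does not repair this. With only the decay $\widehat{g}(\ell) \ll R^{1/2} \ell^{-3/2}$ past the transition, a dyadic block $\ell \asymp T > 1/R$ contributes, after Cauchy--Schwarz in $m$ and even assuming Lindel\"{o}f on average for the Weyl sums, a quantity of order $n^{1/4} R^{1/2} T^{1/2}$ against a main term of order $n^{1/2} R^2$; this \emph{grows} with $T$, so the tail of the sharp expansion is divergent, and truncating it discards a non-negligible (indeed unbounded) contribution unless the truncation is realised as a genuine mollification of the indicator. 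The paper's fix is the convolution sandwich of \hyperref[convlemma]{Lemmata \ref*{convlemma}} and \ref{kernelS2uniformlemma}: convolving with $\tilde{k}_{0,\rho}$ multiplies the transform by $\tilde{h}_{0,\rho}(\ell) \ll \rho^{-3/2} \ell^{-3/2}$, making the expansion absolutely and uniformly convergent, at the cost of a boundary error $O(R\rho)$ in the volume. The optimisation $\rho = R^{-1/3} n^{-1/18}$ balancing these two effects is precisely where the final error term $O_{\e}(R^{-4/3 - \e} n^{-1/18 + \e})$, and hence the threshold $\delta < \tfrac{1}{12}$, comes from; your numerology never introduces this parameter, so the claimed threshold is not actually derived. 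Your closing remarks on the absence of oscillation in $\widehat{g}(\ell)$ and the sub-Weyl obstruction are, however, correct and match the discussion in \hyperref[connectionssect]{Section \ref*{connectionssect}}.
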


\hyperref[Youngthm]{Theorem \ref*{Youngthm}} can be seen as the exact quaternion algebra analogue of the results of Young \cite[Theorem 2.1]{You17}, \cite[Theorem 1.24]{Hum18}, which concern the distribution of Heegner points and closed geodesics on the modular surface. We will return to this analogy in the second part of this introduction.

\hyperref[Youngthm]{Theorem \ref*{Youngthm}} has consequences for the so-called \textit{covering exponent} of lattice points on the $2$-sphere, defined as
\[K_3 \coloneqq \limsup_{n \to \infty} \frac{\log \# \widehat{\EE}(n)}{\log \frac{1}{\sigma(B_{\RR(n)})}},\]
where the covering radius $\RR(n)$ of $\widehat{\EE}(n)$ is the least $R > 0$ for which every point $w \in S^2$ is within distance $R$ of some point in $\widehat{\EE}(n)$. One expects that $K_3 = 1$ \cite[Conjecture 1.9]{BRS17}; towards this, Bourgain, Rudnick, and Sarnak have shown that \hyperref[Youngthm]{Theorem \ref*{Youngthm}} holds under the assumption of the generalised Lindel\"of hypothesis for $\delta < \tfrac{1}{4}$, so that $K_3 \leq 2$. \hyperref[Youngthm]{Theorem \ref*{Youngthm}} reproves this conditional result via slightly different means and unconditionally shows that $K_3 \leq 6$.

\hyperref[Youngthm]{Theorem \ref*{Youngthm}} also establishes for $R \gg n^{-1/24 + \varepsilon}$ a case of a conjecture of Bourgain--Rudnick \cite{BR12}, according to which 
\[
\# (\widehat{\EE}(n) \cap B_R(w)) \ll_{\e} n^{\e} \left(1 + \sqrt{n} R^2\right)
\]
for $R < n^{-\delta}$ for some fixed $\delta > 0$. The method of proof shows more generally that
\[
\# (\widehat{\EE}(n) \cap B_R(w)) \ll_{\e} n^{\frac{1}{2} + \e} R^2 + n^{\frac{5}{12} + \e} R^{1 - \e}
\]
for all $R \ll 1$. Assuming the generalised Lindel\"{o}f hypothesis, $5/12$ may be replaced by $1/4$.

\subsection{A diophantine conjecture of Linnik}

Using the results of the previous section, we are able to make progress on a conjecture of Linnik on the representation of any positive odd squarefree integer $n \not \equiv 7 \pmod{8}$ as a sum of two squares and a ``microsquare''. 

\begin{conjecture}[{Linnik \cite[Chapter XI]{Lin68}}] \label{conj:linnik} 
Fix $\varepsilon > 0$. For each sufficiently large odd squarefree integer $n \not \equiv 7 \pmod{8}$, there exists a solution $(x_1, x_2, x_3) \in \mathbb{Z}^3$ to the equation $x_1^2 + x_2^2 + x_3^2 = n$ with $|x_3| \leq n^{\varepsilon}$. 
\end{conjecture}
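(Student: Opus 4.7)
The plan is to reduce Linnik's conjecture to counting $\widehat{\mathcal E}(n)$-points in an explicit thin equatorial band on $S^{2}$, and then to attack this count via a zonal spherical harmonic expansion coupled with a hybrid Lindel\"of-on-average estimate for the associated Rankin--Selberg family.

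First, I would observe that, after normalising by $\sqrt n$, the condition $|x_{3}|\le n^{\e}$ is equivalent to the condition that $(x_{1},x_{2},x_{3})/\sqrt n$ lies within angular distance $\rho_{n}:= n^{-1/2+\e}$ of the great circle $\{x_{3}=0\}$. Setting $A_{n} := A_{\pi/2-\rho_{n},\,\pi/2+\rho_{n}}((0,0,1))$, the conjecture is therefore equivalent to the nonemptiness of $\widehat{\mathcal E}(n)\cap A_{n}$ for every sufficiently large squarefree $n\not\equiv 7\pmod 8$. In contrast with Theorem~\ref{variancethm1}, this is a statement about \emph{one} specific (unaveraged) axially symmetric annulus, whose expected cardinality $\#\widehat{\mathcal E}(n)\cdot\rho_{n}/2 \asymp L(1,\chi_{-n})\,n^{\e}$ tends to infinity with $n$ for every fixed $\e>0$.

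Next I would smoothly approximate the indicator of $A_{n}$ by an axially symmetric bump $\psi$ on $S^{2}$ and expand it in zonal Legendre polynomials. Since $\psi$ depends only on $x_{3}/\sqrt n$, one obtains
\[
\#(\widehat{\mathcal E}(n)\cap A_{n}) \;=\; c_{0}\,\#\mathcal E(n)\;+\;\sum_{1\le\ell\le\Lambda} c_{\ell}\,W_{\ell}\;+\;O(n^{-10}),
\]
where $\Lambda := \rho_{n}^{-1} = n^{1/2-\e}$, $c_{0}\asymp\rho_{n}$, $|c_{\ell}|\ll \rho_{n}\,\ell^{-1/2}$ for $\ell\le\Lambda$ (with rapid decay beyond $\Lambda$), and $W_{\ell} := \sum_{x\in\mathcal E(n)}P_{\ell}(x_{3}/\sqrt n)$. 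Via the Waldspurger / Maass--Shintani theta correspondence between $\SO(3)$ and the metaplectic cover of $\SL_{2}$, each $|W_{\ell}|^{2}$ is a finite sum of central values $L(\tfrac12, f\otimes\chi_{-n})$ over Hecke newforms $f$ of spectral parameter $\asymp\ell$, modulo bounded arithmetic factors. Cauchy--Schwarz then reduces the task to establishing the hybrid fourth-moment bound $\sum_{\ell\le\Lambda}|W_{\ell}|^{2}\ll \Lambda\,n^{\e'}$ for any $\e'<2\e$.

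The decisive input is thus the Lindel\"of-on-average estimate
\[
\sum_{\ell\le\Lambda}\sum_{f\in\mathcal F_{\ell}} L(\tfrac12,f\otimes\chi_{-n})\;\ll\;\Lambda\,n^{\e'},
\]
uniform simultaneously in the spectral parameter $\ell$ and in the discriminant $-n$. I would attempt this by upgrading the amplified fourth-moment computation that drives the variance asymptotic of Theorem~\ref{variancethm1} into a full spectral large sieve for the family $\{f\otimes\chi_{-n}\}$, via the Petersson/Kuznetsov formula on the metaplectic cover with the zonal kernel. The main obstacle is preserving a power saving uniformly all the way up to the generic spectral scale $\ell\asymp n^{1/2-\e}$, at which the hybrid convexity bound just saturates; to break convexity uniformly across this range, I would exploit the axial symmetry of the equatorial annulus (in particular the vanishing of odd-$\ell$ contributions) together with an amplifier in the Shimura-lifted $f$-aspect.
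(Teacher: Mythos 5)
You have set out to prove \hyperref[conj:linnik]{Conjecture \ref*{conj:linnik}} itself: Linnik's conjecture is open, the paper does not prove it, and its own results in this direction are deliberately weaker --- the almost-all rotated version (Theorem \ref{Linnikthm1}) and Theorem \ref{Linnikthm2}, which reaches $|x_3| \leq n^{1/2 - \delta}$ only for $\delta < \tfrac{1}{18}$ unconditionally and $\delta < \tfrac{1}{6}$ even assuming the generalised Lindel\"{o}f hypothesis. Your sketch breaks at its central step. Waldspurger's formula (Lemma \ref{Waldspurgerlemma}) applies to \emph{Hecke eigenfunctions} $\phi$; your $W_\ell = \sum_x P_\ell(x_3/\sqrt{n})$ is the period of the zonal harmonic attached to the \emph{fixed} pole $(0,0,1)$, which is not a Hecke eigenfunction, so $|W_\ell|^2$ is not ``a finite sum of central values modulo bounded arithmetic factors''. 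One must expand the zonal vector over the Hecke basis and carry the evaluations $\phi(w)$, and since the central values are nonnegative, any passage to $L$-values via Cauchy--Schwarz or positivity costs the full spectral multiplicity $\asymp \ell$. That loss is exactly the difference between the paper's $w$-averaged results (Theorems \ref{variancethm1}, \ref{aethm1}), where Parseval over the centre removes the factors $\phi(w)$, and its fixed-$w$ results (Theorems \ref{Youngthm}, \ref{Linnikthm2}). In particular your ``decisive input'' cannot be obtained by ``upgrading'' the variance computation: that computation controls $\sum_\phi |W_{n,\phi}|^2$ over the whole degree-$\ell$ eigenspace, which is of size $\asymp \ell \sqrt{n}$ even under Lindel\"{o}f --- larger by the factor $\ell$ than what a single centre $w$ requires.

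The reduction is also quantitatively miscalibrated, and this is what makes the needed input look like ``Lindel\"{o}f on average'' when it is much stronger. For the band $|x_3| \leq \sqrt{n}\,\rho_n$, the Legendre coefficients of the (sharp or smoothed) indicator in your normalisation are $a_\ell = \tfrac{2\ell+1}{2} \int_{-\rho_n}^{\rho_n} P_\ell(t)\, dt \asymp \rho_n \ell^{1/2}$ for even $\ell \lesssim \rho_n^{-1}$, not $\rho_n \ell^{-1/2}$; smoothing only truncates the tail $\ell \gg \Lambda$ and does not shrink the bulk coefficients. With the correct coefficients, even the pointwise Lindel\"{o}f bound $|W_{n,\phi}| \ll_{\e} n^{1/4+\e}$ gives an error $\asymp \rho_n^{-1/2} n^{1/4+\e}$ against a main term $\asymp \rho_n \sqrt{n}\, L(1,\chi_{-n})$, which wins only for $\rho_n \gg n^{-1/6}$ --- precisely the paper's conditional threshold, and hopeless at $\rho_n = n^{-1/2+\e}$. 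So the estimate you actually need is not an average bound for the family $\{f \otimes \chi_{-n}\}$ (Proposition \ref{momentprop} already gives Lindel\"{o}f-on-average quality for $T \gg n^{1/4}$); it is genuine cancellation, at a fixed point $w$, in the $\ell$-sum of the zonal Weyl sums themselves --- equivalently, positivity of $\sum_{|m| \leq n^{\e}} r_2(n - m^2)$, where no averaging is available. Neither the parity observation nor an amplifier addresses this, and as Section \ref{connectionssect} explains, far weaker unaveraged statements at much larger scales already encode sub-Weyl subconvexity. The proposal therefore has a genuine gap: its key hybrid estimate is unproved, is not implied even by the generalised Lindel\"{o}f hypothesis in the form you would need it, and cannot be extracted from the paper's moment machinery.
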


Wooley \cite[Corollary 1.3]{Woo14} has shown that such a solution exists (and in a stronger form) for \emph{almost every} positive odd squarefree integer $n \not\equiv 7 \pmod{8}$. Using \hyperref[variancethm1]{Theorem \ref*{variancethm1}}, we are able to establish almost all ``rotated'' versions of this conjecture. 

\begin{theorem}
\label{Linnikthm1}Let $\delta > 0$ and $0 < \psi(n) < n^{1/12 - \delta}$ be a nondecreasing function. Let $n \equiv 3 \pmod{8}$ be squarefree, sufficiently large with respect to $1 / \delta$. Then the volume of the set of $w \in S^2$ for which there exists $x = (x_1, x_2, x_3) \in \mathbb{Z}^3$ with $x_1^2 + x_2^2 + x_3^2 = n$ and $|x \cdot w| \leq \psi(n)$ is $\sigma(S^2) + O(\psi(n)^{-1} L(1, \chi_{-n})^{-1})$. 
\end{theorem}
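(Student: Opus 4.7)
The plan is to reinterpret the diophantine condition as a lattice-point count in an annulus on $S^2$ and then apply the variance estimate of \hyperref[variancethm1]{Theorem \ref*{variancethm1}} together with Chebyshev's inequality. Writing $\hat x = x/\sqrt n \in \widehat{\mathcal{E}}(n)$ and $\eta := \psi(n)/\sqrt n$, the condition $|x \cdot w| \leq \psi(n)$ becomes $|\hat x \cdot w| \leq \eta$, which says that the angle between $\hat x$ and $w$ lies in $[\pi/2 - \arcsin \eta, \pi/2 + \arcsin \eta]$. Setting $r := \pi/2 - \arcsin \eta$ and $R := \pi/2 + \arcsin \eta$, and using the symmetry of this condition in $\hat x$ and $w$, the existence of such $x \in \mathcal{E}(n)$ is equivalent to $N(w) := \#(\widehat{\mathcal{E}}(n) \cap A_{r,R}(w)) \geq 1$. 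The set one wishes to bound is therefore $\{w \in S^2 : N(w) = 0\}$.

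Because $\psi(n) < n^{1/12 - \delta}$, we have $\eta < n^{-5/12 - \delta}$, so $r = \pi/2 + O(\eta)$ and $\sigma(A_{r,R}) = 2\pi(\cos r - \cos R) = 4\pi \eta$. For $n$ sufficiently large in terms of $1/\delta$, both hypotheses $n^{-1/12 + \delta/2} \leq r \leq \pi - \e$ and $\sigma(A_{r,R}) \leq r n^{-5/12 - \delta/2}$ of \hyperref[variancethm1]{Theorem \ref*{variancethm1}} are satisfied, and applying that theorem with $\delta/2$ in place of $\delta$ yields
\[\int_{\SO(3)} \bigl(N(g w_0) - M\bigr)^2 \, dg \sim M, \qquad M := \# \widehat{\mathcal{E}}(n) \, \frac{\sigma(A_{r,R})}{\sigma(S^2)} = \frac{24\, L(1, \chi_{-n}) \psi(n)}{\pi},\]
for any fixed $w_0 \in S^2$, where we have used $g A_{r,R}(w_0) = A_{r,R}(g w_0)$.

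Since $A_{r,R}(w_0)$ is invariant under the $\SO(2)$-stabiliser of $w_0$, the map $g \mapsto g w_0$ pushes the Haar probability measure on $\SO(3)$ forward to $\sigma/\sigma(S^2)$ on $S^2$. Chebyshev's inequality, together with the observation that $N(w) = 0$ forces $|N(w) - M| = M$, then gives
\[\frac{\sigma(\{w \in S^2 : N(w) = 0\})}{\sigma(S^2)} \leq \frac{1}{M^2} \int_{\SO(3)} \bigl(N(g w_0) - M\bigr)^2 \, dg \ll \frac{1}{M} \ll \frac{1}{L(1, \chi_{-n}) \psi(n)},\]
which is the desired bound on the measure of the complement of the success set. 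The substantive point is that $\psi(n) < n^{1/12 - \delta}$ is exactly the range in which \hyperref[variancethm1]{Theorem \ref*{variancethm1}} applies to an annulus around the equator of each $\hat x$; once this is recognised, the remainder is a standard second-moment argument and there is no further obstacle.
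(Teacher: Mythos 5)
Your proof is correct and follows exactly the route the paper takes: it identifies the condition $|x\cdot w|\leq \psi(n)$ with membership of $\hat{x}$ in the equatorial annulus $A_{r,R}(w)$ with $r=\arccos(n^{-1/2}\psi(n))$ and $R=\arccos(-n^{-1/2}\psi(n))$, applies \hyperref[variancethm1]{Theorem \ref*{variancethm1}}, and concludes with Chebyshev's inequality. The details you supply (verification of the hypotheses, the computation $\sigma(A_{r,R})=4\pi\psi(n)/\sqrt{n}$, and the pushforward from $\SO(3)$ to $S^2$) are all accurate.
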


Siegel's theorem implies that $L(1, \chi_{-n}) \gg_{\varepsilon} n^{-\varepsilon}$ for every $\varepsilon > 0$. 
Thus for every $\delta > 0$, the measure of the set of $w \in S^2$ for which there exists $x = (x_1, x_2, x_3) \in \mathbb{Z}^3$ with $x_1^2 + x_2^2 + x_3^2 = n$ and $|x \cdot w| \leq n^{\delta}$ is $\sigma(S^2) + O_{\varepsilon}(n^{-\delta + \varepsilon})$. This establishes that almost all ``rotated Linnik conjectures'' hold (\hyperref[conj:linnik]{Conjecture \ref*{conj:linnik}} corresponds to $w = (0,0,1)$).
On the assumption of the generalised Riemann hypothesis, the requirement $|x \cdot w | \leq n^{\delta}$ could be weakened to $| x \cdot w | \leq (\log \log n)^{1 + \varepsilon}$ for any $\varepsilon > 0$, since the generalised Riemann hypothesis implies that $L(1, \chi_{-n}) \gg (\log\log n)^{-1}$ for all squarefree integers $n \equiv 3 \pmod{8}$. 

Using \hyperref[Youngthm]{Theorem \ref*{Youngthm}}, we can also make the following partial progress on Linnik's conjecture proper.

\begin{theorem} \label{Linnikthm2}
Let $0 \leq \delta < \tfrac{1}{18}$. Let $n \equiv 3 \pmod{8}$ be squarefree and sufficiently large with respect to $1 / \delta$. Then there exists $x = (x_1, x_2, x_3) \in \mathbb{Z}^3$ with $|x_3| \leq n^{1/2 - \delta}$ and such that $x_1^2 + x_2^2 + x_3^2 = n$. Under the assumption of the generalised Lindel\"of hypothesis, we can assume that $0 \leq \delta < \tfrac {1}{6}$.
\end{theorem}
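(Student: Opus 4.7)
The condition $|x_3| \leq n^{1/2 - \delta}$ translates, upon normalising $\hat{x} = x / \sqrt{n}$, into requiring that $\hat{x} \in \widehat{\EE}(n)$ lie in the equatorial band $\{y \in S^2 : |y_3| \leq n^{-\delta}\}$. This band is the axisymmetric annulus $A_{\pi/2 - \eta, \pi/2 + \eta}((0, 0, 1))$ with $\sin \eta = n^{-\delta}$, of surface measure asymptotically $4\pi n^{-\delta}$, so it suffices to show that $\#(\widehat{\EE}(n) \cap \mathrm{band})$ is positive. The plan is to write the band as the set-theoretic difference $B_{\pi/2 + \eta}((0,0,1)) \setminus B_{\pi/2 - \eta}((0,0,1))$ of two large spherical caps centred at the north pole, and apply \hyperref[Youngthm]{Theorem \ref*{Youngthm}} to each.

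The two main terms combine to give the expected count $\# \widehat{\EE}(n) \sin \eta \sim n^{1/2 - \delta}$, which is large, while the two error terms combine spectrally into a single sum over the nontrivial zonal spherical harmonics $Y_{\ell, 0}$ about the axis $(0, 0, 1)$, of the form $\sum_{\ell \geq 1} c_\ell I_\ell$ with $c_\ell = \sum_{\hat x \in \widehat{\EE}(n)} Y_{\ell, 0}(\hat{x})$ and $I_\ell = \int_{\mathrm{band}} Y_{\ell, 0} \, d \sigma$. Explicit Legendre polynomial asymptotics give $|I_\ell| \ll \min(\eta, 1/\ell)$, and the reflection symmetry $y_3 \to -y_3$ of the band kills odd $\ell$.

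The heart of the argument is to bound $\sum_{\ell \textrm{ even}} c_\ell I_\ell$ by $o(n^{1/2 - \delta})$. The Waldspurger-type formula $|c_\ell|^2 \sim \# \widehat{\EE}(n) \cdot L(\tfrac{1}{2}, f_\ell \otimes \chi_{-n})/L(1, \sym^2 f_\ell)$, where $f_\ell$ is the Shimura lift of $Y_{\ell, 0}$, together with H\"{o}lder's inequality, reduces the required bound to the third moment estimate for $L(\tfrac 12, f \otimes \chi_{-n})$ that underpins Theorem \ref{Youngthm}; optimising the H\"{o}lder exponents against the decay rate of $I_\ell$ yields the admissible range $\delta < 1/18$. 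Under the generalised Lindel\"{o}f hypothesis, the pointwise convexity bound $L(\tfrac 12, f_\ell \otimes \chi_{-n}) \ll_\e n^\e$ replaces the third moment input and improves the threshold to $\delta < 1/6$. The main technical obstacle is precisely this H\"{o}lder balancing: naively applying Theorem \ref{Youngthm} to small balls contained entirely inside the band only yields $\delta < 1/24$, since Theorem \ref{Youngthm} requires $\sigma(B_R) \gg n^{-1/12 + \e}$ and hence $R \gg n^{-1/24 + \e/2}$; it is the oscillatory cancellation captured by the annulus error sum, together with the evenness restriction on $\ell$, that delivers the sharper exponent.
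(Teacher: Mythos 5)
Your opening reduction is exactly the paper's: the band $\{|y_3| \leq n^{-\delta}\}$ is the annulus $A_{r,R}((0,0,1))$ with $r = \arccos(n^{-\delta})$, $R = \arccos(-n^{-\delta})$, the paper's proof being ``run the proof of Theorem \ref{Youngthm} for this annulus, using \eqref{hrRmupperboundeq} in place of \eqref{hrRupperboundseq}'', and your thresholds $\tfrac{1}{18}$ and $\tfrac16$ are correct. But the step you call the heart of the argument has a genuine gap: the sharp-cutoff spectral sum $\sum_{\ell} c_\ell I_\ell$ cannot be bounded term by term in absolute value. With $|I_\ell| \ll \min(\eta, 1/\ell)$ and the best available control on the Weyl sums in a dyadic block $\ell \asymp T$ (Waldspurger, H\"{o}lder with exponents $(2,4,6,12)$, the Petrow--Young third moment), one gets $\sum_{\ell \asymp T} |c_\ell|\,|I_\ell| \ll \# \widehat{\EE}(n)\, n^{-1/12+\e}\, T^{1/2}$ once $T \geq 1/\eta$, which grows without bound in $T$; even on GLH the tail is $\sum_\ell n^{1/4+\e} \ell^{-1/2+\e}$, which diverges. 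No choice of H\"{o}lder exponents repairs this, and the ``oscillatory cancellation'' you invoke is never exploited in the paper (nor do you supply a mechanism for it; the evenness of $\ell$ saves only a factor of $2$, and odd $\ell$ vanish anyway since $\widehat{\EE}(n) = -\widehat{\EE}(n)$). The missing ingredient is the mollification of \hyperref[convlemma]{Lemmata \ref*{convlemma}} and \ref{kernelS2uniformlemma}: sandwiching the indicator of the annulus between $\tilde{k}_{r \pm \rho, R \mp \rho} \ast \tilde{k}_{0,\rho}$ inserts the extra factor $\tilde{h}_{0,\rho}(\ell) \ll (\rho\ell)^{-3/2}$, effectively truncating at $\ell \asymp 1/\rho$, at the cost of a volume-discrepancy error $O(\rho/\eta)$. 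Balancing $n^{-1/12}\rho^{-1/2}\eta^{-1}$ against $\rho/\eta$ forces $\rho = n^{-1/18}$, and the constraint $\rho < \eta = n^{-\delta}$ is precisely where $\delta < \tfrac{1}{18}$ comes from; the true source of the gain over small balls is that the annulus transform already decays like $\ell^{-1/2}$ for $\ell \gg 1/r \asymp 1$, per \eqref{hrRmupperboundeq}.

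Two further points. First, $Y_{\ell,0}$ about a generic axis is not a Hecke eigenfunction, so \hyperref[Waldspurgerlemma]{Lemma \ref*{Waldspurgerlemma}} does not apply to $c_\ell$ directly: the degree-$\ell$ eigenspace contains $\asymp \ell$ Hecke eigenforms, and one must use $\sqrt{(2\ell+1)/4\pi}\, c_\ell = \sum_{m_\phi = \ell} W_{n,\phi} \overline{\phi(w)}$ together with the local Weyl law; moreover the period formula carries the product $L(\tfrac12, f) L(\tfrac12, f \otimes \chi_{-n})$, not $L(\tfrac12, f \otimes \chi_{-n})$ alone, which is why the second-moment input for $L(\tfrac12,f)$ is needed. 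Second, your very first instinct was actually sound and is the cleanest repair: the \emph{quantitative} error in the proof of \hyperref[Youngthm]{Theorem \ref*{Youngthm}}, namely $O_\e(R^{-4/3-\e} n^{-1/18+\e})$ relative error (or $n^{-1/6}$ on GLH), applied to the two caps of radius $\pi/2 \pm \eta \asymp 1$ and subtracted, gives $\#(\widehat{\EE}(n) \cap \text{band}) = \# \widehat{\EE}(n)\, n^{-\delta} + O(\# \widehat{\EE}(n)\, n^{-1/18+\e}) > 0$ for $\delta < \tfrac{1}{18}$ — but that is because the mollification is already built into that proof, not because the sharp sum $\sum_\ell c_\ell I_\ell$ can be estimated as you describe.
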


This improves upon the same result with the weaker condition $|x_3| \leq \delta \sqrt{n}$ for any fixed $\delta > 0$, which follows from the equidistribution result of Duke and Golubeva--Fomenko, as well as the subsequent refinement $|x_3| \leq n^{1/2 - \delta}$ for $0 \leq \delta < \tfrac{7}{705}$ due to Golubeva--Fomenko \cite[Corollary]{GF94}.

\subsection{Failure of equidistribution and variance asymptotics}

We end this part of the introduction by discussing negative results on the regimes in which equidistribution and variance estimates fail to behave as expected. 

First of all, we cannot expect equidistribution for regions $\Omega = A_{r, R}$ or $\Omega = B_{R}$ with volume smaller than
$\# \widehat{\mathcal{E}}(n)^{-1}$, since then on average $\Omega$ contains only a bounded number of rational points. Nonetheless, the variance of the number of points in $\Omega$ still behaves like the variance of points thrown randomly on the surface of the sphere; this is implicit in our proof of \hyperref[variancethm1]{Theorem \ref*{variancethm1}} and consistent with the probabilistic predictions. 

Secondly, when the region $\Omega = A_{r, R}$ or $\Omega = B_{R}$ has large volume, say $\sigma (\Omega) \gg (\log n)^{-\delta}$ with $0 < \delta < \tfrac {1}{16}$, we naturally have equidistribution of the number of rational points within $\Omega$ by the results of Duke and Golubeva--Fomenko. However, for a density $1$ sequence of squarefree integers $n \equiv 3 \pmod{8}$, the variance of the number of rational points in such randomly rotated regions is asymptotically arbitrarily small compared to the variance of the number of random points in such a randomly rotated region. This uses crucially the fact that for a Hecke modular form $f$ and almost all squarefree integers $n \equiv 3 \pmod{8}$, the central values $L(\tfrac 12, f \otimes \chi_{-n})$ are bounded by $\varepsilon$ for any given $\varepsilon > 0$ (see \cite{RS15}). 

We state these negative results in the theorem below. 

\begin{theorem} \label{pigeonholethm}
\hspace{1em}
\begin{enumerate}[leftmargin=*]
\item[\textnormal{(1)}] Let $\delta > 0$. Let $B_{R} \subset S^2$ be a ball of volume $n^{-1/2 - \delta}$. Let $0 < \varepsilon < 1$. Then as $n \to \infty$ along squarefree integers $n \equiv 3 \pmod{8}$,
\[
\sigma \left(\left\{w \in S^2 :
\left| \# (\widehat{\mathcal{E}}(n) \cap B_R(w)) - \# \widehat{\mathcal{E}}(n) \frac{\sigma(B_R)}{\sigma(S^2)} \right| > \varepsilon \# \widehat{\mathcal{E}}(n) \frac{\sigma(B_R)}{\sigma(S^2)} \right\}\right) = \sigma(S^2) + o(1).
\]
\item[\textnormal{(2)}] \label{pigeonsecond} Let $0 < \delta < \tfrac{1}{16}$ be given. Let $B_{R} \subset S^2$ be a ball of volume $(\log n)^{-\delta}$. Then there exists a density $1$ subset $S$ of squarefree integers $n \equiv 3 \pmod{8}$ such that as $n \to \infty$ along $n \in S$, 
\[
\int_{\SO(3)}
\left( \# ( \widehat{\mathcal{E}}(n) \cap g B_{R} ) - \# \widehat{\mathcal{E}}(n) \frac{\sigma(B_R)}{\sigma(S^2)} \right)^2 \, dg = o \left( \# \widehat{\mathcal{E}}(n) \frac{\sigma(B_R)}{\sigma(S^2)} \right).
\]
\end{enumerate}
\end{theorem}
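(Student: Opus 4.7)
Part (1) is essentially immediate from a small-ball estimate. Since $\sigma(B_R) = n^{-1/2-\delta}$ and Siegel gives $\# \widehat\EE(n) \ll \sqrt n \, (\log n)^{O(1)}$, the expected count $E_n := \# \widehat\EE(n) \sigma(B_R)/\sigma(S^2)$ is $\ll n^{-\delta}(\log n)^{O(1)} = o(1)$. Since $N(w) := \#(\widehat\EE(n) \cap B_R(w))$ is a nonnegative integer with mean $E_n$, for $n$ sufficiently large (depending on $\e, \delta$) one has $E_n < \e/(1+\e)$, and then for every $w \in S^2$ either $N(w) = 0$ and $|N(w) - E_n| = E_n > \e E_n$, or $N(w) \geq 1$ and $|N(w) - E_n| \geq 1 - E_n > \e E_n$. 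The exceptional set therefore equals $S^2$ exactly once $n$ is large, which is stronger than the claimed $\sigma(S^2) + o(1)$.

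For Part (2) I would decompose the variance spectrally. Writing $\chi_{B_R}$ in the zonal spherical harmonic basis and applying Plancherel on $\SO(3)$ yields
\[
\int_{\SO(3)} (N(g) - E_n)^2\, dg = \sum_{\ell \geq 1} \frac{|\widehat{\chi}_{B_R}(\ell)|^2}{2\ell+1} \cdot \MM_\ell(n),
\]
where $\MM_\ell(n) := \sum_{m=-\ell}^{\ell} |\sum_{P \in \widehat\EE(n)} Y_{\ell,m}(P)|^2$ and the Fourier--Legendre coefficients of a cap have effective support $\ell \ll R^{-1} = (\log n)^{\delta/2}$. Decomposing $\MM_\ell(n)$ over the Hecke eigenbasis on the quaternion side and invoking the Waldspurger-type formula used elsewhere in the paper, one obtains
\[
\MM_\ell(n) = \frac{c_\ell \sqrt{n}}{L(1, \chi_{-n})} \sum_{f} \alpha_f^{(\ell)} L\!\left(\tfrac12, f \otimes \chi_{-n}\right) (1 + o(1)),
\]
with the inner sum running over holomorphic newforms of weight $2\ell+2$ and level dividing $n$, and bounded local factors $\alpha_f^{(\ell)}$.

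The key input is then Radziwi\l{}\l{}--Soundararajan \cite{RS15}: for any fixed newform $f$ and any $\eta > 0$, the density of squarefree $n \equiv 3 \pmod 8$ with $L(\tfrac12, f \otimes \chi_{-n}) > \eta$ is zero. Since the space of newforms of weight at most $F$ has polynomial dimension in $F$, a standard diagonalisation produces sequences $\eta_k \to 0$, $F_k \to \infty$, and cutoffs $N_k \to \infty$ such that on a subset $S_k \subset [1, N_k]$ of density $\geq 1 - 1/k$ the inequality $L(\tfrac12, f \otimes \chi_{-n}) < \eta_k$ holds \emph{simultaneously} for every newform $f$ of weight $\leq F_k$. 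Choosing $F_k$ to grow just faster than $(\log N_k)^{\delta/2}$ covers every $\ell$ for which $|\widehat\chi_{B_R}(\ell)|$ is non-negligible, and substituting back gives $\text{Var}(n) = o(E_n)$ on the density $1$ set $S := \bigcup_k S_k$.

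The main obstacle is the arithmetic bookkeeping in the final substitution: the spectral-side sum carries a factor $1/L(1, \chi_{-n})$ while the random-model prediction is $E_n \asymp \sqrt{n}\, L(1, \chi_{-n}) \sigma(B_R)$, so the saving $\eta_k$ must dominate a possible loss of size $L(1, \chi_{-n})^{-2}$ on the density $1$ set. Balancing this against the typical size of $L(\tfrac12, f \otimes \chi_{-n})$ (of order $(\log n)^{-1/8+o(1)}$ predicted by the Keating--Snaith heuristic and quantitatively accessible via \cite{RS15}) and the prescribed volume $\sigma(B_R) = (\log n)^{-\delta}$ is precisely where the cutoff $\delta < 1/16$ emerges.
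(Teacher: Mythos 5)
Your part (1) is correct; it is even marginally sharper than the paper's own argument (which only uses the $w$ with $B_R(w)\cap\widehat{\EE}(n)=\emptyset$, whose complement has measure at most $\#\widehat{\EE}(n)\,\sigma(B_R)=o(1)$), since once $E_n<1/(1+\e)$ every $w\in S^2$ is exceptional. Either route is fine.

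Part (2) has the right skeleton, but the central step would fail as written. The qualitative statement ``for each fixed $f$, the density of $n$ with $L(\tfrac12,f\otimes\chi_{-n})>\eta$ is zero'' plus diagonalisation only yields simultaneity over weights $k_f\leq F_k$ for \emph{some} sequence $F_k\to\infty$ whose growth you cannot prescribe; you are not entitled to demand $F_k\gg(\log N_k)^{\delta/2}$. Since the number of relevant forms grows like a power of $\log n$, you must run a union bound, and that requires the \emph{quantitative} large-deviation input behind \cite{RS15}: $\log L(\tfrac12,f\otimes\chi_{-n})$ is approximately Gaussian with mean $-\tfrac12\log\log X$ (so the typical size is $(\log n)^{-1/2+o(1)}$, not $(\log n)^{-1/8+o(1)}$ as you assert), and the proportion of $n\leq X$ with $L(\tfrac12,f\otimes\chi_{-n})>(\log X)^{\delta'-1/2}$ is $\ll(\log X)^{-\delta'^2/2}$. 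Balancing this against the $\asymp K^2$ forms of weight $\leq K$ forces $K\leq(\log X)^{\delta'^2/4-\e}$ with $\delta'<\tfrac12$, and $(\tfrac12)^2/4=\tfrac1{16}$ is precisely where the threshold comes from; your proposal never produces this number but only asserts it will emerge from an unbalanced bookkeeping.

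A second genuine gap is the claim that the cap's coefficients have ``effective support $\ell\ll R^{-1}$''. The Selberg--Harish-Chandra transform satisfies only $|\tilde{h}_{0,R}(m)|^2\ll R^{-3}m^{-3}$, and there are $\asymp m$ forms of weight $\asymp m$, so with the trivial average bound on the $L$-values the tail $m\gg 1/R$ contributes $\asymp R^{-2}\asymp\sigma(B_R)^{-1}$, i.e.\ the \emph{same} order as the main term. Killing the tail is a substantive part of the paper's proof: it proceeds by Chebyshev together with first-moment estimates of $L(\tfrac12,f\otimes\chi_{-n})$ averaged over $n$ (a variant of \cite[Proposition 2]{RS15} for $k_f\leq X^{1/100}$, and the approximate functional equation plus Heath-Brown's quadratic large sieve for larger weights). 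Two further slips worth fixing: the Jacquet--Langlands transfers here are newforms of level $2$ (not level dividing $n$), and Waldspurger gives the product $L(\tfrac12,f)L(\tfrac12,f\otimes\chi_{-n})$ rather than the twisted factor alone; the untwisted factor is what allows Lindel\"{o}f-on-average in the weight aspect for the low-weight range.
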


Moreover, it should be possible to show using the methods in \cite{Sou08} and \cite{GS03} that for any fixed $\Delta > 0$ and for a ball $B_{R}$ of volume greater than $\exp(- (\log n)^{1/2 - \delta})$ with $\delta \in (0, 1/2)$, there exists a subsequence of the squarefree integers $n \equiv 3 \pmod{8}$ such that as $n \to \infty$ along this subsequence, 
\[
\int_{\SO(3)} \left( \# (\widehat{\mathcal{E}}(n) \cap g B_{R}) - \# \widehat{\mathcal{E}}(n) \frac{\sigma(B_{R})}{\sigma(S^2)} \right)^2 \, d g \geq \Delta \# \widehat{\mathcal{E}}(n) \frac{\sigma(B_R)}{\sigma(S^2)}.
\]
Therefore the case of balls of large volume exhibits truly chaotic behaviour. One can still draw an analogy here with the case of the Barban--Davenport--Halberstam theorem, which is also expected to fail for extremely long arithmetic progressions \cite{Fio15}.

It is a fairly delicate question to determine even conjecturally the optimal threshold at which we expect \hyperref[conj:brs]{Conjecture \ref*{conj:brs}} to hold. We believe, based on conjectures about the maximal size of an $L$-function, that this threshold is around $\exp(-(\log n)^{1/2 + o(1)})$; that is, \hyperref[conj:brs]{Conjecture \ref*{conj:brs}} holds for all balls $B_{R}$ with $\sigma(B_R) \leq \exp( - (\log n)^{1/2 + \varepsilon})$ for any given $\varepsilon > 0$ and $n \to \infty$ along squarefree integers $n \equiv 3 \pmod{8}$.

\subsection*{II. Heegner points and closed geodesics}
\subsection{Variance and equidistribution results}

The results of the first part have analogues for Heegner points and closed geodesics. An interesting feature that we highlight is that we are able to obtain equidistribution of closed geodesics in almost every ball in all regimes, whereas we are unable to obtain such a result for lattice points on the sphere or for Heegner points. We start by recalling some standard results about Heegner points and closed geodesics. 

Let $D < 0$ be a fundamental discriminant. Each ideal class in the class group of the imaginary quadratic field $\Q(\sqrt{D})$ is associated to an orbit of primitive irreducible integral binary quadratic forms $Q(x,y) = ax^2 + bxy + cy^2$ of discriminant $b^2 - 4ac = D$ under the action of the modular group $\Gamma = \SL_2(\Z)$. In turn, such an orbit is associated to a $\Gamma$-orbit of points $(-b + \sqrt{D})/2a$ in the upper half-plane $\Hb$, or equivalently a single Heegner point in the modular surface $\Gamma \backslash \Hb$. We denote by $\Lambda_D$ the set of Heegner points of discriminant $D$ in $\Gamma \backslash \Hb$.

Similarly, let $\Q(\sqrt{D})$ be a real quadratic field of discriminant $D > 0$. Each narrow ideal class in the narrow class group of $\Q(\sqrt{D})$ is associated to a $\Gamma$-orbit of primitive irreducible integral binary quadratic forms of discriminant $b^2 - 4ac = D$; in turn, such an orbit is associated to a $\Gamma$-orbit of closed geodesics in the upper half-plane that intersect the boundary at $(-b \pm \sqrt{D})/2a$, or equivalently a single closed geodesic $\CC \subset \Gamma \backslash \Hb$. We again let $\Lambda_D$ denote the set of closed geodesics of discriminant $D$ in $\Gamma \backslash \Hb$.

We can count the number of Heegner points via the class number formula:
\[\# \Lambda_D = h(D) = \frac{w_D \sqrt{|D|} L(1, \chi_D)}{2\pi}, \qquad w_D = \begin{dcases*}
4 & for $D = -4$,	\\
6 & for $D = -3$,	\\
2 & otherwise,
\end{dcases*}\]
where $\chi_D$ is the primitive quadratic character modulo $|D|$. We can also measure the total length of the closed geodesics in $\Lambda_D$:
\[\sum_{\CC \in \Lambda_D} \ell(\CC) = 2 \sqrt{D} L(1, \chi_D),\]
where $\ell(\CC) := \int_{\CC} \, ds$ denotes the length of a curve $\CC$ with respect to the measure $ds^2 := y^{-2} \, dx^2 + y^{-2} \, dy^2$. 
These quantities can then be tightly bounded via the bounds $|D|^{-\e} \ll_{\e} L(1,\chi_D) \ll \log |D|$. We will also denote by $d\mu(w)$ the hyperbolic volume, so that for $w = x + i y$,
\[
d \mu(w) = \frac{dx \, dy}{y^2}. 
\]

Similarly to \hyperref[variancethm1]{Theorem \ref*{variancethm1}}, we are able to obtain an asymptotic estimate for the variance of Heegner points intersecting shrinking annuli. We let $A_{r,R}(w)$ denote the annulus centred at $w \in \Gamma \backslash \Hb$ with inner radius $r$ and outer radius $R$.

\begin{theorem}
\label{variancethm2}
If $|D|^{-1/12 + \delta} \ll r \ll 1$ and $\mu(A_{r, R}) \ll r |D|^{-5 /12 - \delta}$ for some $\delta > 0$, then as $D \to - \infty$ through negative squarefree fundamental discriminants, 
\[
\frac{1}{\mu(\Gamma \backslash \Hb)} \int_{\Gamma \backslash \Hb} \left( \# (\Lambda_D \cap A_{r,R}(w)) - \# \Lambda_{D} \frac{\mu(A_{r,R})}{\mu(\Gamma \backslash \Hb)} \right)^2 \, d\mu(w) \sim \# \Lambda_{D} \frac{\mu(A_{r,R})}{\mu(\Gamma \backslash \Hb)}.
\]
\end{theorem}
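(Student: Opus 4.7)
My plan is to follow the spectral machinery that underlies \hyperref[variancethm1]{Theorem \ref*{variancethm1}}, with Waldspurger's formula (equivalently, the Katok--Sarnak/Zhang identity) playing the role of the Maass--Shintani lift and with the same geometric gain that annuli enjoy over balls.

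I would begin with the pretrace formula. Setting $k_\rho(z,w) := \mathbf{1}_{\{d(z,w) \leq \rho\}}$, $k_{r,R} := k_R - k_r$, and $K_{r,R}(w) := \sum_{z \in \Lambda_D} k_{r,R}(z,w)$ (understood as an automorphic kernel), the constant spectrum contributes precisely the expected count $\# \Lambda_D \mu(A_{r,R}) / \mu(\Gamma\backslash\Hb)$, so that Parseval gives
\[
\int_{\Gamma \backslash \Hb} \Bigl| K_{r,R}(w) - \# \Lambda_D \tfrac{\mu(A_{r,R})}{\mu(\Gamma \backslash \Hb)} \Bigr|^2 d\mu(w) = \sum_{j \geq 1} |h_{r,R}(t_j)|^2 \Bigl| \sum_{z \in \Lambda_D} \phi_j(z) \Bigr|^2 + \mathcal{C},
\]
where $\{\phi_j\}$ is a Hecke--Maass orthonormal basis with spectral parameters $t_j$, $h_{r,R}$ is the Selberg/Harish--Chandra transform of $k_{r,R}$, and $\mathcal{C}$ is the analogous continuous-spectrum contribution built from $\sum_{z \in \Lambda_D} E(z, \tfrac12 + it)$ weighted by $|h_{r,R}(t)|^2$.

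Next, I would apply Waldspurger's formula to write
\[
\Bigl| \sum_{z \in \Lambda_D} \phi_j(z) \Bigr|^2 = c_{\phi_j} \sqrt{|D|} \, \frac{L(\tfrac12, \phi_j) L(\tfrac12, \phi_j \otimes \chi_D)}{L(1, \sym^2 \phi_j)},
\]
together with the analogous Dirichlet $L$-value identity for the continuous spectrum, roughly $\sum_{z \in \Lambda_D} E(z, \tfrac12 + it) \sim |D|^{1/4} L(\tfrac12+it, \chi_D) \zeta(\tfrac12+it)/\zeta(1+2it)$. This converts the variance into a spectral moment
\[
V \sim \sqrt{|D|} \sum_{j} \frac{|h_{r,R}(t_j)|^2 \, L(\tfrac12, \phi_j) L(\tfrac12, \phi_j \otimes \chi_D)}{L(1, \sym^2 \phi_j) \cosh(\pi t_j)} + (\text{continuous-spectrum analogue}).
\]
The transform $h_{r,R}$ is now the decisive object: for $t\rho \gg 1$ one has $h_\rho(t) \sim \mu(B_\rho) (t\rho)^{-3/2} \cos(2t\rho + \text{phase})$, so the difference $h_R - h_r$ exhibits cancellation localising the effective spectral range to $t_j \lesssim 1/r$, with amplitude governed by $\mu(A_{r,R})$ rather than $\mu(B_R)$.

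At this point the analytic input is the Heath-Brown/Conrey--Iwaniec cubic-moment bound for $L(\tfrac12, \phi_j \otimes \chi_D)$ in the $|D|$-aspect, together with the spectral large sieve in the $t_j$-aspect, which are exactly the ingredients that drive \hyperref[variancethm1]{Theorem \ref*{variancethm1}}. The continuous-spectrum piece is controlled by the convexity-strength fourth moment of $|L(\tfrac12+it, \chi_D)|$ and is of lower order in the stated range. The main obstacle is extracting an \emph{asymptotic} rather than merely an upper bound: the cubic moment's main term, produced by the diagonal of an approximate functional equation after Kuznetsov inversion, must reproduce the Poissonian variance $\# \Lambda_D \mu(A_{r,R}) / \mu(\Gamma \backslash \Hb)$, with the remaining Kloosterman/secondary terms strictly smaller. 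The hypotheses $|D|^{-1/12 + \delta} \ll r$ and $\mu(A_{r,R}) \ll r |D|^{-5/12 - \delta}$ are calibrated exactly for this balance: the first keeps the relevant $t_j$ below the Weyl-subconvexity threshold, while the second caps the amplitude $|h_{r,R}|$ so that the cubic-moment error term remains dominated by the main term.
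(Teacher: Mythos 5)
Your skeleton --- spectral expansion of the variance, Waldspurger to convert $|W_{D,f}|^2$ into $L(\tfrac12,f)L(\tfrac12,f\otimes\chi_D)$, asymptotics for the Selberg--Harish-Chandra transform, then moment bounds --- is exactly the paper's reduction (Section 2 and Lemma \ref{DukeVarlemma}). But two of your central analytic claims are wrong, and they are not repairable within the strategy you describe. First, the effective spectral range is \emph{not} $t_j \lesssim 1/r$. By \eqref{hrRtrefinedeq}, $|h_{r,R}(t)|^2 \approx \frac{8}{\sinh\frac{R-r}{2}\,\mu(A_{r,R})}\,|t|^{-3}\sin^2\frac{(R-r)t}{2}\sin^2\frac{(R+r)t}{2}$, and weighting against the spectral measure $\asymp t\,dt$ one sees that the integral $\int \sin^2\frac{(R-r)t}{2}\sin^2\frac{(R+r)t}{2}\,t^{-2}\,dt$ is dominated by $t \asymp (R-r)^{-1}$, which under the hypothesis $\mu(A_{r,R}) \ll r|D|^{-5/12-\delta}$ means $t \gg |D|^{5/12+\delta}$ (see \eqref{defandboundseq} and Lemma \ref{maintermlemma}). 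The ranges $t \ll 1/r$ and $1/r \ll t \ll 1/(R-r)$ contribute only error terms, controlled by Proposition \ref{momentprop}. Second, the cubic moment of Young/Petrow--Young is not what produces the asymptotic: it yields only an \emph{upper bound}, and in the paper it enters solely (via H\"older with exponents $(2,3,6)$) to control the low-frequency range $T \ll |D|^{1/12}$. The Poissonian main term instead comes from an \emph{exact identity} for the first moment $\sum_f L(\tfrac12,f)L(\tfrac12,f\otimes\chi_D)h(t_f)/L(1,\sym^2 f)$ (Corollary \ref{momentidentitycor}), obtained by inserting $\lambda_{\chi_D,1}(m,0)m^{-w}$ into the opposite-sign Kuznetsov formula, applying Vorono\u{\i} summation to the resulting multiple Dirichlet series, and continuing analytically to $w=\tfrac12$; the residue term $2L(1,\chi_D)\int h(t)\,d_{\spec}t$ gives the main term and the off-diagonal is a shifted convolution sum. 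At $T \asymp (R-r)^{-1}$ the moment is of size $|D|^{\e}T^2$ (Lindel\"of on average), exactly comparable to the target, so no upper-bound input can substitute for this identity.

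Beyond these, two essential steps are absent from your outline. One cannot take $h(t) = h_{r,R}(t)^2$ in the moment identity, since the identity requires a test function holomorphic in a wide strip with prescribed zeros at $\pm i(n-\tfrac12)$; the paper builds an approximant $h = h_1h_2h_3$ localised to $T_1 \leq |t| \leq T_2$ with $T_{1,2} = (R-r)^{-1\mp\alpha}$ and controls the discrepancy using Proposition \ref{momentprop} (Lemma \ref{comparisonlemma}). And the claim that the ``remaining Kloosterman/secondary terms are strictly smaller'' is where most of the work lies: bounding the shifted convolution sum \eqref{Varshiftedeq} requires a delicate multi-range stationary-phase analysis of $\Ks^{-}h$ (Lemma \ref{shiftedlemma}), and it is precisely this step that forces the constraints $r \gg |D|^{-1/12+\delta}$ and $R - r \ll |D|^{-5/12-\delta}$, not a balance between a cubic-moment main term and its error term as you suggest.
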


It should be possible to obtain analogous results for closed geodesics but we have not investigated this in any detail. 

A consequence of this variance estimate is the following equidistribution results for almost all balls and annuli.
An interesting feature is that in the case of closed geodesics, we are able to obtain such a result for almost all balls. 

\begin{theorem}
\label{aethm2}
Fix $c > 0$.
\begin{enumerate}[leftmargin=*]
\item[\textnormal{(1)}] Suppose that $|D|^{-1/12 + \delta} \ll r < R \ll 1$ and $|D|^{-1/2 + \delta} \ll \mu(A_{r,R}) \ll 1$ for some $\delta > 0$. Then as $D \to -\infty$ through squarefree fundamental discriminants,
\[\mu\left(\left\{w \in \Gamma \backslash \Hb : \left|\frac{\mu(\Gamma \backslash \Hb)}{\mu(A_{r,R})} \frac{\# (\Lambda_D \cap A_{r,R}(w))}{\# \Lambda_D} - 1\right| > c\right\}\right) = o_{\delta}(1).\]
\item[\textnormal{(2)}] Suppose that $0 \leq r < R \ll 1$ and $D^{-1 + \delta} \ll \mu(A_{r,R}) \ll 1$ for some $\delta > 0$. Then as $D \to \infty$ through squarefree fundamental discriminants,
\[\mu\left(\left\{w \in \Gamma \backslash \Hb : \left|\frac{\mu(\Gamma \backslash \Hb)}{\mu(A_{r,R})} \frac{\sum_{\CC \in \Lambda_D} \ell(\CC \cap A_{r,R}(w))}{\sum_{\CC \in \Lambda_D} \ell(\CC)} - 1\right| > c\right\}\right) = o_{\delta}(1).\]
\end{enumerate}
\end{theorem}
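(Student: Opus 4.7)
The plan is to deduce both parts from Chebyshev's inequality, with the variance input supplied by \hyperref[variancethm2]{Theorem \ref*{variancethm2}} in part (1) and by an analogous variance estimate for closed geodesic cycles in part (2).

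For part (1), set
\[
X(w) \coloneqq \#(\Lambda_D \cap A_{r,R}(w)) - \#\Lambda_D \frac{\mu(A_{r,R})}{\mu(\Gamma\backslash\Hb)}
\]
and let $\mathcal{E} \coloneqq \#\Lambda_D \mu(A_{r,R})/\mu(\Gamma\backslash\Hb)$ denote the expected count. Chebyshev yields
\[
\mu\!\left(\left\{w\in \Gamma\backslash\Hb : |X(w)|>c\mathcal{E}\right\}\right) \leq \frac{1}{c^2\mathcal{E}^2}\int_{\Gamma\backslash\Hb} X(w)^2\,d\mu(w).
\]
By Siegel's lower bound $\#\Lambda_D \gg_\e |D|^{1/2-\e}$ combined with the hypothesis $\mu(A_{r,R}) \gg |D|^{-1/2+\delta}$, the right-hand side is $o_\delta(1)$ as soon as $\int X^2\,d\mu \ll \#\Lambda_D \mu(A_{r,R})$. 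In the regime $\mu(A_{r,R}) \ll r|D|^{-5/12-\delta}$ covered by \hyperref[variancethm2]{Theorem \ref*{variancethm2}}, this holds even as an asymptotic. In the complementary regime of larger annuli, I would open the variance via Plancherel/pretrace into a spectral sum of values $|L(\tfrac12, f\otimes\chi_D)|^2$ (accessed through Waldspurger's formula) weighted by the squared Harish-Chandra transform of the annulus indicator, and establish the required upper bound by combining the localisation of the transform near spectral parameter $\sim 1/r$ with a short-window second-moment estimate for $L(\tfrac12, f\otimes\chi_D)$.

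Part (2) proceeds in exactly the same way, with $\#\Lambda_D$ replaced by the total length $\sum_\CC\ell(\CC) = 2\sqrt{D}L(1,\chi_D)$ and $X(w)$ replaced by $\sum_\CC \ell(\CC\cap A_{r,R}(w)) - \sum_\CC \ell(\CC)\,\mu(A_{r,R})/\mu(\Gamma\backslash\Hb)$. The spectral expansion of the variance is governed by Zagier's identity relating geodesic periods of Maass forms on $\Lambda_D$ to $L(\tfrac12, f\otimes\chi_D)$. The structural reason we can afford the inner radius $r=0$ here is that a closed geodesic is one-dimensional, so its intersection with a small ball is a curve segment; the corresponding spectral weight picks up one extra power of decay in the spectral parameter compared to the two-dimensional indicator, which removes the need for the annular thickening. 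Under the hypothesis $\mu(A_{r,R})\gg D^{-1+\delta}$, the expected total length $\sqrt{D}L(1,\chi_D)\mu(A_{r,R})$ tends to infinity, and Chebyshev then gives the claimed $o_\delta(1)$ bound.

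The main obstacle is the auxiliary variance estimate beyond the range of \hyperref[variancethm2]{Theorem \ref*{variancethm2}} in part (1), and the analogous variance estimate for geodesic cycles in part (2); both reduce to uniformly controlling spectral sums of $|L(\tfrac12, f\otimes\chi_D)|^2$ weighted by an appropriate Selberg transform, and the delicate point is handling the contribution of high spectral parameters by second-moment bounds in a spectral window compatible with the support of the transform.
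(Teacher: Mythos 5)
Your reduction is the same as the paper's: Chebyshev's inequality applied to the normalised variance \eqref{VarD<0defeq}--\eqref{VarD>0defeq}, followed by the spectral expansion of \hyperref[DukeVarlemma]{Lemma \ref*{DukeVarlemma}}, the period formul\ae{} of \hyperref[DukeWaldspurgerlemma]{Lemma \ref*{DukeWaldspurgerlemma}}, and the Selberg--Harish-Chandra bounds of \hyperref[hrRasymplemma]{Lemmata \ref*{hrRasymplemma}} and \ref{hrRupperboundslemma}. Your structural explanation of why $r = 0$ is admissible in part (2) --- the extra factor $H(t) \asymp (|t|+1)^{-1}$ in the geodesic period formula --- is exactly the point the paper makes. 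Note, though, that the paper does not route the small-annulus regime through \hyperref[variancethm2]{Theorem \ref*{variancethm2}} (whose proof is far harder and is deferred); it proves the $o(1)$ upper bound uniformly in all regimes by the dyadic argument, which is logically cleaner.

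The genuine gap is the moment input. After Waldspurger, the quantity weighting the spectral expansion is \emph{not} $|L(\tfrac{1}{2}, f\otimes\chi_D)|^2$ but the product $L(\tfrac{1}{2},f)\,L(\tfrac{1}{2},f\otimes\chi_D)/L(1,\sym^2 f)$, and what is needed is a bound for its first moment over dyadic windows $T \leq t_f \leq 2T$ (and the Eisenstein analogue), uniformly in $D$, with a power saving over $|D|^{1/2}$ after summing against the transform weights. This is \hyperref[momentprop]{Proposition \ref*{momentprop}}, the technical heart of the paper: a Cauchy--Schwarz/large-sieve bound gives only $O_{\e}(|D|^{1/2+\e}T^{2+\e})$ per dyadic block, which already fails in the range $t_f \leq 1/r$ (it produces $|D|^{1/2}r^{-2} \gg |D|^{1/2}$). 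The actual bounds require, for $T \ll |D|^{1/12}$, H\"{o}lder with exponents $(2,3,6)$ against the Weyl-strength cubic moments of Young and Petrow--Young, and for $T \gg |D|^{1/12}$, exact moment identities obtained from the Kuznetsov formula, Vorono\u{\i} summation, and analytic continuation, with the resulting shifted convolution sums estimated by hand. Your ``short-window second-moment estimate for $L(\tfrac{1}{2},f\otimes\chi_D)$'' neither names the correct object nor supplies an argument for it, so as written the proof does not close; moreover the hypotheses $r \gg |D|^{-1/12+\delta}$ and $\mu(A_{r,R}) \gg |D|^{-1/2+\delta}$ enter precisely in balancing these dyadic blocks against the three regimes of the transform, not merely in making the expected count tend to infinity.
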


We highlight the fact that the condition $D^{-1 + \delta} \ll \mu(A_{r,R})$ in \hyperref[aethm2]{Theorem \ref*{aethm2} (2)} ensures that when $r = 0$, so that $A_{r,R} = B_R$ is a ball, the radius $R$ is at least of size $D^{-1/2 + \delta/2}$, and hence that $1/R$ is smaller than $\sum_{\CC \in \Lambda_D} \ell(\CC)$.

\section{Reduction to bounds for moments of \texorpdfstring{$L$}{L}-functions}

Throughout we will normalise our variances slightly differently and consider
\begin{align}
\label{VarD<0defeq}
\Var(\Lambda_D; A_{r,R}) & \coloneqq \frac{1}{\mu(\Gamma \backslash \Hb)} \int_{\Gamma \backslash \Hb} \left(\frac{\mu(\Gamma \backslash \Hb)}{\mu(A_{r,R})} \frac{\# (\Lambda_D \cap A_{r,R}(w))}{\# \Lambda_D} - 1\right)^2 \, d\mu(w)	\\
\intertext{for $D < 0$,}
\label{VarD>0defeq}
\Var(\Lambda_D; A_{r,R}) & \coloneqq \frac{1}{\mu(\Gamma \backslash \Hb)} \int_{\Gamma \backslash \Hb} \left(\frac{\mu(\Gamma \backslash \Hb)}{\mu(A_{r,R})} \frac{\sum_{\CC \in \Lambda_D} \ell(\CC \cap A_{r,R}(w))}{\sum_{\CC \in \Lambda_D} \ell(\CC)} - 1\right)^2 \, d\mu(w)	\\
\intertext{for $D > 0$,}
\label{Varndefeq}
\Var(\widehat{\EE}(n); A_{r,R}) & \coloneqq \frac{1}{\sigma(S^2)} \int_{S^2} \left(\frac{\sigma(S^2)}{\sigma(A_{r,R})} \frac{\# (\widehat{\EE}(n) \cap A_{r,R}(w))}{\# \widehat{\EE}(n)} - 1\right)^2 \, d\sigma(w).
\end{align}
With this normalisation, the expected results are respectively
\begin{gather*}
\Var(\Lambda_D; A_{r,R}) \sim \frac{\mu(\Gamma \backslash \Hb)}{\mu(A_{r,R}) \# \Lambda_D}, \qquad \Var(\Lambda_D; A_{r,R}) \sim \frac{\mu(\Gamma \backslash \Hb)}{\mu(A_{r,R}) \sum_{\CC \in \Lambda_D} \ell(\CC)},	\\
\Var(\widehat{\EE}(n); A_{r,R}) \sim \frac{\sigma(S^2)}{\sigma(A_{r,R}) \# \widehat{\EE}(n)}.
\end{gather*}

Our basic approach towards the computation of the variances is to use Parseval's identity to spectrally expand the variances in terms of an orthonormal basis of Laplacian eigenfunctions on $\Gamma \backslash \Hb$ and $S^2$. For $\Gamma \backslash \Hb$, we denote by $\BB_0(\Gamma)$ an orthonormal basis of the space of Maa\ss{} cusp forms, which we may choose to consist of Hecke--Maa\ss{} cusp forms, while for $S^2$, we let $\BB$ denote an orthonormal basis of Laplacian eigenfunctions, which we may assume to be Hecke eigenfunctions. The spectral expansion of the variances involves the square of the absolute value of the Weyl sums
\begin{align}
\label{eq:WDf}
W_{D,f} & \coloneqq \begin{dcases*}
\sum_{z \in \Lambda_D} f(z) & for $D < 0$,	\\
\sum_{\CC \in \Lambda_D} \int_{\CC} f(z) \, ds & for $D > 0$,
\end{dcases*}	\\
\label{eq:WDt}
W_{D,t} & \coloneqq \begin{dcases*}
\sum_{z \in \Lambda_D} E\left(z, \frac{1}{2} + it\right) & for $D < 0$,	\\
\sum_{\CC \in \Lambda_D} \int_{\CC} E\left(z, \frac{1}{2} + it\right) \, ds & for $D > 0$,
\end{dcases*}	\\
\intertext{where $f \in \BB_0(\Gamma)$ and $t \in \R$, and}
\label{eq:Wnphi}
W_{n,\phi} & \coloneqq \sum_{x \in \widehat{\EE}(n)} \phi(x)
\end{align}
for $\phi \in \BB$. Period formul\ae{} allow us to express the square of the absolute value of the Weyl sums in terms of $L$-functions. This leads us to write the variances as sums of $L$-functions weighted by the square of the Selberg--Harish-Chandra transform of the normalised indicator function of the annulus $A_{r,R}$. We explicitly work out the asymptotic behaviour of the Selberg--Harish-Chandra transform, then break up these weighted sums of $L$-functions into dyadic ranges; in this way \hyperref[variancethm1]{Theorems \ref*{variancethm1}}, \ref{aethm1}, \ref{variancethm2}, and \ref{aethm2} are reduced to proving bounds for certain moments of $L$-functions.

\subsection{The Selberg--Harish-Chandra transform for \texorpdfstring{$\Hb$}{H}}

We follow \cite[Chapter 1]{Iwa02}. For $z,w \in \Hb$, set
\[\rho(z,w) \coloneqq \log \frac{\left|z - \overline{w}\right| + |z - w|}{\left|z - \overline{w}\right| - |z - w|}, \qquad u(z,w) \coloneqq \frac{|z - w|^2}{4 \Im(z) \Im(w)} = \sinh^2 \frac{\rho(z,w)}{2}.\]
The function $u : \Hb \times \Hb \to [0,\infty)$ is a point-pair invariant for the symmetric space $\Hb \cong \SL_2(\R) / \SO(2)$; that is, $u(g z, g w) = u(z,w)$ for all $g \in \SL_2(\R)$ and $z,w \in \Hb$. From this, a function $k : [0,\infty) \to \C$ gives rise to a point-pair invariant $k(u(z,w))$ on $\Hb$.

We take $k(u(z,w)) = k_{r,R}(u(z,w))$ to be equal to the indicator function of an annulus of inner radius $r$ and outer radius $R$ centred at a point $w$,
\[A_{r,R}(w) \coloneqq \{z \in \Hb : r \leq \rho(z,w) \leq R\} = \left\{z \in \Hb : \sinh^2 \frac{r}{2} \leq u(z,w) \leq \sinh^2 \frac{R}{2}\right\},\]
normalised by the volume of this annulus,
\[\mu(A_{r,R}) = \mu(A_{r,R}(w)) = 4\pi \left(\sinh^2 \frac{R}{2} - \sinh^2 \frac{r}{2}\right),\]
namely
\[k_{r,R}(u(z,w)) \coloneqq \begin{dcases*}
\dfrac{1}{\mu(A_{r,R})} & if $\sinh^2 \dfrac{r}{2} \leq u(z,w) \leq \sinh^2 \dfrac{R}{2}$,	\\
0 & otherwise.
\end{dcases*}\]

Given $k : [0,\infty) \to \C$, we define the automorphic kernel $K : \Gamma \backslash \Hb \times \Gamma \backslash \Hb \to \C$ by
\[K(z,w) \coloneqq \sum_{\gamma \in \Gamma} k(u(\gamma z,w)).\]
The spectral expansion for the automorphic kernel $K = K_{r,R}$ associated to the point-pair invariant $k = k_{r,R}$ involves a sum over an orthonormal basis $\BB_0(\Gamma)$ of the space of Maa\ss{} cusp forms (which we may choose to consist of Hecke--Maa\ss{} eigenforms), where the inner product is
\[\langle f,g\rangle \coloneqq \int_{\Gamma \backslash \Hb} f(z) \overline{g(z)} \, d\mu(z),\]
and an integral over $t \in \R$ indexing the Eisenstein series $E(z,1/2 + it)$. It also involves the Selberg--Harish-Chandra transform $h_{r,R}$ of $k_{r,R}$. The Selberg--Harish-Chandra transform takes sufficiently well-behaved functions $k : [0,\infty) \to \C$ to functions $h : \R \to \C$ via
\[h(t) \coloneqq 2\pi \int_{0}^{\infty} P_{-\frac{1}{2} + it}(\cosh \rho) k\left(\sinh^2 \frac{\rho}{2}\right) \sinh \rho \, d\rho,\]
where $P_{\lambda}^{\mu}(z)$ denotes the associated Legendre function. In particular,
\begin{equation}
\label{hrRtPeq}
h_{r,R}(t) = \frac{2\pi}{\mu(A_{r,R})} \int_{r}^{R} P_{-\frac{1}{2} + it}(\cosh \rho) \sinh \rho \, d\rho.
\end{equation}

\begin{lemma}[{\cite[Theorems 1.14 and 7.4]{Iwa02}}]
\label{kernelHlemma}
The automorphic kernel $K_{r,R}$ satisfies
\begin{align*}
\int_{\Gamma \backslash \Hb} K_{r,R}(z,w) \, d\mu(z) & = h_{r,R}\left(\frac{i}{2}\right) = 1,	\\
\int_{\Gamma \backslash \Hb} f(z) K_{r,R}(z,w) \, d\mu(z) & = h_{r,R}(t_f) f(w),	\\
\int_{\Gamma \backslash \Hb} E\left(z,\frac{1}{2} + it\right) K_{r,R}(z,w) \, d\mu(z) & = h_{r,R}(t) E\left(w,\frac{1}{2} + it\right)
\end{align*}
for every $f \in \BB_0(\Gamma)$, $t \in \R$, and $w \in \Gamma \backslash \Hb$ and has the $L^2$-spectral expansion
\begin{multline*}
K_{r,R}(z,w) = \frac{1}{\mu(\Gamma \backslash \Hb)} + \sum_{f \in \BB_0(\Gamma)} h_{r,R}(t_f) f(z) \overline{f(w)}	\\
+ \frac{1}{4\pi} \int_{-\infty}^{\infty} h_{r,R}(t) E\left(z,\frac{1}{2} + it\right) \overline{E\left(z,\frac{1}{2} + it\right)} \, dt.
\end{multline*}
\end{lemma}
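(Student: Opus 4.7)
The lemma is a direct specialisation of the general theory of the Selberg–Harish-Chandra transform to the radial kernel $k_{r,R}$. Since $k_{r,R}$ is bounded and compactly supported, it is sufficiently well-behaved to invoke the standard results of Chapter 1 of Iwaniec's book, and I would view the essential work as consisting of two parts: the general operator-theoretic identities, and the explicit computation $h_{r,R}(i/2) = 1$.

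For the second and third integral identities, the plan is to unfold: expanding $K_{r,R}(z,w) = \sum_{\gamma \in \Gamma} k_{r,R}(u(\gamma z, w))$ and combining with a fundamental domain, the integral over $\Gamma \backslash \Hb$ becomes an integral over all of $\Hb$. Both identities then reduce to the general fact that any Laplacian eigenfunction $\varphi$ on $\Hb$ with eigenvalue $1/4 + t^2$ satisfies
\[
\int_{\Hb} k(u(z,w)) \varphi(z) \, d\mu(z) = h(t) \varphi(w),
\]
with $h$ the Selberg–Harish-Chandra transform of $k$. This mean-value property is essentially the defining feature of $h$; it follows from the fact that $P_{-1/2 + it}(\cosh \rho(z,w))$ is, up to normalisation, the rotation-invariant Laplacian eigenfunction on $\Hb$ centred at $w$ with eigenvalue $1/4 + t^2$. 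Applied to $\varphi = f \in \BB_0(\Gamma)$ and to $\varphi = E(\cdot, 1/2 + it)$, together with the unfolding, it yields the two claimed eigenvalue equations.

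For the first identity, the equality $\int_{\Gamma \backslash \Hb} K_{r,R}(z,w) \, d\mu(z) = h_{r,R}(i/2)$ is the second identity applied to the constant function $\varphi \equiv 1$, viewed as a Laplacian eigenfunction with eigenvalue $0$ corresponding to the spectral parameter $t = i/2$. The equality $h_{r,R}(i/2) = 1$ is then a short explicit computation: inserting $t = i/2$ into \eqref{hrRtPeq} gives the Legendre function $P_{-1}(\cosh \rho) = P_{0}(\cosh \rho) = 1$ (using $P_\lambda = P_{-\lambda - 1}$), and the remaining integral $\int_r^R \sinh \rho \, d\rho = \cosh R - \cosh r$ cancels exactly against the normalisation $\mu(A_{r,R}) = 4\pi(\sinh^2(R/2) - \sinh^2(r/2))$ via $\cosh x = 1 + 2\sinh^2(x/2)$.

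The spectral expansion then follows from the spectral theorem for $L^2(\Gamma \backslash \Hb)$: the function $K_{r,R}(\cdot, w)$ is bounded with compact support and hence lies in $L^2$, and its Fourier coefficients against the Plancherel basis consisting of the constant function, the Hecke–Maa\ss{} basis $\BB_0(\Gamma)$, and the Eisenstein series $\{E(\cdot, 1/2 + it)\}_{t \in \R}$ are precisely what the first three identities compute, after using the symmetry $k_{r,R}(u(z,w)) = k_{r,R}(u(w,z))$ to interchange $z$ and $w$. The main potential obstacle is the low regularity of $k_{r,R}$, which prevents claiming pointwise equality in the expansion; however, an $L^2$ statement suffices for the spectral variance computations, and it can be justified either directly via the spectral theorem for the self-adjoint convolution operator defined by $k_{r,R}$, or by an approximation argument from smooth compactly supported radial kernels, for which the pointwise expansion is classical.
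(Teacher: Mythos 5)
Your argument is correct and is precisely the standard proof of the cited results from \cite[Theorems 1.14 and 7.4]{Iwa02}, which the paper quotes without proof: unfolding plus the mean-value characterisation of the Selberg--Harish-Chandra transform gives the three integral identities, the computation $P_{-1}=P_0=1$ with $\cosh x = 1+2\sinh^2(x/2)$ gives $h_{r,R}(i/2)=1$, and the spectral theorem gives the $L^2$-expansion. Your remark that only $L^2$ (not pointwise) convergence can be claimed for the sharp-cutoff kernel is exactly why the paper states it as an ``$L^2$-spectral expansion'' and later convolves kernels to upgrade to uniform convergence.
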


\subsection{The Selberg--Harish-Chandra transform for \texorpdfstring{$S^2$}{S\80\262}}

We now work on the symmetric space $S^2 \cong \SO(3) / \SO(2)$ instead of $\Hb \cong \SL_2(\R) / \SO(2)$. We follow \cite{LPS86}. Given $z,\zeta \in S^2$, we let
\[\theta(z,\zeta) \coloneqq \arccos z \cdot \zeta, \qquad \tilde{u}(z,\zeta) = \frac{1 - z \cdot \zeta}{2} = \sin^2 \frac{\theta(z,\zeta)}{2},\]
so that $\theta(z,\zeta) \in [0,\pi]$ is the angle subtended at the origin of the vectors $z$ and $\zeta$. The function $\tilde{u} : S^2 \times S^2 \to \C$ is a point-pair invariant. From this, a function $\tilde{k} : [0,1] \to \C$ gives rise to a point-pair invariant $\tilde{k}(\tilde{u}(z,\zeta))$ on $S^2$.

We take $\tilde{k}(\tilde{u}(z,\zeta)) = \tilde{k}_{r,R}(\tilde{u}(z,\zeta))$ to be equal to the indicator function of an annulus of inner radius $r$ and outer radius $R$ centred at a point $\zeta$,
\[A_{r,R}(\zeta) \coloneqq \{z \in S^2 : r \leq \theta(z,\zeta) \leq R\} = \left\{z \in S^2 : \sin^2 \frac{r}{2} \leq \tilde{u}(z,\zeta) \leq \sin^2 \frac{R}{2}\right\},\]
normalised by the volume of this annulus,
\[\sigma(A_{r,R}) = \sigma(A_{r,R}(\zeta)) = 4\pi \left(\sin^2 \frac{R}{2} - \sin^2 \frac{r}{2}\right),\]
namely
\[\tilde{k}_{r,R}(\tilde{u}(z,\zeta)) = \begin{dcases*}
\frac{1}{\sigma(A_{r,R})} & if $\sin^2 \frac{r}{2} \leq \tilde{u}(z,\zeta) \leq \sin^2 \frac{R}{2}$,	\\
0 & otherwise.
\end{dcases*}\]

The spectral expansion for $\tilde{k} = \tilde{k}_{r,R}$ involves a sum over an orthonormal basis $\BB$ of $L^2(S^2)$ consisting of spherical harmonics $\phi$ of degree $m_{\phi} \geq 0$, where the inner product is
\[\langle \phi,\psi\rangle \coloneqq \int_{S^2} \phi(z) \overline{\psi(z)} \, d\sigma(z).\]
It also involves the Selberg--Harish-Chandra transform $\tilde{h}_{r,R}$ of $\tilde{k}_{r,R}$ given by
\[\tilde{h}(m) \coloneqq 2\pi \int_{0}^{\pi} P_m(\cos \theta) \tilde{k}\left(\sin^2 \frac{\theta}{2}\right) \sin \theta \, d\theta,\]
where $P_m(x)$ denotes the Legendre polynomial. In particular,
\begin{equation}
\label{hrRmPeq}
\tilde{h}_{r,R}(m) = \frac{2\pi}{\sigma(A_{r,R})} \int_{r}^{R} P_m(\cos \theta) \sin \theta \, d\theta.
\end{equation}

\begin{lemma}[{\cite[(1.8) and (1.7')]{LPS86}}]
\label{kernelS2lemma}
The kernel $\tilde{k}_{r,R}$ satisfies
\begin{align*}
\int_{S^2} \tilde{k}_{r,R}(z,\zeta) \, dz & = \tilde{h}_{r,R}(0) = 1,	\\
\int_{S^2} \phi(z) \tilde{k}_{r,R}(z,\zeta) \, dz & = \tilde{h}_{r,R}(m_{\phi}) \phi(\zeta)
\end{align*}
for every $\phi \in \BB$ with $m_{\phi} \geq 1$ and $\zeta \in S^2$ and has the $L^2$-spectral expansion
\[\tilde{k}_{r,R}(z,\zeta) = \frac{1}{\sigma(S^2)} + \sum_{\substack{\phi \in \BB \\ m_{\phi} \geq 1}} \tilde{h}_{r,R}(m_{\phi}) \phi(z) \overline{\phi(\zeta)}.\]
\end{lemma}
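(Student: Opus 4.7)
The plan is to exploit the fact that $\tilde{k}_{r,R}(z,\zeta)$ is a \emph{zonal kernel}: by construction it depends on $z,\zeta$ only through the inner product $z \cdot \zeta$. Hence the integral operator $T\phi(\zeta) \coloneqq \int_{S^2} \phi(z)\, \tilde{k}_{r,R}(z,\zeta)\, d\sigma(z)$ commutes with the natural action of $\SO(3)$ on $L^2(S^2)$. Since $L^2(S^2) = \bigoplus_{m \geq 0} \mathcal{H}_m$ decomposes as a Hilbert direct sum of pairwise inequivalent irreducible $\SO(3)$-representations, the space $\mathcal{H}_m$ of spherical harmonics of degree $m$ being the $m$-th summand, Schur's lemma forces $T$ to act on $\mathcal{H}_m$ by a single scalar $\lambda_m \in \C$; that is, $T\phi(\zeta) = \lambda_m \phi(\zeta)$ for every $\phi \in \mathcal{H}_m$ and every $\zeta \in S^2$.

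To identify $\lambda_m$ with $\tilde{h}_{r,R}(m)$, I would fix $\zeta_0 \in S^2$ and test $T$ on the zonal spherical harmonic $\phi(z) \coloneqq P_m(z \cdot \zeta_0)$, which lies in $\mathcal{H}_m$ and satisfies $\phi(\zeta_0) = P_m(1) = 1$. Using polar coordinates on $S^2$ centred at $\zeta_0$, so that $z \cdot \zeta_0 = \cos \theta$ and $d\sigma(z) = \sin\theta\, d\theta\, d\varphi$, the integral collapses to
\[
T\phi(\zeta_0) = 2\pi \int_0^\pi P_m(\cos\theta)\, \tilde{k}_{r,R}\!\left(\sin^2 \tfrac{\theta}{2}\right) \sin\theta\, d\theta = \tilde{h}_{r,R}(m),
\]
whereas on the other hand $T\phi(\zeta_0) = \lambda_m \phi(\zeta_0) = \lambda_m$. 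Hence $\lambda_m = \tilde{h}_{r,R}(m)$, proving the second displayed identity of the lemma for all $m_\phi \geq 0$. The first identity $\int_{S^2} \tilde{k}_{r,R}(z,\zeta)\, d\sigma(z) = 1$ is immediate from the definition of $\tilde{k}_{r,R}$ as the indicator of $A_{r,R}(\zeta)$ divided by $\sigma(A_{r,R})$, and the equality $\tilde{h}_{r,R}(0) = 1$ is the $m = 0$ case of the computation above (equivalently, it follows from $P_0 \equiv 1$ and \eqref{hrRmPeq}).

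For the $L^2$ spectral expansion, I would expand $z \mapsto \tilde{k}_{r,R}(z,\zeta)$---which is bounded and hence lies in $L^2(S^2)$---in the orthonormal basis consisting of $\BB$ together with the constant function $1/\sqrt{\sigma(S^2)}$ spanning $\mathcal{H}_0$. For $\phi \in \BB$ with $m_\phi \geq 1$, the Fourier coefficient is
\[
\langle \tilde{k}_{r,R}(\cdot,\zeta), \phi\rangle = \overline{T\phi(\zeta)} = \tilde{h}_{r,R}(m_\phi)\, \overline{\phi(\zeta)},
\]
using that $\tilde{k}_{r,R}$ is real-valued (so that complex conjugation passes through the inner product) together with the reality of $\tilde{h}_{r,R}(m_\phi)$, since $P_{m_\phi}$ is real on $[-1,1]$. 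Adding the $m_\phi = 0$ contribution $1/\sigma(S^2)$ yields the claimed expansion. The only delicate point is the invocation of Schur's lemma in this concrete form---equivalently, the classical Funk--Hecke identity---but this is a standard fact about spherical harmonics and poses no genuine obstacle.
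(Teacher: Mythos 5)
Your argument is correct. The paper gives no proof of this lemma at all --- it simply cites \cite[(1.8) and (1.7')]{LPS86} --- and what you have written out is precisely the standard Funk--Hecke/Schur's lemma argument that underlies those cited formulas: the zonal kernel commutes with $\SO(3)$, acts by a scalar on each multiplicity-one summand $\mathcal{H}_m$, and the scalar is identified as $\tilde{h}_{r,R}(m)$ by testing against $P_m(z \cdot \zeta_0)$; the normalisation $\tilde{h}_{r,R}(0) = 1$ and the $L^2$-expansion then follow exactly as you say. No gaps.
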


We also consider the spherical convolution
\[\tilde{k}_1 \ast \tilde{k}_2(\tilde{u}(z,\zeta)) = \int_{S^2} \tilde{k}_1(\tilde{u}(z,w)) \tilde{k}_2(\tilde{u}(w,\zeta)) \, d\sigma(w)\]
of two point-pair invariants on $S^2$. The Selberg--Harish-Chandra transform of the convolution $\tilde{k}_1 \ast \tilde{k}_2$ is the product $\tilde{h}_1(m) \tilde{h}_2(m)$ of the individual Selberg--Harish-Chandra transforms. We will use this in the following setting.

\begin{lemma}
\label{convlemma}
The convolution $\tilde{k}_{r,R} \ast \tilde{k}_{0,\rho}(\tilde{u}(z,\zeta))$ is nonnegative, bounded by $1/\sigma(A_{r,R})$, and satisfies
\[\tilde{k}_{r,R} \ast \tilde{k}_{0,\rho}(\tilde{u}(z,\zeta)) = \begin{dcases*}
\frac{1}{\sigma(A_{r,R})} & if $\sin^2 \frac{r + \rho}{2} \leq \tilde{u}(z,\zeta) \leq \sin^2 \frac{R - \rho}{2}$,	\\
0 & if $\tilde{u}(z,\zeta) \leq \sin^2 \frac{r - \rho}{2}$ or $\tilde{u}(z,\zeta) \geq \sin^2 \frac{R + \rho}{2}$.
\end{dcases*}\]
In particular, for $0 < r,\rho < R$, we have that
\begin{gather*}
\frac{\sigma(B_{R - \rho})}{\sigma(B_R)} \tilde{k}_{0,R - \rho} \ast k_{0,\rho}(\tilde{u}(z,\zeta)) \leq \tilde{k}_{0,R}(\tilde{u}(z,\zeta)) \leq \frac{\sigma(B_{R + \rho})}{\sigma(B_R)} \tilde{k}_{0, R + \rho} \ast \tilde{k}_{0,\rho}(\tilde{u}(z,\zeta)),	\\
\tilde{k}_{r,R}(\tilde{u}(z,\zeta)) \geq \frac{\sigma(A_{r + \rho,R - \rho})}{\sigma(A_{r,R})} \tilde{k}_{r + \rho,R - \rho} \ast k_{0,\rho}(\tilde{u}(z,\zeta)),	\\
\tilde{k}_{r,R}(\tilde{u}(z,\zeta)) \leq \frac{\sigma(A_{{\max\{r - \rho,0\}, R + \rho}})}{\sigma(A_{r,R})} \tilde{k}_{\max\{r - \rho,0\}, R + \rho} \ast \tilde{k}_{0,\rho}(\tilde{u}(z,\zeta))
\end{gather*}
for all $z,\zeta \in S^2$.
\end{lemma}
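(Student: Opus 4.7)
The strategy is to unpack the convolution pointwise as a volume of intersection and then read off everything from spherical triangle inequalities. Since $\tilde{k}_{r,R}(\tilde{u}(z,w))$ is the indicator function $\mathbf{1}_{A_{r,R}(z)}(w)/\sigma(A_{r,R})$ and similarly $\tilde{k}_{0,\rho}(\tilde{u}(w,\zeta))$ equals $\mathbf{1}_{B_\rho(\zeta)}(w)/\sigma(B_\rho)$, the defining integral immediately gives
\[
\tilde{k}_{r,R} \ast \tilde{k}_{0,\rho}(\tilde{u}(z,\zeta)) = \frac{\sigma\!\left(A_{r,R}(z) \cap B_\rho(\zeta)\right)}{\sigma(A_{r,R})\,\sigma(B_\rho)}.
\]
Nonnegativity is then obvious, and the upper bound $1/\sigma(A_{r,R})$ follows from $\sigma(A_{r,R}(z) \cap B_\rho(\zeta)) \leq \sigma(B_\rho(\zeta)) = \sigma(B_\rho)$.

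Next I would use the triangle inequality for the spherical distance $\theta(\cdot,\cdot)$ to locate the intersection. Writing $\theta = \theta(z,\zeta)$, the point $w \in B_\rho(\zeta)$ satisfies $\theta(z,w) \in [\theta - \rho, \theta + \rho]$. Therefore, if $\theta + \rho \leq R$ and $\theta - \rho \geq r$, equivalently $r + \rho \leq \theta \leq R - \rho$, one has $B_\rho(\zeta) \subseteq A_{r,R}(z)$, so the intersection has volume $\sigma(B_\rho)$ and the convolution attains its maximal value $1/\sigma(A_{r,R})$. Conversely, if $\theta \leq r - \rho$ then $B_\rho(\zeta) \subseteq B_r(z)$ is disjoint from $A_{r,R}(z)$, and if $\theta \geq R + \rho$ then $B_\rho(\zeta)$ lies outside $B_R(z)$; in both cases the convolution vanishes. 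Rewriting these distance conditions through $\tilde{u}(z,\zeta) = \sin^2(\theta/2)$ gives the piecewise description in the statement.

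Finally, the four sandwich inequalities follow by applying this description to the appropriate special cases and rescaling. For the ball inequalities, take $r = 0$: the convolution $\tilde{k}_{0,R-\rho} \ast \tilde{k}_{0,\rho}$ equals $1/\sigma(B_{R-\rho})$ on $\tilde{u} \leq \sin^2((R-2\rho)/2)$ and vanishes on $\tilde{u} \geq \sin^2(R/2)$, while $\tilde{k}_{0,R+\rho} \ast \tilde{k}_{0,\rho}$ equals $1/\sigma(B_{R+\rho})$ on $\tilde{u} \leq \sin^2(R/2)$ and vanishes on $\tilde{u} \geq \sin^2((R+2\rho)/2)$; multiplying by $\sigma(B_{R\mp\rho})/\sigma(B_R)$ produces, respectively, a minorant and a majorant of $\tilde{k}_{0,R}$ because in each case the support and the peak height line up with those of $\tilde{k}_{0,R}$, and on the transition zones the quoted bounds $0 \leq (\text{conv.}) \leq 1/\sigma(A_{r,R})$ from the first part of the lemma suffice. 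The annulus inequalities are proved in exactly the same way, using $A_{r+\rho,R-\rho}$ and $A_{\max\{r-\rho,0\},R+\rho}$ in place of $B_{R\mp\rho}$, with the piecewise description controlling the support and peak on the nose and the $L^\infty$ bound covering the boundary strips.

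The only subtle point, and where I expect to spend the most care, is the annulus inequalities when $r - \rho < 0$: here one must verify that the convolution with $\tilde{k}_{0,\rho}$ built from the enlarged annulus $A_{0,R+\rho}$ still dominates $\tilde{k}_{r,R}$ on the inner ball $\tilde{u} \leq \sin^2(r/2)$, which follows because on that region $A_{r,R}(z)$ does not even meet $B_\rho(\zeta)$ when $\theta \leq r - \rho$, but the majorant is at worst $1/\sigma(A_{\max\{r-\rho,0\}, R+\rho})$, and the prefactor $\sigma(A_{\max\{r-\rho,0\},R+\rho})/\sigma(A_{r,R})$ is chosen precisely so that the peak matches $1/\sigma(A_{r,R})$. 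Beyond this bookkeeping there is nothing deeper to do.
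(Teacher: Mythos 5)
Your proof is correct and is precisely the argument the paper intends: the paper's entire proof is the single sentence that the lemma follows from the triangle inequality for the spherical distance, and your write-up supplies exactly the missing details (the convolution as a normalised intersection volume $\sigma(A_{r,R}(z)\cap B_\rho(\zeta))/\sigma(A_{r,R})\sigma(B_\rho)$, containment or disjointness of $B_\rho(\zeta)$ and $A_{r,R}(z)$ according to $\theta(z,\zeta)$, and the $L^\infty$ bound on the transition zones). One cosmetic remark: in your last paragraph the delicate region when $r<\rho$ is not the inner ball $\theta(z,\zeta)<r$ (there $\tilde{k}_{r,R}$ vanishes, so domination is trivial) but rather $r\le\theta(z,\zeta)<\rho$, where one needs the majorant to attain its peak; this holds because $\max\{r-\rho,0\}=0$ makes the inner constraint vacuous for the ball $B_{R+\rho}$, which is the peak-matching you already invoke.
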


\begin{proof}
This follows from the triangle inequality for the spherical distance function $\theta(z,\zeta)$.
\end{proof}

The advantage of convolving is that it smooths the point-pair invariant and improves the decay of the Selberg--Harish-Chandra transform. This ensures that the convolved kernel has a spectral expansion on $L^2(S^2)$ that not only converges in $L^2$ but uniformly.

\begin{lemma}[{\cite[(1.7')]{LPS86}}]
\label{kernelS2uniformlemma}
The convolved kernel $\tilde{k}_{r,R} \ast \tilde{k}_{0,\rho}$ has the spectral expansion
\[\tilde{k}_{r,R} \ast \tilde{k}_{0,\rho}(z,\zeta) = \frac{1}{\sigma(S^2)} + \sum_{\substack{\phi \in \BB \\ m_{\phi} \geq 1}} \tilde{h}_{r,R}(m_{\phi}) \tilde{h}_{0,\rho}(m_{\phi}) \phi(z) \overline{\phi(\zeta)},\]
which converges absolutely and uniformly.
\end{lemma}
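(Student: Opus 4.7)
The plan is to start from the $L^2$-spectral expansion of the convolved kernel and upgrade the convergence to absolute and uniform by grouping spherical harmonics of a given degree and bounding the Selberg--Harish-Chandra transforms via a single integration by parts in the integral \eqref{hrRmPeq}.

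First, since the Selberg--Harish-Chandra transform is multiplicative under convolution, $\tilde{k}_{r,R} \ast \tilde{k}_{0,\rho}$ has transform $\tilde{h}_{r,R}(m)\tilde{h}_{0,\rho}(m)$, and \hyperref[kernelS2lemma]{Lemma \ref*{kernelS2lemma}} gives the stated expansion at least in the $L^2$ sense. By the classical addition theorem for spherical harmonics,
\[
\sum_{\substack{\phi \in \BB \\ m_{\phi} = m}} \phi(z)\overline{\phi(\zeta)} \;=\; \frac{2m+1}{4\pi}\,P_m(\cos\theta(z,\zeta)),
\]
so grouping by degree and using $|P_m(\cos\theta)| \leq 1$ reduces the desired uniform absolute convergence (in $z,\zeta$) to the numerical statement
\[
\sum_{m \geq 1} (2m+1)\,\bigl|\tilde{h}_{r,R}(m)\,\tilde{h}_{0,\rho}(m)\bigr| \;<\; \infty.
\]

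Next I would bound the individual transforms by integrating by parts. Using the Legendre recurrence $(2m+1)P_m(x) = P_{m+1}'(x) - P_{m-1}'(x)$ together with $\sin\theta\, d\theta = -d(\cos\theta)$ in \eqref{hrRmPeq},
\[
\int_{r}^{R} P_m(\cos\theta)\,\sin\theta\, d\theta \;=\; \frac{1}{2m+1}\Bigl[P_{m-1}(\cos\theta) - P_{m+1}(\cos\theta)\Bigr]_{\theta=r}^{\theta=R}.
\]
For $\tilde{h}_{0,\rho}$ the contribution at $\theta=0$ drops out because $P_m(1)=1$ for all $m$, so only the endpoint $\rho$ survives; for $\tilde{h}_{r,R}$ both endpoints are strictly interior to $(0,\pi)$. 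Combining with the standard Laplace-type bound $|P_m(\cos\theta)| \ll (m\sin\theta)^{-1/2}$ away from $\theta = 0,\pi$, we obtain
\[
\bigl|\tilde{h}_{r,R}(m)\bigr| \;\ll_{r,R}\; m^{-3/2}, \qquad \bigl|\tilde{h}_{0,\rho}(m)\bigr| \;\ll_{\rho}\; m^{-3/2}.
\]
Therefore $\sum_{m \geq 1}(2m+1)|\tilde{h}_{r,R}(m)\,\tilde{h}_{0,\rho}(m)| \ll \sum_{m\geq 1} m^{-2}$, which converges. Since this majorant is independent of $z,\zeta$, Weierstrass $M$-test gives the absolute uniform convergence, and the uniform limit must agree with the $L^2$ limit almost everywhere, hence (by continuity of the convolution) everywhere.

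The main obstacle I anticipate is the possible degeneration at $\theta=0$: the Laplace asymptotic for $P_m$ breaks down there, so naively the ball case $\tilde{h}_{0,\rho}$ would only yield an $m^{-1}$ bound, which would be insufficient for summability even after convolution. The key observation that resolves this is that integration by parts produces the combination $P_{m\pm 1}(1) = 1$ at the lower endpoint, causing the singular contribution to cancel exactly; this is precisely why convolving with a ball suffices to extract the extra $m^{-1/2}$ decay needed to bring the total sum below the critical exponent.
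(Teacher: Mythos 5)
Your proof is correct. The paper itself offers no argument for this lemma — it is quoted from \cite[(1.7')]{LPS86} — so your self-contained derivation is a genuine addition rather than a rephrasing. The structure is sound: multiplicativity of the transform under convolution plus \hyperref[kernelS2lemma]{Lemma \ref*{kernelS2lemma}} gives the expansion in $L^2$; the addition theorem collapses each degree-$m$ eigenspace to $\frac{2m+1}{4\pi}P_m(\cos\theta(z,\zeta))$ (and this reproducing kernel is basis-independent, so the choice of Hecke eigenbasis is immaterial); and the integration by parts via $(2m+1)P_m = P_{m+1}' - P_{m-1}'$ combined with $|P_m(\cos\theta)| \ll (m\sin\theta)^{-1/2}$ yields $\tilde{h}_{0,\rho}(m) \ll_\rho m^{-3/2}$, so the majorant $\sum_m m^{-2}$ closes the argument by the $M$-test. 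Your observation that $P_{m\pm1}(1)=1$ kills the endpoint $\theta=0$, where the Laplace bound fails, is exactly the right point, and the $m^{-3/2}$ decay you extract matches what the paper later obtains for $\tilde{h}_{r,R}$ by the heavier route of Hilb's formula in \hyperref[hrRasymplemma]{Lemmata \ref*{hrRasymplemma}} and \ref{hrRupperboundslemma}; your Legendre-recurrence argument is more elementary and suffices here because only convergence, not a sharp asymptotic, is needed. Two small touch-ups: the lemma is applied in the paper with $r=0$ (e.g.\ $\tilde{k}_{0,R-\rho}\ast\tilde{k}_{0,\rho}$ in the proof of \hyperref[Youngthm]{Theorem \ref*{Youngthm}}), so your phrase ``both endpoints are strictly interior'' for $\tilde{h}_{r,R}$ should be amended to note that when $r=0$ the same $P_{m\pm1}(1)$ cancellation applies to the lower endpoint; and one should record that the pointwise identification of the uniform limit with the convolution uses the continuity of $\tilde{k}_{r,R}\ast\tilde{k}_{0,\rho}$ (convolution of a bounded function with an integrable one on the compact homogeneous space), which you do gesture at. Neither affects correctness.
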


\subsection{Weyl sums and \texorpdfstring{$L$}{L}-functions}

Our method to deal with the Weyl sums is to relate them to $L$-functions. A famous result of Waldspurger \cite{Wal81} shows that the Weyl sums $W_{D,f}$, $W_{D,t}$ and $W_{n,\phi}$ in \eqref{eq:WDf}, \eqref{eq:WDt}, and \eqref{eq:Wnphi} are each equal, up to certain normalising factors, to Fourier coefficients of half-integral weight forms. This is the key identity via which Duke \cite{Duk88, DS-P90} and Golubeva--Fomenko \cite{GF90} prove the equidistribution of lattice points on the sphere. Waldspurger \cite{Wal85} subsequently proved another identity that is more pertinent for our needs: he showed that the squares of the absolute values of the Weyl sums $W_{D,f}$, $W_{D,t}$ and $W_{n,\phi}$ are equal, up to certain normalising factors, to products of $L$-functions.

We now state an exact formula for the Weyl sums $W_{D,f}$ and $W_{D,t}$ in terms of $L$-functions, which is proven in \cite{DIT16} (and also follows from \cite[Theorem 4.1]{MW09}); it is an explicit form of Waldspurger's formula \cite{Wal85}.

\begin{lemma}[{\cite[Theorems 3 and 5 and (5.17)]{DIT16}}]
\label{DukeWaldspurgerlemma}
For fundamental discriminants $D < 0$,
\begin{align*}
\left|\frac{W_{D,f}}{\# \Lambda_D}\right|^2 & = \frac{\pi^2}{4 \sqrt{|D|} L(1,\chi_D)^2} \frac{L\left(\frac{1}{2},f\right) L\left(\frac{1}{2}, f \otimes \chi_D\right)}{L(1, \sym^2 f)},	\\
\left|\frac{W_{D,t}}{\# \Lambda_D}\right|^2 & = \frac{\pi^2}{2 \sqrt{|D|} L(1,\chi_D)^2} \left|\frac{\zeta\left(\frac{1}{2} + it\right) L\left(\frac{1}{2} + it, \chi_D\right)}{\zeta(1 + 2it)}\right|^2,
\end{align*}
while for fundamental discriminants $D > 0$,
\begin{align*}
\left|\frac{W_{D,f}}{\sum_{\CC \in \Lambda_D} \ell(\CC)}\right|^2 & = \frac{H(t_f)}{8 \sqrt{D} L(1,\chi_D)^2} \frac{L\left(\frac{1}{2},f\right) L\left(\frac{1}{2}, f \otimes \chi_D\right)}{L(1, \sym^2 f)},	\\
\left|\frac{W_{D,t}}{\sum_{\CC \in \Lambda_D} \ell(\CC)}\right|^2 & = \frac{H(t)}{4 \sqrt{D} L(1,\chi_D)^2} \left|\frac{\zeta\left(\frac{1}{2} + it\right) L\left(\frac{1}{2} + it, \chi_D\right)}{\zeta(1 + 2it)}\right|^2,
\end{align*}
where
\begin{equation}
\label{H(t)asympeq}
H(t) \coloneqq \frac{\Gamma\left(\frac{1}{4} + \frac{it}{2}\right)^2 \Gamma\left(\frac{1}{4} - \frac{it}{2}\right)^2}{\Gamma\left(\frac{1}{2} + it\right) \Gamma\left(\frac{1}{2} - it\right)} = \frac{4\pi}{|t| + 1} + O\left(\frac{1}{(|t| + 1)^2}\right).
\end{equation}
\end{lemma}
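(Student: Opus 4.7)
My plan is to derive the four identities from the explicit Waldspurger-type period formulas of Duke--Imamo\=glu--T\'oth as referenced in \cite{DIT16}, supplemented by the Martin--Whitehouse computation of local test vectors at \cite[Theorem 4.1]{MW09}, and then normalise by the class number / length formulas recalled before the statement. Concretely, the starting point is Waldspurger's factorisation: for a Hecke--Maa\ss{} newform $f \in \BB_0(\Gamma)$ and a fundamental discriminant $D$, the toric period $P_{D}(f) \coloneqq W_{D,f}$ satisfies
\[
|P_D(f)|^2 = C_{\infty}(D, t_f) \cdot \frac{L(\tfrac12, f) L(\tfrac12, f \otimes \chi_D)}{L(1,\sym^2 f)},
\]
where $C_\infty$ is a purely archimedean constant (all non-archimedean local factors collapse to $1$ in the level-one, fundamental-discriminant setting). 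First I would pin down $C_\infty$ in the two cases: for $D<0$, the relevant archimedean matrix coefficient is a $K_\infty$-invariant bi-$\SO(2)$-invariant function and the integral is a classical Barnes-type gamma computation that yields a constant independent of $t_f$, absorbed into the $\pi^2/(\sqrt{|D|} L(1,\chi_D)^2)$ prefactor once one divides by $\# \Lambda_D = \sqrt{|D|} L(1,\chi_D)/\pi$ squared. For $D > 0$ the geodesic period is an integral over a split torus, and the archimedean integral produces precisely the function $H(t_f)$ of \eqref{H(t)asympeq}; Stirling's formula (duplication and reflection) then gives $H(t) = 4\pi/(|t|+1) + O((|t|+1)^{-2})$.

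Next I would handle the Eisenstein entries, which do not follow from Waldspurger's formula for cusp forms but rather from its continuous-spectrum analogue. The strategy is to unfold the toric period $W_{D,t}$ against the Eisenstein series, using the standard factorisation of the Eisenstein Rankin--Selberg integral
\[
\left\langle E(\cdot, \tfrac12 + it), \Theta_D \right\rangle \;=\; \frac{\zeta(\tfrac12 + it) L(\tfrac12 + it, \chi_D)}{\zeta(1+2it)} \cdot (\text{arch.\ factor}),
\]
where $\Theta_D$ is the theta kernel attached to the ideal class / geodesic. Equivalently, one can write $W_{D,t}$ as a sum (or integral) of a divisor-like function times $|D|^{s/2}$ and take the Mellin transform; the identity $|E(z, \tfrac12+it)|^2 \sim \zeta(\tfrac12+it)\overline{\zeta(\tfrac12+it)}$-type decomposition produces the $|\zeta(1+2it)|^{-2}$ denominator as the residual from the constant term subtraction. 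Squaring the absolute value and normalising by $\# \Lambda_D$ (for $D<0$) or $\sum_{\CC} \ell(\CC) = 2\sqrt D L(1,\chi_D)$ (for $D>0$) together with the class number formula yields the claimed expressions, with the same archimedean constant $H(t)$ appearing in the $D>0$ case by the same split-torus calculation as in the cuspidal case.

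Finally, I would verify the $H(t)$ asymptotic by applying Stirling: the numerator is $|\Gamma(\tfrac14 + \tfrac{it}{2})|^4$ and the denominator is $|\Gamma(\tfrac12 + it)|^2$, and the duplication formula $\Gamma(\tfrac14 + \tfrac{it}{2})\Gamma(\tfrac34 + \tfrac{it}{2}) = 2^{1/2 - it}\sqrt{\pi}\,\Gamma(\tfrac12 + it)$ together with the reflection formula for the ratio $\Gamma(\tfrac14 + \tfrac{it}{2})/\Gamma(\tfrac34 + \tfrac{it}{2})$ reduces the entire ratio to $\sqrt{\pi}\sech(\pi t/2)\cdot(1+O(t^{-1}))$-type expressions, whose modulus squared is asymptotic to $4\pi/(|t|+1)$.

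I expect the main technical obstacle to be the bookkeeping of normalisations: Waldspurger's formula as stated adelically includes local test-vector constants that differ between references by factors of $2$, $\pi$, and powers of the conductor, and the half-integral weight Fourier-coefficient formulation used by Duke uses a theta correspondence normalisation that must be aligned with the present $L^{2}$-normalised basis $\BB_0(\Gamma)$ (with inner product $\langle f,f\rangle = 1$) rather than the arithmetic/Petersson normalisation. Carefully checking these constants at every archimedean integral and against the explicit class number formula $\# \Lambda_D = w_D \sqrt{|D|} L(1,\chi_D)/(2\pi)$ is what produces the exact constants $\pi^2/4$, $\pi^2/2$, $1/8$, $1/4$ appearing in the four cases; this is largely a verification rather than a new argument, and is precisely what is tabulated in \cite[Theorems 3, 5 and (5.17)]{DIT16}.
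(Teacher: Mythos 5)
Your proposal follows the same route as the paper: the four period identities are not proved in the text but quoted verbatim from \cite[Theorems 3 and 5 and (5.17)]{DIT16} (with \cite[Theorem 4.1]{MW09} as the alternative source you mention), and the only step actually carried out in the paper is the observation that the asymptotic for $H(t)$ follows from Stirling's formula. One correction to your sketch of that last step: the duplication formula gives
\[
H(t) = 2\pi\,\frac{\Gamma\left(\tfrac14+\tfrac{it}{2}\right)\Gamma\left(\tfrac14-\tfrac{it}{2}\right)}{\Gamma\left(\tfrac34+\tfrac{it}{2}\right)\Gamma\left(\tfrac34-\tfrac{it}{2}\right)},
\]
and Stirling applied to this ratio of gamma factors yields $2/|t| + O(|t|^{-2})$, hence $4\pi/(|t|+1) + O((|t|+1)^{-2})$; the expression $\sqrt{\pi}\sech(\pi t/2)$ appearing in your reduction has exponentially decaying modulus and cannot be the relevant intermediate quantity.
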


Here the last line follows from Stirling's formula.

\begin{remark}
The additional decay in $t$ in \eqref{H(t)asympeq} is the source of the strengthening in \hyperref[aethm2]{Theorem \ref*{aethm2} (2)} to hold not just for annuli with inner radii that do not shrink too rapidly but for all annuli, including the degenerate case of balls.
\end{remark}

We also require an explicit form of Waldspurger's formula for the Weyl sums $W_{n,\phi}$. We may choose an orthonormal basis $\BB \ni \phi$ of $L^2(S^2)$ consisting of spherical harmonics of degree $m \geq 0$ that are Hecke eigenfunctions by viewing these as functions on the subspace $\Dgp^0(\R)$ of the Hamiltonion quaternion algebra $\Dgp(\R)$ consisting of elements with trace zero; see \cite[Section 2]{BSS-P03}. The Weyl sum $W_{n,\phi}$ trivially vanishes if $m_{\phi}$ is odd or if $\phi$ is not invariant under the action of the unit group $\OO^{\times}$ of the maximal order $\OO$ of $\Dgp(\Q)$. If $m_{\phi} \geq 2$ is even and $\phi$ is $\OO^{\times}$-invariant, then the Jacquet--Langlands correspondence gives a bijective correspondence between such Hecke eigenfunctions $\phi$ and holomorphic newforms $f = f_{\phi}$ of weight $2 + 2m_{\phi}$ and level $2$. We let $\BB_{\hol}^{\ast}(\Gamma_0(2))$ denote an orthonormal basis of holomorphic newforms of level $2$ and trivial nebentypus.

\begin{lemma}
\label{Waldspurgerlemma}
Let $-n = D \equiv 5 \pmod{8}$ be a negative squarefree fundamental discriminant. Let $\phi \in \BB$ be an $\OO^{\times}$-invariant Hecke eigenfunction of even degree $m_{\phi} \geq 2$, and let $f = f_{\phi} \in \BB_{\hol}^{\ast}(\Gamma_0(2))$ of weight $k_f = 2 + 2m_{\phi}$ denote the corresponding Jacquet--Langlands transfer. Then
\[\left|\frac{W_{n,\phi}}{\# \widehat{\EE}(n)}\right|^2 = \frac{\pi^2}{96 \sqrt{n} L(1,\chi_{-n})^2} \frac{L\left(\frac{1}{2},f\right) L\left(\frac{1}{2},f \otimes \chi_{-n}\right)}{L(1,\sym^2 f)}.\]
\end{lemma}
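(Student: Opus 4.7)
The plan is to realise $W_{n,\phi}$ as a toric period on the definite quaternion algebra $\Dgp$ (ramified at $\{2, \infty\}$) and then invoke Waldspurger's central value formula in the explicit form of Martin--Whitehouse \cite[Theorem 4.1]{MW09}; this mirrors the proof of \hyperref[DukeWaldspurgerlemma]{Lemma \ref*{DukeWaldspurgerlemma}} in \cite{DIT16}, transported to the definite quaternionic side of Jacquet--Langlands.

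First I would adelise the problem. Following \cite[Section 2]{BSS-P03}, the $\OO^\times$-invariant Hecke eigenfunction $\phi \in \BB$ of even degree $m_\phi \geq 2$ lifts to a cuspidal automorphic form $\Phi$ on $\Dgp^\times(\Q) \Zgp(\A) \backslash \Dgp^\times(\A) / \widehat{\OO}^\times$, and the Jacquet--Langlands correspondence matches $\Phi$ with the newform $f \in \BB_{\hol}^{\ast}(\Gamma_0(2))$ of weight $k_f = 2 + 2 m_\phi$. The hypothesis $-n \equiv 5 \pmod{8}$ ensures both that $K = \Q(\sqrt{-n})$ is inert at $2$ and non-split at $\infty$ (so $K$ embeds into $\Dgp$) and that the maximal order $\OO_K = \Z[(1 + \sqrt{-n})/2]$ embeds into the Hurwitz order $\OO$: indeed, $x_1^2 + x_2^2 + x_3^2 = n \equiv 3 \pmod{8}$ forces $x_1, x_2, x_3$ all odd, so $(1 + x_1 i + x_2 j + x_3 k)/2 \in \OO$. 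Under this identification, $\widehat{\EE}(n)/\{\pm 1\}$ is in bijection with the set of $\OO^\times$-conjugacy classes of optimal embeddings $\OO_K \hookrightarrow \OO$, a principal homogeneous space for $\mathrm{Cl}(K)$ by class field theory, and the Weyl sum $W_{n,\phi}$ coincides, up to an explicit factor involving $|\OO^\times|$, with the toric period
\[
P_K(\Phi) = \int_{T(\Q) \Zgp(\A) \backslash T(\A)} \Phi(t) \, dt, \qquad T = \mathrm{Res}_{K/\Q} \GL_1.
\]

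I would then apply the explicit Waldspurger formula, which expresses $|P_K(\Phi)|^2$ as a product of the central value $L(1/2, f) L(1/2, f \otimes \chi_{-n})$, the inverse of $L(1, \sym^2 f)$, a factor of $L(1, \chi_{-n})^{-2}$ coming from measure normalisations on $T(\A)$, and local integrals $\alpha_v$ that equal $1$ outside $\{2, \infty\}$. At $p = 2$ the integral is a standard spherical computation (noting that $\chi_{-n}$ is unramified at $2$ with $\chi_{-n}(2) = -1$), while at $\infty$ it reduces to the integral of a matrix coefficient of the $(2 m_\phi + 1)$-dimensional irreducible representation of $\SO(3) \cong \Dgp^\times(\R)/\R^\times$ against the trivial character on the image of $T(\R)$, which evaluates by Schur orthogonality. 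Combining these with the class number formula $\# \widehat{\EE}(n) = 24 \sqrt{n} L(1, \chi_{-n})/\pi$ to convert the period normalisation into the sum normalisation yields the stated identity.

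The main obstacle is the bookkeeping of constants: one must track (i) the normalisation relating $\phi \in L^2(S^2)$ to the adelic form $\Phi$ and its Petersson inner product $\langle \Phi, \Phi \rangle$; (ii) the Jacquet--Langlands transfer at level $2$ and weight $k_f$, in particular the ratio $\langle \Phi, \Phi \rangle / \langle f, f \rangle$; and (iii) the explicit evaluation of $\alpha_2$ and $\alpha_\infty$. The ultimate constant $\pi^2/96 = \pi^2/(4 \cdot 24)$ differs from the analogous constant $\pi^2/4$ for Heegner points in \hyperref[DukeWaldspurgerlemma]{Lemma \ref*{DukeWaldspurgerlemma}} by a factor of $|\OO^\times| = 24$, the order of the unit group of the Hurwitz maximal order, reflecting the $\OO^\times$-invariance required of $\phi$. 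As a sanity check, the $n$-dependence $n^{-1/2} L(1, \chi_{-n})^{-2}$ agrees exactly with the Heegner formula.
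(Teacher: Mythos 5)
Your proposal follows essentially the same route as the paper: adelise $\phi$ to an automorphic form on $\Dgp^{\times}(\A_{\Q})$, identify $W_{n,\phi}$ (up to an explicit constant) with the toric period over $\A_{\Q}^{\times} E^{\times} \backslash \A_E^{\times}$ attached to an optimal embedding, and apply the explicit Waldspurger formula of Martin--Whitehouse \cite[Theorem 4.1]{MW09}, after which the stated constant falls out of the measure normalisations and the class number formula. The outline and the key citation match the paper's proof, so this is correct modulo the constant-tracking you yourself flag as the remaining bookkeeping.
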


\begin{proof}
Let $\varphi$ denote the ad\`{e}lic lift of $\phi$ to an automorphic form on $\Zgp(\A_{\Q}) \Dgp^{\times}(\Q) \backslash \Dgp^{\times}(\A_{\Q})$, so that $\varphi$ is the ad\`{e}lic newform in a cuspidal automorphic representation $\pi^{\Dgp}$ of $\Dgp^{\times}(\A_{\Q})$; the Jacquet--Langlands correspondence associates to $\pi^{\Dgp}$ a cuspidal automorphic representation $\pi$ of $\GL_2(\A_{\Q})$ whose ad\`{e}lic newform is the lift of $f = f_{\phi}$. Define the period integral
\[P^{\Dgp}(\varphi) \coloneqq \int\limits_{\A_{\Q}^{\times} E^{\times} \backslash \A_E^{\times}} \varphi(\Psi_{\A_{\Q}}(t)) \, dt\]
where $E = \Q(\sqrt{D})$ and the measure $dt$ is normalised such that the volume of $\A_{\Q}^{\times} E^{\times} \backslash \A_E^{\times}$ is $2 L(1,\chi_D)/\pi$. Here we have fixed an optimal embedding $\Psi : E \hookrightarrow \Dgp(\Q)$ of the ring of integers $\OO_E$ of $E$ into the maximal order of Hurwitz quaternions and tensored with $\A_{\Q}$ to form an embedding $\Psi_{\A_{\Q}} : \A_E \hookrightarrow \Dgp(\A_{\Q})$. The optimal embedding corresponds to a fixed solution $(x_1,x_2,x_3) \in \Z^3$ to the equation $x_1^2 + x_2^2 + x_3^2 = |D|$ via $a + b\sqrt{|D|} \mapsto a + bx_1 i + bx_2 j + bx_3 k$. We refer the reader to \cite{BB20} for further details on this period integral, viewed both ad\`{e}lically and classically.

Up to multiplication by a constant, $P^{\Dgp}(\varphi)$ is precisely the Weyl sum $W_{n,\phi}$. We apply \cite[Theorem 4.1]{MW09} with $F = \Q$ and $E = \Q(\sqrt{D})$, $\Omega$ the trivial character, and $\varphi \in \pi^{\Dgp}$ as above, so that $S'(\pi) = S(\Omega) = \emptyset$, $\Delta_F = 1$, $\Delta_E = |D|$, $c(\Omega) = 1$, $\Ram(\pi) = \{2\}$, and $\Sigma_{\infty}^F = \{\infty\}$ in the notation of \cite{MW09}; this gives us the identity
\[\left|P^{\Dgp}(\varphi)\right|^2 = \frac{\pi}{12 \sqrt{|D|}} \frac{L\left(\frac{1}{2},f\right) L\left(\frac{1}{2},f \otimes \chi_D\right)}{L(1,\sym^2 f)} \int\limits_{\Zgp(\A_{\Q}) \Dgp^{\times}(\Q) \backslash \Dgp^{\times}(\A_{\Q})} |\varphi(g)|^2 \, dg,\]
where the measure $dg$ is normalised such that the volume of $\Zgp(\A_{\Q}) \Dgp^{\times}(\Q) \backslash \Dgp^{\times}(\A_{\Q})$ is $2$. It remains to recall that
\begin{equation}
\# \widehat{\EE}(n) = 
\frac{48 h(D)}{w_D} = \frac{24 \sqrt{n} L(1,\chi_{-n})}{\pi} \text{ for } n \equiv 3 \pmod{8} \text{ with } D = -n.
\end{equation}
and note that with these normalisations,
\[\left|P^{\Dgp}(\varphi)\right|^2 = \frac{1}{144 |D|} \left|W_{n,\phi}\right|^2, \qquad \int\limits_{\Zgp(\A_{\Q}) \Dgp^{\times}(\Q) \backslash \Dgp^{\times}(\A_{\Q})} |\varphi(g)|^2 \, dg = \frac{1}{2\pi}\]
by comparing these measures with $\varphi$ equal to the constant function and using the fact that $\int_{S^2} |\phi(z)|^2 \, d\sigma(z) = 1$.
\end{proof}

\begin{remark}
\label{Rudnickremark}
The generalised Lindel\"{o}f hypothesis implies that $W_{D,f} \ll_{\e} |D|^{1/4 + \e} t_f^{\e}$ for $D < 0$, $W_{D,f} \ll_{\e} D^{1/4 + \e} t_f^{-1/2 + \e}$ for $D > 0$, and $W_{n,\phi} \ll_{\e} n^{1/4 + \e} m_{\phi}^{\e}$. By comparing with the bounds $\# \Lambda_D \gg_{\e} |D|^{1/2 - \e}$ for $D < 0$, $\sum_{\CC \in \Lambda_D} \ell(\CC) \gg_{\e} D^{1/2 - \e}$ for $D > 0$, and $\# \widehat{\EE}(n) \gg_{\e} n^{1/2 - \e}$, we may interpret this as square-root cancellation for individual Weyl sums.
\end{remark}

\begin{remark}
The fact that the squares of the absolute values of the Weyl sums factorise as the product of two $L$-functions, $L(\tfrac{1}{2},f)$ and $L(\tfrac{1}{2},f \otimes \chi_D)$, is crucial to our method. It allows us to separate these $L$-functions when faced with sums of these products of $L$-functions by applying H\"{o}lder's inequality.
\end{remark}

\subsection{Spectral expansions of the variances}

Combining the explicit expressions for the Weyl sums in terms of $L$-functions with the spectral expansions of the kernels $k_{r,R}$ and $\tilde{k}_{r,R}$, we are able to explicitly express the variances as sums of $L$-functions.

\begin{lemma}
\label{DukeVarlemma}
For $D < 0$,
\begin{multline}
\label{VarD<0eq}
\Var(\Lambda_D; A_{r,R}) = \frac{\pi^2 \mu(\Gamma \backslash \Hb)}{4 \sqrt{|D|} L(1,\chi_D)^2} \sum_{f \in \BB_0(\Gamma)} \frac{L\left(\frac{1}{2},f\right) L\left(\frac{1}{2}, f \otimes \chi_D\right)}{L(1, \sym^2 f)} \left|h_{r,R}\left(t_f\right)\right|^2	\\
+ \frac{\pi \mu(\Gamma \backslash \Hb)}{8 \sqrt{|D|} L(1,\chi_D)^2} \int_{-\infty}^{\infty} \left|\frac{\zeta\left(\frac{1}{2} + it\right) L\left(\frac{1}{2} + it, \chi_D\right)}{\zeta(1 + 2it)}\right|^2 \left|h_{r,R}(t)\right|^2 \, dt.
\end{multline}
For $D > 0$,
\begin{multline}
\label{VarD>0eq}
\Var(\Lambda_D; A_{r,R}) = \frac{\mu(\Gamma \backslash \Hb)}{8 \sqrt{D} L(1,\chi_D)^2} \sum_{f \in \BB_0(\Gamma)} \frac{L\left(\frac{1}{2},f\right) L\left(\frac{1}{2}, f \otimes \chi_D\right)}{L(1, \sym^2 f)} H(t_f) \left|h_{r,R}\left(t_f\right)\right|^2	\\
+ \frac{\mu(\Gamma \backslash \Hb)}{16 \pi \sqrt{D} L(1,\chi_D)^2} \int_{-\infty}^{\infty} \left|\frac{\zeta\left(\frac{1}{2} + it\right) L\left(\frac{1}{2} + it, \chi_D\right)}{\zeta(1 + 2it)}\right|^2 H(t) \left|h_{r,R}(t)\right|^2 \, dt.
\end{multline}
Finally, for squarefree $n \equiv 3 \pmod{8}$,
\begin{equation}
\label{Varneq}
\Var(\widehat{\EE}(n); A_{r,R}) = \frac{\pi^2 \sigma(S^2)}{96 \sqrt{n} L(1,\chi_{-n})^2} \sum_{\substack{f \in \BB_{\hol}^{\ast}(\Gamma_0(2)) \\ k_f \equiv 2 \hspace{-.25cm} \pmod{4}}} \frac{L\left(\frac{1}{2},f\right) L\left(\frac{1}{2},f \otimes \chi_{-n}\right)}{L(1,\sym^2 f)} \left|\tilde{h}_{r,R}\left(\frac{k_f}{2} - 1\right)\right|^2.
\end{equation}
\end{lemma}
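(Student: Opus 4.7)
The strategy is the standard unfolding-and-Parseval argument, using that each of the three quantities inside the variance is, up to the subtraction of the main term, a linear functional of an automorphic kernel applied to the relevant arithmetic cycle.

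I would begin by rewriting the indicator of the annulus as $\mu(A_{r,R})\, k_{r,R}(u(\,\cdot\,,w))$ in the hyperbolic case (respectively $\sigma(A_{r,R})\,\tilde k_{r,R}(\tilde u(\,\cdot\,,\zeta))$ in the spherical case). Summing over the $\Gamma$-orbit turns $k_{r,R}(u(\cdot,w))$ into the automorphic kernel $K_{r,R}(\cdot,w)$, and unfolds the counts into
\[
\#(\Lambda_D\cap A_{r,R}(w)) = \mu(A_{r,R})\sum_{z\in\Lambda_D} K_{r,R}(z,w), \qquad D<0,
\]
\[
\sum_{\CC\in\Lambda_D}\ell(\CC\cap A_{r,R}(w)) = \mu(A_{r,R})\sum_{\CC\in\Lambda_D}\int_{\CC} K_{r,R}(z,w)\,ds, \qquad D>0,
\]
and analogously $\#(\widehat{\EE}(n)\cap A_{r,R}(\zeta)) = \sigma(A_{r,R})\sum_{x\in\widehat{\EE}(n)}\tilde k_{r,R}(\tilde u(x,\zeta))$ on $S^2$. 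Dividing through by $\mu(A_{r,R})\#\Lambda_D/\mu(\Gamma\backslash\Hb)$ (and its analogues), the normalisations are arranged so that the spectral constant term $1/\mu(\Gamma\backslash\Hb)$ (respectively $1/\sigma(S^2)$) in the kernel exactly cancels the $-1$ inside the variance.

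Next I would insert the spectral expansions from Lemmas \ref{kernelHlemma} and \ref{kernelS2lemma}. Exchanging the (finite) sum over $\Lambda_D$ or $\widehat{\EE}(n)$ with the spectral expansion identifies the coefficients with the Weyl sums defined in \eqref{eq:WDf}, \eqref{eq:WDt}, \eqref{eq:Wnphi}; for instance, for $D<0$,
\[
\frac{\mu(\Gamma\backslash\Hb)}{\mu(A_{r,R})}\,\frac{\#(\Lambda_D\cap A_{r,R}(w))}{\#\Lambda_D} - 1 = \frac{\mu(\Gamma\backslash\Hb)}{\#\Lambda_D}\Bigl[\sum_{f\in\BB_0(\Gamma)} h_{r,R}(t_f)\,W_{D,f}\,\overline{f(w)} + \frac{1}{4\pi}\int_{-\infty}^{\infty} h_{r,R}(t)\,W_{D,t}\,\overline{E(w,\tfrac12+it)}\,dt\Bigr],
\]
with the obvious modifications in the other two cases. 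Applying Parseval's identity in $L^2(\Gamma\backslash\Hb)$ (respectively $L^2(S^2)$) to the square of this expression eliminates the cross terms and yields
\[
\Var(\Lambda_D;A_{r,R}) = \frac{\mu(\Gamma\backslash\Hb)}{(\#\Lambda_D)^2}\Bigl[\sum_{f\in\BB_0(\Gamma)} |h_{r,R}(t_f)|^2\,|W_{D,f}|^2 + \frac{1}{4\pi}\int_{-\infty}^{\infty} |h_{r,R}(t)|^2\,|W_{D,t}|^2\,dt\Bigr],
\]
and analogously on $S^2$, where the sum collapses to an orthonormal basis of spherical harmonics and the continuous spectrum is absent.

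Finally I would substitute the Waldspurger-type identities from Lemmas \ref{DukeWaldspurgerlemma} and \ref{Waldspurgerlemma}. The square $|W_{D,f}|^2$ contributes $(\#\Lambda_D)^2$ times the arithmetic factor, and the factor $(\#\Lambda_D)^2$ cancels against the denominator in the Parseval formula, producing \eqref{VarD<0eq}; the $D>0$ case \eqref{VarD>0eq} differs only by the additional weight $H(t_f)$ (respectively $H(t)$) from Lemma \ref{DukeWaldspurgerlemma}. For the sphere, only $\OO^\times$-invariant Hecke eigenfunctions of even degree $m_\phi\geq 2$ contribute (the others giving $W_{n,\phi}=0$), and the Jacquet--Langlands correspondence $\phi\mapsto f_\phi$ with $k_{f_\phi}=2+2m_\phi$ reindexes the sum over $f\in\BB_{\hol}^\ast(\Gamma_0(2))$ with $k_f\equiv 2\pmod 4$, giving \eqref{Varneq}. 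The only non-routine point is the justification of the Parseval step: because $k_{r,R}$ is merely an $L^2$ kernel, convergence of the spectral expansion is only in $L^2$, but this is exactly the hypothesis needed to legitimately compute the $L^2$ norm termwise, and no uniform convergence is required here (the convolutions of Lemma \ref{convlemma} and the uniform spectral expansion of Lemma \ref{kernelS2uniformlemma} will be needed later for pointwise comparisons, but not for this variance identity).
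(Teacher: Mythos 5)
Your proposal is correct and follows essentially the same route as the paper: unfold the counts into the automorphic kernel, apply Parseval to the $L^2$-spectral expansion of the centred kernel to produce $|W_{D,f}|^2|h_{r,R}(t_f)|^2$ (plus the Eisenstein integral), and substitute the Waldspurger-type identities of Lemmata \ref{DukeWaldspurgerlemma} and \ref{Waldspurgerlemma}. The constants check out, and your remark that only $L^2$-convergence is needed for the Parseval step (with the convolution smoothing reserved for later pointwise arguments) is consistent with how the paper organises things.
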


\begin{proof}
We prove first prove \eqref{VarD<0eq}. Recalling \eqref{VarD<0defeq}, we write the left-hand side of \eqref{VarD<0eq} as
\[\frac{\mu(\Gamma \backslash \Hb)}{(\# \Lambda_D)^2} \sum_{z_1,z_2 \in \Lambda_D} \int_{\Gamma \backslash \Hb} K_{r,R}(z_1,w) \overline{K_{r,R}(z_2,w)} \, d\mu(w) - \frac{2}{\# \Lambda_D} \sum_{z \in \Lambda_D} \int_{\Gamma \backslash \Hb} K_{r,R}(z,w) \, d\mu(w) + 1.\]
We apply Parseval's identity to spectrally expand the first integral and use \hyperref[kernelHlemma]{Lemma \ref*{kernelHlemma}} to see that this is
\[\mu(\Gamma \backslash \Hb) \sum_{f \in \BB_0(\Gamma)} \left|\frac{W_{D,f}}{\# \Lambda_D}\right|^2 \left|h_{r,R}(t_f)\right|^2 + \frac{\mu(\Gamma \backslash \Hb)}{4\pi} \int_{-\infty}^{\infty} \left|\frac{W_{D,t}}{\# \Lambda_D}\right|^2 \left|h_{r,R}(t)\right|^2 \, dt.\]
The identity \eqref{VarD<0eq} then follows from \hyperref[DukeWaldspurgerlemma]{Lemma \ref*{DukeWaldspurgerlemma}}. The same method yields \eqref{VarD>0eq}, recalling \eqref{VarD>0defeq}, and also yields \eqref{Varneq}, recalling \eqref{Varndefeq}, applying \hyperref[kernelS2lemma]{Lemma \ref*{kernelS2lemma}} in place of \hyperref[kernelHlemma]{Lemma \ref*{kernelHlemma}}, and identifying $\phi \in \BB$ with $f \in \BB_{\hol}^{\ast}(\Gamma_0(2))$.
\end{proof}

\subsection{Bounds and asymptotics for the Selberg--Harish-Chandra transform}

To understand the behaviour of the Selberg--Harish-Chandra transforms $h_{r,R}(t)$ and $\tilde{h}_{r,R}(m)$ for various ranges of $r$, $R$, $t$, and $m$, we must first understand the uniform behaviour of the associated Legendre functions $P_{-1/2 + it}(\cosh \rho)$ and $P_m(\cos \theta)$. Hilb's formula relates these functions to the Bessel function.

\begin{lemma}[Hilb's Formula]
Fix $\e > 0$. For $t \in \R$ and $0 < \rho < 1/\e$,
\begin{equation}
\label{associatedLegendreboundseq}
P_{-\frac{1}{2} + it}(\cosh \rho) = \sqrt{\frac{\rho}{\sinh \rho}} J_0(\rho t) + \begin{dcases*}
O(\rho^2) & for $|t| \leq \dfrac{1}{\rho}$,	\\
O_{\e}\left(\frac{\sqrt{\rho}}{|t|^{3/2}}\right) & for $|t| \geq \dfrac{1}{\rho} \geq \e$.
\end{dcases*}
\end{equation}
For $m \in \N$ and $0 < \theta < \pi - \e$,
\begin{equation}
\label{Legendreboundseq}
P_m(\cos \theta) = \sqrt{\frac{\theta}{\sin \theta}} J_0\left(\theta \left(m + \frac{1}{2}\right)\right) + \begin{dcases*}
O(\theta^2) & for $m \leq \dfrac{1}{\theta}$,	\\
O_{\e}\left(\frac{\sqrt{\theta}}{m^{3/2}}\right) & for $m \geq \dfrac{1}{\theta} \geq \dfrac{1}{\pi - \e}$.
\end{dcases*}
\end{equation}
\end{lemma}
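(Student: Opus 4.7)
The plan is to derive both asymptotics from classical integral representations of the Legendre functions. I would begin with the Mehler--Dirichlet formulas
\[
P_m(\cos\theta) = \frac{\sqrt{2}}{\pi}\int_0^{\theta}\frac{\cos\bigl((m+\tfrac{1}{2})u\bigr)}{\sqrt{\cos u - \cos\theta}}\,du, \qquad P_{-\frac{1}{2}+it}(\cosh\rho) = \frac{\sqrt{2}}{\pi}\int_0^{\rho}\frac{\cos(tu)}{\sqrt{\cosh\rho - \cosh u}}\,du,
\]
and compare them against the standard representation $J_0(x) = \frac{2}{\pi}\int_0^{1}\cos(xs)/\sqrt{1-s^2}\,ds$. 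The spherical and hyperbolic cases run entirely in parallel, so I describe only the spherical one.

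The trigonometric substitution $\sin(u/2) = \sin(\theta/2)\,s$ normalises the square-root singularity at $u = \theta$, yielding $\cos u - \cos\theta = 2\sin^2(\theta/2)(1 - s^2)$ and $du = 2\sin(\theta/2)\sec(u/2)\,ds$, so that
\[
P_m(\cos\theta) = \frac{2}{\pi}\int_0^{1}\frac{\cos\bigl((m + \tfrac{1}{2})u(s)\bigr)}{\sqrt{1 - s^2}\,\cos(u(s)/2)}\,ds, \qquad u(s) = 2\arcsin(\sin(\theta/2)s).
\]
In the non-oscillatory range $(m + \tfrac{1}{2})\theta \le 1$, I would Taylor-expand $u(s) = \theta s + O(\theta^3)$ and $\sec(u(s)/2) = 1 + O(\theta^2)$ uniformly on $s\in[0,1]$. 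Replacing $\cos((m + \tfrac{1}{2})u(s))$ by $\cos((m + \tfrac{1}{2})\theta s)$ then costs $O((m + \tfrac{1}{2})\theta^3) \le O(\theta^2)$, the resulting main term is precisely $J_0((m + \tfrac{1}{2})\theta)$, and since $\sqrt{\theta/\sin\theta} = 1 + O(\theta^2)$ uniformly for $\theta \in [0,\pi-\e]$ (with constant depending on $\e$), the prefactor can be inserted at no additional cost. The hyperbolic case is identical with $\sinh(u/2) = \sinh(\rho/2)s$.

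In the oscillatory range $(m + \tfrac{1}{2})\theta \ge 1$ I would pass instead to stationary phase. Making the further substitution $s = \cos\psi$ converts the integral into $\tfrac{2}{\pi}\int_0^{\pi/2}\cos((m + \tfrac{1}{2})u(\cos\psi))\sec(u(\cos\psi)/2)\,d\psi$, whose phase $u(\cos\psi) = \theta - \tan(\theta/2)\psi^2 + O(\psi^4)$ has a unique quadratic stationary point at $\psi = 0$. A single-term stationary phase expansion at this endpoint matches the Darboux asymptotic $\sqrt{2/(\pi(m+\tfrac12)\sin\theta)}\cos((m+\tfrac12)\theta - \pi/4)$ of $\sqrt{\theta/\sin\theta}\,J_0((m + \tfrac{1}{2})\theta)$; careful tracking of the subleading contribution, including the quartic phase error $O((m + \tfrac{1}{2})\psi^4)$ and a second-order expansion of the amplitude $\sec(u(\cos\psi)/2)$ near $\psi = 0$, then yields the claimed refined bound $O_{\e}(\sqrt{\theta}/m^{3/2})$. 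The hypotheses $\theta < \pi - \e$ and $\rho < 1/\e$ are used precisely here to keep $\tan(\theta/2)$ (respectively $\tanh(\rho/2)$) bounded away from the degenerate values where the stationary phase constants blow up.

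The main anticipated obstacle is exactly this sharp accounting in the stationary-phase step, since the stated error $\sqrt{\theta}/m^{3/2}$ is strictly stronger than what a single integration by parts would give (namely $1/m\theta$), and the improvement arises only after one exploits the cancellation between the subleading stationary-phase correction and the error in the Bessel function's own $\cos((m + \tfrac{1}{2})\theta - \pi/4)$ asymptotic. Matching the two regimes uniformly across the crossover $(m+\tfrac12)\theta \asymp 1$ is then routine, as both bounds reduce to $O(\theta^2)$ at the boundary. Since Hilb's formula in essentially the form stated is classical (see Szeg\H{o}'s \emph{Orthogonal Polynomials}, Theorem 8.21.6 for the spherical case, and the analogous classical asymptotic for the conical functions of Mehler--Fock type for the hyperbolic case), my ultimate preference would be to cite it directly rather than reproduce the derivation in full.
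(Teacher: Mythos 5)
Your bottom line coincides with the paper's: the paper's entire proof is to cite Szeg\H{o}'s Theorem 8.21.6 for the Legendre polynomial case and to remark that the same method yields the conical-function case \eqref{associatedLegendreboundseq} with minimal modifications. Where you differ is in the method you sketch for a self-contained derivation. Szeg\H{o}'s proof (Section 8.62, which the paper explicitly points to) is the Liouville--Stekloff ODE comparison: one checks that $(\sin\theta)^{1/2}P_m(\cos\theta)$ and $\theta^{1/2}J_0((m+\tfrac12)\theta)$ satisfy second-order equations differing by a small, explicitly controlled potential, and variation of parameters converts this into a Volterra-type integral inequality from which the two error regimes $O(\theta^2)$ and $O(\theta^{1/2}m^{-3/2})$ fall out directly. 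Your route via Mehler--Dirichlet plus stationary phase is a genuinely different decomposition; your substitutions and the non-oscillatory regime are correct, and the leading stationary-phase term does match the Darboux asymptotic as you say. But the step you yourself flag as the obstacle --- extracting $O_{\e}(\sqrt{\theta}/m^{3/2})$ rather than the weaker bound from a one-term expansion --- is precisely where this route becomes painful: you would need a full two-term stationary-phase expansion of the Mehler--Dirichlet integral matched term-by-term against the two-term asymptotic of $J_0$, verifying that the subleading terms cancel to the stated accuracy uniformly down to the crossover $(m+\tfrac12)\theta\asymp 1$. That cancellation is exactly the content of Hilb's formula and is what the ODE method delivers for free; as written, your proposal asserts it rather than proves it. Since you ultimately elect to cite Szeg\H{o} (and, for the hyperbolic case, the paper likewise asserts that the same method applies rather than locating a reference), your proposal is acceptable and lands where the paper does, with the understanding that the sketched direct argument is an outline of a harder alternative proof, not a complete one.
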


\begin{proof}
This follows via the Liouville--Stekloff method. For the Legendre polynomial $P_m(\cos \theta)$, this is \cite[Theorem 8.21.6]{Sze75}; the proof is given in \cite[Section 8.62]{Sze75}. The same method yields \eqref{associatedLegendreboundseq} with minimal modifications.
\end{proof}

We use this to prove the following.

\begin{lemma}[{Cf.~\cite[Lemma 2.4]{Cha96}, \cite[(2.13)]{LPS86}}]
\label{hrRasymplemma}
Suppose that $R - r \ll r < R \leq \pi - \e$ for some fixed $\e > 0$. Then
\begin{equation}
\label{hrRtupperboundseq}
h_{r,R}(t) \ll \begin{dcases*}
1 & for $|t| \leq \dfrac{1}{r}$,	\\
\frac{1}{\sqrt{r |t|}} & for $\dfrac{1}{r} \leq |t| \leq \dfrac{1}{R - r}$,	\\
\frac{1}{\sqrt{r} (R - r) |t|^{3/2}} & for $|t| \geq \dfrac{1}{R - r}$
\end{dcases*}
\end{equation}
for $t \in \R$, while for $m \in \N$,
\begin{equation}
\label{hrRmupperboundeq}
\tilde{h}_{r,R}(m) \ll \begin{dcases*}
1 & for $m \leq \dfrac{1}{r}$,	\\
\frac{1}{\sqrt{r m}} & for $\dfrac{1}{r} \leq m \leq \dfrac{1}{R - r}$,	\\
\frac{1}{\sqrt{r} (R - r) m^{3/2}} & for $m \geq \dfrac{1}{R - r}$.
\end{dcases*}
\end{equation}
Moreover, for $t \in \R$,
\begin{multline}
\label{hrRtrefinedeq}
h_{r,R}(t)^2 = \frac{8}{\sinh \frac{R - r}{2} \mu(A_{r,R})} \frac{1}{|t|^3} \sin^2 \frac{(R - r) t}{2} \sin^2 \frac{(R + r) t}{2}	\\
+ \begin{dcases*}
O\left(\frac{1}{r^3 |t|^3}\right) & for $\dfrac{1}{r} \leq |t| \leq \dfrac{1}{R - r}$,	\\
O\left(\frac{1}{r^3 (R - r)^2 |t|^5}\right) & for $|t| \geq \dfrac{1}{R - r}$,
\end{dcases*}
\end{multline}
and for $m \in \N$,
\begin{multline}
\label{hrRmrefinedeq}
\tilde{h}_{r,R}(m)^2 = \frac{8}{\sin \frac{R - r}{2} \sigma(A_{r,R})} \frac{1}{\left(m + \frac{1}{2}\right)^3} \sin^2 \frac{(R - r) \left(m + \frac{1}{2}\right)}{2} \sin^2 \frac{(R + r) \left(m + \frac{1}{2}\right)}{2}	\\
+ \begin{dcases*}
O\left(\frac{1}{r^3 m^3}\right) & for $\dfrac{1}{r} \leq m \leq \dfrac{1}{R - r}$,	\\
O\left(\frac{1}{r^3 (R - r)^2 m^5}\right) & for $m \geq \dfrac{1}{R - r}$.
\end{dcases*}
\end{multline}
\end{lemma}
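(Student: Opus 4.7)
The plan is to treat both $h_{r,R}(t)$ and $\tilde h_{r,R}(m)$ in parallel, since Hilb's formula \eqref{associatedLegendreboundseq}, \eqref{Legendreboundseq} reduces both to integrals of $J_0$ against a slowly varying envelope, with only $\sinh \leftrightarrow \sin$ and $|t| \leftrightarrow m + \tfrac{1}{2}$ changing between the two cases. Two basic identities underlie everything: $\mu(A_{r,R}) = 4\pi \sinh\tfrac{R-r}{2}\sinh\tfrac{R+r}{2}$ and $\sigma(A_{r,R}) = 4\pi \sin\tfrac{R-r}{2}\sin\tfrac{R+r}{2}$ (which give $\mu(A_{r,R}), \sigma(A_{r,R}) \asymp r(R-r)$ in our range and immediately produce the prefactors appearing in \eqref{hrRtrefinedeq}, \eqref{hrRmrefinedeq}), together with the Hankel asymptotic $J_0(x) = \sqrt{2/(\pi x)}\cos(x-\pi/4) + O(x^{-3/2})$.

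For the upper bounds \eqref{hrRtupperboundseq}, \eqref{hrRmupperboundeq}, I substitute Hilb into \eqref{hrRtPeq}, \eqref{hrRmPeq} and treat the three regimes separately. For $|t|\leq 1/r$, $|J_0(\rho t)|\leq 1$ trivially gives $h_{r,R}(t)\ll \mu(A_{r,R})^{-1}\int_r^R \sqrt{\rho\sinh\rho}\,d\rho \ll 1$. For $1/r\leq |t|\leq 1/(R-r)$, the bound $|J_0(\rho t)|\ll (\rho|t|)^{-1/2}$ produces the $1/\sqrt{r|t|}$ bound. For $|t|\geq 1/(R-r)$, I extract oscillation via the Hankel expansion and one integration by parts in $\int_r^R \sqrt{\sinh\rho}\cos(\rho|t|-\pi/4)\,d\rho$; the boundary term is $O(\sqrt{r}/|t|)$, which after dividing by $\mu(A_{r,R})\asymp r(R-r)$ and multiplying by the $|t|^{-1/2}$ prefactor gives the stated $1/(\sqrt{r}(R-r)|t|^{3/2})$. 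In each regime the Hilb error in \eqref{associatedLegendreboundseq}, \eqref{Legendreboundseq} is dominated by the main estimate.

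For the refined asymptotics \eqref{hrRtrefinedeq}, \eqref{hrRmrefinedeq} I push the Hankel expansion through. Writing
\[h_{r,R}(t) = \frac{2\pi}{\mu(A_{r,R})}\sqrt{\tfrac{2}{\pi|t|}} \int_r^R \sqrt{\sinh\rho}\cos(\rho|t|-\pi/4)\, d\rho + E,\]
integrating by parts in the $\cos$ integral, and freezing the slowly varying envelope $\sqrt{\sinh\rho}$ at the midpoint $\tfrac{R+r}{2}$ produces a boundary contribution of size
\[\tfrac{2}{|t|}\sqrt{\sinh\tfrac{R+r}{2}}\,\bigl(\sin(R|t|-\pi/4) - \sin(r|t|-\pi/4)\bigr).\]
Applying $\sin A - \sin B = 2\sin\tfrac{A-B}{2}\cos\tfrac{A+B}{2}$, squaring, and using $\mu(A_{r,R}) = 4\pi\sinh\tfrac{R-r}{2}\sinh\tfrac{R+r}{2}$ produce the prefactor $8/(\sinh\tfrac{R-r}{2}\mu(A_{r,R})|t|^3)$ and the factor $\sin^2\tfrac{(R-r)|t|}{2}$; the remaining oscillatory factor in the phase $\tfrac{(R+r)|t|}{2}$ is then converted via half-angle identities to $\sin^2\tfrac{(R+r)|t|}{2}$, with the residual cross terms absorbed into the stated remainder.

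The main obstacle is the bookkeeping of error terms, coming from Hilb, from the $O(x^{-3/2})$ tail of the Hankel expansion, from the second integration by parts on the envelope, and from the freezing of $\sqrt{\sinh\rho}$ at the midpoint. Because $\bigl|\tfrac{d^k}{d\rho^k}\sqrt{\sinh\rho}\bigr| \asymp r^{1/2-k}$ near $\rho = r$, each integration by parts trades $|t|$ against $r^{-1/2}$, and one must verify that exactly two integrations by parts produce the stated $O(1/(r^3|t|^3))$ in the middle regime and $O(1/(r^3(R-r)^2|t|^5))$ in the outer regime. The spherical statements \eqref{hrRmupperboundeq}, \eqref{hrRmrefinedeq} follow by the identical argument applied to \eqref{Legendreboundseq} and \eqref{hrRmPeq}, with $|t|$ replaced by $m+\tfrac{1}{2}$ and $\sinh, \cosh$ by $\sin, \cos$; the assumption $R \leq \pi - \e$ keeps $\sin\tfrac{R+r}{2}$ bounded away from $0$, so no further care is needed at the antipode.
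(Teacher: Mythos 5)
Your overall route is the same as the paper's: substitute Hilb's formula into \eqref{hrRtPeq}/\eqref{hrRmPeq}, use the trivial bound resp.\ the Hankel asymptotic for $J_0$, and integrate by parts antidifferentiating the cosine; the identities $\mu(A_{r,R}) = 4\pi\sinh\frac{R-r}{2}\sinh\frac{R+r}{2}$ and $\sigma(A_{r,R}) = 4\pi\sin\frac{R-r}{2}\sin\frac{R+r}{2}$ are indeed what produce the prefactors, and your three-regime treatment of the upper bounds \eqref{hrRtupperboundseq}, \eqref{hrRmupperboundeq} is correct (one should also record that the post-integration-by-parts integral and the Hilb and Hankel remainders are each dominated in every regime, but these checks go through).

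The genuine gap is your last step in the refined asymptotics. Your computation correctly yields a boundary term proportional to $\sin(R|t|-\frac{\pi}{4})-\sin(r|t|-\frac{\pi}{4}) = 2\sin\frac{(R-r)t}{2}\cos\bigl(\frac{(R+r)t}{2}-\frac{\pi}{4}\bigr)$, hence after squaring a main term with the factor $\cos^2\bigl(\frac{(R+r)t}{2}-\frac{\pi}{4}\bigr)$ rather than $\sin^2\frac{(R+r)t}{2}$. These are \emph{not} interchangeable up to the stated remainder: their difference is $\frac{1}{2}\bigl(\sin((R+r)t)+\cos((R+r)t)\bigr)$, which is generically of size $1$, so replacing one by the other changes the expression by a quantity of order
\[
\frac{8}{\sinh\frac{R-r}{2}\,\mu(A_{r,R})}\frac{\sin^2\frac{(R-r)t}{2}}{|t|^3} \asymp \frac{\sin^2\frac{(R-r)t}{2}}{r(R-r)^2|t|^3},
\]
which in the range $\frac{1}{r}\le|t|\le\frac{1}{R-r}$ is $\asymp\frac{1}{r|t|}$ and thus far exceeds the claimed $O\bigl(\frac{1}{r^3|t|^3}\bigr)$ (recall $r|t|\ge 1$ there). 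So "the residual cross terms are absorbed into the stated remainder" is not a valid step: the two candidate main terms differ by something of the same order as the main term itself. What saves the situation is that only the non-oscillatory part of the factor matters downstream — both $\cos^2(x-\frac{\pi}{4})$ and $\sin^2 x$ have mean $\frac{1}{2}$, and in \eqref{mainterminteq} (and the Poisson-summation analogue for \eqref{hrRmrefinedeq}) the oscillatory component integrates to an admissible error. You should therefore either prove the asymptotic with $\cos^2\bigl(\frac{(R+r)t}{2}-\frac{\pi}{4}\bigr)$ and verify that this version suffices for every subsequent use, or track the phase honestly rather than asserting the absorption. Relatedly, the error bookkeeping you defer ("one must verify\dots") is where the substance lies — for instance the cross term between the boundary main term and the Hilb remainder is of size $\frac{1}{(R-r)|t|^3}$ in the outer regime and needs to be compared carefully against $\frac{1}{r^3(R-r)^2|t|^5}$ — so this cannot be waved through.
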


\begin{proof}
From \eqref{hrRtPeq} and \eqref{associatedLegendreboundseq}, we have that
\[h_{r,R}(t) = \frac{2\pi}{\mu(A_{r,R})} \int_{r}^{R} \sqrt{\rho \sinh \rho} J_0(\rho t) \, d\rho + \begin{dcases*}
O(r^2) & for $|t| \leq \dfrac{1}{r}$,	\\
O\left(\frac{\sqrt{r}}{|t|^{3/2}}\right) & for $|t| \geq \dfrac{1}{r}$.
\end{dcases*}\]
We use the bounds
\begin{equation}
\label{J0zboundseq}
J_0(x) = \begin{dcases*}
1 + O(x^2) & for $|x| \leq 1$,	\\
\sqrt{\frac{2}{\pi |x|}} \cos\left(|x| - \frac{\pi}{4}\right) + O\left(\frac{1}{|x|^{3/2}}\right) & for $|x| \geq 1$
\end{dcases*}
\end{equation}
for $x \in \R$ \cite[8.441.1 and 8.451.1]{GR15}, which immediately gives the desired upper bound for $|t| \leq 1/r$. For $|t| \geq 1/r$, we use \eqref{J0zboundseq} and then integrate by parts, antidifferentiating the cosine term. After some simple manipulations, we obtain \eqref{hrRtrefinedeq}; the desired upper bounds for $h_{r,R}(t)$ in the regimes $1/r \leq |t| \leq 1/(R - r)$ and $|t| \geq 1/(R - r)$ then follow immediately. Finally, the same method works for $\tilde{h}_{r,R}(m)$, using \eqref{hrRmPeq} and \eqref{Legendreboundseq} in place of \eqref{hrRtPeq} and \eqref{associatedLegendreboundseq}.
\end{proof}

A similar argument may be used for when $r \ll R - r \ll 1$, including the degenerate case of balls, so that $r = 0$.

\begin{lemma}
\label{hrRupperboundslemma}
Suppose that $r \ll R - r \leq R \leq \pi - \e$ for some fixed $\e > 0$. For $t \in \R$ and for $m \in \N$,
\begin{equation}
\label{hrRupperboundseq}
h_{r,R}(t) \ll \begin{dcases*}
1 & for $|t| \leq \dfrac{1}{R}$,	\\
\frac{1}{R^{3/2} |t|^{3/2}} & for $|t| \geq \dfrac{1}{R}$,
\end{dcases*} \qquad \tilde{h}_{r,R}(m) \ll \begin{dcases*}
1 & for $m \leq \dfrac{1}{R}$,	\\
\frac{1}{R^{3/2} m^{3/2}} & for $m \geq \dfrac{1}{R}$.
\end{dcases*}
\end{equation}
\end{lemma}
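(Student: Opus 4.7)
The plan is to follow the approach of \hyperref[hrRasymplemma]{Lemma \ref*{hrRasymplemma}}, applying Hilb's formula \eqref{associatedLegendreboundseq} to the integral representation \eqref{hrRtPeq} to replace $P_{-\frac12+it}(\cosh\rho)$ by $\sqrt{\rho/\sinh\rho}\,J_0(\rho t)$ plus the stated error. The key simplification in the present regime $r \ll R-r \leq R \leq \pi - \varepsilon$ is that the volume collapses to
\[\mu(A_{r,R}) = 4\pi\bigl(\sinh^2 \tfrac{R}{2} - \sinh^2 \tfrac{r}{2}\bigr) \asymp R^2,\]
and similarly $\sigma(A_{r,R}) \asymp R^2$. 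Since there is no separate thin-annular factor of $(R-r)$, the three-regime bound \eqref{hrRtupperboundseq} of the previous lemma collapses to the two-regime bound stated here.

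For $|t| \leq 1/R$, no oscillation is available and I would use $|J_0| \leq 1$ together with $\sqrt{\rho/\sinh\rho} \leq 1$ to bound the main term of Hilb's formula. Combined with the identity $\cosh R - \cosh r = 2(\sinh^2 \tfrac{R}{2} - \sinh^2 \tfrac{r}{2})$, this gives
\[|h_{r,R}(t)| \ll \frac{2\pi}{\mu(A_{r,R})}\int_r^R \sinh\rho\,d\rho = 1,\]
with the $O(\rho^2)$ Hilb error contributing only $O(R^2/\mu(A_{r,R})) = O(1)$, which is harmless.

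For $|t| \geq 1/R$, I would insert the large-argument Bessel asymptotic \eqref{J0zboundseq} on the subinterval where $\rho|t| \geq 1$, handle the complementary piece trivially (its contribution is at most $1/|t|^2$, dominated by the desired bound using $|t|R \geq 1$), and integrate by parts on the oscillatory main term. Antidifferentiating $\cos(\rho|t|-\pi/4)$ produces a boundary contribution at $\rho = R$ of size $\sqrt{R}/|t|^{3/2}$, together with a remainder integral of the same order; dividing by $\mu(A_{r,R}) \asymp R^2$ yields the claimed bound $R^{-3/2}|t|^{-3/2}$. The remaining pieces—the $O((\rho|t|)^{-3/2})$ tail of the $J_0$-asymptotic, together with the two Hilb errors $O(\rho^2)$ for $\rho \leq 1/|t|$ and $O(\sqrt{\rho}/|t|^{3/2})$ for $\rho \geq 1/|t|$—are each controlled by a constant multiple of $R^{-3/2}|t|^{-3/2}$ after integration and normalisation, using $|t|R \geq 1$ and $R = O(1)$. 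The spherical estimate is entirely analogous, substituting \eqref{Legendreboundseq} for \eqref{associatedLegendreboundseq} and $(\sin\theta, P_m(\cos\theta), m+\tfrac{1}{2})$ for $(\sinh\rho, P_{-\frac12+it}(\cosh\rho), |t|)$.

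The only delicate point, as in \hyperref[hrRasymplemma]{Lemma \ref*{hrRasymplemma}}, is the bookkeeping needed to verify that every error term is dominated by $R^{-3/2}|t|^{-3/2}$. The absence of the thin-annular factor $(R-r)$ actually makes this accounting easier here than in the previous regime, since the denominator $R^2$ introduces no additional small parameters.
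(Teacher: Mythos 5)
Your proposal is correct and follows essentially the same route as the paper, which proves this lemma by the same Hilb's-formula-plus-integration-by-parts argument used for \hyperref[hrRasymplemma]{Lemma \ref*{hrRasymplemma}}, the only change being that $\mu(A_{r,R}) \asymp \sigma(A_{r,R}) \asymp R^2$ in this regime so the thin-annulus factor $R-r$ disappears. Your bookkeeping of the error terms (each dominated by $R^{-3/2}|t|^{-3/2}$ via $R|t| \geq 1$ and $R = O(1)$) checks out.
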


\subsection{Bounds for moments of \texorpdfstring{$L$}{L}-functions}

Finally, we require bounds for moments of $L$-functions in dyadic ranges.

\begin{proposition}
\label{momentprop}
Let $D$ be a squarefree fundamental discriminant, and let $\chi_D$ denote the quadratic character modulo $|D|$.
\begin{enumerate}[leftmargin=*]
\item[\textnormal{(1)}] For $T \geq 1$, we have that
\begin{multline*}
\sum_{\substack{f \in \BB_0(\Gamma) \\ T \leq t_f \leq 2T}} \frac{L\left(\frac{1}{2},f\right) L\left(\frac{1}{2},f \otimes \chi_D\right)}{L(1,\sym^2 f)} + \frac{1}{2\pi} \int\limits_{T \leq |t| \leq 2T} \left|\frac{\zeta\left(\frac{1}{2} + it\right) L\left(\frac{1}{2} + it,\chi_D\right)}{\zeta(1 + 2it)}\right|^2 \, dt	\\
\ll_{\e} \begin{dcases*}
|D|^{\frac{1}{3} + \e} T^{2 + \e} & for $T \ll |D|^{\frac{1}{12}}$,	\\
|D|^{\frac{1}{2} + \e} & for $|D|^{\frac{1}{12}} \ll T \ll |D|^{\frac{1}{4}}$,	\\
|D|^{\e} T^{2 + \e} & for $T \gg |D|^{\frac{1}{4}}$.
\end{dcases*}
\end{multline*}
\item[\textnormal{(2)}] For $D < 0$ and $T \geq 1$, we have that
\[\sum_{\substack{f \in \BB_{\hol}^{\ast}(\Gamma_0(2)) \\ T \leq k_f \leq 2T \\ k_f \equiv 2 \hspace{-.25cm} \pmod{4}}} \frac{L\left(\frac{1}{2},f\right) L\left(\frac{1}{2},f \otimes \chi_D\right)}{L(1,\sym^2 f)} \ll_{\e} \begin{dcases*}
|D|^{\frac{1}{3} + \e} T^{2 + \e} & for $T \ll |D|^{\frac{1}{12}}$,	\\
|D|^{\frac{1}{2} + \e} & for $|D|^{\frac{1}{12}} \ll T \ll |D|^{\frac{1}{4}}$,	\\
|D|^{\e} T^{2 + \e} & for $T \gg |D|^{\frac{1}{4}}$.
\end{dcases*}\]
\end{enumerate}
\end{proposition}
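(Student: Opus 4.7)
All three quantities under consideration are first moments of the form
\[\sum_{T \leq t_f \leq 2T} \frac{L\!\left(\tfrac{1}{2}, f\right) L\!\left(\tfrac{1}{2}, f \otimes \chi_D\right)}{L(1, \sym^2 f)},\]
together with its Eisenstein and holomorphic analogues. My plan is to handle all three cases uniformly by combining the approximate functional equation for the product $L(\tfrac{1}{2}, f) L(\tfrac{1}{2}, f \otimes \chi_D)$ with either the Kuznetsov trace formula (for the cuspidal sum and Eisenstein integral in part (1)) or the Petersson trace formula (for the holomorphic family in part (2)). Since the analytic conductor of the product is of size $(1 + |t_f|)^2 |D|$, the approximate functional equation expresses it as
\[2 \sum_{m, n \geq 1} \frac{\lambda_f(m) \lambda_f(n) \chi_D(n)}{\sqrt{mn}} V_{t_f}\!\left(\frac{mn}{|D|^{1/2}}\right) + (\text{conjugate}),\]
where $V_{t_f}$ is a rapidly decaying weight effectively restricting $mn \ll (|D|^{1/2}(1 + |t_f|))^{1+\e}$.

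After inserting a smooth weight $h(t)$ with $h \asymp 1$ on $[T, 2T]$ and rapid decay elsewhere, Kuznetsov converts the spectral average of $\lambda_f(m)\lambda_f(n)/L(1, \sym^2 f)$ into a diagonal contribution $\delta_{m=n} \Delta(h) \asymp \delta_{m=n} T^2$ plus an off-diagonal
\[\sum_{c \geq 1} \frac{S(m, n; c)}{c}\, \Phi_h\!\left(\frac{4\pi\sqrt{mn}}{c}\right),\]
in which $\Phi_h$ is a Bessel transform of $h$. The diagonal contribution collapses to
\[T^{2+o(1)} \sum_{n \ll (T |D|^{1/2})^{1/2}} \frac{\chi_D(n)}{n} \ll_{\e} T^{2+\e} |D|^{\e}\]
by P\'olya--Vinogradov and partial summation, which already matches the third regime $T \gg |D|^{1/4}$ of the asserted bound.

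The core of the argument is the off-diagonal. I would open each Kloosterman sum into additive characters modulo $c$ and apply Poisson summation to the $n$-variable modulo $c |D|$; the multiplicative structure of $\chi_D$ as a quadratic symbol produces complete quadratic Gauss sums that can be evaluated exactly, while the residual Kloosterman pieces are controlled by Weil's bound. A careful stationary-phase analysis of $\Phi_h(x)$ partitions the $c$-sum into three natural ranges (corresponding to $\sqrt{mn}/c$ being small, in transition with, or large compared to $T$); these three ranges match the three regimes of the proposition. In particular, the factor $|D|^{1/3 + \e}$ in the small-$T$ regime is exactly the Weyl-strength exponent produced by the Conrey--Iwaniec cubic moment for $L(\tfrac{1}{2}, f \otimes \chi_D)$, and its hybrid refinement \`a la Petrow--Young is what ultimately yields the uniform $|D|^{1/2 + \e}$ bound in the middle regime $|D|^{1/12} \ll T \ll |D|^{1/4}$.

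The principal obstacle is this middle regime, where neither the trivial pointwise bound on $\Phi_h$ nor the fully dualised estimate individually suffices; one must exploit additional oscillation in the Bessel kernel $\Phi_h$ together with the complete cancellation in the quadratic Gauss sums modulo $|D|$ to obtain a bound that is genuinely independent of $T$. Once part (1) has been established in this manner, part (2) follows by running the identical argument with the Petersson trace formula replacing Kuznetsov and the holomorphic test weight replacing $h(t)$; the only substantive difference is that no continuous spectrum arises, which only simplifies the bookkeeping.
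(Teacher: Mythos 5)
Your proposal treats all three regimes with a single first-moment analysis (approximate functional equation plus Kuznetsov/Petersson, Poisson summation, Gauss sums, stationary phase), but this architecture cannot deliver the bound in the range $T \ll |D|^{1/12}$. Any analysis of this first moment produces a main (diagonal) term of genuine size $\asymp L(1,\chi_D)\,T^2$ together with an off-diagonal/shifted-convolution error, and the best bound one can reasonably hope for on that error by Weil's bound and stationary phase is $O_{\e}(|D|^{1/2+\e})$ --- indeed the paper shows that improving it to $O(|D|^{1/2-\alpha})$ would already imply sub-Weyl subconvexity for $L(\tfrac12,f)L(\tfrac12,f\otimes\chi_D)$. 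Since $|D|^{1/3+\e}T^{2+\e} \ll |D|^{1/2}$ precisely when $T \ll |D|^{1/12}$, your method bottoms out at $|D|^{1/2+\e}$ there and misses the claimed estimate. The paper's small-$T$ bound comes from a genuinely different argument: H\"{o}lder's inequality with exponents $(2,3,6)$ to separate the two central values (using nonnegativity via Waldspurger), the spectral large sieve for the second moment of $L(\tfrac12,f)$, the Weyl law, and crucially the Weyl-strength \emph{cubic} moment $\sum L(\tfrac12,f\otimes\chi_D)^3/L(1,\sym^2 f) \ll_{\e} |D|^{1+\e}T^{2+\e}$ of Young and Petrow--Young. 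You invoke the cubic moment, but you assign it the wrong role: you claim it underlies the middle regime $|D|^{1/12}\ll T\ll |D|^{1/4}$, whereas in fact H\"{o}lder plus the cubic moment gives only $|D|^{1/3+\e}T^{2+\e} \geq |D|^{1/2+\e}$ there; the middle and large regimes are exactly where the first-moment analysis takes over (main term $\ll |D|^{\e}T^{2+\e}$ plus shifted convolution $\ll |D|^{1/2+\e}$), with no cubic-moment input at all.

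Two further points. First, the paper does not use the approximate functional equation in its first-moment analysis: it sums the full Dirichlet series $\sum_m \lambda_f(m)\lambda_{\chi_D,1}(m,0)m^{-w}$ against the opposite-sign Kuznetsov formula for $\Re(w)>5/4$, applies Vorono\u{\i} summation to the resulting multiple Dirichlet series, and analytically continues to $w=\tfrac12$, obtaining an \emph{exact} identity (main term plus shifted convolution sum); your AFE route is workable in principle but is not the same argument and requires controlling truncation errors you do not address. Second, your claim that part (2) ``follows by running the identical argument with the Petersson trace formula'' ignores that the sum there runs over \emph{newforms} of level $2$; the plain Petersson formula for $\Gamma_0(2)$ also sees the level-$1$ oldforms, and the paper must use the Petersson formula attached to the $(\infty,1)$-pair of cusps (introducing Atkin--Lehner eigenvalues and a root-number argument with $k_f\equiv 2\pmod 4$) to isolate the newform contribution. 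These are genuine gaps, not matters of presentation.
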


The proof of \hyperref[momentprop]{Proposition \ref*{momentprop}} is given in \hyperref[Proofmomentsboundsect]{Section \ref*{Proofmomentsboundsect}}. These bounds imply subconvexity for the associated $L$-functions, as we shall expand upon in \hyperref[connectionssect]{Section \ref*{connectionssect}}.

\begin{remark}
For $T \gg |D|^{1/4}$, \hyperref[momentprop]{Proposition \ref*{momentprop}} implies bounds that are as strong as the generalised Lindel\"{o}f hypothesis on average. Equivalently, \hyperref[momentprop]{Proposition \ref*{momentprop}} implies square-root cancellation on average for the Weyl sums; cf.~\hyperref[Rudnickremark]{Remark \ref*{Rudnickremark}}.
\end{remark}

\section{Proofs}
\label{proofsect}

In this section, we prove the results stated in \hyperref[introsect]{Section \ref*{introsect}} (except for our result for the variances, namely \hyperref[variancethm1]{Theorems \ref*{variancethm1}} and \ref{variancethm2}) assuming \hyperref[momentprop]{Proposition \ref*{momentprop}} and the bound \eqref{PYholcubiceq}. We defer the proofs of \hyperref[variancethm1]{Theorems \ref*{variancethm1}} and \ref{variancethm2} to \hyperref[asympsect]{Section \ref*{asympsect}}; they require delicate improvements of \hyperref[momentprop]{Proposition \ref*{momentprop}} involving asymptotics for these moments of $L$-functions weighted by particular choices of test functions.

\begin{proof}[Proof of {\hyperref[Youngthm]{Theorem \ref*{Youngthm}}}]
Via \hyperref[convlemma]{Lemmata \ref*{convlemma}} and \ref{kernelS2uniformlemma}, we have that for $0 < \rho < R$,
\[\frac{\sigma(S^2)}{\sigma(B_R)} \frac{\# (\widehat{\EE}(n) \cap B_R(w))}{\# \widehat{\EE}(n)} \geq \frac{\sigma(B_{R - \rho})}{\sigma(B_R)} + \frac{\sigma(B_{R - \rho}) \sigma(S^2)}{\sigma(B_R)} \sum_{\substack{\phi \in \BB \\ m_{\phi} \geq 1}} \tilde{h}_{0,R - \rho}(m_{\phi}) \tilde{h}_{0,\rho}(m_{\phi}) \frac{W_{n,\phi}}{\# \widehat{\EE}(n)} \overline{\phi(w)}.\]
We claim that
\begin{equation}
\label{hhsumeq}
\frac{\sigma(B_{R - \rho}) \sigma(S^2)}{\sigma(B_R)} \sum_{\substack{\phi \in \BB \\ m_{\phi} \geq 1}} \tilde{h}_{0,R - \rho}(m_{\phi}) \tilde{h}_{0,\rho}(m_{\phi}) \frac{W_{n,\phi}}{\# \widehat{\EE}(n)} \overline{\phi(w)} \ll_{\e} \frac{1}{R^{3/2 + \e} \rho^{1/2 + \e} n^{1/12 - \e}}.
\end{equation}
To prove this, we use the triangle inequality and replace every element inside the sum with its absolute value. The Selberg--Harish-Chandra transforms $\tilde{h}_{0,R - \rho}$ and $\tilde{h}_{0,\rho}$ may be bounded via \eqref{hrRupperboundseq}, while \hyperref[Waldspurgerlemma]{Lemma \ref*{Waldspurgerlemma}} expresses the square of the absolute value of the Weyl sum in terms of $L$-functions. We then break up this sum into dyadic ranges $m_{\phi} \in [\frac{T}{2} - 1,T - 1]$ and apply H\"{o}lder's inequality with exponents $(2,4,6,12)$. We use the local Weyl law to see that
\[\sum_{\substack{\phi \in \BB \\ \frac{T}{2} - 1 \leq m_{\phi} \leq T - 1}} |\phi(w)|^2 \ll T^2.\]
The large sieve in conjunction with the approximate functional equation yields
\[\sum_{\substack{f \in \BB_{\hol}^{\ast}(\Gamma_0(2)) \\ T \leq k_f \leq 2T \\ k_f \equiv 2 \hspace{-.25cm} \pmod{4}}} \frac{L\left(\frac{1}{2},f\right)^2}{L(1,\sym^2 f)} \ll_{\e} T^{2 + \e},\]
where we have identified $\phi \in \BB$ with $f \in \BB_{\hol}^{\ast}(\Gamma_0(2))$. We shall show in \hyperref[lem:PYcubic]{Lemma \ref*{lem:PYcubic} (2)}, from work of Petrow and Young \cite{PY19,PY20}, that we have the bound
\[\sum_{\substack{f \in \BB_{\hol}^{\ast}(\Gamma_0(2)) \\ T \leq k_f \leq 2T \\ k_f \equiv 2 \hspace{-.25cm} \pmod{4}}} \frac{L\left(\frac{1}{2},f \otimes \chi_{-n}\right)^3}{L(1,\sym^2 f)} \ll_{\e} n^{1 + \e} T^{2 + \e}.\]
Finally, since there are $\ll k$ elements of $\BB_{\hol}^{\ast}(\Gamma_0(2))$ of weight $k$, we have the bound
\[\sum_{\substack{f \in \BB_{\hol}^{\ast}(\Gamma_0(2)) \\ T \leq k_f \leq 2T \\ k_f \equiv 2 \hspace{-.25cm} \pmod{4}}} \frac{1}{L(1,\sym^2 f)} \ll T^2.\]
Combined, we obtain \eqref{hhsumeq}. A similar argument may be used with $\tilde{h}_{0,R + \rho}$ in place of $\tilde{h}_{0,R - \rho}$; recalling \hyperref[convlemma]{Lemma \ref*{convlemma}}, noting that $\sigma(B_{R \pm \rho}) = \sigma(B_R) + O(R\rho)$, and taking $\rho = R^{-1/3} n^{-1/18}$, we deduce that
\[\frac{\sigma(S^2)}{\sigma(B_R)} \frac{\# (\widehat{\EE}(n) \cap B_R(w))}{\# \widehat{\EE}(n)} = 1 + O_{\e}\left(\frac{1}{R^{4/3 + \e} n^{1/18 - \e}}\right).\]
This proves the desired unconditional result.

For the conditional result, the generalised Lindel\"{o}f hypothesis bounds $W_{\phi,n} / \# \widehat{\EE}(n)$ by $O_{\e}(m_{\phi}^{\e} n^{-1/4 + \e})$, at which point we may use the Cauchy--Schwarz inequality and the local Weyl law to see that
\[\frac{\sigma(S^2)}{\sigma(B_R)} \frac{\# (\widehat{\EE}(n) \cap B_R(w))}{\# \widehat{\EE}(n)} = 1 + O_{\e}\left(\frac{1}{R^{4/3 + \e} n^{1/6 - \e}}\right).\qedhere\]
\end{proof}

\begin{proof}[Proof of {\hyperref[aethm1]{Theorems \ref*{aethm1}} and \ref{aethm2}}]
First let us deal with the proof of \hyperref[aethm2]{Theorem \ref*{aethm2} (1)}. Via Chebyshev's inequality, it suffices to prove that $\Var(\Lambda_D; A_{r,R}) = o(1)$. To prove this bound, we use the spectral expansion in \hyperref[DukeVarlemma]{Lemma \ref*{DukeVarlemma}} together with the identities for the Weyl sums in terms of $L$-functions in \hyperref[DukeWaldspurgerlemma]{Lemma \ref*{DukeWaldspurgerlemma}} and the upper bounds for the Selberg--Harish-Chandra transform in \hyperref[hrRasymplemma]{Lemmata \ref*{hrRasymplemma}} and \ref{hrRupperboundslemma}. For the case $R - r \ll r \ll 1$, this reduces the problem to showing that
\begin{multline*}
\sum_{\substack{f \in \BB_0(\Gamma) \\ t_f \leq \frac{1}{r}}} \frac{L\left(\frac{1}{2},f\right) L\left(\frac{1}{2},f \otimes \chi_D\right)}{L(1,\sym^2 f)} + \frac{1}{2\pi} \int\limits_{|t| \leq \frac{1}{r}} \frac{\left|\zeta\left(\frac{1}{2} + it\right) L\left(\frac{1}{2} + it,\chi_D\right)\right|}{\left|\zeta(1 + 2it)\right|^2} \, dt	\\
+ \frac{1}{r} \sum_{\substack{f \in \BB_0(\Gamma) \\ \frac{1}{r} \leq t_f \leq \frac{1}{R - r}}} \frac{L\left(\frac{1}{2},f\right) L\left(\frac{1}{2},f \otimes \chi_D\right)}{t_f L(1,\sym^2 f)} + \frac{1}{r} \frac{1}{2\pi} \int\limits_{\frac{1}{r} \leq |t| \leq \frac{1}{R - r}} \frac{\left|\zeta\left(\frac{1}{2} + it\right) L\left(\frac{1}{2} + it,\chi_D\right)\right|}{|t| \left|\zeta(1 + 2it)\right|^2} \, dt	\\
+ \frac{1}{r(R - r)^2} \sum_{\substack{f \in \BB_0(\Gamma) \\ t_f \geq \frac{1}{R - r}}} \frac{L\left(\frac{1}{2},f\right) L\left(\frac{1}{2},f \otimes \chi_D\right)}{t_f^3 L(1,\sym^2 f)} + \frac{1}{r(R - r)^2} \frac{1}{2\pi} \int\limits_{|t| \geq \frac{1}{R - r}} \frac{\left|\zeta\left(\frac{1}{2} + it\right) L\left(\frac{1}{2} + it,\chi_D\right)\right|}{|t|^3 \left|\zeta(1 + 2it)\right|^2} \, dt 
\end{multline*}
is $O(|D|^{1/2 - \alpha})$ for some $\alpha > 0$. In turn, this estimate is proven by breaking up these terms into dyadic ranges and applying \hyperref[momentprop]{Proposition \ref*{momentprop} (1)}. The case $r \ll R - r \ll 1$ is similar. \hyperref[aethm2]{Theorem \ref*{aethm2} (2)} follows by the same method, noting that we must additionally multiply the Maa\ss{} cusp form terms by $t_f^{-1}$ and the Eisenstein terms by $(|t| + 1)^{-1}$ due to \eqref{H(t)asympeq}. Finally, \hyperref[aethm1]{Theorem \ref*{aethm1}} follows similarly, using \hyperref[Waldspurgerlemma]{Lemma \ref*{Waldspurgerlemma}} in place of \hyperref[DukeWaldspurgerlemma]{Lemma \ref*{DukeWaldspurgerlemma}} and \hyperref[momentprop]{Proposition \ref*{momentprop} (2)} in place of \hyperref[momentprop]{Proposition \ref*{momentprop} (1)}.
\end{proof}

\begin{proof}[Proof of {\hyperref[pigeonholethm]{Theorem \ref*{pigeonholethm} (1)}}]
We observe that
\[\left\{w \in S^2 : \widehat{\mathcal{E}}(n) \cap B_{R}(w) \neq \emptyset\right\} \subset \bigcup_{w \in S^2} B_{R}(w),\]
while for $0 < c < 1$,
\begin{multline*}
\left\{w \in S^2 : \left|\frac{\sigma(S^2)}{\sigma(B_{R})} \frac{\# (\widehat{\mathcal{E}}(n) \cap B_{R}(w))}{\# \mathcal{E}(n)} - 1\right| > c\right\} \supset \left\{w \in S^2 : \widehat{\mathcal{E}}(n) \cap B_{R}(w) = \emptyset\right\}	\\
= S^2 \setminus \left\{w \in S^2 : \widehat{\mathcal{E}}(n) \cap B_{R}(w) \neq \emptyset\right\}.
\end{multline*}
This yields \hyperref[pigeonholethm]{Theorem \ref*{pigeonholethm} (1)}.
\end{proof}

In order to prove \hyperref[pigeonholethm]{Theorem \ref*{pigeonholethm} (2)}, we will need the following simple lemma.
\begin{lemma} \label{pomykala}
Let $\e > 0$ be given. Let $f \in \BB_{\hol}^{\ast}(\Gamma_0(2))$ be a holomorphic newform of weight $k$. Then
\begin{equation}
\label{eq:simplefunct}
\sum_{\substack{n \leq X \\ n \equiv 3 \hspace{-.25cm} \pmod{8} \\ \textup{squarefree}}} L(\tfrac 12, f \otimes \chi_{-n}) \ll_{\e} X^{1 + \varepsilon} k^{1/2 + \varepsilon}.
\end{equation}
\end{lemma}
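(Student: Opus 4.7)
The strategy is to apply the approximate functional equation (AFE) for $L(\tfrac{1}{2}, f \otimes \chi_{-n})$, interchange the order of summation over $n$ with the AFE variable $m$, and bound the resulting character sums via Heath-Brown's quadratic large sieve. Since the analytic conductor of $f \otimes \chi_{-n}$ is of size $\asymp n^2 k^2$, the AFE expresses $L(\tfrac{1}{2}, f \otimes \chi_{-n})$ as a sum of two terms, each of the form
\[
\sum_{m \geq 1} \frac{\lambda_f(m)\, \chi_{-n}(m)}{\sqrt{m}}\, V\!\left(\frac{m}{nk}\right),
\]
where $\lambda_f(m) \coloneqq a_f(m)/m^{(k-1)/2}$ are the normalised Hecke eigenvalues and $V$ is a fixed smooth function of rapid decay, so that the effective range is $m \lesssim Xk$.

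After interchanging summations, we separate the contribution from $m = \square$ (diagonal) from that of $m \neq \square$ (off-diagonal). For the diagonal $m = \ell^2$, one has $\chi_{-n}(\ell^2) = \mathbf{1}_{(\ell,n)=1}$ and the inner sum over $n \leq X$ satisfying the congruence and squarefree conditions has size $\ll X$. Combined with the bound $\sum_\ell \lambda_f(\ell^2)/\ell \ll L(1, \sym^2 f) \ll k^{\e}$, this yields a diagonal contribution of $\ll X^{1+\e} k^{\e}$.

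For the off-diagonal, set $S_m \coloneqq \sum_n \chi_{-n}(m) V(m/(nk))$, where the sum is over $n \leq X$ squarefree with $n \equiv 3 \pmod{8}$. Then Cauchy--Schwarz gives
\[
\sum_{\substack{m \leq Xk \\ m \neq \square}} \frac{|\lambda_f(m)|}{\sqrt{m}}\, |S_m| \leq \left(\sum_{m \leq Xk} \frac{\lambda_f(m)^2}{m}\right)^{\!\!1/2} \left(\sum_{\substack{m \leq Xk \\ m \neq \square}} |S_m|^2\right)^{\!\!1/2}.
\]
The first factor is $\ll k^{\e} (\log Xk)^{c}$ by the Rankin--Selberg estimate $\sum_{m \leq M} \lambda_f(m)^2 \ll M k^{\e}$. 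For the second factor, when $m$ is not a square, quadratic reciprocity identifies $\chi_{-n}(m)$ with a non-principal character in the variable $n$ of modulus $O(m)$, so Heath-Brown's quadratic large sieve for real characters, applied after a dyadic decomposition in $m$, yields
\[
\sum_{\substack{m \leq Xk \\ m \neq \square}} |S_m|^2 \ll (Xk)^{\e} \sum_{\substack{M_0 \leq Xk \\ \text{dyadic}}} (M_0 + X)\, X \ll X^2 k\, (Xk)^{\e}.
\]
Multiplying, the off-diagonal is $\ll X\, k^{1/2 + \e}(Xk)^{\e}$, which together with the diagonal gives the claimed bound.

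The main technical obstacle will be the careful application of Heath-Brown's large sieve to the Kronecker symbol $\chi_{-n}(m)$ rather than to a clean Jacobi symbol of a fundamental discriminant: one factors out the prime $2$ and any square divisor of $m$, and then uses reciprocity to rewrite the residual symbol as a primitive character in $n$ whose modulus divides a constant multiple of the squarefree kernel of $m$. A secondary issue is decoupling the smooth factor $V(m/(nk))$, which couples $m$ and $n$; this can be achieved by expressing $V$ via its Mellin transform and shifting contours, at the cost of only polynomial factors in $\log(Xk)$.
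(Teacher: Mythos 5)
Your overall strategy (approximate functional equation, Cauchy--Schwarz, Heath-Brown's quadratic large sieve) is the same as the paper's; the difference is the direction of Cauchy--Schwarz. The paper applies it in the $n$-variable, bounding the sum by $\sqrt{X}\bigl(\sum_n |\sum_{m \asymp M}\lambda_f(m)\chi_{-n}(m)m^{-1/2-iu}|^2\bigr)^{1/2}$ and then using the large sieve with the squarefree variable $n$ on the outside, which yields $\sqrt{X}\sqrt{X+M}(XM)^{\e} \ll X^{1+\e}k^{1/2+\e}$ since $M \ll (Xk)^{1+\e}$. You instead apply Cauchy--Schwarz in $m$, and this dual arrangement has a genuine quantitative gap: the claimed bound $\sum_{m \leq Xk,\, m \neq \square}|S_m|^2 \ll X^2 k (Xk)^{\e}$ is false when $k = o(X)$. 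Heath-Brown's inequality requires a squarefree odd modulus, so for general $m$ you must pass to the squarefree kernel: writing $m = 2^a m_1^2 m_2$ with $m_2 > 1$ odd squarefree, the large sieve gives, for each fixed $m_1$ and dyadic $M_0$, a bound $\ll (M_0/m_1^2 + X)X^{1+\e}$, and summing over $m_1 \leq \sqrt{M_0}$ produces $(M_0 + X\sqrt{M_0})X^{1+\e}$. At $M_0 = Xk$ the second term is $X^{5/2+\e}k^{1/2}$, which dominates $X^2k$ whenever $k < X$ --- a range that matters both for the lemma as stated and for its use in the paper (it is invoked for all $k_f > X^{1/100}$). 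The loss comes from the $\asymp \sqrt{Xk}$ integers $m$ sharing a given small kernel $m_2$, each of which carries the full ``diagonal'' term $X$ of the large sieve; having moved the weight $|\lambda_f(m)|/\sqrt{m}$ entirely into the other Cauchy--Schwarz factor, you have nothing left to damp the sum over $m_1$, and your final bound degrades to $X^{5/4+\e}k^{1/4+\e}$.

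The repair is standard and keeps your architecture intact: retain the factor $1/\sqrt{m} \leq 1/(m_1\sqrt{m_2})$ and apply Cauchy--Schwarz separately for each square class $m_1$ (equivalently, invoke the version of Heath-Brown's large sieve designed for a non-squarefree variable, which builds in exactly this weighting). The extra $1/m_1$ makes the sum over $m_1$ convergent and restores the bound $\ll Xk^{1/2}(Xk)^{\e}$. This is precisely what the paper's choice of direction buys for free: with Cauchy--Schwarz over $n$, the general variable $m$ stays inside the square with its weight $m^{-1/2}$ attached, so the square parts of $m$ never need to be counted separately. Your diagonal treatment and the Mellin decoupling of $V(m/(nk))$ are fine.
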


\begin{proof}
Using the approximate functional equation \cite[Theorem 5.3]{IK04} and splitting into dyadic intervals, we can bound the left-hand side of \eqref{eq:simplefunct} by
\[
\ll_{\varepsilon} X^{\varepsilon} + X^{\varepsilon} \sup_{\substack{M \leq (k X)^{1 + \eta} \\ |u| \leq (k X)^{\varepsilon}}} \sum_{\substack{n \leq X \\ n \equiv 3 \hspace{-.25cm} \pmod{8} \\ \text{squarefree}}} \Big | \sum_{M \leq m \leq 2M} \frac{\lambda_f(m) \chi_{-n}(m)}{m^{1/2 + iu}} \Big | 
\]
for any given $\varepsilon > 0$. By the Cauchy--Schwarz inequality and Heath-Brown's quadratic large sieve \cite[Corollary 3]{HB95}, the above is
\[
\ll_{\varepsilon} X^{\varepsilon} \sup_{\substack{M \leq (k X)^{1 + \varepsilon}}}
\sqrt{X} \sqrt{(X + M )} X^{\varepsilon},
\]
and the claim follows. 
\end{proof}

\begin{proof}[Proof of {\hyperref[pigeonholethm]{Theorem \ref*{pigeonholethm} (2)}}]
For $\varepsilon, \delta, \kappa > 0$, consider the set $\mathcal{D}_{\varepsilon, \delta, \kappa}(X)$ of squarefree integers $n \equiv 3 \pmod{8}$ in $[1, X]$ such that either of these conditions hold:
\begin{enumerate}
\item \label{prop1} There exists a holomorphic newform $f \in \BB_{\hol}^{\ast}(\Gamma_0(2))$ of weight $k_f \leq (\log X)^{\delta^2 / 4 - \varepsilon}$ such that
\[
L(\tfrac 12 , f \otimes \chi_{-n}) > (\log X)^{\delta - \tfrac 12}.
\]
\item \label{prop2} We have
\[
\sum_{\substack{f \in \BB_{\hol}^{\ast}(\Gamma_0(2)) \\ k_f \equiv 2 \hspace{-.25cm} \pmod{4} \\ k_f \geq (\log X)^{\kappa + 10 \varepsilon}}} k_f^{-3} \cdot \frac{L(\tfrac 12, f) L(\tfrac 12 , f \otimes \chi_{-n})}{L(1,\sym^2 f)} > (\log X)^{-\kappa - 2\varepsilon}.
\]
\item \label{prop3} We have $L(1, \chi_{-n}) < (\log X)^{-\varepsilon}$. 
\end{enumerate}

Notice that by \eqref{Varneq} and \hyperref[hrRupperboundslemma]{Lemma \ref*{hrRupperboundslemma}}, for $R = (\log X)^{-\kappa}$, 
\begin{multline*}
\text{Var}(\widehat{\mathcal{E}}(n); B_{R}) \ll \frac{1}{\sqrt{n} L(1, \chi_{-n})^2} \sum_{\substack{f \in \BB_{\hol}^{\ast} (\Gamma_0(2)) \\ k_f \equiv 2 \hspace{-.25cm} \pmod{4} \\ k_f \leq (\log X)^{\kappa + 10 \varepsilon}}} \frac{L(\tfrac 12, f) L(\tfrac 12 , f \otimes \chi_{-n})}{L(1,\sym^2 f)} \\
+ \frac{1}{\sqrt{n} L(1, \chi_{-n})} \sum_{\substack{f \in \BB_{\hol}^{\ast}(\Gamma_0(2)) \\ k_f \equiv 2 \hspace{-.25cm} \pmod{4} \\ k_f \geq (\log X)^{\kappa + 10 \varepsilon}}} \frac{(\log X)^{3 \kappa}}{k_f^3} \frac{L(\tfrac 12, f) L(\tfrac 12 , f \otimes \chi_{-n})}{L(1,\sym^2 f)}.
\end{multline*}
Therefore using the first moment estimate
\begin{equation} \label{firstmoment}
\sum_{\substack{f \in \BB_{\hol}^{\ast}(\Gamma_0(2)) \\ k_f \leq K}} \frac{L(\tfrac 12, f)}{L(1,\sym^2 f)} \ll K^2,
\end{equation}
we find that for squarefree integers $n \equiv 3 \pmod{8}$ with $n \in [1, X] \backslash \mathcal{D}_{\varepsilon, \delta, \kappa}(X)$, as long as $\kappa + 10 \varepsilon < \delta^2 / 4 - \varepsilon$, 
\[
\Var(\widehat{\mathcal{E}}(n); B_{R}) \ll \frac{1}{\sqrt{n}L (1, \chi_{-n})} \cdot \Big ( (\log X)^{2 \kappa + \delta - 1/2 + 21 \varepsilon} + (\log X)^{3 \kappa - \kappa - \varepsilon} \Big ), 
\]
and since $\sigma(B_R) \asymp R^2 \asymp (\log X)^{- 2 \kappa}$, the above expression is $o ( (\sigma(B_R) \# \mathcal{E}(n))^{-1} ) $ provided that $\delta$ and $\kappa$ are chosen so that $2 \kappa + \delta - \tfrac 12 + 21 \varepsilon \leq 2 \kappa - \varepsilon$ (and we maintain our previous condition $\kappa + 10 \varepsilon < \delta^2 / 4 - \varepsilon$). In particular, for any $\kappa < \tfrac{1}{16}$, an admissible choice of $\delta, \varepsilon > 0$ can be made. 

We will now conclude the proof by showing that $|\mathcal{D}_{\varepsilon, \delta, \kappa}(X)| = o(X)$ as $X \to \infty$ for any given $\varepsilon, \kappa > 0$ and $1 > \delta > 0$.
By the union bound, it suffices to show that for any given $\varepsilon, \kappa > 0$ and $1 > \delta > 0$, each of the properties \eqref{prop1}, \eqref{prop2}, \eqref{prop3} holds for at most a density zero subset of squarefree integers $n \equiv 3 \pmod{8}$ with $n \leq X$.

It is a classical result that the third property \eqref{prop3} holds at most for a density zero subset of squarefree integers $n \equiv 3 \pmod{8}$; see \cite{Ell73}.

Now let us show that the first property holds for at most a zero density subset of squarefree integers $n \equiv 3 \pmod{8}$ with $n \leq X$. Let $f \in \BB_{\hol}^{\ast}(\Gamma_0(2))$. By Chernoff's inequality applied to a minor variant of \cite[Theorem 1]{RS15}, the number of squarefree integers $n \equiv 3 \pmod{8}$ with $n \leq X$ for which $L(\tfrac 12, f \otimes \chi_{-n}) > (\log X)^{\delta - 1/2}$ is bounded by $O(X (\log X)^{-\delta^2 / 2})$. Therefore by the union bound, the number of squarefree integers $n \equiv 3 \pmod{8}$ with $n \leq X$ for which there exists a holomorphic newform $f \in \BB_{\hol}^{\ast}(\Gamma_0(2))$ with $k_f \leq (\log X)^{\delta^2 / 4 - \varepsilon}$ and $L(\tfrac 12, f \otimes \chi_{-n}) > (\log X)^{\delta - 1/2}$ is $O(X (\log X)^{-\varepsilon})$. 

Finally, by Chebyshev's inequality, the number of squarefree integers $n \equiv 3 \pmod{8}$ with $n \leq X$ for which the second property \eqref{prop2} holds is bounded by
\begin{equation} \label{eqthird}
(\log X)^{\kappa + 2 \varepsilon} \sum_{K > (\log X)^{\kappa + 10 \varepsilon}} \frac{1}{K^3}
\sum_{\substack{f \in \BB_{\hol}^{\ast}(\Gamma_0(2)) \\ K \leq k_f \leq 2K \\ k_f \equiv 2 \hspace{-.25cm} \pmod{4}}} \frac{L(\tfrac 12, f)}{L(1,\sym^2 f)} \sum_{\substack{n \leq X \\ n \equiv 3 \hspace{-.25cm} \pmod{8} \\ \text{squarefree}}} L(\tfrac 12, f \otimes \chi_{-n})
\end{equation}
with $K$ running over powers of two.

It remains therefore to estimate the above expression. It follows from a minor variant of \cite[Proposition 2]{RS15} with $u = 1$ that for $f \in \BB_{\hol}^{\ast}(\Gamma_0(2))$ of weight $k_f \leq X^{1/100}$, 
\[
\sum_{\substack{n \leq X \\ n \equiv 3 \hspace{-.25cm} \pmod{8} \\ \text{squarefree}}} L(\tfrac 12, f \otimes \chi_{-n}) \ll L(1,\sym^2 f) X,
\]
while it follows from \hyperref[pomykala]{Lemma \ref*{pomykala}} that for $k_f > X^{1/100}$ and for any fixed $\eta > 0$, 
\[
\sum_{\substack{n \leq X \\ n \equiv 3 \hspace{-.25cm} \pmod{8} \\ \text{squarefree}}} L(\tfrac 12, f \otimes \chi_{-n}) \ll_{\eta} X^{1 + \eta} k_f^{1/2 + \eta}.
\]
We therefore split the sum according to whether $K \leq X^{1/100}$ or $K > X^{1/100}$. As a result, for any $\eta > 0$, we bound \eqref{eqthird} by
\begin{align}\label{eqfour}
\begin{aligned} \ll_{\eta} & (\log X)^{2 \kappa + 2 \varepsilon} \sum_{X^{1/100} > K > (\log X)^{2 \kappa + 10 \varepsilon}} \frac{1}{K^3} \sum_{\substack{f \in \BB_{\hol}^{\ast}(\Gamma_0(2)) \\ K \leq k_f \leq 2K \\ k_f \equiv 2 \hspace{-.25cm} \pmod{4}}} \frac{L(\tfrac 12, f)}{L(1,\sym^2 f)} \cdot L(1,\sym^2 f) X \\
& + (\log X)^{2 \kappa + 2 \varepsilon} \sum_{X \geq K > X^{1/100}} \frac{1}{K^3} \sum_{\substack{f \in \BB_{\hol}^{\ast}(\Gamma_0(2)) \\ K \leq k_f \leq 2K \\ k_f \equiv 2 \hspace{-.25cm} \pmod{4}}} \frac{L(\tfrac 12, f)}{L(1,\sym^2 f)} \cdot X^{1 + \eta} K^{1/2 + \eta}.
\end{aligned}
\end{align}
Using the first moment estimate \eqref{firstmoment} and 
\[
\sum_{\substack{f \in \BB_{\hol}^{\ast}(\Gamma_0(2)) \\ K \leq k_f \leq 2K \\ k_f \equiv 2 \hspace{-.25cm} \pmod{4}}} L(\tfrac 12, f) \ll K^2,
\]
we conclude that for any $\eta > 0$ sufficiently small, \eqref{eqfour} is $\ll X (\log X)^{-\varepsilon}$.
This shows that the second property \eqref{prop2} holds for almost all squarefree integers $n \equiv 3 \pmod{8}$ with $n \leq X$. 
\end{proof}

\begin{proof}[Proof of {\hyperref[Linnikthm1]{Theorems \ref*{Linnikthm1}}} and {\ref{Linnikthm2}}]
The proof of \hyperref[Linnikthm2]{Theorem \ref*{Linnikthm2}} follows by the same method as the proof of \hyperref[Youngthm]{Theorem \ref*{Youngthm}} except that instead of a ball $B_R(w)$ at a point $w \in S^2$, we take the annulus $A_{r,R}(w)$ at $w = (0,0,1) \in S^2$ with $r = \arccos(n^{-\delta})$ and $R = \arccos(-n^{-\delta})$; the only change in the proof is that \eqref{hrRmupperboundeq} is used to bound $\tilde{h}_{r + \rho,R - \rho}(m)$ in place of \eqref{hrRupperboundseq} to bound $\tilde{h}_{0,R - \rho}(m)$. \hyperref[Linnikthm1]{Theorem \ref*{Linnikthm1}} is a direct consequence of \hyperref[variancethm1]{Theorem \ref*{variancethm1}} with $r = \arccos(n^{-1/2} \psi(n))$ and $R = \arccos(-n^{-1/2} \psi(n))$ together with an application of Chebyshev's inequality.
\end{proof}

\section{Automorphic preliminaries}

\subsection{The Kuznetsov formula}

The proof of \hyperref[momentprop]{Proposition \ref*{momentprop} (1)} makes use of the opposite sign Kuznetsov formula.

\begin{theorem}[{\cite[Theorem 16.3]{IK04}}]
\label{Kuznetsovthm}
Let $\delta > 0$, and let $h$ be a function that is even, holomorphic in the horizontal strip $|\Im(t)| \leq 1/2 + \delta$, and satisfies $h(t) \ll (|t| + 1)^{-2 - \delta}$. Then for $m,n \in \N$,
\begin{multline*}
\sum_{f \in \BB_0(\Gamma)} \epsilon_f \frac{\lambda_f(m) \lambda_f(n)}{L(1,\sym^2 f)} h(t_f) + \frac{1}{2\pi} \int_{-\infty}^{\infty} \frac{\lambda(m,t) \lambda(n,t)}{\zeta(1 + 2it) \zeta(1 - 2it)} h(t) \, dt	\\
= \sum_{c = 1}^{\infty} \frac{S(m,-n;c)}{c} \left(\Ks^{-} h\right)\left(\frac{\sqrt{mn}}{c}\right),
\end{multline*}
where $\epsilon_f \in \{1,-1\}$ denotes the root number of the Hecke--Maa\ss{} cusp form $f \in \BB_0(\Gamma)$ and $\lambda_f(n)$ denotes its $n$-th Hecke eigenvalue, $\lambda(n,t) \coloneqq \sum_{ab = n} a^{it} b^{-it}$ denotes the $n$-th Hecke eigenvalue of $E(z,1/2 + it)$,
\begin{gather*}
S(m,n;c) \coloneqq \sum_{d \in (\Z/c\Z)^{\times}} e\left(\frac{md + n\overline{d}}{c}\right), \qquad (\Ks^{-} h)(x) \coloneqq \int_{-\infty}^{\infty} \JJ_t^{-}(x) h(t) \, d_{\spec}t,	\\
\JJ_t^{-}(x) \coloneqq 4 \cosh \pi t K_{2it}(4\pi x), \qquad d_{\spec}t \coloneqq \frac{1}{2\pi^2} t \tanh \pi t \, dt.
\end{gather*}
\end{theorem}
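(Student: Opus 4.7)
The strategy is the standard one for Kuznetsov-type formulae: compute a Petersson inner product of Poincar\'e series in two different ways and then invert an integral transform to pass from a distinguished family of test functions to arbitrary $h$ satisfying the hypotheses.

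First, for $\Re(s)$ sufficiently large, introduce the Poincar\'e series
\[
P_m(z,s) \coloneqq \sum_{\gamma \in \Gamma_\infty \backslash \Gamma} (\Im \gamma z)^{s} e(m \gamma z),
\]
whose Fourier expansion at the cusp at infinity has $n$-th coefficient of the shape Kloosterman sum $S(m,n;c)$ times an explicit Bessel/Whittaker factor. Next, compute the inner product $\langle P_m(\cdot, s_1),\, \overline{P_{-n}(\cdot, \bar s_2)} \rangle$ in two ways. Unfolding one Poincar\'e series against the other gives a sum $\sum_c c^{-1} S(m,-n;c)$ weighted by an explicit Bessel transform of a product of Whittaker functions, which, for $n>0$, collapses to a $K$-Bessel kernel $\mathcal{J}^-_t = 4\cosh(\pi t) K_{2it}(4\pi \cdot)$. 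Spectrally decomposing one of the Poincar\'e series via Selberg's theorem yields a sum over Hecke--Maa\ss{} cusp forms plus a continuous integral over Eisenstein series, each term involving the product of the $m$-th and $(-n)$-th Fourier coefficients times the Rankin--Selberg integral of Whittaker functions; this Rankin--Selberg integral produces the desired weight factor $h_{s_1,s_2}(t_f)$ on the spectral side. The sign $\epsilon_f$ enters through the relation between the $m$-th and $(-n)$-th coefficients of a cusp form of definite parity.

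Two cleanup steps complete the identity for $h = h_{s_1,s_2}$. One converts Fourier coefficients into Hecke eigenvalues using $\rho_f(n) = \rho_f(1) \lambda_f(n)/\sqrt n$ together with the standard identity $|\rho_f(1)|^2 = (\cosh \pi t_f)/L(1,\sym^2 f)$ coming from Rankin--Selberg; the analogous Eisenstein computation produces the divisor function $\lambda(n,t)$ and the denominator $\zeta(1+2it)\zeta(1-2it)$. Doubling via $t_f \mapsto -t_f$ symmetrises the spectral side, which is why the test function $h$ is assumed even.

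The final step is to extend from the special family $\{h_{s_1,s_2}\}$ to the general class of even test functions $h$ that are holomorphic in $|\Im t|\leq 1/2+\delta$ with $h(t)\ll (|t|+1)^{-2-\delta}$. This is done by a Bessel (Kontorovich--Lebedev) inversion argument: one writes $h$ as an integral transform of data for which the special case applies, and checks that both sides of the proposed identity are continuous in $h$ with respect to a suitable norm. The main obstacle is precisely this analytic step, since one must justify interchanging the spectral sum and the inversion integral uniformly in the spectrum, and separately verify the absolute convergence of $\sum_c c^{-1}S(m,-n;c)(\mathcal{K}^-h)(\sqrt{mn}/c)$ on the arithmetic side. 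Both are handled by exploiting the prescribed holomorphy strip to shift contours in the Mellin--Barnes representation of $K_{2it}$ and by using the Weil bound for $S(m,-n;c)$, which gives the geometric side absolute convergence as long as $h$ decays faster than $(|t|+1)^{-2}$, matching the hypothesis.
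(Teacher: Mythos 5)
Your sketch is correct and is essentially the standard derivation of the opposite-sign Kuznetsov formula — inner product of Poincar\'e series computed by unfolding and by spectral decomposition, passage to Hecke eigenvalues via $|\rho_f(1)|^2 \asymp \cosh(\pi t_f)/L(1,\sym^2 f)$, the parity eigenvalue producing $\epsilon_f$, and Kontorovich--Lebedev inversion plus contour shifts and the Weil bound for the general test function. The paper itself gives no proof, quoting the result directly from \cite[Theorem 16.3]{IK04}, which proceeds exactly along the lines you describe.
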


The opposite sign Kuznetsov formula includes the root number $\epsilon_f$ in the spectral sum,; in our applications, this will eventually be counteracted by the fact that $L(\tfrac{1}{2},f) = 0$ whenever $\epsilon_f = -1$. This root number trick is well-known; in particular, this is exploited in \cite{BLM19,DK20,HK20}.

\subsection{The Petersson formula}

Similarly, the proof of \hyperref[momentprop]{Proposition \ref*{momentprop} (2)} makes use of an explicit form of the Petersson formula for squarefree level associated to the $(\infty,1)$-pair of cusps. This naturally introduces the Atkin--Lehner eigenvalue $\eta_f(N_1)$ into the expression of the sum over holomorphic cusp forms.

\begin{theorem}[{\cite[Theorem 9.6]{Iwa02}, \cite[Lemma A.9]{HK20}}]
\label{Peterssoninfty1thm}
Let $\delta > 0$, and let $h^{\hol} : 2\N \to \C$ be a sequence satisfying $h^{\hol}(k) \ll (k + 1)^{-2 - \delta}$. Let $N > 1$ be squarefree. Then for $m,n \in \N$,
\begin{multline*}
\sum_{N_1 N_2 = N} \frac{N_2^{3/2}}{\nu(N_2)} \sum_{f \in \BB_{\hol}^{\ast}(\Gamma_0(N_1))} \eta_f(N_1) \frac{h^{\hol}(k_f)}{L(1,\sym^2 f)} \sum_{\substack{\ell \mid N_2 \\ \frac{N_2}{\ell} \mid n}} L_{\ell}(1,\sym^2 f) \frac{\varphi(\ell)}{\ell^3}	\\
\times \sum_{v_1 \mid (\ell,m)} \nu(v_1) \mu\left(\frac{\ell}{v_1}\right) \lambda_f\left(\frac{\ell}{v_1}\right) \lambda_f\left(\frac{m}{v_1}\right) \sum_{v_2 \mid (\ell,n)} \nu\left(\frac{\ell}{v_2}\right) v_2 \mu(v_2) \lambda_f(v_2) \lambda_f\left(\frac{\ell n}{v_2 N_2}\right)	\\
= \sqrt{N} \sum_{\substack{c = 1 \\ (c,N) = 1}}^{\infty} \frac{S(m,n\overline{N};c)}{c} \left(\Ks^{\hol} h^{\hol}\right)\left(\frac{\sqrt{mn}}{c\sqrt{N}}\right),
\end{multline*}
where $N \overline{N} \equiv 1 \pmod{c}$ and
\begin{align*}
\left(\Ks^{\hol} h^{\hol}\right)(x) & \coloneqq \sum_{\substack{k = 2 \\ k \equiv 0 \hspace{-.25cm} \pmod{2}}}^{\infty} \frac{k - 1}{2 \pi^2} \JJ_k^{\hol}(x) h^{\hol}(k),	\\
\JJ_k^{\hol}(x) & \coloneqq 2\pi i^{-k} J_{k - 1}(4\pi x).
\end{align*}
\end{theorem}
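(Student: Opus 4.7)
The plan is to derive this from the classical Petersson formula at the pair of cusps $(\infty,1)$ for $\Gamma_0(N)$, and then unfold the full basis $\BB_{\hol}(\Gamma_0(N))$ (old and new forms) in terms of the newforms at each level $N_1 \mid N$. Specifically, writing $\rho_{f,\mathfrak{a}}(n)$ for the $n$-th normalised Fourier coefficient of $f$ at cusp $\mathfrak{a}$, the Petersson trace formula asserts
\[
\sum_{\substack{f \in \BB_{\hol}(\Gamma_0(N)) \\ k_f \equiv 0 \hspace{-.2cm} \pmod{2}}} \frac{\Gamma(k_f - 1)}{(4\pi)^{k_f - 1}} \overline{\rho_{f,\infty}(m)} \rho_{f,1}(n) \frac{h^{\hol}(k_f)(k_f - 1)}{2\pi^2}
= \delta_{\infty = 1} \delta_{m = n} + \sum_{c} \frac{S_{\infty,1}(m,n;c)}{c} (\Ks^{\hol} h^{\hol})\!\left(\frac{\sqrt{mn}}{c}\right).
\]
For the $(\infty,1)$ pair on squarefree $N$, a direct computation (see e.g.\ \cite[Section 2]{Iwa02}) identifies the Kloosterman sums $S_{\infty,1}(m,n;c)$, for $c$ in the appropriate set of moduli, with classical Kloosterman sums $S(m,n\overline{N};c\sqrt{N})$ up to a scaling, which produces exactly the right-hand side of the claimed formula with the extra factor $\sqrt{N}$ and the coprimality condition $(c,N) = 1$.

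Next I would handle the left-hand side by decomposing $\BB_{\hol}(\Gamma_0(N))$ as an orthogonal direct sum, over $N_1 N_2 = N$, of lifts of newforms $f \in \BB_{\hol}^{\ast}(\Gamma_0(N_1))$ to level $N$. For each such newform, an orthonormal basis of its oldform translates is obtained from the vectors $f | B_d$ with $d \mid N_2$, where $B_d$ is the level-raising operator; Gram--Schmidt and the Atkin--Lehner computation of $\langle f|B_{d_1}, f|B_{d_2}\rangle$ expresses the result in terms of the local symmetric-square values $L_\ell(1,\sym^2 f)$ and the multiplicative function $\nu(\cdot)$, as in \cite[Proposition 2.6]{ILS00} or \cite[Lemma A.9]{HK20}. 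Substituting these Fourier coefficients (both at $\infty$ and at $1$) and using Hecke multiplicativity $\lambda_f(d) \lambda_f(n) = \sum_{v \mid (d,n)} \lambda_f(dn/v^2)$ produces the convoluted double sum over $v_1 \mid (\ell,m)$ and $v_2 \mid (\ell,n)$ with the $\mu$ and $\nu$ weights.

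The Atkin--Lehner eigenvalue $\eta_f(N_1)$ enters when converting the Fourier coefficient at cusp $1$ of the lifted newform (or its oldform translates) to a Fourier coefficient at cusp $\infty$: the cusp $1$ on $\Gamma_0(N_1)$ is $W_{N_1}$-equivalent to $\infty$, and $f | W_{N_1} = \eta_f(N_1) f$. The normalisations $\Gamma(k_f - 1)/(4\pi)^{k_f - 1}$ cancel neatly against the Petersson norm of $f$ in terms of $L(1,\sym^2 f)$, giving the denominator $L(1,\sym^2 f)$ on the left-hand side. Finally the factor $N_2^{3/2}/\nu(N_2)$ records the relation between the Petersson norm at level $N_1$ and at level $N$, together with the $\sqrt{N}$ that was pulled out of the Kloosterman normalisation on the right.

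The main obstacle is bookkeeping in the oldform decomposition: getting all the divisors $N_1 N_2 = N$, $\ell \mid N_2$ with $N_2/\ell \mid n$, and $v_1, v_2 \mid \ell$ to line up with the correct multiplicative weights $\varphi(\ell)/\ell^3$, $\nu(\ell/v_1)$, $\mu(\ell/v_1)$, etc., and verifying that the Hecke relations repackage cleanly into $\lambda_f(\ell/v_1)\lambda_f(m/v_1)$ and $\lambda_f(v_2) \lambda_f(\ell n/(v_2 N_2))$. This is essentially the content of \cite[Lemma A.9]{HK20}, which carries out exactly this calculation; the squarefree hypothesis on $N$ is used throughout to keep the local factors simple. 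Modulo this combinatorial verification, the result then follows from the classical Petersson formula and Atkin--Lehner theory.
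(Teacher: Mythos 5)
Your outline is correct and follows exactly the route of the sources the paper cites for this statement --- \cite[Theorem 9.6]{Iwa02} for the Petersson formula at the $(\infty,1)$-pair of cusps and \cite[Lemma A.9]{HK20} for the newform/oldform decomposition with the Atkin--Lehner eigenvalue --- and the paper itself gives no independent proof beyond these citations. Note only that the combinatorial heart of the identity (matching the sums over $\ell$, $v_1$, $v_2$ with the weights $\varphi(\ell)/\ell^3$, $\nu$, $\mu$) is precisely what you defer to the reference, so your argument is a faithful reconstruction of the cited proof rather than a new one.
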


We have written $L_p(s,\pi)$ to denote the $p$-component of the Euler product of an $L$-function $L(s,\pi)$, while $L_q(s,\pi) \coloneqq \prod_{p \mid q} L_p(s,\pi)$ and $L^q(s,\pi) \coloneqq L(s,\pi)/L_q(s,\pi)$.

\subsection{Mellin transforms}

We recall that the Mellin transform $\widehat{W}$ of a function $W : (0,\infty) \to \C$ is given by
\[\widehat{W}(s) \coloneqq \int_{0}^{\infty} W(x) x^s \, \frac{dx}{x}\]
for $s \in \C$ for which this converges absolutely, while the inverse Mellin transform $\widecheck{\WW}$ of a holomorphic function $\WW : \{z \in \C : a < \Re(s) < b\}$ is given by
\[\widecheck{\WW}(x) \coloneqq \frac{1}{2\pi i} \int_{\sigma - i\infty}^{\sigma + i\infty} \WW(s) x^{-s} \, ds\]
for $a < \sigma < b$ and $x \in (0,\infty)$ for which this converges absolutely.

Define
\[\JJ_0^+(s) \coloneqq -2\pi Y_0(4\pi x).\]
By \cite[12.43.17 and 12.43.18]{GR15}, we have that
\begin{align*}
\widehat{\JJ_0^{+}}(s) & = (2\pi)^{-s} \Gamma\left(\frac{s}{2}\right)^2 \cos \frac{\pi s}{2},	\\
\widehat{\JJ_t^{-}}(s) & = (2\pi)^{-s} \Gamma\left(\frac{s}{2} + it\right) \Gamma\left(\frac{s}{2} - it\right) \cosh \pi t,	\\
\widehat{\JJ_k^{\hol}}(s) & = (2\pi)^{-s} \Gamma\left(\frac{s + k - 1}{2}\right) \Gamma\left(\frac{s - k + 1}{2}\right) \times \begin{dcases*}
\cos \frac{\pi s}{2} & for $k \equiv 0 \pmod{2}$,	\\
\sin \frac{\pi s}{2} & for $k \equiv 1 \pmod{2}$.
\end{dcases*}
\end{align*}
We note that $\widehat{\JJ_k^{\hol}}(s)$ has simple zeroes at $s = k - 1 + 2\ell$ and simple poles at $s = 3 - k - 2\ell$ for $\ell \in \N$.

We also require bounds for the Mellin transform of the function $\Ks^{-} h$ appearing in the opposite sign Kuznetsov formula, \hyperref[Kuznetsovthm]{Theorem \ref*{Kuznetsovthm}}. That we can achieve quite strong bounds proves quite advantageous and is the main reason that we use the opposite sign Kuznetsov formula instead of the same sign Kuznetsov formula, where such strong bounds are unattainable (cf.~\cite[(2.16)]{Mot03}).

\begin{lemma}[{\cite[Lemma 2]{BLM19}}]
\label{MellinKsextendlemma}
Suppose that $h(t)$ is an even holomorphic function in the strip $-2M < \Im(t) < 2M$ for some $M \geq 20$ with zeroes at $\pm (n - 1/2)i$ for $n \in \{1,\ldots,2M\}$ and satisfies $h(t) \ll (|t| + 1)^{-2M}$ in this region. Then the Mellin transform of $\Ks^{-} h$ extends to a holomorphic function in the strip $1 - M < \Re(s) < M - 1$, in which it satisfies
\begin{equation}
\label{KsMellinboundeq}
\widehat{\Ks^{-} h}(s) \ll (|\Im(s)| + 1)^{1 - M}.
\end{equation}
\end{lemma}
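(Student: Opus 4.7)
The plan is to analytically continue $\widehat{\Ks^{-} h}(s)$ into the strip $1-M < \Re(s) < M-1$ by shifting the contour in a suitable integral representation, and then to bound each piece using Stirling's formula. First I would start from the Mellin--Parseval identity
\[
\widehat{\Ks^{-} h}(s) = (2\pi)^{-s} \int_{-\infty}^{\infty} \Gamma\!\left(\tfrac{s}{2}+it\right) \Gamma\!\left(\tfrac{s}{2}-it\right) \cosh(\pi t)\, h(t)\, d_{\spec} t,
\]
valid initially in $0 < \Re(s) < 1$ by Fubini (justified by the Stirling-type decay of $\Gamma(\tfrac{s}{2} \pm it)\cosh(\pi t)$ on the real axis combined with the assumed $(|t|+1)^{-2M}$ decay of $h$); this uses the explicit Mellin transform $\widehat{\JJ_t^{-}}(s)$ recorded above.

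Next I would shift the $t$-contour to the horizontal line $\Im(t) = -(M + \tfrac14)$, chosen to avoid the prescribed zeros of $h$ at $\pm i(k - \tfrac12)$ while remaining strictly inside its holomorphicity strip. Shifting collects residues from the simple poles of $\Gamma(\tfrac{s}{2} - it)$ at $t = -i(n + \tfrac{s}{2})$ (and, for $\Re(s) < 0$, from poles of $\Gamma(\tfrac{s}{2} + it)$ at $t = i(n + \tfrac{s}{2})$) that lie between the two contours. Using $\Gamma(-n + w) = (-1)^n/(n!\, w) + O(1)$ near each simple pole and $\sinh(i\pi z) = i\sin(\pi z)$, a direct computation gives each residue as a constant times
\[
\frac{1}{n!}\, \Gamma(s+n) \sin\!\left(\tfrac{\pi s}{2}\right)\!\left(n + \tfrac{s}{2}\right) h\!\left(i\bigl(n + \tfrac{s}{2}\bigr)\right).
\]
I would then check that the sum of the shifted contour integral and the explicit residue sum is holomorphic in $s$ throughout the strip: apparent poles of the contour integral (occurring when a Gamma pole meets the shifted contour at $\Re(s) = 2M + \tfrac12 - 2n$) cancel against the loss of the corresponding residue term as $\Re(s)$ crosses that line, and the prescribed zeros of $h$ at the $2M$ half-integer imaginary points eliminate exactly the residues that would otherwise produce spurious poles at odd-integer $s$ coming from the coincidences $n + \tfrac{s}{2} = \pm(k - \tfrac12)$.

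Finally I would bound the two pieces using Stirling. Writing $s = \sigma + i\tau$ and $t = u - i(M + \tfrac14)$ on the shifted contour, Stirling gives
\[
\bigl|\Gamma(\tfrac{s}{2} + it)\, \Gamma(\tfrac{s}{2} - it)\bigr| \ll \bigl(1 + |\tfrac{\tau}{2} + u|\bigr)^{\sigma/2 + M - 1/4} \bigl(1 + |\tfrac{\tau}{2} - u|\bigr)^{\sigma/2 - M - 3/4} e^{-\pi(|\tau/2 + u| + |\tau/2 - u|)/2},
\]
which combined with $|\cosh(\pi t)| \asymp e^{\pi|u|}$ and $|h(t)| \ll (1+|u|)^{-2M}$ and a split of the $u$-integral into $|u| > |\tau|/2$ (where the exponential factors cancel, leaving a polynomial integrand of size $|u|^{\sigma - 2M}$) and $|u| < |\tau|/2$ (exponentially small in $|\tau|$) gives a contour-integral bound of $\ll (1 + |\tau|)^{\sigma + 1 - 2M} \ll (1 + |\tau|)^{1-M}$ for $\sigma \leq M-1$. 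Each residue is of size $\ll (1+|\tau|)^{\sigma + n + 1/2 - 2M}/n!$ (using Stirling for $\Gamma(s+n)\sin(\tfrac{\pi s}{2})$ and $|h(i(n+\tfrac{s}{2}))| \ll (1+|\tau|)^{-2M}$), so the finite sum over $n = 0, 1, \ldots, O(M)$ also yields $\ll (1+|\tau|)^{1-M}$ uniformly in the strip. The main obstacle will be the residue bookkeeping in the middle step: confirming that the shifted integral and explicit residues glue into a function truly holomorphic on the full strip, which rests essentially on the $2M$ prescribed zeros of $h$ being at exactly the right places to neutralise what would otherwise be spurious poles coming from $\Gamma(s+n)$.
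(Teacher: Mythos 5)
First, a point of reference: the paper does not prove this lemma at all --- it is quoted verbatim from \cite[Lemma 2]{BLM19} --- so there is no internal proof to compare against, and what you have written is a reconstruction of the cited argument. Your overall strategy is the right one, and most of it checks out: the Mellin--Parseval identity $\widehat{\Ks^{-}h}(s) = \int \widehat{\JJ_t^{-}}(s)\, h(t)\, d_{\spec}t$ in an initial right half-strip, the downward shift of the $t$-contour, the residue $\tfrac{C}{n!}\Gamma(s+n)\sin(\tfrac{\pi s}{2})(n+\tfrac{s}{2})h(i(n+\tfrac{s}{2}))$ at $t=-i(n+\tfrac{s}{2})$, and the observation that the prescribed zeros of $h$ at $\pm i(k-\tfrac12)$ cancel exactly the spurious poles of $\Gamma(s+n)\sin(\tfrac{\pi s}{2})$ at odd integers in the strip. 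The holomorphy bookkeeping (gluing the shifted integral to the residues across the lines where a pole meets the contour) is standard and correct.

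The gap is in the final bound, and it is genuine. On the fixed contour $\Im(t)=-(M+\tfrac14)$, write $t=u-i(M+\tfrac14)$ and $s=\sigma+i\tau$ with $\tau>0$ large. Your claim that the range $|u|<|\tau|/2$ is exponentially small fails near $u=\tau/2$: there $\Gamma(\tfrac{\sigma}{2}-M-\tfrac14+i(\tfrac{\tau}{2}-u))$ is merely $O(1)$, the other Gamma factor is $\asymp\tau^{\sigma/2+M-1/4}e^{-\pi\tau/2}$, and against $|t\sinh\pi t|\asymp\tau e^{\pi\tau/2}$ and $|h|\ll\tau^{-2M}$ this window contributes $\asymp(1+|\tau|)^{\sigma/2-M+3/4}$ to the shifted integral, not $(1+|\tau|)^{\sigma+1-2M}$. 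The residue sum has the same defect: the admissible indices run up to $n\approx M+\tfrac14-\tfrac{\sigma}{2}$, and your own estimate $(1+|\tau|)^{\sigma+n+1/2-2M}/n!$ at that top index is again $(1+|\tau|)^{\sigma/2-M+3/4}$. For $\sigma>\tfrac12$ this exceeds $(1+|\tau|)^{1-M}$, so the triangle inequality applied to ``shifted integral plus residues'' cannot deliver \eqref{KsMellinboundeq} on the right half of the strip; the decomposition throws away an essential cancellation between the near-pinch portion of the integral and the extremal residues. The repair is simply not to over-shift: for $0\le\sigma<M-1$ keep the contour on the real axis (it converges there since the integrand is $\ll|u|^{\sigma-2M}$ at infinity, no residues arise, and the near-diagonal window now gives $(1+|\tau|)^{\sigma/2+1/2-2M}\ll(1+|\tau|)^{-M}$), and for $1-M<\sigma<0$ shift only to depth $c=\tfrac{|\sigma|}{2}+\tfrac14$, so that only $n\ll|\sigma|$ residues appear, each $\ll(1+|\tau|)^{3/4-2M}$, and the near-pinch term $(1+|\tau|)^{\sigma/2+c+1/2-2M}=(1+|\tau|)^{3/4-2M}$ is likewise admissible. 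With that modification (and patching the $\sigma$-dependent contours by uniqueness of analytic continuation), your argument goes through.
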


\subsection{The Vorono\u{\i} summation formula}

For $c \in \N$, $d \in (\Z/c\Z)^{\times}$, and primitive Dirichlet characters $\chi_1$ and $\chi_2$ modulo $q_1$ and $q_2$ respectively, we require the Vorono\u{\i} summation formula for the Vorono\u{\i} $L$-series
\[L\left(s,E_{\chi_1,\chi_2},\frac{d}{c}\right) \coloneqq \sum_{m = 1}^{\infty} \frac{\lambda_{\chi_1,\chi_2}(m,0) e\left(\frac{md}{c}\right)}{m^s},\]
associated to the Eisenstein series $E_{\chi_1,\chi_2}(z)$ with Hecke eigenvalues $\lambda_{\chi_1,\chi_2}(m,0)$, where
\[\lambda_{\chi_1,\chi_2}(m,t) \coloneqq \sum_{ab = m} \chi_1(a) a^{it} \chi_2(b) b^{-it}.\]

\begin{lemma}[{\cite[Appendix A]{KMV02}, \cite[Section 2.4]{HM06}, \cite[Theorem A]{LT05}}]
\label{Voronoilemma}
Suppose that $q = q_1 q_2$ is squarefree and $\chi = \chi_1 \chi_2$ is a primitive Dirichlet character modulo $q$ satisfying $\chi(-1) = (-1)^{\kappa}$ for $\kappa \in \{0,1\}$. Then for $c \in \N$ with $(c,q) = q_1$, the Vorono\u{\i} $L$-series $L(s,E_{\chi,1},d/c)$ is absolutely convergent for $\Re(s) > 1$ and extends to a meromorphic function of $s \in \C$. The only possible pole is at $s = 1$, with
\begin{equation}
\label{Voronoireseq}
\lim_{s \to 1} (s - 1) L\left(s,E_{\chi,1},\frac{d}{c}\right) = \begin{dcases*}
\frac{\tau(\chi) \overline{\chi}(d) L(1, \overline{\chi})}{c} & if $c \equiv 0 \pmod{q}$,	\\
\frac{\chi(c) L(1,\chi)}{c} & if $(c,q) = 1$,	\\
0 & otherwise.
\end{dcases*}
\end{equation}
Moreover, we have the functional equation
\begin{equation}
\label{Voronoifunc0eq}
L\left(s,E_{\chi,1},\frac{d}{c}\right) = \frac{2 \overline{\chi_1}(d) \tau(\chi_2)}{q_2^s c^{2s - 1}} \sum_{\pm} \chi_2(\mp c) \widehat{\JJ_0^{\pm}}(2(1 - s)) L\left(1 - s, E_{\chi_1,\overline{\chi_2}}, \mp \frac{\overline{q_2 d}}{c}\right)
\end{equation}
for $\kappa = 0$ and
\begin{equation}
\label{Voronoifunc1eq}
L\left(s,E_{\chi,1},\frac{d}{c}\right) = -\frac{2 \overline{\chi_1}(d) \tau(\chi_2)}{q_2^s c^{2s - 1}} \chi_2(-c) \widehat{\JJ_1^{\hol}}(2(1 - s)) L\left(1 - s, E_{\chi_1,\overline{\chi_2}}, - \frac{\overline{q_2 d}}{c}\right)
\end{equation}
for $\kappa = 1$, where $q_2 \overline{q_2} \equiv d \overline{d} \equiv 1 \pmod{c}$.
\end{lemma}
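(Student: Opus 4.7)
The lemma is a special case of the $\mathrm{GL}_2$ Vorono\u{\i} summation formula for the non-cuspidal Eisenstein series $E_{\chi,1}$; our task is to synthesise the three cited references to extract the precise form above. The starting point is the factorisation $\lambda_{\chi,1}(m,0) = \sum_{a \mid m} \chi(a)$, which yields
\[
L\left(s, E_{\chi,1}, \frac{d}{c}\right) = \sum_{a=1}^{\infty} \frac{\chi(a)}{a^s} \sum_{b=1}^{\infty} \frac{e(abd/c)}{b^s}.
\]
The double sum converges absolutely for $\Re(s) > 1$ via the divisor bound on $\lambda_{\chi,1}(m,0)$. The inner sum is a Hurwitz--Lerch-type zeta function, whose meromorphic continuation and functional equation are classical and are ultimately the source of the Bessel kernels $\widehat{\JJ_0^\pm}$ and $\widehat{\JJ_1^{\hol}}$ appearing in \eqref{Voronoifunc0eq} and \eqref{Voronoifunc1eq}.

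For the residue at $s = 1$, decompose the inner sum over $b$ according to the value of $abd \bmod c$. Writing $g = (a,c)$ and $c = g c'$, only the subseries indexed by the $b$ with $e(abd/c) = 1$ produces a pole, contributing $(c')^{-s}\zeta(s)$; all nontrivial additive twists are entire. Resumming over $a$ gives a residue of the shape $c^{-1}\sum_{g \mid c} \chi(g) L^{c/g}(1,\chi)$, and since $\chi$ is primitive modulo $q = q_1 q_2$ with $(c,q) = q_1$, the Euler product of $L^{c/g}(1,\chi)$ is nonvanishing only in the two surviving cases of \eqref{Voronoireseq}: namely $(c,q) = 1$, producing $\chi(c) L(1,\chi)/c$ directly, or $q \mid c$, producing $\tau(\chi)\bar\chi(d) L(1,\bar\chi)/c$ after applying the identity $\sum_{x \bmod q} \chi(x) e(dx/q) = \tau(\chi)\bar\chi(d)$. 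For the functional equation, apply the additive-twist functional equation of $\zeta(s)$ (the Hurwitz formula) to the inner $b$-sum, introducing $\widehat{\JJ_0^\pm}(2(1-s))$ or $\widehat{\JJ_1^{\hol}}(2(1-s))$ according to the parity $\chi(-1) = (-1)^\kappa$, together with the factor $c^{1-2s}$ and a new dual summation variable. Reindexing and recombining with the outer sum over $a$ reorganises everything into the dual series $L(1-s, E_{\chi_1, \bar\chi_2}, \mp \overline{q_2 d}/c)$.

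The principal technical difficulty is bookkeeping the splitting of the character $\chi$ across the two prime supports $q_1$ and $q_2$. The prefactor $\bar\chi_1(d)\tau(\chi_2)$ in \eqref{Voronoifunc0eq} and \eqref{Voronoifunc1eq} emerges by separating the contribution of primes dividing $q_1$---where $c$ is divisible and $\chi_1$ interacts with $d$ only through the additive character, leaving $\bar\chi_1(d)$---from the contribution of primes dividing $q_2$---where $(c,q_2) = 1$ so that the full Dirichlet functional equation of $L(s,\chi_2)$ applies and produces the Gauss sum $\tau(\chi_2)$. A conceptually cleaner derivation, which is essentially the argument of \cite{KMV02}, \cite{HM06}, and \cite{LT05}, is to realise $L(s, E_{\chi,1}, d/c)$ as a Mellin transform of $E_{\chi,1}$ at the cusp $d/c$ and then transport this cusp to $\infty$ via an Atkin--Lehner-type matrix whose bottom row is proportional to $c$; the transformed Eisenstein series has Fourier coefficients given by $\lambda_{\chi_1, \bar\chi_2}$, and reading off its Mellin transform at $\infty$ yields the dual $L$-series with the claimed prefactor directly.
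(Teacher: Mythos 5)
The paper does not prove this lemma at all: it is quoted verbatim from \cite[Appendix A]{KMV02}, \cite[Section 2.4]{HM06}, and \cite[Theorem A]{LT05}, so there is no in-paper argument to compare against. Your overall plan --- opening $\lambda_{\chi,1}(m,0)=\sum_{a\mid m}\chi(a)$, continuing the additive twists of $\zeta$, and especially the final paragraph's realisation of $L(s,E_{\chi,1},d/c)$ as a Mellin transform of the Eisenstein series at the cusp $d/c$ transported to $\infty$ --- is exactly the route taken in those references, and the cusp-transport version is the one that actually delivers the prefactor $\overline{\chi_1}(d)\tau(\chi_2)$ and the dual coefficients $\lambda_{\chi_1,\overline{\chi_2}}$ cleanly.

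There is, however, a genuine flaw in your residue computation as written. You argue that only the inner $b$-sums with $e(abd/c)=1$ (i.e.\ $c\mid a$) carry a pole and that ``all nontrivial additive twists are entire,'' and you then extract the residue term by term over $a$. That interchange is invalid, and in fact gives the wrong answer in the case $c\equiv 0\pmod{q}$: the surviving terms have $c\mid a$, hence $q\mid a$, hence $\chi(a)=0$, so your procedure yields residue $0$ there rather than $\tau(\chi)\overline{\chi}(d)L(1,\overline{\chi})/c$. The missing contribution comes precisely from the \emph{entire} inner sums $Z_\alpha(s)=\sum_b e(\alpha bd/c)b^{-s}$ for $\alpha\not\equiv 0\pmod{c}$: grouping $a$ by its residue $\alpha$ modulo $c$, the outer Dirichlet series $\sum_{a\equiv\alpha\,(c)}\chi(a)a^{-s}$ itself has a pole at $s=1$ whenever the mean of $\chi$ on that progression is nonzero --- which happens exactly when $q\mid c$ --- and the residue is then $c^{-1}\sum_{\alpha\bmod c}\chi(\alpha)Z_\alpha(1)=-c^{-1}\sum_{\alpha}\chi(\alpha)\log\bigl(1-e(\alpha d/c)\bigr)$, which evaluates to $\tau(\chi)\overline{\chi}(d)L(1,\overline{\chi})/c$ via the Gauss-sum identity you quote. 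So the identity $\sum_{x\bmod q}\chi(x)e(dx/q)=\tau(\chi)\overline{\chi}(d)$ is not a cosmetic afterthought but must enter through this second source of the pole; your intermediate formula $c^{-1}\sum_{g\mid c}\chi(g)L^{c/g}(1,\chi)$ does not capture it. The functional-equation half is only a sketch, but the mechanism you identify (full Dirichlet functional equation at the primes dividing $q_2$, where $(c,q_2)=1$, versus the additive interaction at the primes dividing $q_1\mid c$) is the correct one.
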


We need some control over the size of the Vorono\u{\i} $L$-series $L(s,E_{\chi,1},d/c)$ in vertical strips.

\begin{lemma}
For $s = \sigma + i\tau$ and for fixed $M \in \N$, we have that
\begin{equation}
\label{Phrageq}
\left(\frac{s - 1}{s + M}\right) L\left(s,E_{\chi,1},\frac{d}{c}\right) \ll_{q,\sigma,M,\e} \begin{dcases*}
(c(|\tau| + 1))^{\e} & for $\sigma \geq 1$,	\\
(c(|\tau| + 1))^{1 - \sigma + \e} & for $0 \leq \sigma \leq 1$,	\\
(c(|\tau| + 1))^{1 - 2\sigma + \e} & for $-M < \sigma \leq 0$.
\end{dcases*}
\end{equation}
\end{lemma}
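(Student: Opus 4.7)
The plan is to deduce the bound \eqref{Phrageq} via the Phragmén--Lindelöf convexity principle applied to the auxiliary function
\[ F(s) \coloneqq \frac{s-1}{s+M}\, L\!\left(s, E_{\chi,1}, \tfrac{d}{c}\right). \]
By \hyperref[Voronoilemma]{Lemma \ref*{Voronoilemma}}, $F(s)$ is holomorphic in the half-plane $\Re(s) > -M$: the factor $(s-1)$ cancels the possible simple pole of $L(s, E_{\chi,1}, d/c)$ at $s = 1$ given by \eqref{Voronoireseq}, while the denominator $s+M$ offsets the linear growth introduced by the numerator so that $F$ has the same polynomial growth in $|\tau|$ on vertical lines as $L$ itself.

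First, I would bound $F$ on the right edge $\sigma = 1+\varepsilon$. In this region the Dirichlet series defining $L(s, E_{\chi,1}, d/c)$ converges absolutely, and using $|\lambda_{\chi,1}(m,0)| \leq d(m)$ one obtains the uniform bound $L(s, E_{\chi,1}, d/c) \ll \zeta(1+\varepsilon)^2 \ll_\varepsilon 1$. Combined with $|(s-1)/(s+M)| \ll 1$ on this line, this yields $F(s) \ll_\varepsilon (c(|\tau|+1))^\varepsilon$, matching the desired bound for $\sigma \geq 1$.

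Second, I would control $F$ on the left edge $\sigma = -\varepsilon$ by invoking the functional equation \eqref{Voronoifunc0eq} (for $\kappa = 0$) or \eqref{Voronoifunc1eq} (for $\kappa = 1$), which expresses $L(s, E_{\chi,1}, d/c)$ as a product of the conductor factor $q_2^{-s} c^{1-2s}$, a gamma factor $\widehat{\JJ_0^\pm}(2(1-s))$ or $\widehat{\JJ_1^{\hol}}(2(1-s))$, and the dual series $L(1-s, E_{\chi_1,\overline{\chi_2}}, \mp \overline{q_2 d}/c)$. On the line $\Re(s) = -\varepsilon$ one has $\Re(1-s) = 1+\varepsilon$, so the dual series is $O_\varepsilon(1)$ by the first step; Stirling's formula applied to the explicit Mellin expressions for $\widehat{\JJ_0^\pm}$ and $\widehat{\JJ_1^{\hol}}$ shows that the gamma factor has modulus $\ll (|\tau|+1)^{1-2\sigma}$ (the cosine/sine factors cancelling the exponential decay of the $\Gamma$ factors); and the conductor factor contributes $\ll_q c^{1-2\sigma}$. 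Together, these give $F(s) \ll_{q,\varepsilon} (c(|\tau|+1))^{1-2\sigma+\varepsilon}$ on $\sigma = -\varepsilon$. Exactly the same functional equation calculation, performed on any vertical line $\sigma = \sigma_0$ with $-M < \sigma_0 \leq -\varepsilon$, produces the claimed bound $(c(|\tau|+1))^{1-2\sigma+\varepsilon}$ throughout the range $-M < \sigma \leq 0$ (one uses $\Re(1-s) \geq 1$ there together with the trivial bound for $L(1-s, \cdot)$ from the Dirichlet series).

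Finally, the intermediate strip $0 \leq \sigma \leq 1$ is handled by Phragmén--Lindelöf applied to $F$ in the strip $-\varepsilon \leq \Re(s) \leq 1+\varepsilon$: linearly interpolating between the exponent $\varepsilon$ on the right edge and $1 + O(\varepsilon)$ on the left edge yields the exponent $1 - \sigma + O(\varepsilon)$ in the interior, which absorbs into the desired $(c(|\tau|+1))^{1-\sigma+\varepsilon}$. The $c$ aspect is trivially uniform, since $c$ is independent of $s$ and enters only through the conductor factor $c^{1-2s}$ on the left edge. The main bookkeeping concern is that the gamma-factor asymptotics and conductor factor dependencies match up consistently to give the claimed exponent across the whole strip; beyond this, no input is needed other than \hyperref[Voronoilemma]{Lemma \ref*{Voronoilemma}}, Stirling's formula, and the convexity principle.
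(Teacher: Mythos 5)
Your proof is correct and takes essentially the same route as the paper, which likewise derives the bound from the Stirling estimates \eqref{JJ0boundseq} for the gamma factors, the functional equations \eqref{Voronoifunc0eq} and \eqref{Voronoifunc1eq} (giving the left-edge and $\sigma\leq 0$ bounds), absolute convergence on the right edge, and the Phragm\'{e}n--Lindel\"{o}f convexity principle in between. The paper states this in one line; your write-up fills in the same details.
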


\begin{proof}
Stirling's formula implies that for $\sigma > 0$,
\begin{equation}
\label{JJ0boundseq}
\widehat{\JJ_0^+}(s) \ll (|\tau| + 1)^{\sigma - 1}, \qquad \widehat{\JJ_0^-}(s) \ll (|\tau| + 1)^{\sigma - 1} e^{-\frac{\pi}{2} |\tau|}, \qquad \widehat{\JJ_1^{\hol}}(s) \ll (|\tau| + 1)^{\sigma - 1};
\end{equation}
see \cite[Corollary A.27]{HK20}. The bounds \eqref{Phrageq} then follow from this, the functional equations \eqref{Voronoifunc0eq} and \eqref{Voronoifunc1eq}, and the Phragm\'{e}n--Lindel\"{o}f convexity principle.
\end{proof}

We also require the following identity for Gauss sums.

\begin{lemma}[{\cite[Lemma 3.1.3]{Miy06}}]\label{Miyakelemma}
Let $\chi$ be a primitive Dirichlet character modulo $q$ and $c \equiv 0 \pmod{q}$. We have that
\[\sum_{a \in (\Z/c\Z)^{\times}} \chi(a) e\left(\frac{ma}{c}\right) = \tau(\chi) \sum_{d \mid \left(\frac{c}{q}, m\right)} d \mu\left(\frac{c}{qd}\right) \chi\left(\frac{c}{qd}\right) \overline{\chi}\left(\frac{m}{d}\right).\]
\end{lemma}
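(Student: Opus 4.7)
The plan is to reduce the sum to a standard Gauss sum via Möbius inversion on the coprimality condition, together with the identity $\sum_{a \bmod q} \chi(a) e(na/q) = \tau(\chi) \overline{\chi}(n)$ which holds for all integers $n$ because $\chi$ is primitive mod $q$. Write $c = qk$ with $k = c/q$. Since $\chi$ has modulus $q$, the summand $\chi(a) e(ma/c)$ already vanishes when $\gcd(a,q) > 1$, so the condition $a \in (\Z/c\Z)^{\times}$ can be replaced by the single additional requirement $\gcd(a,k) = 1$. Detect this condition by Möbius to obtain
\[
\sum_{a \in (\Z/c\Z)^{\times}} \chi(a) e\!\left(\frac{ma}{c}\right)
= \sum_{e \mid k} \mu(e) \sum_{\substack{a \bmod c \\ e \mid a}} \chi(a) e\!\left(\frac{ma}{c}\right)
= \sum_{e \mid k} \mu(e) \chi(e) \sum_{a' \bmod c/e} \chi(a') e\!\left(\frac{m a'}{c/e}\right),
\]
where $a = e a'$ and one uses $\chi(ea') = \chi(e)\chi(a')$ (this also automatically enforces $(e,q) = 1$, since otherwise $\chi(e) = 0$).

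Next I would evaluate the inner sum $T(c',m) := \sum_{a' \bmod c'} \chi(a') e(ma'/c')$ for any $c' = qk'$ with $q \mid c'$. Writing $a' = a_0 + qj$ with $a_0 \bmod q$ and $j \bmod k'$ factors the exponential as $e(ma_0/(qk'))\, e(mj/k')$; the sum over $j$ gives $k' \cdot \mathbf{1}_{k' \mid m}$, and when $k' \mid m$ the remaining sum over $a_0$ is precisely the primitive-character Gauss sum evaluation, giving $\tau(\chi)\, \overline{\chi}(m/k')$. Therefore
\[
T(c', m) \;=\; k'\, \tau(\chi)\, \overline{\chi}(m/k')\, \mathbf{1}_{k' \mid m}.
\]
Substituting $c' = c/e$, so that $k' = k/e$, and noting that $k/e \mid m$ is equivalent to $e \mid (k/\gcd(k,m)) \cdot \gcd(k,m)$ which is just $e \mid k$ together with $k/e \mid m$, the identity becomes a sum over divisors $e$ of $k$ satisfying $k/e \mid m$.

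Finally I would reindex by $d := k/e$, so that $e = k/d = c/(qd)$, and the constraint $k/e \mid m$ becomes $d \mid m$ (combined with $d \mid k$, i.e. $d \mid (c/q, m)$). Collecting the pieces,
\[
\sum_{a \in (\Z/c\Z)^{\times}} \chi(a) e\!\left(\frac{ma}{c}\right)
= \tau(\chi) \sum_{\substack{d \mid (c/q, m)}} d\, \mu\!\left(\frac{c}{qd}\right) \chi\!\left(\frac{c}{qd}\right) \overline{\chi}\!\left(\frac{m}{d}\right),
\]
matching the claim. The only real ``obstacle'' is bookkeeping: keeping careful track of which factor ($\chi$ versus $\mu$, which argument) lands on the pre- and post-reindexed divisor, and verifying that the two coprimality conditions ($\gcd(e,q)=1$ after the Möbius step, and $k' \mid m$ after evaluating $T(c',m)$) both get absorbed cleanly into the final expression. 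There is no analytic content; the lemma is purely a routine identity for twisted Gauss sums.
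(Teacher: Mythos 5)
Your argument is correct and complete: the Möbius detection of $(a,k)=1$, the evaluation of the full sum $\sum_{a'\bmod qk'}\chi(a')e(ma'/qk')$ via the splitting $a'=a_0+qj$, and the reindexing $d=k/e$ all check out, with primitivity used exactly where needed (the identity $\sum_{a\bmod q}\chi(a)e(na/q)=\tau(\chi)\overline{\chi}(n)$ for all integers $n$) and the coprimality condition $(e,q)=1$ absorbed by $\chi(c/(qd))$. The paper does not prove this lemma but simply cites Miyake, and your proof is essentially the standard textbook argument given there, so nothing further is needed.
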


\subsection{A multiple Dirichlet series}

In the course of the proof of \hyperref[momentprop]{Proposition \ref*{momentprop}}, we shall come across the multiple Dirichlet series
\begin{align}
\label{Dswchidefeq}
\DD_{N,\chi}^{\pm}(s,w) & \coloneqq \sum_{\substack{c = 1 \\ c \equiv 0 \hspace{-.25cm} \pmod{N}}}^{\infty} \sum_{m = 1}^{\infty} \frac{\lambda_{\chi,1}(m,0)}{m^{\frac{s}{2} + w}} \frac{S(m,\pm 1;c)}{c^{1 - s}},	\\
\label{Doverlineswchidefeq}
\DD_{\overline{N},\chi}^{\pm}(s,w) & \coloneqq \sum_{\substack{c = 1 \\ (c,N) = 1}}^{\infty} \sum_{m = 1}^{\infty} \frac{\lambda_{\chi,1}(m,0)}{m^{\frac{s}{2} + w}} \frac{S(m,\pm \overline{N};c)}{c^{1 - s}},
\end{align}
where $N \overline{N} \equiv 1 \pmod{c}$. Via the Weil bound for Kloosterman sums, these are absolutely convergent in the region
\[\Omega_0 \coloneqq \{(s,w) \in \C^2 : 2 - 2\Re(w) < \Re(s) < -1/2\},\]
in which they are holomorphic in the complex variables $s$ and $w$. We study the meromorphic continuation of these multiple Dirichlet series.

\begin{lemma}
Suppose that $q > 1$ and $N$ are squarefree and coprime and that $\chi$ is a primitive character modulo $q$ satisfying $\chi(-1) = (-1)^{\kappa}$ for $\kappa \in \{0,1\}$. As a function of the complex variables $(s,w) \in \C^2$, the functions $\DD_{N,\chi}^{\pm}(s,w)$ and $\DD_{\overline{N},\chi}^{\pm}(s,w)$ extend holomorphically in an open neighbourhood of the union of the regions
\begin{align*}
\Omega_1 & \coloneqq \{(s,w) \in \C^2 : -2\Re(w) \leq \Re(s) < \min\{2 - 2\Re(w), 2\Re(w) - 4\}\},	\\
\Omega_2 & \coloneqq \{(s,w) \in \C^2 : \Re(s) < -2\Re(w), \ \Re(w) \geq 1/2\}.
\end{align*}
For $\Re(w) \geq 1/2$, these functions satisfy
\begin{align}
\label{Dswchiresidueeq}
\lim_{\frac{s}{2} + w \to 1} \left(\frac{s}{2} + w - 1\right) \DD_{N,\chi}^{\pm}(s,w) & = \frac{\mu(N) \chi(N)}{N^{2w} L^N(2w,\chi)} \left(\frac{\chi(\pm 1) \tau(\chi)^2 L(1,\overline{\chi})}{q^{2w}} + L(1,\chi)\right),	\\
\label{Doverlineswchiresidueeq}
\lim_{\frac{s}{2} + w \to 1} \left(\frac{s}{2} + w - 1\right) \DD_{\overline{N},\chi}^{\pm}(s,w) & = \frac{1}{L^N(2w,\chi)} \left(\frac{\chi(\pm N) \tau(\chi)^2 L(1,\overline{\chi})}{q^{2w}} + L(1,\chi)\right).
\end{align}
In an open neighbourhood of $\Omega_2$ and for $\kappa = 0$, $\DD_{N,\chi}^{\pm}(s,w)$ is equal to
\begin{multline}
\label{DswchipostVoronoieq}
\frac{2 \mu(N) \chi(N) \tau(\chi)}{q^{2w} N^{2w} L^N(2w,\chi)} \sum_{\pm_1} \chi(\mp_1 1) \widehat{\JJ_0^{\pm_1}}(2 - s - 2w) \sum_{N_1 N_2 = N} \mu(N_2) N_2 \sum_{q_1 q_2 = q} \overline{\chi_1}(N_2) q_2^{w - \frac{s}{2}}	\\
\times \sum_{\substack{m = 1 \\ m \equiv \pm \pm_1 q_2 \hspace{-.25cm} \pmod{N_2} \\ m \neq \pm \pm_1 q_2}}^{\infty} \frac{\lambda_{\chi_1,\overline{\chi_2}}(m,0)}{m^{1 - \frac{s}{2} - w}} \sum_{d \mid \frac{m \pm \mp_1 q_2}{N_2}} d^{1 - 2w} \overline{\chi_1}\left(\frac{m \pm \mp_1 q_2}{dN_2}\right) \chi_2(d),
\end{multline}
which is absolutely convergent, and $\DD_{\overline{N},\chi}^{\pm}(s,w)$ is equal to
\begin{multline}
\label{DoverlineswchipostVoronoieq}
\frac{2 \tau(\chi)}{q^{2w} L^N(2w,\chi)} \sum_{\pm_1} \chi(\mp_1 1) \widehat{\JJ_0^{\pm_1}}(2 - s - 2w) \sum_{q_1 q_2 = q} \chi_1(N) q_2^{w - \frac{s}{2}}	\\
\times \sum_{\substack{m = 1 \\ m \neq \pm \pm_1 \frac{q_2}{N}}}^{\infty} \frac{\lambda_{\chi_1,\overline{\chi_2}}(m,0)}{m^{1 - \frac{s}{2} - w}} \sum_{d \mid (Nm \pm \mp_1 q_2)} d^{1 - 2w} \overline{\chi_1}\left(\frac{Nm \pm \mp_1 q_2}{d}\right) \chi_2(d),
\end{multline}
while the same holds for $\kappa = 1$ with $\widehat{\JJ_0^+}$ and $\widehat{\JJ_0^-}$ replaced by $-\widehat{\JJ_1^{\hol}}$ and $0$ respectively.
\end{lemma}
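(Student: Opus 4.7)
The plan is to reduce the analytic properties of $\DD_{N,\chi}^{\pm}(s,w)$ and $\DD_{\overline{N},\chi}^{\pm}(s,w)$ to those of the Vorono\u{\i} $L$-series of \hyperref[Voronoilemma]{Lemma \ref*{Voronoilemma}}. In $\Omega_0$ both double sums converge absolutely by the Weil bound for Kloosterman sums, so I would write $S(m,\pm 1;c) = \sum_{d \in (\Z/c\Z)^{\times}} e(md/c) e(\pm\overline{d}/c)$, interchange orders of summation, and obtain
\[\DD_{N,\chi}^{\pm}(s,w) = \sum_{\substack{c = 1 \\ N \mid c}}^{\infty} \frac{1}{c^{1-s}} \sum_{d \in (\Z/c\Z)^{\times}} e\!\left(\frac{\pm\overline{d}}{c}\right) L\!\left(\frac{s}{2} + w,\, E_{\chi,1},\, \frac{d}{c}\right),\]
and an analogous expression for $\DD_{\overline{N},\chi}^{\pm}$ with the condition $(c,N) = 1$ in place of $N \mid c$. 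Both the residues and the explicit form will follow from exploiting the analytic structure of the Vorono\u{\i} $L$-series appearing on the right-hand side.

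To obtain the explicit form \eqref{DswchipostVoronoieq} in a neighbourhood of $\Omega_2$, I would split the $c$-sum by the coprimality class $(c,q) = q_1$ for each factorisation $q = q_1 q_2$; since $(N,q) = 1$, this is compatible with the outer condition on $c$. On each piece I apply the Vorono\u{\i} functional equation \eqref{Voronoifunc0eq} for $\kappa = 0$, or \eqref{Voronoifunc1eq} for $\kappa = 1$, which converts each Vorono\u{\i} $L$-series at $s/2 + w$ into its dual at $1 - s/2 - w$. In $\Omega_2$ we have $\Re(1 - s/2 - w) > 1$, so the dual series may be expanded into its Dirichlet series. After the substitution $a = \overline{d}$, the $d$-sum collapses to $\sum_{a \in (\Z/c\Z)^{\times}} \chi_1(a) e(\beta a / c)$ with $\beta \equiv \pm 1 \mp_1 m \overline{q_2} \pmod c$, which I would evaluate in closed form via the Gauss sum identity of \hyperref[Miyakelemma]{Lemma \ref*{Miyakelemma}}. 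Parameterising the $c$-sum in the form $c = N q_1 k$ and reassembling the resulting nested M\"obius sums against the outer $k$-sum then produces the double factorisation $\sum_{N_1 N_2 = N} \sum_{q_1 q_2 = q}$ with the weights $\mu(N_2) N_2 \overline{\chi_1}(N_2)$ and $q_2^{w - s/2}$ visible in \eqref{DswchipostVoronoieq}; the restriction $m \neq \pm \pm_1 q_2$ inside \eqref{DswchipostVoronoieq} arises from discarding the residue of the Vorono\u{\i} $L$-series at the origin of the dual variable in this bookkeeping. The residue identities \eqref{Dswchiresidueeq} and \eqref{Doverlineswchiresidueeq} at $s/2 + w = 1$ then fall out of \eqref{Voronoireseq}, taking residues inside $\Omega_0$: only the cases $c \equiv 0 \pmod{q}$ and $(c,q) = 1$ contribute, and summing these Ramanujan-sum residues against $N \mid c$ (respectively $(c,N) = 1$) yields the Euler factor $L^N(2w,\chi)^{-1}$ together with the two terms $L(1,\chi)$ and $\chi(\pm 1) \tau(\chi)^2 L(1,\overline{\chi})/q^{2w}$.

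For the continuation through $\Omega_1$, observe that this region lies across the pole line $\Re(s/2 + w) = 1$ from $\Omega_0$ and across the absolute convergence line $\Re(s/2 + w) = 0$ from $\Omega_2$, so neither the original Kloosterman form nor the dual expression \eqref{DswchipostVoronoieq} converges absolutely there. Instead, I would work directly with the first display and control the meromorphically continued Vorono\u{\i} $L$-series via the convexity bound \eqref{Phrageq}: in $\Omega_1$ we have $\Re(s/2 + w) \in [0, 1)$, and \eqref{Phrageq} gives $L(s/2 + w, E_{\chi,1}, d/c) \ll (c(|\Im(s)| + 1))^{1 - \Re(s/2 + w) + \e}$ uniformly in $d$ and for $s$ bounded away from the pole at $s/2+w=1$ (which lies outside $\Omega_1$). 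Bounding the $d$-sum trivially by $\phi(c)$ times this estimate yields a $c$-sum with exponent $\Re(s) + 1 - \Re(s/2 + w) + \e = \Re(s)/2 + 1 - \Re(w) + \e$, which is less than $-1$ precisely when $\Re(s) < 2\Re(w) - 4$, exactly the upper constraint defining $\Omega_1$. The main obstacle will be the combinatorial reassembly in the derivation of \eqref{DswchipostVoronoieq}: the Gauss sum identity of \hyperref[Miyakelemma]{Lemma \ref*{Miyakelemma}} produces nested M\"obius sums indexed by divisors of $c/q_1$ and of $\beta$, and collapsing these against the free $k$-sum in $c = Nq_1 k$ so as to produce the precise double decomposition over $N_1 N_2 = N$ and $q_1 q_2 = q$, with the correct characters $\overline{\chi_1}$ and $\chi_2$, shifts, and signs $\pm, \pm_1$, requires delicate bookkeeping.
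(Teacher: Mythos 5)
Your proposal follows essentially the same route as the paper: open the Kloosterman sum to express $\DD_{N,\chi}^{\pm}$ and $\DD_{\overline{N},\chi}^{\pm}$ as sums of Vorono\u{\i} $L$-series split by $(c,q) = q_1$, read off the residues from \eqref{Voronoireseq}, continue to $\Omega_1$ via the convexity bounds \eqref{Phrageq} (your exponent count $\Re(s)/2 + 1 - \Re(w) < -1$ is exactly the paper's), and obtain \eqref{DswchipostVoronoieq} in $\Omega_2$ from the functional equation followed by \hyperref[Miyakelemma]{Lemma \ref*{Miyakelemma}} and the M\"obius reassembly over $N_1 N_2 = N$. The one small inaccuracy is your attribution of the exclusion $m \neq \pm\pm_1 q_2$ to a residue of the dual Vorono\u{\i} series: since $\Re(1 - s/2 - w) > 1$ on $\Omega_2$ there is no such residue, and the exclusion simply reflects the degenerate term of the Gauss-sum evaluation when the shift $m \pm \mp_1 q_2$ vanishes.
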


\begin{proof}
We prove this only for $\DD_{N,\chi}^{\pm}(s,w)$; the proof for $\DD_{\overline{N},\chi}^{\pm}(s,w)$ follows by the same argument. In the region $\{(s,w) \in \C^2 : 2 - 2\Re(w) < \Re(s) < -1\}$, we may open up the Kloosterman sum to see that
\begin{equation}
\label{DswchiVoronoieq}
\DD_{N,\chi}^{\pm}(s,w) = \sum_{q_1 q_2 = q} \sum_{\substack{c = 1 \\ c \equiv 0 \hspace{-.25cm} \pmod{Nq_1} \\ \left(\frac{c}{q_1},q_2\right) = 1}}^{\infty} \frac{1}{c^{1 - s}} \sum_{d \in (\Z/c\Z)^{\times}} e\left(\pm \frac{\overline{d}}{c}\right) L\left(\frac{s}{2} + w, E_{\chi,1},\frac{d}{c}\right).
\end{equation}
From \eqref{Voronoireseq},
\begin{multline*}
\lim_{\frac{s}{2} + w \to 1} \left(\frac{s}{2} + w - 1\right) \DD_{N,\chi}^{\pm}(s,w) = \tau(\chi) L(1,\overline{\chi}) \sum_{\substack{c = 1 \\ c \equiv 0 \hspace{-.25cm} \pmod{Nq}}}^{\infty} \frac{1}{c^{2w}} \sum_{d \in (\Z/c\Z)^{\times}} \overline{\chi}(d) e\left(\pm \frac{\overline{d}}{c}\right)	\\
+ L(1,\chi) \sum_{\substack{c = 1 \\ c \equiv 0 \hspace{-.25cm} \pmod{N}}}^{\infty} \frac{\chi(c)}{c^{2w}} \sum_{d \in (\Z/c\Z)^{\times}} e\left(\pm \frac{\overline{d}}{c}\right).
\end{multline*}
We obtain \eqref{Dswchiresidueeq} after making the change of variables $d \mapsto \pm \overline{d}$, applying \hyperref[Miyakelemma]{Lemma \ref*{Miyakelemma}}, making the change of variables $c \mapsto cNq$ and $c \mapsto cN$ respectively, and noting that $\mu(cN) = \mu(c) \mu(N) 1_{(c,N) = 1}$. Next, the bounds \eqref{Phrageq} imply that the expression \eqref{DswchiVoronoieq} for $\DD_{N,\chi}^{\pm}(s,w)$ is absolutely convergent and holomorphic in an open neighbourhood of $\Omega_1$ and in the region $\{(s,w) \in \C^2 : \Re(s) < -2\Re(w), \ \Re(w) > 1\}$.

In this latter region, we may use the functional equations \eqref{Voronoifunc0eq} and \eqref{Voronoifunc1eq} to see that
\begin{multline*}
\DD_{N,\chi}^{\pm}(s,w) = 2 \sum_{\pm_1} \widehat{\JJ_0^{\pm_1}}(2 - s - 2w) \sum_{q_1 q_2 = q} \frac{\chi_2(\mp_1 1) \tau(\chi_2)}{q_2^{\frac{s}{2} + w}} \sum_{m = 1}^{\infty} \frac{\lambda_{\chi_1,\overline{\chi_2}}(m,0)}{m^{1 - \frac{s}{2} - w}}	\\
\times \sum_{\substack{c = 1 \\ c \equiv 0 \hspace{-.25cm} \pmod{Nq_1} \\ \left(\frac{c}{q_1},q_2\right) = 1}}^{\infty} \frac{\chi_2(c)}{c^{2w}} \sum_{d \in (\Z/c\Z)^{\times}} \overline{\chi_1}(d) e\left(\frac{(\mp_1 \overline{q_2}m \pm 1) \overline{d}}{c}\right)
\end{multline*}
for $\kappa = 0$, while the same holds for $\kappa = 1$ with $\widehat{\JJ_0^+}$ and $\widehat{\JJ_0^-}$ replaced by $-\widehat{\JJ_1^{\hol}}$ and $0$ respectively. By making the change of variables $d \mapsto \mp_1 \overline{q_2 d}$, the sum over $d$ is equal to
\[\chi_1(\mp_1 q_2) \tau(\chi_1) \sum_{d \mid \left(\frac{c}{q_1},m \pm \mp_1 q_2\right)} d \mu\left(\frac{c}{q_1 d}\right) \chi_1\left(\frac{c}{q_1 d}\right) \overline{\chi_1}\left(\frac{m \pm \mp_1 q_2}{d}\right)\]
by \hyperref[Miyakelemma]{Lemma \ref*{Miyakelemma}}. We introduce a sum over $N_1 N_2 = N$ such that $d \equiv 0 \pmod{N_1}$ and $(d/N_1,N_2) = 1$, then make the change of variables $c \mapsto cdN_2 q_1$. The ensuing sum over $c$ is $1/L^{N_2}(2w,\chi)$, so that this is
\begin{multline*}
\frac{2 \tau(\chi)}{q^{2w} L^N(2w,\chi)} \sum_{\pm_1} \chi(\mp_1 1) \widehat{\JJ_0^{\pm_1}}(2 - s - 2w) \sum_{N_1 N_2 = N} \frac{\mu(N_2) \chi(N_2)}{N_2^{2w} L_{N_1}(2w,\chi)} \sum_{q_1 q_2 = q} q_2^{w - \frac{s}{2}}	\\
\times \sum_{\substack{m = 1 \\ m \neq \pm \pm_1 q_2}}^{\infty} \frac{\lambda_{\chi_1,\overline{\chi_2}}(m,0)}{m^{1 - \frac{s}{2} - w}} \sum_{\substack{d \mid (m \pm \mp_1 q_2) \\ d \equiv 0 \hspace{-.25cm} \pmod{N_1} \\ \left(\frac{d}{N_1},N_2\right) = 1}} d^{1 - 2w} \overline{\chi_1}\left(\frac{m \pm \mp_1 q_2}{d}\right) \chi_2(d).
\end{multline*}
Since $N$ is squarefree, $(d/N_1,N_2) = 1$ if and only if $(d,N_2) = 1$. We replace this condition by the sum $\sum_{N_3 \mid (N_2,d)} \mu(N_3)$, so that $\frac{N_2}{N_3} N_5 = N$ and $N_1 N_3 = N_5$, then make the change of variables $d \mapsto dN_5$. The resulting sum over $N_1 N_3 = N_5$ is simply $1$, so after relabelling, we arrive at \eqref{DswchipostVoronoieq}. This is absolutely convergent and holomorphic in an open neighbourhood of $\Omega_2$.
\end{proof}

\section{Identities for moments of \texorpdfstring{$L$}{L}-functions}
\label{momentsect}

We wish to prove bounds and asymptotics for the moments
\begin{gather}
\label{moment1eq}
\sum_{f \in \BB_0(\Gamma)} \frac{L\left(\frac{1}{2},f\right) L\left(\frac{1}{2},f \otimes \chi_D\right)}{L(1,\sym^2 f)} h(t_f) + \frac{1}{2\pi} \int_{-\infty}^{\infty} \left|\frac{\zeta\left(\frac{1}{2} + it\right) L\left(\frac{1}{2} + it,\chi_D\right)}{\zeta(1 + 2it)}\right|^2 h(t) \, dt,	\\
\label{moment2eq}
\sum_{f \in \BB_{\hol}^{\ast}(\Gamma_0(2))} \frac{L\left(\frac{1}{2},f\right) L\left(\frac{1}{2},f \otimes \chi_D\right)}{L(1,\sym^2 f)} h^{\hol}(k_f),
\end{gather}
where $\chi_D$ is the primitive quadratic character modulo $|D|$ with $D$ a squarefree fundamental discriminant and $h : \R \to \C$ and $h^{\hol} : 2\N \to \C$ are appropriately chosen test functions. One approach would be to use the Kuznetsov and Petersson formul\ae{} in conjunction with approximate functional equations for $L$-functions and then apply the Vorono\u{\i} summation formula, as is done in similar situations in \cite{HT14} and \cite[Section 6]{HK20}. This would require some care, since the latter moment involves a sum over \emph{newforms}, so one must use the Petersson formula for newforms; see \cite[Lemma 5]{HT14} and \cite[Theorem 3]{PY19}. Another approach would be to proceed directly via the relative trace formula, as in \cite{FW09,MR12,RR05}.

We instead proceed via a combination of the Kuznetsov and Petersson formul\ae{}, the Vorono\u{\i} summation formula, and analytic continuation, as is done in similar situations in \cite{Byk98,BF17,GZ99,Nel10} (and, after this paper was written, in \cite{HLN20}). This has the advantage of giving \emph{exact} identities for moments of $L$-functions: \eqref{moment1eq} and \eqref{moment2eq} are each shown to be equal to the sum of a main term and a shifted convolution sum. We avoid the use of the Petersson formula for newforms by using the Petersson formula for the $(\infty,1)$-pair of cusps, which has both the effect of inserting an Atkin--Lehner eigenvalue, which is ultimately harmless, and removing the contribution of the oldforms during the process of analytic continuation.

\subsection{An identity for a moment of \texorpdfstring{$L$}{L}-functions associated to Maa\ss{} forms}

\begin{lemma}
\label{Maassmomentwlemma}
Suppose that $q > 1$ is squarefree and $\chi$ is a primitive Dirichlet character modulo $q$ satisfying $\chi(-1) = (-1)^{\kappa}$ for $\kappa \in \{0,1\}$. Let $h(t)$ be an even holomorphic function in the strip $-2M < \Im(t) < 2M$ for some $M \geq 20$ with zeroes at $\pm (n - 1/2)i$ for $n \in \{1,\ldots,2M\}$ and satisfies $h(t) \ll (|t| + 1)^{-2M}$ in this region. Then for $5/4 < \Re(w) < (M - 1)/2$,
\begin{multline}
\label{momentweq}
\sum_{f \in \BB_0(\Gamma)} \epsilon_f \frac{L(w,f) L(w,f \otimes \chi)}{L(2w,\chi) L(1,\sym^2 f)} h(t_f)	\\
+ \frac{1}{2\pi} \int_{-\infty}^{\infty} \frac{\zeta(w + it) \zeta(w - it) L(w + it,\chi) L(w - it,\chi)}{L(2w,\chi) \zeta(1 + 2it) \zeta(1 - 2it)} h(t) \, dt
\end{multline}
is equal to the sum of
\begin{equation}
\label{residueweq}
\frac{2 \widehat{\Ks^{-} h}(2(1 - w))}{L(2w,\chi)} \left(\frac{\chi(-1) \tau(\chi)^2 L(1,\overline{\chi})}{q^{2w}} + L(1,\chi)\right)
\end{equation}
and of
\begin{multline}
\label{shiftedweq}
\frac{2 \tau(\chi)}{q^{2w} L(2w,\chi)} \sum_{\pm} \chi(\mp 1) \sum_{q_1 q_2 = q} q_2^{w - \frac{1}{2}} \sum_{\substack{m = 1 \\ m \neq \mp q_2}}^{\infty} \frac{\lambda_{\chi_1,\overline{\chi_2}}(m,0)}{m^{\frac{1}{2} - w}} \sum_{d \mid (m \pm q_2)} d^{1 - 2w} \overline{\chi_1}\left(\frac{m \pm q_2}{d}\right) \chi_2(d)	\\
\times \frac{1}{2\pi i} \int_{\sigma_1 - i\infty}^{\sigma_1 + i\infty} \widehat{\Ks^{-} h}(s) \widehat{\JJ_0^{\pm}}(2 - s - 2w) \left(\frac{m}{q_2}\right)^{\frac{s - 1}{2}} \, ds
\end{multline}
for $\kappa = 0$, where $1 - M < \sigma_1 < -2\Re(w)$; the same holds for $\kappa = 1$ with $\widehat{\JJ_0^{+}}$ and $\widehat{\JJ_{0}^{-}}$ replaced by $-\widehat{\JJ_1^{\hol}}$ and $0$ respectively.
\end{lemma}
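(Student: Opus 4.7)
The strategy is to open the $L$-functions via Hecke multiplicativity, apply the opposite-sign Kuznetsov formula (Theorem \ref{Kuznetsovthm}), and recognise the resulting arithmetic sum as a Mellin-integral transform of the multiple Dirichlet series $\DD_{1,\chi}^-(s,w)$. For $\Re(w) > 1$, a routine Hecke computation yields
\[
\frac{L(w,f) L(w, f \otimes \chi)}{L(2w,\chi)} = \sum_{k=1}^{\infty} \frac{\lambda_f(k) \lambda_{\chi,1}(k,0)}{k^w},
\]
together with its Eisenstein analogue obtained by replacing $\lambda_f(k)$ by $\lambda(k,t)$. Substituting into \eqref{momentweq}, applying Theorem \ref{Kuznetsovthm} with $m = k$, $n = 1$ (the $\epsilon_f$ in \eqref{momentweq} being precisely what the opposite-sign Kuznetsov produces), and writing $(\Ks^- h)(\sqrt{k}/c)$ by Mellin inversion on a contour $\Re(s) = \sigma_0 \in (2 - 2\Re(w), -1/2)$---such a $\sigma_0$ exists because $\Re(w) > 5/4$---expresses \eqref{momentweq} as
\[
\frac{1}{2\pi i} \int_{\sigma_0 - i\infty}^{\sigma_0 + i\infty} \widehat{\Ks^- h}(s)\, \DD_{1,\chi}^-(s,w) \, ds,
\]
since summation over $k$ and $c$ reassembles the defining series \eqref{Dswchidefeq} of $\DD_{1,\chi}^-$.

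The main step is then to shift the contour leftwards to $\Re(s) = \sigma_1 \in (1 - M, -2\Re(w))$, which is possible because $\Re(w) < (M-1)/2$. By Lemma \ref{MellinKsextendlemma}, $\widehat{\Ks^- h}$ is holomorphic with rapid decay throughout $1 - M < \Re(s) < M - 1$, while the results of the preceding section show that $\DD_{1,\chi}^-(s,w)$ continues meromorphically through $\Omega_1 \cup \Omega_2$ with a single simple pole at $s = 2(1-w)$. Collecting the residue at this pole---whose value is $2R$, where $R$ is computed in \eqref{Dswchiresidueeq} (with $N = 1$), the factor of $2$ coming from the Jacobian in passing from the variable $s/2 + w$ to $s$---produces exactly the main term \eqref{residueweq}. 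On the shifted line $\Re(s) = \sigma_1$ we lie in $\Omega_2$, so I would substitute the explicit post-Vorono\u{\i} expansion \eqref{DswchipostVoronoieq}, specialised to $N = 1$ so that only $N_1 = N_2 = 1$ survives in the sum over $N_1 N_2 = N$; interchanging the absolutely convergent $m$-sum with the $s$-integral and factoring $m^{w - 1/2}$ and $q_2^{w - 1/2}$ from the integrand then converts the remaining piece into precisely the shifted convolution \eqref{shiftedweq} with the stated Mellin--Barnes kernel.

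The principal technical point is justifying the contour shift, which requires decay of $\widehat{\Ks^- h}(s)\, \DD_{1,\chi}^-(s,w)$ on horizontal truncations $\Im(s) = \pm T$ between $\sigma_0$ and $\sigma_1$. The bound \eqref{KsMellinboundeq} supplies $O(T^{1 - M})$ decay for the first factor; for the second, I would combine the explicit formula \eqref{DswchipostVoronoieq} with the Stirling estimates \eqref{JJ0boundseq} and the Phragm\'{e}n--Lindel\"{o}f bounds \eqref{Phrageq} to establish at most polynomial growth on vertical lines, so that choosing $M$ sufficiently large relative to $\Re(w)$ closes the argument. The $\kappa = 1$ case proceeds identically but uses \eqref{Voronoifunc1eq} in place of \eqref{Voronoifunc0eq}, so that $-\widehat{\JJ_1^{\hol}}$ replaces $\widehat{\JJ_0^+}$ and the $\widehat{\JJ_0^-}$ term disappears.
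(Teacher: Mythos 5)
Your proposal is correct and follows essentially the same route as the paper: set $n=1$ in the opposite-sign Kuznetsov formula, sum against $\lambda_{\chi,1}(m,0)m^{-w}$ via the Ramanujan identity, Mellin-invert the Kloosterman term into an integral of $\widehat{\Ks^{-}h}(s)\DD_{1,\chi}^{-}(s,w)$, and shift the contour to $\Re(s)=\sigma_1$, picking up the residue at $s=2(1-w)$ (with the factor $2$ from the Jacobian, as you note) and using \eqref{DswchipostVoronoieq} with $N=1$ on the shifted line. The justification of the shift via \eqref{KsMellinboundeq} and \eqref{Phrageq} matches the paper's argument as well.
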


\begin{proof}
We set $n = 1$ in the opposite sign Kuznetsov formula, \hyperref[Kuznetsovthm]{Theorem \ref*{Kuznetsovthm}}, multiply through by $\lambda_{\chi,1}(m,0) m^{-w}$ with $5/4 < \Re(w) < (M - 1)/2$, and sum over $m \in \N$. The Maa\ss{} cusp form and Eisenstein contributions are equal to \eqref{momentweq} by the Ramanujan identities
\begin{align*}
\sum_{m = 1}^{\infty} \frac{\lambda_f(m) \lambda_{\chi,1}(m,0)}{m^w} & = \frac{L(w,f) L(w,f \otimes \chi)}{L(2w,\chi)},	\\
\sum_{m = 1}^{\infty} \frac{\lambda(m,t) \lambda_{\chi,1}(m,0)}{m^w} & = \frac{\zeta(w + it) \zeta(w - it) L(w + it, \chi) L(w - it, \chi)}{L(2w,\chi)}.
\end{align*}
For the Kloosterman term, we use Mellin inversion and \hyperref[MellinKsextendlemma]{Lemma \ref*{MellinKsextendlemma}} to write
\[(\Ks^{-} h)(x) = \frac{1}{2\pi i} \int_{\sigma_0 - i\infty}^{\sigma_0 + i\infty} \widehat{\Ks^{-} h}(s) x^{-s} \, ds\]
for $1 - M < \sigma_0 < M - 1$, so that the Kloosterman term is
\[\frac{1}{2\pi i} \int_{\sigma_0 - i\infty}^{\sigma_0 + i\infty} \widehat{\Ks^{-} h}(s) \DD_{1,\chi}^{-}(s,w) \, ds = \sum_{c = 1}^{\infty} \frac{1}{2\pi i} \int_{\sigma_0 - i\infty}^{\sigma_0 + i\infty} \widehat{\Ks^{-} h}(s) \sum_{m = 1}^{\infty} \frac{\lambda_{\chi,1}(m,0)}{m^{\frac{s}{2} + w}} \frac{S(m,-1;c)}{c^{1 - s}} \, ds\]
for $2 - 2\Re(w) < \sigma_0 < -1/2$. The condition on $\sigma_0$ ensures the absolute convergence of this via the Weil bound, which allows us to interchange the order of integration and summation.

We observe that
\[\sum_{m = 1}^{\infty} \frac{\lambda_{\chi,1}(m,0)}{m^{\frac{s}{2} + w}} \frac{S(m,-1;c)}{c^{1 - s}} = \frac{1}{c^{1 - s}} \sum_{d \in (\Z/c\Z)^{\times}} e\left(-\frac{\overline{d}}{c}\right) L\left(\frac{s}{2} + w, E_{\chi,1}, \frac{d}{c}\right)\]
for $\Re(s) > 2 - 2\Re(w)$. We shift the contour to $\Re(s) = \sigma_1$ with $1 - M < \sigma_1 < -2\Re(w)$; the bounds \eqref{Phrageq} and \eqref{KsMellinboundeq} ensure that the ensuing integral converges absolutely. From \eqref{Dswchiresidueeq}, the integrand has a pole at $s = 2(1 - w)$; the ensuing sum over $c \in \N$ of this residue is \eqref{residueweq}. The contour integral is equal to \eqref{shiftedweq} by \eqref{DswchipostVoronoieq}, noting that the absolute convergence of the sum over $c \in \N$ and the integral over $\Re(s) = -\sigma_1$ is guaranteed via \eqref{Phrageq} and \eqref{KsMellinboundeq}, which allows us to interchange the order of integration and summation.
\end{proof}

Now we specialise \hyperref[Maassmomentwlemma]{Lemma \ref*{Maassmomentwlemma}} to $q = |D|$ and $\chi = \chi_D$.

\begin{corollary}
\label{momentidentitycor}
Let $D$ be a squarefree fundamental discriminant and let $\chi_D$ be the primitive quadratic character modulo $|D|$, so that $\chi_D(-1) = \sgn(D)$. Let $h(t)$ be an even holomorphic function in the strip $-2M < \Im(t) < 2M$ for some $M \geq 20$ with zeroes at $\pm (n - 1/2)i$ for $n \in \{1,\ldots,2M\}$ and satisfies $h(t) \ll (|t| + 1)^{-2M}$ in this region. Then the moment
\begin{equation}
\label{momentidentityeq}
\sum_{f \in \BB_0(\Gamma)} \frac{L\left(\frac{1}{2},f\right) L\left(\frac{1}{2},f \otimes \chi_D\right)}{L(1,\sym^2 f)} h(t_f) + \frac{1}{2\pi} \int_{-\infty}^{\infty} \left|\frac{\zeta\left(\frac{1}{2} + it\right) L\left(\frac{1}{2} + it,\chi_D\right)}{\zeta(1 + 2it)}\right|^2 h(t) \, dt
\end{equation}
is equal to the sum of the main term
\begin{equation}
\label{momentidentitymaineq}
2 L(1,\chi_D) \int_{-\infty}^{\infty} h(t) \, d_{\spec}t
\end{equation}
and the shifted convolution sum
\begin{multline}
\label{momentidentityshiftedeq}
\frac{2}{\sqrt{D}} \sum_{\pm} \sum_{D_1 D_2 = |D|} \sum_{\substack{m = 1 \\ m \neq \mp D_2}}^{\infty} \chi_1(\sgn(m \pm D_2)) \lambda_{\chi_1,\chi_2}(m,0) \lambda_{\chi_1,\chi_2}(|m \pm D_2|,0)	\\
\times \frac{1}{2\pi i} \int_{\sigma_1 - i\infty}^{\sigma_1 + i\infty} \widehat{\Ks^{-} h}(s) \widehat{\JJ_0^{\pm}}(1 - s) \left(\frac{m}{D_2}\right)^{\frac{s - 1}{2}} \, ds
\end{multline}
for $D > 0$, where $1 - M < \sigma_1 < -1$; the same holds for $D < 0$ with $\widehat{\JJ_0^+}$ and $\widehat{\JJ_0^-}$ replaced by $-\widehat{\JJ_1^{\hol}}$ and $0$ respectively.
\end{corollary}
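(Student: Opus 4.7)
The plan is to deduce Corollary 5.2 from Lemma 5.1 by specialization at $q = |D|$, $\chi = \chi_D$, and $w = 1/2$, combined with the root number trick: since the functional equation forces $L(1/2, f) = 0$ whenever $\epsilon_f = -1$, the equality $\epsilon_f L(1/2, f) = L(1/2, f)$ holds for every $f \in \BB_0(\Gamma)$, eliminating the sign factor inherited from the opposite sign Kuznetsov formula. The specialization $w = 1/2$ is justified by analytic continuation: both sides of the identity of Lemma 5.1 define meromorphic functions of $w$ that are holomorphic at $w = 1/2$, since $L(2w, \chi_D) = L(1, \chi_D) \neq 0$ there, the Mellin transform $\widehat{\Ks^{-} h}(2(1-w))$ is holomorphic in the strip of Lemma 4.3, and the shifted convolution integral is holomorphic provided $1 - M < \sigma_1 < -2\Re(w)$, which at $w = 1/2$ amounts exactly to $1 - M < \sigma_1 < -1$. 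After multiplying through by $L(2w, \chi_D)|_{w = 1/2} = L(1, \chi_D)$ and using the reality of $\chi_D$ to write $\zeta(1/2+it) L(1/2+it,\chi_D) \zeta(1/2-it) L(1/2-it,\chi_D)$ as $|\zeta(1/2+it) L(1/2+it,\chi_D)|^2$, the resulting left-hand side is precisely \eqref{momentidentityeq}.

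For the main term, I would use the quadratic Gauss sum identity $\tau(\chi_D)^2 = \chi_D(-1) |D| = \sgn(D) |D| = D$ and the resulting equality $\chi_D(-1) \tau(\chi_D)^2/|D| = 1$ to simplify the bracketed factor in \eqref{residueweq} at $w = 1/2$ to $2 L(1, \chi_D)$. A direct computation from the explicit formula for $\widehat{\JJ_t^{-}}(s)$ combined with the reflection identity $|\Gamma(1/2 + it)|^2 = \pi/\cosh \pi t$ gives $\widehat{\JJ_t^{-}}(1) = 1/2$, and hence $\widehat{\Ks^{-} h}(1) = \frac{1}{2} \int_{-\infty}^{\infty} h(t) \, d_{\spec} t$. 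Together with the $L(1, \chi_D)$ multiplication this yields \eqref{momentidentitymaineq}.

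For the shifted convolution sum, specialization at $w = 1/2$ collapses the factors $q_2^{w - 1/2}$, $m^{1/2 - w}$, and $d^{1 - 2w}$ of \eqref{shiftedweq} to unity and reduces $2 \tau(\chi_D)/|D|$ to $2/\sqrt{D}$ when $D > 0$. Using the reality of $\chi_1$, I would then rewrite $\overline{\chi_1}((m \pm D_2)/d) = \chi_1(\sgn(m \pm D_2)) \chi_1(|m \pm D_2|/d)$ and recognise $\sum_{d \mid (m \pm D_2)} \chi_1(|m \pm D_2|/d) \chi_2(d)$ as $\lambda_{\chi_1, \chi_2}(|m \pm D_2|, 0)$, producing \eqref{momentidentityshiftedeq}. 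The only subtle step is the $D < 0$ case: here $\tau(\chi_D) = i \sqrt{|D|}$ introduces an imaginary prefactor, and combined with $\chi_D(-1) = -1$ and the replacements $\widehat{\JJ_0^{+}} \to -\widehat{\JJ_1^{\hol}}$ and $\widehat{\JJ_0^{-}} \to 0$ these factors conspire to produce a real expression matching the stated form of \eqref{momentidentityshiftedeq}; this sign book-keeping is the only step that is not purely mechanical.
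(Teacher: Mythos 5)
Your overall route --- specialising \hyperref[Maassmomentwlemma]{Lemma \ref*{Maassmomentwlemma}} at $q = |D|$, $\chi = \chi_D$, continuing to $w = 1/2$, using the root number trick to drop $\epsilon_f$, and evaluating $\widehat{\Ks^{-} h}(1) = \tfrac{1}{2}\int h(t)\, d_{\spec}t$ together with $\tau(\chi_D)^2 = \chi_D(-1)|D| = D$ --- is exactly the paper's, and your computations (the bracket in \eqref{residueweq} collapsing to $2L(1,\chi_D)$ after multiplying through by $L(1,\chi_D)$, and the divisor sum reassembling into $\chi_1(\sgn(m\pm D_2))\lambda_{\chi_1,\chi_2}(|m\pm D_2|,0)$) are correct.

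There is, however, a genuine gap in your justification of the specialisation to $w = 1/2$. You assert that both sides of \hyperref[Maassmomentwlemma]{Lemma \ref*{Maassmomentwlemma}} ``define meromorphic functions of $w$ that are holomorphic at $w = 1/2$'' and then simply substitute $w = 1/2$ into the formul\ae{}. But the Eisenstein integral in \eqref{momentweq} is \emph{not} continued by the same integral expression: as $w$ moves from $\Re(w) > 5/4$ down to $1/2$, the poles of $\zeta(w \pm it)$ at $t = \mp i(1 - w)$ cross the contour $t \in \R$ (they pass through $t = 0$ at $w = 1$), so the analytic continuation of the integral equals the integral evaluated at $w = 1/2$ \emph{plus} two residue terms proportional to $h(\mp i(1 - w))$. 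These extra terms vanish at $w = 1/2$ only because of the hypothesis $h(\pm i/2) = 0$ --- a hypothesis your argument never invokes at this step, and without which the corollary would acquire additional polar contributions. Relatedly, the holomorphy of the spectral side for $\Re(w) \geq 1/2$ is not automatic: the Dirichlet series defining $L(w,f)L(w,f\otimes\chi_D)$ no longer converge absolutely there, so one needs the Cauchy--Schwarz inequality together with the spectral large sieve (or some polynomial bound on the central values played against the decay of $h$) to see that the sum over $\BB_0(\Gamma)$ and the $t$-integral converge locally uniformly and hence define a holomorphic function. Both points are needed to make ``plug in $w = 1/2$'' legitimate; with them supplied, your argument coincides with the paper's proof.
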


\begin{proof}
We use \hyperref[Maassmomentwlemma]{Lemma \ref*{Maassmomentwlemma}} and holomorphically extend to $w = 1/2$. As a function of the complex variable $w$, \eqref{momentweq} extends holomorphically to $w = 1/2$; we may use the Cauchy--Schwarz inequality and the spectral large sieve to see the absolute convergence of the Maa\ss{} and Eisenstein contributions for $\Re(w) \geq 1/2$. Two additional terms arise from the Eisenstein contribution due to the poles of $\zeta(w \pm it)$ for $t = \mp i(1 - w)$, yet these terms vanish at $w = 1/2$ since $h(\pm i/2) = 0$. The Maa\ss{} contribution only includes terms from even Maa\ss{} forms at $w = 1/2$ due to the fact that $L(\tfrac{1}{2},f) = 0$ when $\epsilon_f = -1$, as the root number of $f$ is $\epsilon_f$. The holomorphic extension of \eqref{residueweq} is clear, observing that
\[\widehat{\Ks^{-} h}(1) = \int_{-\infty}^{\infty} \widehat{\JJ_t^{-}}(1) h(t) \, d_{\spec}t = \frac{1}{2} \int_{-\infty}^{\infty} h(t) \, d_{\spec}t,\]
as is the holomorphic extension of \eqref{shiftedweq}, noting additionally that $\tau(\chi_D) = \sqrt{D}$.
\end{proof}

\subsection{An identity for a moment of \texorpdfstring{$L$}{L}-functions associated to holomorphic forms}

\begin{lemma}
\label{holinfty1momentwlemma}
Suppose that $q > 1$ and $N > 1$ are squarefree and coprime and $\chi$ is an odd primitive character modulo $q$ satisfying $\chi(p) = -1$ for all $p \mid N$. Let $h^{\hol} : 2\N \to \C$ be a compactly supported function vanishing at $k = 2$. Then for $5/4 < \Re(w) < 3/2$,
\begin{equation}
\label{Peterssoninfty1momentweq}
\sum_{N_1 N_2 = N} \frac{\varphi(N_2) \mu(N_2)}{N_2^{3/2}} \prod_{p \mid N_2} \left(1 - p^{1 - 2w}\right) \sum_{f \in \BB_{\hol}^{\ast}(\Gamma_0(N_1))} \eta_f(N_1) \lambda_f(N_2) \frac{L(w,f) L(w,f \otimes \chi)}{L^N(2w,\chi) L^{N_2}(1,\sym^2 f)} h^{\hol}(k_f)
\end{equation}
is equal to the sum of
\begin{equation}
\label{Peterssoninfty1deltaweq}
\frac{2 \widehat{\Ks^{\hol} h^{\hol}}(2(1 - w)) N^{\frac{3}{2} - w}}{L^N(2w,\chi)} \left(\frac{\mu(N) \tau(\chi)^2 L(1,\overline{\chi})}{q^{2w}} + L(1,\chi)\right)
\end{equation}
and of
\begin{multline}
\label{Peterssoninfty1shiftedweq}
-\frac{2 \tau(\chi) N^{\frac{3}{2} - w}}{q^{2w} L^N(2w,\chi)} \sum_{q_1 q_2 = q} \chi_1(N) q_2^{w - \frac{1}{2}} \sum_{\substack{m = 1 \\ m \equiv 0 \hspace{-.25cm} \pmod{N}}}^{\infty} \frac{\lambda_{\chi_1,\overline{\chi_2}}\left(\frac{m}{N},0\right)}{m^{\frac{1}{2} - w}}	\\
\times \sum_{d \mid (m - q_2)} d^{1 - 2w} \overline{\chi_1}\left(\frac{m - q_2}{d}\right) \chi_2(d) \frac{1}{2\pi i} \int_{\sigma_1 - i\infty}^{\sigma_1 + i\infty} \widehat{\Ks^{\hol} h^{\hol}}(s) \widehat{\JJ_1^{\hol}}(2 - s - 2w) \left(\frac{m}{q_2}\right)^{\frac{s - 1}{2}} \, ds,
\end{multline}
where $-3 < \sigma_1 < -2\Re(w)$.
\end{lemma}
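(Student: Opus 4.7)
The proof adapts that of \hyperref[Maassmomentwlemma]{Lemma \ref*{Maassmomentwlemma}}, replacing the opposite-sign Kuznetsov formula by the Petersson formula for the $(\infty,1)$-pair of cusps (\hyperref[Peterssoninfty1thm]{Theorem \ref*{Peterssoninfty1thm}}). First, I would specialise that theorem to $n = 1$: the divisibility conditions $N_2/\ell \mid n$ and $v_2 \mid (\ell, n)$ then force $\ell = N_2$ and $v_2 = 1$, so only the inner $v_1$-sum survives on the spectral side, with overall combinatorial prefactor $\varphi(N_2)/N_2^{3/2}$ and ratio $1/L^{N_2}(1, \sym^2 f)$ standing in front of $\lambda_f(N_2/v_1) \lambda_f(m/v_1)$. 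The geometric side becomes $\sqrt{N} \sum_{(c,N) = 1} c^{-1} S(m, \overline{N}; c) \, (\Ks^{\hol} h^{\hol})(\sqrt{m}/(c\sqrt{N}))$.

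Next, I would multiply both sides by $\lambda_{\chi,1}(m,0) m^{-w}$ and sum over $m \in \N$ for $5/4 < \Re(w) < 3/2$, this being justified by absolute convergence (using Deligne's bound for $\lambda_f$). On the spectral side, substituting $m = v_1 m'$ and exploiting multiplicativity of $\lambda_{\chi,1}$, the $m'$-sum separates into an Euler product equal to $L(w,f) L(w, f \otimes \chi)/L(2w,\chi)$ times local corrections at primes $p \mid v_1$. A direct local calculation at each $p \mid N_2$ (using $\chi(p) = -1$ for $p \mid N$, the squarefreeness of $N$, and the Hecke recursion $\lambda_f(p) \lambda_f(p^j) = \lambda_f(p^{j+1}) + \lambda_f(p^{j-1})$) collapses the resulting $v_1 \mid N_2$ sum into the single factor $\mu(N_2) \lambda_f(N_2) \prod_{p \mid N_2}(1 - p^{1 - 2w})$, while the ratio $L(2w,\chi)/L^N(2w,\chi) = L_N(2w,\chi)$ is absorbed using $\chi(p) = -1$ for $p \mid N$. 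This produces exactly \eqref{Peterssoninfty1momentweq}.

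For the geometric side, I would Mellin-invert $\Ks^{\hol} h^{\hol}$ on a line $\Re(s) = \sigma_0$ with $2 - 2\Re(w) < \sigma_0 < -1/2$. Since $h^{\hol}$ has finite support, $\Ks^{\hol} h^{\hol}$ is a finite Bessel sum, and $\widehat{\Ks^{\hol} h^{\hol}}(s)$ is meromorphic in $s$ with rapid decay on vertical strips. Interchanging orders of summation (legitimate via the Weil bound on the initial strip), the right-hand side becomes
\[\sqrt{N} \cdot \frac{1}{2\pi i} \int_{(\sigma_0)} \widehat{\Ks^{\hol} h^{\hol}}(s) \, \DD_{\overline{N}, \chi}^{+}(s, w) \, ds\]
with $\DD_{\overline{N}, \chi}^{+}$ as in \eqref{Doverlineswchidefeq}. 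I would then shift the contour to $\sigma_1 \in (-3, -2\Re(w))$, crossing the simple pole of $\DD_{\overline{N}, \chi}^{+}(s, w)$ at $s = 2(1 - w)$ specified by \eqref{Doverlineswchiresidueeq}. The residue produces the main term \eqref{Peterssoninfty1deltaweq}. On the shifted contour I would apply \eqref{DoverlineswchipostVoronoieq} in the odd case $\kappa = 1$, so that $\widehat{\JJ_0^{+}}$ is replaced by $-\widehat{\JJ_1^{\hol}}$ and the $\pm_1 = -$ branch vanishes; combining the sign $\chi(-1) = -1$ with the $-\widehat{\JJ_1^{\hol}}$ yields the overall $-$ sign of \eqref{Peterssoninfty1shiftedweq}, with the remaining Gauss sum and divisor sum matching the stated shifted convolution. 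Absolute convergence along $\Re(s) = \sigma_1$ is guaranteed by \eqref{Phrageq}, \eqref{JJ0boundseq}, and the rapid decay of $\widehat{\Ks^{\hol} h^{\hol}}(s)$.

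The principal obstacle is the spectral side bookkeeping: verifying that the $v_1 \mid N_2$ sum, weighted by $\nu(v_1) \mu(N_2/v_1) \lambda_f(N_2/v_1)$ together with the local corrections produced by $\lambda_{\chi,1}(v_1 m', 0)$, collapses cleanly to $\mu(N_2) \lambda_f(N_2) \prod_{p \mid N_2}(1 - p^{1 - 2w})$ times the $L$-function piece. This reduces to a local calculation at each $p \mid N$, where the hypotheses $\chi(p) = -1$ and squarefreeness of $N$ are essential in truncating the local Hecke polynomial appropriately. Once this algebraic identity is confirmed, the remainder of the argument is a direct transcription of the proof of \hyperref[Maassmomentwlemma]{Lemma \ref*{Maassmomentwlemma}}.
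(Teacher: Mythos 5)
Your proposal follows the paper's proof essentially verbatim: the Petersson formula for the $(\infty,1)$-pair of cusps specialised at $n=1$ (forcing $\ell = N_2$, $v_2 = 1$), summation against $\lambda_{\chi,1}(m,0) m^{-w}$ with the Hecke-relation local computation collapsing the $v_1$-sum on the spectral side, and Mellin inversion plus the analytic continuation of $\DD_{\overline{N},\chi}^{+}(s,w)$ on the geometric side, with the residue at $s = 2(1-w)$ giving \eqref{Peterssoninfty1deltaweq} and \eqref{DoverlineswchipostVoronoieq} in the odd case giving \eqref{Peterssoninfty1shiftedweq}. Two cosmetic slips that do not affect the structure: the Mellin-inverted Kloosterman term carries the factor $N^{(s+1)/2}$ rather than just $\sqrt{N}$ (this is where the $N^{3/2-w}$ in the final formulae comes from), and your sign heuristic $\chi(-1)\cdot(-\widehat{\JJ_1^{\hol}}) = +\widehat{\JJ_1^{\hol}}$ does not by itself produce the minus sign in \eqref{Peterssoninfty1shiftedweq}.
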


\begin{proof}
We set $n = 1$ in the Petersson formula associated to the $(\infty,1)$-pair of cusps, \hyperref[Peterssoninfty1thm]{Theorem \ref*{Peterssoninfty1thm}}, multiply by $\lambda_{\chi,1}(m,0) m^{-w}$ with $5/4 < \Re(w) < 3/2$, and sum over $m \in \N$. Upon making the change of variables $m \mapsto m v_1$, the resulting sum over $m$ occurring in the holomorphic cusp form contribution is
\[\sum_{\substack{m = 1 \\ (m,v_1) = 1}}^{\infty} \frac{\lambda_f(m) \lambda_{\chi,1}(m,0)}{m^{w}} \prod_{p \mid v_1} \sum_{j = 0}^{\infty} \frac{\lambda_f(p^j) \lambda_{\chi,1}(p^{j + 1},0)}{p^{jw}}.\]
We now use the Hecke relations: for $(mn,N_1) = 1$,
\begin{align*}
\lambda_{\chi,1}(mn,0) & = \sum_{a \mid (m,n)} \mu(a) \chi(a) \lambda_{\chi,1}\left(\frac{m}{a},0\right) \lambda_{\chi,1}\left(\frac{n}{a},0\right),	\\
\lambda_f(mn) & = \sum_{a \mid (m,n)} \mu(a) \lambda_f\left(\frac{m}{a}\right) \lambda_f\left(\frac{n}{a}\right).
\end{align*}
We take $m = p^j$ and $n = p$. Using the former identity, then the latter, we find that
\[\sum_{j = 0}^{\infty} \frac{\lambda_f(p^j) \lambda_{\chi,1}(p^{j + 1},0)}{p^{jw}} = \frac{\lambda_{\chi,1}(p,0) - \chi(p) \lambda_f(p) p^{-w}}{1 - \chi(p) p^{-2w}} \sum_{j = 0}^{\infty} \frac{\lambda_f(p^j) \lambda_{\chi,1}(p^j,0)}{p^{jw}}.\]
Using this, the Ramanujan identity
\[\sum_{m = 1}^{\infty} \frac{\lambda_f(m) \lambda_{\chi,1}(m,0)}{m^w} = \frac{L(w,f) L(w,f \otimes \chi)}{L^{N_1}(2w,\chi)}\]
and recalling the assumption that $\chi(p) = -1$ for all $p \mid N$, the holomorphic cusp form contribution is simplified to \eqref{Peterssoninfty1momentweq}. There is no delta term. The Kloosterman term is
\[\frac{1}{2\pi i} \int_{\sigma_0 - i\infty}^{\sigma_0 + i\infty} \widehat{\Ks^{\hol} h^{\hol}}(s) \DD_{\overline{N},\chi}^{+}(s,w) N^{\frac{s + 1}{2}} \, ds\]
for $2 - 2\Re(w) < \sigma_0 < -1/2$. We shift the contour to $\Re(s) = \sigma_1$ with $-3 < \sigma_1 < -2\Re(w)$. From \eqref{Doverlineswchiresidueeq}, the integrand has a pole at $s = 2(1 - w)$ with residue \eqref{Peterssoninfty1deltaweq}. The contour integral is equal to \eqref{Peterssoninfty1shiftedweq} by \eqref{DoverlineswchipostVoronoieq}.
\end{proof}

Now we specialise \hyperref[holinfty1momentwlemma]{Lemma \ref*{holinfty1momentwlemma}} to $N = 2$, $q = |D|$, and $\chi = \chi_D$.

\begin{corollary}
\label{holomorphicmomentidentitycor}
Let $D \equiv 1 \pmod{4}$ be a negative squarefree fundamental discriminant and let $\chi_D$ be the primitive quadratic character modulo $|D|$, so that $\chi_D(-1) = -1$. Let $h^{\hol} : 2\N \to \C$ be a compactly supported function vanishing at $k = 2$. Then the moment
\begin{equation}
\label{holomorphicmomentidentityeq}
\sum_{f \in \BB_{\hol}^{\ast}(\Gamma_0(2))} \left(-\eta_f(2)\right) \frac{L\left(\frac{1}{2},f\right) L\left(\frac{1}{2}, f \otimes \chi_D\right)}{L(1,\sym^2 f)} h^{\hol}(k_f)
\end{equation}
is equal to the sum of the main term
\begin{equation}
\label{holomorphicmomentidentitymaineq}
4 L(1,\chi_D) \sum_{\substack{k = 2 \\ k \equiv 0 \hspace{-.25cm} \pmod{2}}}^{\infty} \frac{k - 1}{2\pi^2} \left(-i^k\right) h^{\hol}(k)
\end{equation}
and the shifted convolution sum
\begin{multline}
\label{holomorphicmomentidentityshiftedeq}
\frac{4i}{\sqrt{|D|}} \sum_{D_1 D_2 = |D|} \chi_1(-2) \sum_{\substack{m = 1 \\ m \equiv 0 \hspace{-.25cm} \pmod{2}}}^{D_2 - 1} \lambda_{\chi_1,\chi_2}\left(\frac{m}{2},0\right) \lambda_{\chi_1,\chi_2}(D_2 - m,0)	\\
\times \frac{1}{2\pi i} \int_{\sigma_1 - i\infty}^{\sigma_1 + i\infty} \widehat{\Ks^{\hol} h^{\hol}}(s) \widehat{\JJ_1^{\hol}}(1 - s) \left(\frac{m}{D_2}\right)^{\frac{s - 1}{2}} \, ds,
\end{multline}
where $-3 < \sigma_1 < -1$.
\end{corollary}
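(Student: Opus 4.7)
The strategy is to follow the template of the proof of \hyperref[momentidentitycor]{Corollary \ref*{momentidentitycor}}, substituting \hyperref[holinfty1momentwlemma]{Lemma \ref*{holinfty1momentwlemma}} for \hyperref[Maassmomentwlemma]{Lemma \ref*{Maassmomentwlemma}}. I apply \hyperref[holinfty1momentwlemma]{Lemma \ref*{holinfty1momentwlemma}} with $N = 2$, $q = |D|$, and $\chi = \chi_D$, then holomorphically continue the resulting identity from the strip $5/4 < \Re(w) < 3/2$ down to $w = 1/2$. The hypotheses are satisfied: $N = 2$ and $q = |D|$ are coprime squarefree integers (using $D \equiv 1 \pmod{4}$ odd squarefree), $\chi_D$ is primitive modulo $|D|$, and $\chi_D(-1) = \sgn(D) = -1$ since $D < 0$. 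The remaining hypothesis $\chi_D(2) = -1$ is equivalent to $D \equiv 5 \pmod{8}$, which is automatic in the overarching setting $n \equiv 3 \pmod{8}$ with $D = -n$.

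The outer sum over $N_1 N_2 = 2$ in \eqref{Peterssoninfty1momentweq} has two contributions: the term $N_1 = 1$, $N_2 = 2$ carries the factor $1 - 2^{1 - 2w}$, which vanishes at $w = 1/2$, so only the term $N_1 = 2$, $N_2 = 1$ survives and yields the sum over $f \in \BB_{\hol}^{\ast}(\Gamma_0(2))$ weighted by $\eta_f(2) / L^2(2w, \chi_D)$. Multiplying both sides of the resulting identity by $-L^2(2w, \chi_D)$ converts the spectral weight to $-\eta_f(2)$ and absorbs the $L^N(2w, \chi_D)^{-1}$ factor appearing in \eqref{Peterssoninfty1deltaweq} and \eqref{Peterssoninfty1shiftedweq}, matching the normalisation in \eqref{holomorphicmomentidentityeq}. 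Both sides extend holomorphically through $w = 1/2$ by the same considerations as in \hyperref[momentidentitycor]{Corollary \ref*{momentidentitycor}}; in particular, the compact support of $h^{\hol}$ and its vanishing at $k = 2$ handle the spectral side, while the arithmetic side is visibly meromorphic with no pole at $w = 1/2$.

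At $w = 1/2$, the delta term \eqref{Peterssoninfty1deltaweq} simplifies via $\tau(\chi_D)^2 = \chi_D(-1)|D| = -|D|$ for $D < 0$ and $\mu(2) = -1$: the parenthesised expression collapses to $2 L(1, \chi_D)$, while $L^2(1, \chi_D) = \tfrac{3}{2} L(1, \chi_D)$; combined with the evaluation $\widehat{\JJ_k^{\hol}}(1) = i^k/2$ for even $k \geq 2$ (a direct consequence of the identity $\widehat{\JJ_k^{\hol}}(s) = (2\pi)^{-s} \pi (-1)^{k/2} \Gamma((s+k-1)/2)/\Gamma((k-s+1)/2)$), multiplication by $-L^2(1, \chi_D)$ yields \eqref{holomorphicmomentidentitymaineq}. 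For the shifted term \eqref{Peterssoninfty1shiftedweq}, substituting $\tau(\chi_D) = i\sqrt{|D|}$ produces the prefactor $4i/\sqrt{|D|}$; the inner divisor sum $\sum_{d \mid (m - q_2)} \chi_1((m - q_2)/d) \chi_2(d)$ evaluates to $\chi_1(\sgn(m - q_2)) \lambda_{\chi_1, \chi_2}(|m - q_2|, 0)$, and for $m < q_2$ this yields $\chi_1(-1) \lambda_{\chi_1, \chi_2}(q_2 - m, 0)$, whereby the external $\chi_1(2)$ combines with $\chi_1(-1)$ to produce the claimed $\chi_1(-2)$ in \eqref{holomorphicmomentidentityshiftedeq}.

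The main obstacle I anticipate is justifying the restricted range $1 \leq m \leq D_2 - 1$ in \eqref{holomorphicmomentidentityshiftedeq}: the sum over $m \geq 1$ in \eqref{Peterssoninfty1shiftedweq} also contains $m \geq D_2$ contributions, which must either vanish or be absorbed into the main term. I expect this to follow by shifting the contour of the Mellin--Barnes integral $\frac{1}{2\pi i} \int_{(\sigma_1)} \widehat{\Ks^{\hol} h^{\hol}}(s) \widehat{\JJ_1^{\hol}}(1 - s) (m/q_2)^{(s-1)/2} \, ds$ in the opposite direction when $m > q_2$: since $(m/q_2)^{(s-1)/2}$ now decays as $\Re(s) \to -\infty$, one shifts leftward past the poles of $\widehat{\Ks^{\hol} h^{\hol}}(s)$ coming from the $\widehat{\JJ_k^{\hol}}$ in the support of $h^{\hol}$, and the residues should reorganise to vanish. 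Aside from this bookkeeping, the argument is a direct parallel of the derivation of \hyperref[momentidentitycor]{Corollary \ref*{momentidentitycor}}.
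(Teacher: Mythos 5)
Your proposal is correct and follows the paper's proof essentially verbatim: apply \hyperref[holinfty1momentwlemma]{Lemma \ref*{holinfty1momentwlemma}} with $N = 2$, $q = |D|$, $\chi = \chi_D$, continue holomorphically to $w = 1/2$ (where the factor $1 - 2^{1 - 2w}$ kills the $N_2 = 2$ term), and multiply through by $-L^2(2w,\chi_D)$; your evaluations $\tau(\chi_D) = i\sqrt{|D|}$, $\widehat{\JJ_k^{\hol}}(1) = i^k/2$, and the extraction of $\chi_1(-2)$ from $\chi_1(2)\chi_1(\sgn(m - D_2))$ all check out. The one step you leave as an expectation — the vanishing for $m > D_2$ — does go by shifting the contour to the left, but not because residues "reorganise to vanish": there are simply no residues, since the poles of $\widehat{\JJ_k^{\hol}}(s)$ at $s = 3 - k - 2\ell$ (odd integers $\leq -3$ for the even weights $k \geq 4$ in the support of $h^{\hol}$) are cancelled by the zeroes of $\widehat{\JJ_1^{\hol}}(1 - s)$ at $s = 1 - 2\ell$, so the integrand is holomorphic in the entire left half-plane and the integral vanishes.
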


\begin{proof}
We use \hyperref[holinfty1momentwlemma]{Lemma \ref*{holinfty1momentwlemma}} and holomorphically extend to $w = 1/2$. As a function of the complex variable $w$, \eqref{Peterssoninfty1momentweq} extends holomorphically to $w = 1/2$. Moreover, the product over $p \mid N_2$ vanishes at $w = 1/2$ unless $N_2 = 1$. The holomorphic extension of the remaining terms are also clear. We then multiply both sides by $-L^N(1,\chi_D) = -L(1,\chi_D) \nu(N)/N$. It remains to note that for $m > D_2$, the integral occurring in the shifted convolution sum vanishes, since we may shift the contour to the left, noting that the poles of $\widehat{\JJ_k^{\hol}}(s)$ at $s = 3 - k - 2\ell$ are cancelled by the zeroes of $\widehat{\JJ_1^{\hol}}(1 - s)$ at $s = 1 - 2\ell$ for $\ell \in \N$.
\end{proof}

\begin{remark}
The condition that $h^{\hol}$ vanishes at $k = 2$ may be removed with the effect of contributing an additional main term equal to
\[-\frac{24 \sqrt{|D|} L(1,\chi_D)^2}{\pi^3} h^{\hol}(2).\]
See \cite[Theorem 1]{MR12} and \cite[Theorem 6.5]{FW09}, where this is observed via the relative trace formula. We expect that one can prove this via the method of analytic continuation with a little extra care by using the ``Hecke trick'' of replacing $k$ with a complex variable having large real part, then meromorphically extending to $k = 2$, as is done in \cite{BF17}.
\end{remark}

\section{Bounds for moments of \texorpdfstring{$L$}{L}-functions}
\label{Proofmomentsboundsect}

We now prove the bounds for moments of $L$-functions stated in \hyperref[momentprop]{Proposition \ref*{momentprop}}. The proofs of these bounds depend on the ranges involved. For the range $T \gg |D|^{1/12}$, our key inputs are the exact identities for moments of $L$-functions stated in \hyperref[momentidentitycor]{Corollaries \ref*{momentidentitycor}} and \ref{holomorphicmomentidentitycor}. For the range $T \ll |D|^{1/12}$, on the other hand, our key inputs are the following bounds for third moments of $L$-functions.

\begin{lemma}[{Young \cite{You17}, Petrow--Young \cite{PY19,PY20}}]
\label{lem:PYcubic}
Let $D$ be a squarefree fundamental discriminant, and let $\chi_D$ denote the quadratic character modulo $|D|$.
\begin{enumerate}[leftmargin=*]
\item[\textnormal{(1)}] For $T \geq 1$, we have that
\begin{equation}
\label{PYMaasscubiceq}
\sum_{\substack{f \in \BB_0(\Gamma) \\ T \leq t_f \leq 2T}} \frac{L\left(\frac{1}{2},f \otimes \chi_D\right)^3}{L(1,\sym^2 f)} + \frac{1}{2\pi} \int\limits_{T \leq |t| \leq 2T} \left|\frac{L\left(\frac{1}{2} + it,\chi_D\right)^3}{\zeta(1 + 2it)}\right|^2 \, dt \ll_{\e} |D|^{1 + \e} T^{2 + \e}.
\end{equation}
\item[\textnormal{(2)}] For $D < 0$ and $T \geq 1$, we have that
\begin{equation}
\label{PYholcubiceq}
\sum_{\substack{f \in \BB_{\hol}^{\ast}(\Gamma_0(2)) \\ T \leq k_f \leq 2T \\ k_f \equiv 2 \hspace{-.25cm} \pmod{4}}} \frac{L\left(\frac{1}{2},f \otimes \chi_D\right)^3}{L(1,\sym^2 f)} \ll_{\e} |D|^{1 + \e} T^{2 + \e}.
\end{equation}
\end{enumerate}
\end{lemma}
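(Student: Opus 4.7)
My plan is to derive both bounds from the cubic moment estimates of Young \cite{You17} and Petrow--Young \cite{PY19,PY20}, whose proofs go through roughly as follows. First, I would apply an approximate functional equation to $L(\tfrac{1}{2}, f \otimes \chi_D)^3$, obtaining a Dirichlet series of length $\sim (|D|T)^{3/2}$ in Hecke eigenvalues of $f$ convolved with $\chi_D$-twisted ternary divisor coefficients. After inserting a smooth spectral cutoff localising $t_f$ (respectively $k_f$) to $[T, 2T]$ and the normalising factor $1/L(1,\sym^2 f)$, I would apply the Kuznetsov formula of \hyperref[Kuznetsovthm]{Theorem \ref*{Kuznetsovthm}} (for part (1)) or a Petersson formula for newforms at level~$2$ (for part (2), as developed in \cite{PY20}). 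This converts the spectral sum into a diagonal main term of size $|D|^{1+\varepsilon} T^{2+\varepsilon}$ plus an off-diagonal expression in Kloosterman sums weighted by $\chi_D$-twisted divisor coefficients. Crucially, in part (1) the Eisenstein integral is captured automatically on the spectral side of Kuznetsov and is treated in parallel with the cuspidal sum.

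The heart of the argument is a spectral reciprocity step. Applying Voronoi summation against the Eisenstein series $E_{\chi_D, \chi_D}$, as in \hyperref[Voronoilemma]{Lemma \ref*{Voronoilemma}}, to each $\chi_D$-twisted divisor sum converts the off-diagonal expression into a fourth moment of Dirichlet $L$-functions along a coset of the group of primitive characters modulo $c$, twisted by $\chi_D$. The Weyl-quality bound of shape $(|D|c)^{1+\varepsilon}$ for this fourth moment is Petrow--Young's principal theorem. Combined with the Weil bound for the residual Kloosterman sums and integration against the rapidly decaying Bessel transforms $\widehat{\Ks^{-} h}$ or $\widehat{\Ks^{\hol} h^{\hol}}$ (whose decay in vertical strips is recorded in \hyperref[MellinKsextendlemma]{Lemma \ref*{MellinKsextendlemma}}), this yields the asserted overall estimate $|D|^{1+\varepsilon} T^{2+\varepsilon}$.

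The principal obstacle is maintaining the Weyl-quality saving uniformly in the spectral or weight parameter $T$ throughout the full range $T \geq 1$, and especially near the transition $T \asymp |D|^{1/4}$ where the spectral and conductor contributions to the analytic conductor balance. This uniformity demands a careful stationary-phase analysis of the Bessel transforms above, uniform control of the hyper-Kloosterman sums that emerge at the prime $|D|$ after Voronoi, and---for part~(2)---an inclusion--exclusion passing between the newform sum and the Petersson formula on the ambient space to remove the oldform contributions. These ingredients form the technical core of \cite{PY19,PY20}, which I would cite rather than reproduce.
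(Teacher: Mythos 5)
Your overall strategy---reduce both bounds to the cubic-moment theorems of Young and Petrow--Young and cite the Conrey--Iwaniec machinery for the technical core---is the same as the paper's, and you correctly identify the two genuine difficulties (uniformity in $T$ and the passage between newforms and the full Petersson formula at level $2$). For part (1) you can in fact stop much earlier: the bound \eqref{PYMaasscubiceq} is verbatim \cite[Theorem 1.1]{You17}, Eisenstein contribution included, so no re-derivation is needed.

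The gap is in part (2) for $T$ large. You propose to ``cite rather than reproduce'' the uniform-in-$T$, level-$2$, newform cubic moment, but no such statement exists in the cited literature: \cite[Theorem 1]{PY19} gives the level-$2$ newform bound only for bounded (or $|D|^{\delta}$-sized) weight, while the $T$-uniform archimedean analysis (the stationary-phase treatment of the Bessel transforms $B^{\mathrm{holo}}$, corrected in \cite[Section 13]{PY20}) is carried out only for level $1$. The two must actually be spliced together --- the approximate functional equation and weights $\omega_f$ of \cite{PY19} combined with the dyadic decomposition, Poisson summation, and separation into arithmetic and analytic parts, with the analytic part handled as in \cite{You17,PY20} --- and this is precisely what the paper's proof sketches; it cannot be discharged by citation alone. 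A secondary, non-fatal point: your description of the off-diagonal as a fourth moment of Dirichlet $L$-functions along a coset reflects a later formulation of the Petrow--Young method; in \cite{PY19,PY20} as cited, the arithmetic part is instead bounded via complete character sums in the style of Conrey--Iwaniec, so your sketch slightly misdescribes the proofs you intend to invoke, though this does not affect the truth of the lemma.
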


\begin{proof}
These bounds follow, with a little effort from work of Young \cite{You17} and Petrow and Young \cite{PY19,PY20}, which build on the work of Conrey and Iwaniec \cite{CI00}. The bound \eqref{PYMaasscubiceq} is \cite[Theorem 1.1]{You17}. For $T \ll |D|^{\delta}$ for some sufficiently small $\delta > 0$, the bound \eqref{PYholcubiceq} is a special case of \cite[Theorem 1]{PY19} with $r = 2$ and $q = |D|$. To prove the bound \eqref{PYholcubiceq} for $T \gg |D|^{\delta}$, the proof of \cite[Theorem 1]{PY19} must be modified to give explicit dependence on $T$; this is done in \cite{You17,PY19} for level $1$ forms, whereas we require this for level $2$ forms.

We briefly sketch how the methods of these papers are combined to prove \eqref{PYholcubiceq} for $T \gg |D|^{\delta}$. The bound \eqref{PYholcubiceq} is implied by the bound $\MM(2,|D|) \ll_{\e} |D|^{\e} T^{1 + \e} \Delta$, where
\[\MM(2,|D|) \coloneqq \sum_{D_1 D_2 = |D|} \sum_{f \in \BB_{\hol}^{\ast}(2D_1)} \omega_f L\left(\frac{1}{2}, f \otimes \chi_D\right)^3 w\left(\frac{k_f - 1 - 2T}{\Delta}\right)\]
with $\omega_f$ equal to certain weights as in \cite[(65)]{PY19} satisfying $\omega_f \ll_{\e} |D|^{-1 + \e} k_f^{\e}$ and the test function $w$ as in \cite[Section 4]{You17} with $\Delta = T^{\e}$, so that $w$ is smooth and supported on $[\tfrac{1}{2},3]$. Via the approximate functional equation, we write $\MM(2,|D|)$ as in \cite[(69)]{PY19}, taking $r = 2$ and $q = |D|$, multiplied by $w(\tfrac{k_f - 1 - 2T}{\Delta})$ and summed over $k \in 2\N$ satisfying $\tfrac{1}{2} \leq \tfrac{k_f - 1 - 2T}{\Delta} \leq 3$. We may restrict the sums over $m$ and $n$ in \cite[(69)]{PY19} to $m \ll |D|^{2 + \e} T^{2 + \e} d^{-2}$ and $n \ll |D|^{1 + \e} T^{1 + \e}$ due to the rapid decay of the functions $V_1$ and $V_2$ arising from the approximate functional equation. We proceed as in \cite[Section 8.3]{PY19}, where now $Y$ is a large power of $|D|T$ rather than just of $|D|$; in this way, upon applying the Petersson formula, we are led to bound the term $\SS$ as in \cite[(80)]{PY19} except with the Bessel function $J_{\kappa - 1}$ in \cite[(80)]{PY19} replaced by the weighted sum of Bessel functions $B^{\mathrm{holo}}$ as in \cite[(5.10)]{You17}. We continue to follow \cite{PY19} by opening up the divisor function $\tau(m) = \sum_{n_1 n_1 = m} 1$ appearing in $\SS$ and applying a dyadic partition of unity to the sums over $n_1,n_2,n,c$ appearing in $\SS$, we break up $\SS$ into summands $\SS_{N_1,N_2,N_3,C}$ as in \cite[(87)]{PY19}. We then apply Poisson summation, as in \cite[Section 8.6]{PY19}, which breaks up these sums into an arithmetic part, defined in \cite[(91)]{PY19}, and an analytic part, defined in \cite[(92)]{PY19} except with $J_{\kappa - 1}$ replaced by $B^{\mathrm{holo}}$. We invoke the method of \cite[Section 9]{PY19} unaltered to deal with the arithmetic part, while the method of \cite[Section 8]{You17}, mildly corrected expanded upon further in \cite[Section 13]{PY20}, deals with the analytic part. These methods combine to complete the proof of the bound \eqref{PYholcubiceq}.
\end{proof}

\begin{proof}[Proof of {\hyperref[momentprop]{Proposition \ref*{momentprop} (1)} for $T \ll |D|^{1/12}$}]
We apply H\"{o}lder's inequality with exponents $(2,3,6)$ to the moment
\begin{equation}
\label{momenteq}
\sum_{\substack{f \in \BB_0(\Gamma) \\ T \leq t_f \leq 2T}} \frac{L\left(\frac{1}{2},f\right) L\left(\frac{1}{2},f \otimes \chi_D\right)}{L(1,\sym^2 f)} + \frac{1}{2\pi} \int\limits_{T \leq |t| \leq 2T} \left|\frac{\zeta\left(\frac{1}{2} + it\right) L\left(\frac{1}{2} + it,\chi_D\right)}{\zeta(1 + 2it)}\right|^2 \, dt,
\end{equation}
making use of the fact that $L(\tfrac{1}{2},f)$ and $L(\tfrac{1}{2},f \otimes \chi_D)$ are nonnegative via the work of Waldspurger \cite{Wal81}. A standard application of the spectral large sieve in conjunction with the approximate functional equation implies that
\begin{equation}
\label{secondmomentboundeq}
\sum_{\substack{f \in \BB_0(\Gamma) \\ T \leq t_f \leq 2T}} \frac{L\left(\frac{1}{2},f\right)^2}{L(1,\sym^2 f)} + \frac{1}{2\pi} \int\limits_{T \leq |t| \leq 2T} \left|\frac{\zeta\left(\frac{1}{2} + it\right)^2}{\zeta(1 + 2it)}\right|^2 \, dt \ll_{\e} T^{2 + \e}.
\end{equation}
Next, we have the hybrid Weyl-strength subconvex bounds \eqref{PYMaasscubiceq} for the third moment of $L(\tfrac{1}{2},f \otimes \chi_D)$ and the sixth moment of $L(\tfrac{1}{2} + it,\chi_D)$. Finally, the Weyl law implies that
\begin{equation}
\label{Weylboundeq}
\sum_{\substack{f \in \BB_0(\Gamma) \\ T \leq t_f \leq 2T}} \frac{1}{L(1,\sym^2 f)} + \frac{1}{2\pi} \int\limits_{T \leq |t| \leq 2T} \left|\frac{1}{\zeta(1 + 2it)}\right|^2 \, dt \ll T^2,
\end{equation}
which is a straightforward application of the Kuznetsov formula. Combined, this yields the bound $O_{\e}(|D|^{1/3 + \e} T^{2 + \e})$ for the moment \eqref{momenteq}.
\end{proof}

\begin{proof}[Proof of {\hyperref[momentprop]{Proposition \ref*{momentprop} (1)} for $T \gg |D|^{1/12}$}]
We use \hyperref[momentidentitycor]{Corollary \ref*{momentidentitycor}} with the test function
\[h(t) \coloneqq e^{-\frac{t^2}{T^2}} \prod_{j = 1}^{2M} \left(\frac{t^2 + \left(j - \frac{1}{2}\right)^2}{T^2}\right)^2\]
as in \cite[(1.16)]{BLM19}. This satisfies $h(t) \gg 1$ for $t \in [T,2T]$, so that the moment \eqref{momenteq} is bounded by a constant multiple of \eqref{momentidentityeq}, while the main term \eqref{momentidentitymaineq} satisfies
\[2 L(1,\chi_D) \int_{-\infty}^{\infty} h(t) \, d_{\spec}t \ll_{\e} |D|^{\e} T^{2 + \e}.\]
To bound the shifted convolution sum \eqref{momentidentityshiftedeq}, we use the bounds \eqref{JJ0boundseq} for $\widehat{\JJ_0^{\pm}}(1 - s)$ and $\widehat{\JJ_1^{\hol}}(1 - s)$ together with the bound
\[\widehat{\Ks^{-} h}(s) \ll_{\sigma} T^{1 + \sigma} (|\tau| + 1)^{-2M}\]
for $s = \sigma + i\tau$ with $-M < \sigma < M$, which follows from \cite[Lemma 4]{BLM19}. From this, we find that the shifted convolution sum is bounded by $O_{\e}(|D|^{1/2 + \e})$ upon taking $\sigma_1 = -1 - \e$ in \eqref{momentidentityshiftedeq} and using the bounds $\lambda_{\chi_1,\chi_2}(m,0) \ll_{\e} m^{\e}$ and $L(1,\chi_D) \gg_{\e} |D|^{-\e}$.
\end{proof}

\begin{proof}[Proof of {\hyperref[momentprop]{Proposition \ref*{momentprop} (2)}} for $T \ll |D|^{1/12}$]
This follows exactly as in the proof of \hyperref[momentprop]{Proposition \ref*{momentprop} (1)} for $T \ll |D|^{1/12}$. We apply H\"{o}lder's inequality with exponents $(2,3,6)$ to the moment
\begin{equation}
\label{momenteq'}
\sum_{\substack{f \in \BB_{\hol}^{\ast}(\Gamma_0(2)) \\ T \leq k_f \leq 2T \\ k_f \equiv 2 \hspace{-.25cm} \pmod{4}}} \frac{L\left(\frac{1}{2},f\right) L\left(\frac{1}{2},f \otimes \chi_D\right)}{L(1,\sym^2 f)},
\end{equation}
again making use of the fact that $L(\tfrac{1}{2},f)$ and $L(\tfrac{1}{2},f \otimes \chi_D)$ are nonnegative via the work of Waldspurger \cite{Wal81}. The analogue of the bound \eqref{secondmomentboundeq} for holomorphic cusp forms again holds via the large sieve and the approximate functional equation, while we have the hybrid Weyl-strength subconvex bounds \eqref{PYholcubiceq} for the third moment of $L(\tfrac{1}{2},f \otimes \chi_D)$. Finally, the analogue of the bound \eqref{Weylboundeq} for holomorphic cusp forms is valid since there are $\ll k$ elements of $\BB_{\hol}^{\ast}(\Gamma_0(2))$ of weight $k$; it is a straightforward consequence of the Petersson formula. The desired bound $O_{\e}(|D|^{1/3 + \e} T^{2 + \e})$ for \eqref{momenteq'} thereby follows.
\end{proof}

\begin{proof}[Proof of {\hyperref[momentprop]{Proposition \ref*{momentprop} (2)}} for $T \gg |D|^{1/12}$]
We use \hyperref[holomorphicmomentidentitycor]{Corollary \ref*{holomorphicmomentidentitycor}} with the test function
\[h^{\hol}(k) \coloneqq \frac{1 - i^k}{2} \tilde{h}\left(\frac{k - 1}{T}\right),\]
so that $h^{\hol}(k) = 0$ if $k \equiv 0 \pmod{4}$, where $\tilde{h} : \R \to [0,1]$ is a smooth function supported on $(1/2,5/2)$, equal to $1$ on $[1,2]$, and satisfying $\tilde{h}^{(j)}(x) \ll_j 1/x^j$. Since $L(\tfrac{1}{2},f) = 0$ when $\eta_f(2) = 1$ and $k_f \equiv 2 \pmod{4}$, as the root number of $f$ is $i^{k_f} \eta_f(2)$ \cite[Lemma A.2]{HK20}, the moment
\begin{equation}
\label{holomorphicmomenteq}
\sum_{\substack{f \in \BB_{\hol}^{\ast}(\Gamma_0(2)) \\ T \leq k_f \leq 2T \\ k_f \equiv 2 \hspace{-.25cm} \pmod{4}}} \frac{L\left(\frac{1}{2},f\right) L\left(\frac{1}{2}, f \otimes \chi_D\right)}{L(1,\sym^2 f)}
\end{equation}
is bounded by \eqref{holomorphicmomentidentityeq}, while the main term \eqref{holomorphicmomentidentitymaineq} is readily seen to be $O_{\e}( |D|^{\e} T^2)$.

It remains to bound the shifted convolution sum \eqref{holomorphicmomentidentityshiftedeq}. Via Mellin inversion, it suffices to show that
\begin{equation}
\label{holshiftedtoboundeq}
\sum_{D_1 D_2 = |D|} \sum_{m < D_2} \left|\int_{0}^{\infty} \left(\Ks^{\hol} h^{\hol}\right)(x) J_0\left(4\pi \sqrt{\frac{m}{D_2}} x\right) \, dx\right|
\end{equation}
is $O_{\e}(|D|^{1 + \e} T^{\e})$. We first note that
\begin{equation}
\label{KsholtoLegendreeq}
\int_{0}^{\infty} \left(\Ks^{\hol} h^{\hol}\right)(x) J_0\left(4\pi \sqrt{\frac{m}{D_2}} x\right) \, dx = \frac{1}{4\pi^2} \sum_{\substack{k = 2 \\ k \equiv 0 \hspace{-.25cm} \pmod{2}}}^{\infty} (k - 1) i^{-k} P_{\frac{k}{2} - 1}\left(1 - \frac{2m}{D_2}\right) h^{\hol}(k)
\end{equation}
for $m < D_2$ by \cite[6.512.4]{GR15}. At this point, we observe that we can alter $h^{\hol}(k)$ to be $-i^k \tilde{h}((k - 1)/T)$ without changing the required estimates by using the fact that for $k \in 2\N$,
\begin{equation}
\label{Legendretrickeq}
P_{\frac{k}{2} - 1}\left(1 - \frac{2m}{D_2}\right) = i^k P_{\frac{k}{2} - 1}\left(1 - \frac{2(D_2 - m)}{D_2}\right),
\end{equation}
and making the change of variables $m \mapsto D_2 - m$. We now proceed to bound \eqref{holshiftedtoboundeq} by breaking up this double sum and integral into different ranges and bounding each range separately.

\noindent\textit{Range I: $m \leq D_2/T^{2 - \e}$.} We merely note that $|P_m(\cos \theta)| \leq 1$; \eqref{KsholtoLegendreeq} shows that these terms are bounded by $O_{\e}(|D|^{1 + \e} T^{\e})$.

\noindent\textit{Range II: $D_2/T^{2 - \e} < m < D_2$ and $x \leq \frac{T}{4\pi e} \exp(-\frac{5 \log T}{T})$.} We observe that
\[\left(\Ks^{\hol} h^{\hol}\right)(x) \ll T \sum_{\frac{T}{2} \leq k \leq \frac{5T}{2}} \left|J_k(4\pi x)\right| \ll \frac{1}{T}\]
since
\[J_k(4\pi x) \ll \frac{(2\pi x)^k}{k!} \ll \frac{1}{\sqrt{k}} \left(\frac{2\pi e x}{k}\right)^k \ll \frac{1}{T^3}\]
by \cite[8.440]{GR15} and Stirling's formula. Together with the bound \eqref{J0zboundseq} for $J_0(x)$, this shows that the contribution of this to \eqref{holshiftedtoboundeq} is $O_{\e}(|D|^{1 + \e} T^{\e})$.

\noindent\textit{Range III: $D_2/T^{2 - \e} < m < D_2$ and $x \geq T^2$.} We must bound
\begin{equation}
\label{xlargetoboundeq}
T \sum_{D_1 D_2 = |D|} \sum_{\frac{D_2}{T^{2 - \e}} < m < D_2} \sum_{\frac{T}{2} \leq k \leq \frac{5T}{2}} \left|\int_{T^2}^{\infty} J_k(4\pi x) J_0\left(4\pi \sqrt{\frac{m}{D_2}} x\right) \, dx\right|.
\end{equation}
We claim that
\begin{equation}
\label{xlargeinttoboundeq}
\int_{T^2}^{\infty} J_k(4\pi x) J_0\left(4\pi \sqrt{\frac{m}{D_2}} x\right) \, dx \ll \frac{D_2^{7/4}}{m^{3/4} (D_2 - m) T^2}.
\end{equation}
Inserting this bound into \eqref{xlargetoboundeq} yields the bound $O_{\e}(|D|^{1 + \e} T^{\e})$. To prove the bound \eqref{xlargeinttoboundeq}, we write
\begin{align}
\label{WatsonJkeq}
J_k(4\pi x) & = \frac{e(2x)}{\sqrt{x}} W_k(4\pi x) + \frac{e(-2x)}{\sqrt{x}} \overline{W_k}(4\pi x),	\\
\label{WatsonWkeq}
W_k(x) & \coloneqq \frac{e\left(\frac{k}{4} - \frac{1}{8}\right)}{\Gamma\left(k + \frac{1}{2}\right)} \frac{1}{\sqrt{2} \pi} \int_{0}^{\infty} e^{-y} y^{k + \frac{1}{2}} \left(1 + \frac{iy}{2x}\right)^{k - \frac{1}{2}} \, \frac{dy}{y}
\end{align}
via \cite[Section 7.3]{Wat44}. We integrate by parts in \eqref{xlargeinttoboundeq}, antidifferentiating $e(2(1 \pm \sqrt{m/D_2})x)$ and differentiating the rest. Since $1 + x^2 \leq e^x$ for $x > 0$, we have that
\[W_k(x) \ll \frac{1}{\Gamma\left(k + \frac{1}{2}\right)} \int_{0}^{\infty} e^{-y \left(1 - \frac{k - \frac{1}{2}}{4x}\right)} y^{k + \frac{1}{2}} \, \frac{dy}{y} = \left(1 - \frac{k - \frac{1}{2}}{4x}\right)^{-k - \frac{1}{2}}.\]
In particular, $W_k(x) \ll 1$ for $x \gg k^2$. Similarly, $W_k'(x) \ll k^2/x^2 \ll 1/x$ for $x \gg k^2$, while $W_0(x) \ll 1$ and $W_0'(x) \ll 1/x^2 \ll 1/x$ for $x \geq 1$. From this, we deduce \eqref{xlargeinttoboundeq}.

\noindent\textit{Range IV: $D_2/T^{2 - \e} < m < D_2$ and $\frac{T}{4\pi e} \exp(-\frac{5 \log T}{T}) < x < T^2$.} We use the method of \cite[Section 5.5]{Iwa97}, which shows that
\begin{equation}
\label{Ksholidentityeq}
\left(\Ks^{\hol} h^{\hol}\right)(x) = - \frac{T^2 i}{\pi} \int_{-\infty}^{\infty} \sin (4\pi x \sin (2\pi u)) \int_{-\infty}^{\infty} \tilde{h}(y) y e(-Tuy) \, dy \, du.
\end{equation}
We break up the integral over $u$ into the ranges $|u| \leq v$ and $|u| > v$ for a parameter $v \in (0,1)$ to be chosen. For the portion of the integral with $|u| > v$, we integrate by parts $j + 1$ times with respect to $y$, antidifferentiating $e(-Tuy)$ and differentiating the rest, giving rise to a term of size $O_j(T^{1 - j} v^{-j})$ for any $j \in \N$. Next, we use a Taylor expansion to write
\[\sin(4\pi x \sin (2\pi u)) = \sin\left(8 \pi^2 ux\right) - \frac{16 \pi^4 u^3 x}{3} \cos\left(8 \pi^2 ux\right) + O\left(x|u|^5 + x^2 u^6\right).\]
The error term gives us an additional term of size $O(T^2 v^6 x(1 + vx))$. For the main term, we extend the integration over $u$ back to all of $\R$; for the portion of the integral with $|u| > v$, we again integrate by parts, obtaining an additional term of size $O_j(T^{1 - j} v^{-j})$. Evaluating the ensuing double integral via Fourier inversion, we find that
\begin{multline}
\label{Ksholasympeq}
\left(\Ks^{\hol} h^{\hol}\right)(x) = -2 \tilde{h}\left(\frac{4\pi x}{T}\right) x - \frac{1}{T^2} \tilde{h}''\left(\frac{4\pi x}{T}\right) x - \frac{4\pi}{3 T^3} \tilde{h}'''\left(\frac{4\pi x}{T}\right) x^2	\\
+ O_j\left(T^{1 - j} v^{-j} + T^2 v^6 x (1 + vx)\right).
\end{multline}

We take $v = T^{-1 + \frac{8}{j + 7}} x^{-\frac{2}{j + 7}}$ and $j = \lceil \frac{28}{\e} \rceil - 7$; this together with the bound \eqref{J0zboundseq} for $J_0(x)$ shows that the error term in \eqref{Ksholasympeq} contributes to \eqref{holshiftedtoboundeq} at most $O_{\e}(|D|^{1 + \e} T^{\e})$. Finally, we claim that
\[\int_{\frac{T}{4\pi e} \exp\left(-\frac{5 \log T}{T}\right)}^{T^2} \tilde{h}\left(\frac{4\pi x}{T}\right) x J_0\left(4\pi \sqrt{\frac{m}{D_2}} x\right) \, dx \ll_j T^2 \left(\frac{D_2}{mT^2}\right)^{\frac{5}{4}},\]
and similarly for the other two main terms in \eqref{Ksholasympeq}. To see this, we observe that we may extend the integral over $x$ back to all of $\R$ due to the support of $\tilde{h}$, make the change of variables $x \mapsto Tx$, insert the identity \eqref{WatsonJkeq} for $J_0(x)$, and integrate by parts twice, antidifferentiating $e(2xT\sqrt{m/D_2})$ and differentiating the rest, while noting that $W_0^{(j)}(x) \ll_j 1/x^{j + 1}$ for $x \geq 1$. We thereby find that the main terms in \eqref{Ksholasympeq} contribute to \eqref{holshiftedtoboundeq} a term of size $O_{\e}(|D|^{1 + \e} T^{\e})$.
\end{proof}

\section{Asymptotics for moments of \texorpdfstring{$L$}{L}-functions}
\label{asympsect}

\subsection{Proof of \texorpdfstring{\hyperref[variancethm2]{Theorem \ref*{variancethm2}}}{Theorem \ref{variancethm2}}}

The proof of \hyperref[variancethm2]{Theorem \ref*{variancethm2}} proceeds in a series of steps. First, we construct a test function that both satisfies the requirements of \hyperref[momentidentitycor]{Corollary \ref*{momentidentitycor}} and closely approximates $h_{r,R}(t)^2$. Next, we estimate the difference between $\Var(\Lambda_D; A_{r,R})$ and a moment of $L$-functions with our chosen test function. We then apply \hyperref[momentidentitycor]{Corollary \ref*{momentidentitycor}}. The main term \eqref{momentidentitymaineq} is readily shown to provide the desired asymptotic in \hyperref[variancethm2]{Theorem \ref*{variancethm2}}. The last step, which is the most taxing, is to bound the shifted convolution sum \eqref{momentidentityshiftedeq} and show that it is smaller than the main term.

\subsubsection{Construction of a test function}

In order to make use of \hyperref[momentidentitycor]{Corollary \ref*{momentidentitycor}}, we require stringent conditions on the test function; in particular, we cannot merely take the test function $h(t)$ to be $h_{r,R}(t)^2$. The conditions of \hyperref[momentidentitycor]{Corollary \ref*{momentidentitycor}} require that the test function $h(t)$ extends holomorphically to $|\Im(t)| < 2M$ with zeroes at $\pm i(n - 1/2)$ for $n \in \{1,\ldots,2M\}$. We shall also localise $h(t)$ to the region $[-T_2,-T_1] \cup [T_1,T_2]$ with $T_1 = (R - r)^{-1 + \alpha}$ and $T_2 = (R - r)^{-1 - \alpha}$ for a small fixed constant $\alpha > 0$ for which $T_1 \gg \max\{|D|^{5/12}, 1/r^2\}$; this is due to the fact that the main contribution to the size of $\Var(\Lambda_D; A_{r,R})$ comes from this range. Inspired by \cite[Section 3.9]{BK17a}, we can ensure these requirements are met by multiplying by the entire function
\[h_1(t) \coloneqq e^{-\left(\frac{t}{T_2}\right)^{2M}} \left(1 - e^{-\left(\frac{t}{T_1}\right)^{2M}}\right),\]
where $M \in \N$ is a large fixed constant. For $|\Im(t)| < 2M$, this satisfies
\begin{equation}
\label{h1asympeq}
h_1(t) = \begin{dcases*}
O\left(\left(\frac{\Re(t)}{T_1}\right)^{2M}\right) & for $|\Re(t)| \leq T_1$,	\\
1 + O\left(\left(\frac{\Re(t)}{T_2}\right)^{2M} + e^{-\left(\frac{\Re(t)}{T_1}\right)^{2M}}\right) & for $T_1 \leq |\Re(t)| \leq T_2$,	\\
O\left(e^{-\left(\frac{\Re(t)}{T_2}\right)^{2M}}\right) & for $|\Re(t)| \geq T_2$.
\end{dcases*}
\end{equation}
Moreover, for $j \in \{1,\ldots,2M\}$ and $t \in \R$,
\begin{equation}
\label{h1deriveq}
h_1^{(j)}(t) \ll_j \begin{dcases*}
\frac{(|t| + 1)^{2M - j}}{T_1^{2M}} & for $|t| \leq T_1$,	\\
\frac{|t|^{2M - j}}{T_2^{2M}} + \frac{|t|^{(2M - 1)j}}{T_1^{2Mj}} e^{-\left(\frac{t}{T_1}\right)^{2M}} & for $T_1 \leq |t| \leq T_2$,	\\
\frac{|t|^{(2M - 1)j}}{T_2^{2Mj}} e^{-\left(\frac{t}{T_2}\right)^{2M}} & for $|t| \geq T_2$.
\end{dcases*}
\end{equation}

Next, recalling \eqref{hrRtrefinedeq}, we must introduce the presence of a function that is asymptotic to $1/|t|^3$. We achieve this by multiplying by
\[h_2(t) \coloneqq (2\pi)^{-4M - 2} (4M + 3)^{-3} \frac{\Gamma\left(\frac{2M}{4M + 3} + \frac{it}{4M + 3}\right)^{4M + 3} \Gamma\left(\frac{2M}{4M + 3} - \frac{it}{4M + 3}\right)^{4M + 3}}{\Gamma\left(\frac{1}{2} + it\right) \Gamma\left(\frac{1}{2} - it\right)},\]
which is holomorphic in the strip $|\Im(t)| < 2M$, in which it has zeroes at $\pm i(n - 1/2)$ for $n \in \{1,\ldots, 2M\}$ and satisfies
\begin{equation}
\label{h2asympeq}
h_2(t) = \frac{1}{(t^2 + 4M^2)^{3/2}} + O\left(\frac{1}{(|\Re(t)| + 1)^4}\right)
\end{equation}
by Stirling's formula. Moreover, for $j \in \N$ and $t \in \R$,
\begin{equation}
\label{h2deriveq}
h_2^{(j)}(t) \ll_j (|t| + 1)^{-j - 3}.
\end{equation}

Finally, we take the entire function
\begin{equation}
\label{h3defeq}
h_3(t) \coloneqq \sin^2 \frac{(R - r) t}{2} \sin^2 \frac{(R + r) t}{2}.
\end{equation}
For $|\Im(t)| < 2M$, this satisfies
\begin{equation}
\label{h3asympeq}
h_3(t) = \begin{dcases*}
\frac{(R^2 - r^2) t^4}{16} + O\left(r^4 (R - r)^2 (|\Re(t)| + 1)^6\right) & for $|\Re(t)| \leq \dfrac{1}{r}$,	\\
O\left((R - r)^2 |\Re(t)|^2\right) & for $\dfrac{1}{r} \leq |\Re(t) \leq \dfrac{1}{R - r}$,	\\
O(1) & for $|\Re(t) \geq \dfrac{1}{R - r}$.
\end{dcases*}
\end{equation}

We choose the test function
\begin{equation}
\label{htdefeq}
h(t) \coloneqq h_1(t) h_2(t) h_3(t).
\end{equation}
Combing \eqref{h1asympeq}, \eqref{h2asympeq}, and \eqref{h3asympeq}, we obtain upper bounds and asymptotics for $h(t)$.

\begin{lemma}
For $|\Im(t)| < 2M$, we have that
\begin{equation}
\label{htupperboundseq}
h(t) \ll \begin{dcases*}
\frac{r^2 (R - r)^2 (|\Re(t)| + 1)^{2M + 1}}{T_1^{2M}} & for $|\Re(t)| \leq \dfrac{1}{r}$,	\\
\frac{(R - r)^2 |\Re(t)|^{2M - 1}}{T_1^{2M}} & for $\dfrac{1}{r} \leq |\Re(t)| \leq T_1$,	\\
\frac{(R - r)^2}{|\Re(t)|} & for $T_1 \leq |\Re(t)| \leq \dfrac{1}{R - r}$,	\\
\frac{1}{|\Re(t)|^3} & for $\dfrac{1}{R - r} \leq |\Re(t)| \leq T_2$,	\\
\frac{e^{-\left(\frac{\Re(t)}{T_2}\right)^{2M}}}{|\Re(t)|^3} & for $|\Re(t)| \geq T_2$.
\end{dcases*}
\end{equation}
Moreover, for $t \in \R$,
\begin{equation}
\label{htrefinedeq}
\begin{split}
h(t) & = \frac{1}{|t|^3} \sin^2 \frac{(R - r) t}{2} \sin^2 \frac{(R + r) t}{2}	\\
& \qquad + \begin{dcases*}
O\left(\frac{(R - r)^2}{|t|^2} + \frac{(R - r)^2 |t|^{2M - 1}}{T_2^{2M}} + \frac{(R - r)^2 e^{-\left(\frac{t}{T_1}\right)^{2M}}}{|t|}\right) & for $T_1 \leq |t| \leq \dfrac{1}{R - r}$,	\\
O\left(\frac{1}{|t|^4} + \frac{|t|^{2M - 3}}{T_2^{2M}}\right) & for $\dfrac{1}{R - r} \leq |t| \leq T_2$.
\end{dcases*}
\end{split}
\end{equation}
\end{lemma}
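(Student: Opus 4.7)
The proof is a direct multiplicative combination of the already-established estimates \eqref{h1asympeq}, \eqref{h2asympeq}, and \eqref{h3asympeq} for the factors $h_1$, $h_2$, $h_3$ defined in \eqref{h1asympeq}--\eqref{h3defeq}. The entire function $h(t) = h_1(t) h_2(t) h_3(t)$ is holomorphic in $|\Im(t)| < 2M$, so the upper bounds \eqref{htupperboundseq} and the refined asymptotic \eqref{htrefinedeq} reduce to careful bookkeeping across the five prescribed ranges of $|\Re(t)|$.

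For \eqref{htupperboundseq}, my plan is to handle the five ranges in turn. In the innermost range $|\Re(t)| \leq 1/r$, I combine $h_1(t) \ll (|\Re(t)|+1)^{2M}/T_1^{2M}$ from \eqref{h1asympeq}, the bound $h_2(t) \ll (|\Re(t)|+1)^{-3}$ from \eqref{h2asympeq}, and the elementary estimate $h_3(t) \ll r^2(R-r)^2 (|\Re(t)|+1)^4$ obtained from $|\sin x| \leq |x|$ applied to both factors in \eqref{h3defeq} together with $R+r \ll r$. In the range $1/r \leq |\Re(t)| \leq T_1$, I use the same bounds on $h_1$ and $h_2$ but switch to $h_3(t) \ll (R-r)^2 |\Re(t)|^2$ from \eqref{h3asympeq}. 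For $T_1 \leq |\Re(t)| \leq 1/(R-r)$ I use $h_1 \ll 1$ in place of the polynomial bound. For $1/(R-r) \leq |\Re(t)| \leq T_2$ I use in addition the trivial bound $h_3 \ll 1$. Finally for $|\Re(t)| \geq T_2$ I pick up the Gaussian factor $e^{-(\Re(t)/T_2)^{2M}}$ from \eqref{h1asympeq}. In each range, the three factors multiply to give exactly the claimed bound.

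For the refined asymptotic \eqref{htrefinedeq} on the real line, the main term $|t|^{-3} \sin^2((R-r)t/2) \sin^2((R+r)t/2)$ arises from the product of the leading piece of $h_2$, namely $(t^2+4M^2)^{-3/2} = |t|^{-3} + O(|t|^{-5})$, with $h_3(t)$ as given and $h_1(t) = 1 + (\text{error})$. The plan is to write
\[
h(t) = \left(h_1(t)-1\right) h_2(t) h_3(t) + \left(h_2(t) - \tfrac{1}{|t|^3}\right) h_3(t) + \tfrac{h_3(t)}{|t|^3} + (h_1(t)-1)\cdot \tfrac{1}{|t|^3}\cdot h_3(t) \cdot \text{(negligible cross terms)}
\]
and estimate each error using the precise expansions. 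In the range $T_1 \leq |t| \leq 1/(R-r)$, the error from $h_2 - |t|^{-3}$ is $O(|t|^{-4})$ which, multiplied by $h_3 \ll (R-r)^2 |t|^2$, contributes the first error term $(R-r)^2/|t|^2$; the error from $h_1 - 1$, bounded by $(|t|/T_2)^{2M} + e^{-(t/T_1)^{2M}}$ from \eqref{h1asympeq}, multiplied by $h_3/|t|^3 \ll (R-r)^2/|t|$, yields the remaining two error terms. In the range $1/(R-r) \leq |t| \leq T_2$, the exponential $e^{-(t/T_1)^{2M}}$ is negligible since $|t| \gg T_1$, so only the $(|t|/T_2)^{2M}$ piece survives and, combined with $h_3 \ll 1$, yields $|t|^{2M-3}/T_2^{2M}$; the $h_2$ error yields $|t|^{-4}$.

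The only genuinely delicate point is ensuring that the bound on $h_3$ used in each range is the correct one and that transitions between the ``both sines in the polynomial regime'' and ``at least one sine saturated'' happen at the right scales $1/r$ and $1/(R-r)$; once this is set up cleanly, both \eqref{htupperboundseq} and \eqref{htrefinedeq} follow by multiplication and are proven simultaneously. No further input beyond \eqref{h1asympeq}, \eqref{h2asympeq}, \eqref{h3asympeq} and elementary estimates for $\sin x$ is needed.
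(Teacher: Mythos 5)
Your proposal is correct and is exactly the argument the paper intends: the lemma is justified there by the single sentence that the bounds follow from combining \eqref{h1asympeq}, \eqref{h2asympeq}, and \eqref{h3asympeq}, and your range-by-range bookkeeping (with the transitions at $1/r$, $T_1$, $1/(R-r)$, $T_2$ and the decomposition $h - h_3/|t|^3 = h_3\bigl((h_1-1)h_2 + (h_2 - |t|^{-3})\bigr)$) fills in precisely that computation. The only cosmetic blemish is the garbled displayed decomposition in your second paragraph, but the estimates you then carry out are the correct ones.
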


For future reference, we record the following definitions and bounds:
\begin{gather}
\label{defandboundseq}
T_1 \coloneqq (R - r)^{-1 + \alpha} \gg \max\left\{|D|^{\frac{5}{12}},\frac{1}{r^2}\right\}, \qquad T_2 = (R - r)^{-1 - \alpha},	\\
\notag
|D|^{-\frac{1}{12} + \delta} \ll r \ll 1, \qquad R - r \ll |D|^{-\frac{5}{12} - \delta}.
\end{gather}
Here $\alpha,\delta > 0$ are small fixed constants (with $\alpha$ sufficiently small dependent on $\delta$, namely $\alpha \leq 12\delta/5$), while $M \in \N$ is a large fixed constant. In particular, we may take $\alpha = \min\{12\delta/5,2/35\}$ and $M = 100$, say.

\subsubsection{Comparison of the variance to a moment of $L$-functions}

\begin{lemma}
\label{comparisonlemma}
Fix $\delta > 0$, and suppose that $|D|^{-1/12 + \delta} \ll r \ll 1$ and $\mu(A_{r,R}) \ll r |D|^{-5/12 - \delta}$. Then for $h(t)$ as in \eqref{htdefeq}, we have that
\begin{equation}
\label{Vartomomenteq}
\begin{split}
\Var(\Lambda_D; A_{r,R}) & = \frac{\mu(\Gamma \backslash \Hb)}{\mu(A_{r,R}) \#\Lambda_D} \frac{2\pi}{\sinh \frac{R - r}{2} L(1,\chi_D)} \sum_{f \in \BB_0(\Gamma)} \frac{L\left(\frac{1}{2},f\right) L\left(\frac{1}{2},f \otimes \chi_D\right)}{L(1,\sym^2 f)} h(t_f)	\\
& \qquad + \frac{\mu(\Gamma \backslash \Hb)}{\mu(A_{r,R}) \#\Lambda_D} \frac{1}{\sinh \frac{R - r}{2} L(1,\chi_D)} \int_{-\infty}^{\infty} \left|\frac{\zeta\left(\frac{1}{2} + it\right) L\left(\frac{1}{2} + it,\chi_D\right)}{\zeta(1 + 2it)}\right|^2 h(t) \, dt	\\
& \qquad \quad + O_{\e}\left(\frac{1}{r|D|^{1/12 - \e}} + \frac{T_1^{1 + \e}}{r|D|^{1/2 - \e}} + \frac{1}{r(R - r)^2 T_2^{1 - \e} |D|^{1/2 - \e}}\right).
\end{split}
\end{equation}
\end{lemma}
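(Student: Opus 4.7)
The plan is to start from the spectral expansion of $\Var(\Lambda_D; A_{r,R})$ provided by \hyperref[DukeVarlemma]{Lemma \ref*{DukeVarlemma}} and substitute the leading asymptotic \eqref{hrRtrefinedeq} for $h_{r,R}(t)^2$. Writing $C \coloneqq 8/(\sinh\tfrac{R-r}{2}\mu(A_{r,R}))$, that asymptotic says $h_{r,R}(t)^2 = C h_3(t)/|t|^3 + E(t)$ for $|t| \geq 1/r$, with $E(t) \ll 1/(r^3|t|^3)$ when $|t| \leq 1/(R-r)$ and $E(t) \ll 1/(r^3(R-r)^2|t|^5)$ when $|t| \geq 1/(R-r)$. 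By construction, the test function $h(t) = h_1(t) h_2(t) h_3(t)$ satisfies $h(t) \approx h_3(t)/|t|^3$ throughout $T_1 \leq |t| \leq T_2$ (since $h_1(t) \approx 1$ by \eqref{h1asympeq} and $h_2(t) = |t|^{-3} + O(|t|^{-4})$ by \eqref{h2asympeq}), so $Ch(t)$ reproduces the main term of $h_{r,R}(t)^2$ in the principal spectral window. Using $\#\Lambda_D = \sqrt{|D|}\,L(1,\chi_D)/\pi$, a short calculation shows that multiplying $C$ by the coefficients $\pi^2\mu(\Gamma\backslash\Hb)/(4\sqrt{|D|}\,L(1,\chi_D)^2)$ and $\pi\mu(\Gamma\backslash\Hb)/(8\sqrt{|D|}\,L(1,\chi_D)^2)$ from \hyperref[DukeVarlemma]{Lemma \ref*{DukeVarlemma}} recovers precisely the two prefactors appearing in the statement.

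It remains to bound the defect $\Xi(t) \coloneqq h_{r,R}(t)^2 - Ch(t)$ weighted against the spectral expansion. I will split the spectral parameter (in both the cuspidal and the continuous contributions) into four windows and bound each dyadically via \hyperref[momentprop]{Proposition \ref*{momentprop} (1)}, which transitions at $|D|^{1/12}$ and $|D|^{1/4}$. For $|t| \leq 1/r$, one has $h_{r,R}(t)^2 \ll 1$ by \eqref{hrRtupperboundseq} and $Ch(t)$ is negligible by \eqref{htupperboundseq}; the dyadic moment bound $|D|^{1/3+\e} T^{2+\e}$ summed up to $T = 1/r$ (allowed because $r \gg |D|^{-1/12+\delta}$ forces $1/r \ll |D|^{1/12}$), together with the overall prefactor $\ll |D|^{-1/2+\e}$, yields the first error $O(1/(r|D|^{1/12-\e}))$. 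For $1/r \leq |t| \leq T_1$ the bound is $h_{r,R}(t)^2 \ll 1/(r|t|)$; splitting at the thresholds $|D|^{1/12}$ and $|D|^{1/4}$, the contribution from $T \leq |D|^{1/4}$ is dominated near $T = |D|^{1/12}$ by the on-average Lindel\"of bound $|D|^{1/2+\e}$ and is again $\ll 1/(r|D|^{1/12-\e})$, while the range $|D|^{1/4} < T \leq T_1$ (non-empty since $T_1 \gg |D|^{5/12}$) is dominated at $T = T_1$ by the bound $|D|^{\e} T^{2+\e}$, producing the second error $O(T_1^{1+\e}/(r|D|^{1/2-\e}))$.

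In the central range $T_1 \leq |t| \leq T_2$, the defect $\Xi(t)$ reduces to $E(t)$ plus smaller errors coming from the tail of $h_1$ and from $h_2(t) - |t|^{-3}$, each quantified by \eqref{htrefinedeq}; dyadically summed against $|D|^{\e} T^{2+\e}$, these are absorbed into the previous two errors using $T_1 \gg 1/r^2$. For $|t| \geq T_2$, both $h_{r,R}(t)^2 \ll 1/(r(R-r)^2|t|^3)$ (from \eqref{hrRtupperboundseq}) and $h(t)$ (which decays super-polynomially via \eqref{htupperboundseq}) contribute; the dominant dyadic term sits at $T = T_2$ and gives the third error $O(1/(r(R-r)^2 T_2^{1-\e} |D|^{1/2-\e}))$. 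The Eisenstein integral is handled in parallel with the cuspidal sum, as \hyperref[momentprop]{Proposition \ref*{momentprop} (1)} bundles them together.

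The principal obstacle is the careful bookkeeping across these four spectral windows and the three moment regimes of \hyperref[momentprop]{Proposition \ref*{momentprop} (1)}. The hypotheses $r \gg |D|^{-1/12+\delta}$ and $\mu(A_{r,R}) \ll r|D|^{-5/12-\delta}$, together with the choices $T_1 = (R-r)^{-1+\alpha} \gg \max\{|D|^{5/12}, 1/r^2\}$ and $T_2 = (R-r)^{-1-\alpha}$, are precisely calibrated so that each window lands in a moment regime whose contribution is majorised by one of the three displayed error terms; removing any one of these hypotheses causes the corresponding error to blow up.
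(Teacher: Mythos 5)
Your proposal is correct and takes essentially the same route as the paper: the paper's own proof is a one-line citation of the spectral expansion \eqref{VarD<0eq}, the bounds and asymptotics \eqref{hrRtupperboundseq}, \eqref{hrRtrefinedeq}, \eqref{htupperboundseq}, \eqref{htrefinedeq}, and \hyperref[momentprop]{Proposition \ref*{momentprop}}, which is precisely the window-by-window comparison of $h_{r,R}(t)^2$ with $Ch(t)$ that you carry out. Your identification of which spectral window and which moment regime produces each of the three error terms matches the intended argument.
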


Recalling \eqref{defandboundseq}, we see that the first term in the error term in \eqref{Vartomomenteq} is smaller than the desired asymptotic by at least $O_{\e}(D^{-\delta + \e})$ and the second and third terms are smaller by at least $O_{\e}((R - r)^{\alpha - \e} |D|^{\e})$.

\begin{proof}
This follows from the spectral expansion \eqref{VarD<0eq} of the variance, the upper bounds \eqref{hrRtupperboundseq} and asymptotics \eqref{hrRtrefinedeq} for $h_{r,R}(t)^2$, the upper bounds \eqref{htupperboundseq} and asymptotics \eqref{htrefinedeq} for $h(t)$, and the bounds in \hyperref[momentprop]{Proposition \ref*{momentprop}} for moments of $L$-functions.
\end{proof}

Via \hyperref[momentidentitycor]{Corollary \ref*{momentidentitycor}}, the first two terms on the right-hand side of \eqref{Vartomomenteq} are equal to the sum of the main term
\begin{equation}
\label{Varmaintermeq}
\frac{\mu(\Gamma \backslash \Hb)}{\mu(A_{r,R}) \# \Lambda_D} \frac{4\pi}{\sinh \frac{R - r}{2}} \int_{-\infty}^{\infty} h(t) \, d_{\spec}t
\end{equation}
and the shifted convolution sum
\begin{multline}
\label{Varshiftedeq}
\frac{\mu(\Gamma \backslash \Hb) }{\mu(A_{r,R}) \# \Lambda_D} \frac{4\pi i}{\sinh \frac{R - r}{2} \sqrt{|D|} L(1,\chi_D)} \sum_{D_1 D_2 = |D|} \sum_{m = 1}^{\infty} \lambda_{\chi_1,\chi_2}(m,0) \lambda_{\chi_1,\chi_2}(m + D_2,0)	\\
\times \frac{1}{2\pi i} \int_{\sigma_1 - i\infty}^{\sigma_1 + i\infty} \widehat{\Ks^{-} h}(s) \widehat{\JJ_1^{\hol}}(1 - s) \left(\frac{m}{D_2}\right)^{\frac{s - 1}{2}} \, ds.
\end{multline}

\subsubsection{Asymptotics for the main term}

\begin{lemma}
\label{maintermlemma}
For $h(t)$ as in \eqref{htdefeq}, the main term \eqref{Varmaintermeq} is equal to
\begin{equation}
\label{maintermeq}
\frac{\mu(\Gamma \backslash \Hb)}{\mu(A_{r,R}) \# \Lambda_D} + O_{\e}\left(\frac{1}{r(R - r)^2 T_2 |D|^{1/2 - \e}} + \frac{1}{r^2 (R - r)^2 T_1^2 |D|^{1/2 - \e}} + \frac{T_1}{r |D|^{1/2 - \e}}\right).
\end{equation}
\end{lemma}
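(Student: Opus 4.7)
The plan is to compute $I \coloneqq \int_{-\infty}^{\infty} h(t)\,d_{\spec}t$ to leading order and check that, after multiplication by the prefactor $\frac{4\pi\mu(\Gamma\backslash\Hb)}{\mu(A_{r,R})\#\Lambda_D\sinh\frac{R-r}{2}}$, one recovers $\frac{\mu(\Gamma\backslash\Hb)}{\mu(A_{r,R})\#\Lambda_D}$ up to the claimed errors. The key input is the elementary identity
\[
\int_{-\infty}^{\infty} \frac{\sin^2\frac{(R-r)t}{2}\sin^2\frac{(R+r)t}{2}}{t^2}\,dt = \frac{\pi(R-r)}{4},
\]
obtained by writing $\sin^2 x = (1-\cos 2x)/2$, applying product-to-sum on the resulting product of cosines, and using $\int_{\R}(1-\cos(ct))/t^2\,dt = \pi|c|$: with $a = (R+r)/2$ and $b=(R-r)/2$, this gives $\pi a/2 + \pi b/2 - \pi(a+b)/4 - \pi|a-b|/4 = \pi(R-r)/4$.

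First, using that $h$ and $t\tanh(\pi t)$ are even, rewrite $I = \frac{1}{\pi^2}\int_0^{\infty} h(t)\,t\tanh(\pi t)\,dt$ and replace $t\tanh(\pi t)$ by $t$; the discrepancy is $O(te^{-2\pi t})$ on $[1,\infty)$, and on $[0,1]$ the bound $h(t) \ll r^2(R-r)^2/T_1^{2M}$ from \eqref{htupperboundseq} makes the contribution super-polynomially small. Next, insert the asymptotic $h_2(t) = 1/t^3 + O(1/t^4)$ for $t \geq 1$ (from \eqref{h2asympeq} combined with the expansion $(t^2+4M^2)^{-3/2} = t^{-3} + O(t^{-5})$) and the approximation $h_1(t) = \mathbf{1}_{[T_1,T_2]}(t) + O(e^{-(t/T_1)^{2M}} + e^{-(t/T_2)^{2M}})$; the leading part of $I$ becomes $\frac{1}{\pi^2}\int_{T_1}^{T_2} h_3(t)/t^2\,dt$.

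Extending the integration range to $(0,\infty)$ and applying the central identity gives $\frac{1}{\pi^2}\int_0^{\infty} h_3(t)/t^2\,dt = (R-r)/(8\pi)$, so $I = (R-r)/(8\pi) + \text{errors}$. The three error terms in \eqref{maintermeq} come from bounding the tail and cutoff contributions: $\int_{T_2}^{\infty} h_3(t)/t^2\,dt \ll 1/T_2$ from $h_3\leq 1$ produces the first error term after the prefactor; $\int_{1/r}^{T_1} h_3(t)/t^2\,dt \ll (R-r)^2 T_1$, using the bound $h_3(t) \ll (R-r)^2 t^2$ valid for $t\leq 1/(R-r) \geq T_1$, produces the third error term; and $\int_0^{1/r} h_3(t)/t^2\,dt \ll (R-r)^2/r$, using the refined bound $h_3(t)\ll r^2(R-r)^2(|t|+1)^4$ from \eqref{h3asympeq}, contributes a term that is dominated by the second error term $1/(r^2(R-r)^2 T_1^2|D|^{1/2-\e})$ since $(R-r)^{2\alpha}\leq 1$.

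Finally, the identity $(R-r)/(2\sinh\frac{R-r}{2}) = 1 + O((R-r)^2)$ turns the leading contribution into $\frac{\mu(\Gamma\backslash\Hb)}{\mu(A_{r,R})\#\Lambda_D}$, and the resulting $O((R-r)^2)$ relative error is absorbed in the third error term since $T_1 \geq R-r$. The main obstacle is the careful bookkeeping of several small error contributions across ranges of $t$ where the pointwise bounds on $h$ in \eqref{htupperboundseq} and the refined asymptotics in \eqref{htrefinedeq} take different forms; no single estimate is difficult, but the parameters $T_1, T_2, r, R-r$ must all be tracked consistently to confirm that each piece fits under one of the three listed error terms.
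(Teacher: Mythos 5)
Your proof is correct, and its skeleton is the same as the paper's: reduce $\int_{\R} h(t)\,d_{\spec}t$ to $\frac{1}{\pi^2}\int_{T_1}^{T_2} h_3(t)\,t^{-2}\,dt$ via \eqref{htupperboundseq} and \eqref{htrefinedeq}, evaluate that integral as $\frac{\pi(R-r)}{8}$ plus errors, and convert $\frac{R-r}{2\sinh\frac{R-r}{2}}$ into $1 + O((R-r)^2)$. The one genuine difference is how the central integral is evaluated: the paper integrates by parts and invokes the asymptotics of $\Si(x)$ at $cT_1$ and $cT_2$ (this is \eqref{mainterminteq}, and it is where the $1/(rT_1^2)$ error term originates, from the $O(x^{-2})$ term of $\Si$ at $x \asymp rT_1$; one must also observe that the $O(1/T_1)$ boundary contributions cancel against the oscillatory parts of the $\Si$ expansion). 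You instead use the exact identity $\int_{\R}(1-\cos(ct))t^{-2}\,dt = \pi|c|$ after a product-to-sum expansion, and then bound the tails $\int_0^{T_1}$ and $\int_{T_2}^{\infty}$ directly from \eqref{h3asympeq}; this sidesteps the $\Si$ cancellation entirely and is arguably cleaner. Your tail bounds reproduce the paper's three error terms (indeed your $[0,1/r]$ contribution $(R-r)^2/r$ is dominated by $(R-r)^2 T_1$ as well, so the second error term is not even needed on your route). The only imprecision is your statement that $h_1(t) = \mathbf{1}_{[T_1,T_2]}(t) + O(e^{-(t/T_1)^{2M}} + e^{-(t/T_2)^{2M}})$: for $t \leq T_1$ the correct bound from \eqref{h1asympeq} is $O((t/T_1)^{2M})$, and your stated error is merely $O(1)$ there — but since the trivial bound $|h_1 - \mathbf{1}_{[T_1,T_2]}| \leq 1$ on $[0,T_1]$ already yields $\int_0^{T_1} h_3(t) t^{-2}\,dt \ll (R-r)^2 T_1$, which fits under the third error term, nothing breaks.
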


Again recalling \eqref{Vartomomenteq}, the first and third terms in the error term in \eqref{maintermeq} are smaller than the main term by at least $O_{\e}((R - r)^{\alpha} |D|^{\e})$, while the second term is smaller by at least $O_{\e}(r^{-1} (R - r)^{1 - 2\alpha} |D|^{\e})$.

\begin{proof}
Via the upper bounds \eqref{htupperboundseq} and asymptotics \eqref{htrefinedeq} for $h(t)$, the main term \eqref{Varmaintermeq} is
\begin{multline*}
\frac{\mu(\Gamma \backslash \Hb)}{\mu(A_{r,R}) \# \Lambda_D} \frac{4}{\pi \sinh \frac{R - r}{2}} \int_{T_1}^{T_2} \frac{\sin^2 \frac{(R - r) t}{2} \sin^2 \frac{(R + r) t}{2}}{t^2} \, dt	\\
+ O_{\e}\left(\frac{T_1}{r |D|^{1/2 - \e}} + \frac{1}{r (R - r)^2 T_2 |D|^{1/2 - \e}}\right).
\end{multline*}
After integrating by parts, antidifferentiating $1/t^2$ and differentiating the rest, and using the fact that
\[\Si(x) \coloneqq \int_{0}^{x} \frac{\sin t}{t} \, dt = \begin{dcases*}
x + O(x^3) & for $0 \leq x \ll 1$,	\\
\frac{\pi}{2} - \frac{\cos x}{x} + O\left(\frac{1}{x^2}\right) & for $x \gg 1$,
\end{dcases*}\]
we find that
\begin{equation}
\label{mainterminteq}
\int_{T_1}^{T_2} \frac{\sin^2 \frac{(R - r) t}{2} \sin^2 \frac{(R + r) t}{2}}{t^2} \, dt = \frac{\pi (R - r)}{8} + O\left(\frac{1}{T_2} + \frac{1}{rT_1^2} + (R - r)^2 T_1\right).\qedhere
\end{equation}
\end{proof}

\subsubsection{Bounds for the shifted convolution sum}

The shifted convolution sum takes more work to bound. Our strategy is similar to the proof of \hyperref[momentprop]{Proposition \ref*{momentprop} (2)} for $T \gg |D|^{1/12}$, though it is more involved due to the oscillatory behaviour of the test function.

\begin{lemma}
\label{shiftedlemma}
For $h(t)$ as in \eqref{htdefeq}, the shifted convolution sum \eqref{Varshiftedeq} is
\[O_{\e}\left(\frac{1}{r(R - r)^{1 - \alpha + \e} |D|^{1/2 - \e}}\right).\]
\end{lemma}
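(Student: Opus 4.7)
The strategy is to express the inner contour integral in \eqref{Varshiftedeq} via Mellin--Parseval as an integral against a $J_0$-Bessel kernel, and then to bound the resulting $m$-sum by combining pointwise control of $(\Ks^{-} h)(x)$ with oscillatory estimates. Using $\JJ_1^{\hol}(x) = -2\pi i J_0(4\pi x)$ and Mellin inversion, one rewrites the inner integral in \eqref{Varshiftedeq} as
\[I_m := \frac{1}{2\pi i}\int_{\sigma_1 - i\infty}^{\sigma_1 + i\infty} \widehat{\Ks^{-} h}(s)\, \widehat{\JJ_1^{\hol}}(1 - s) \left(\frac{m}{D_2}\right)^{(s-1)/2} ds = -2\pi i \int_0^{\infty} (\Ks^{-} h)(x)\, J_0\!\left(4\pi \sqrt{\frac{m}{D_2}}\, x\right) dx,\]
mirroring the transition from \eqref{holomorphicmomentidentityshiftedeq} to \eqref{KsholtoLegendreeq} in \hyperref[Proofmomentsboundsect]{Section \ref*{Proofmomentsboundsect}}, but with $\Ks^{\hol}$ and Legendre polynomials replaced by $\Ks^{-}$ and the Bessel function $J_0$.

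Next, I would obtain sharp pointwise information for $(\Ks^{-} h)(x)$ by inserting the representation $(\Ks^{-} h)(x) = \int_{-\infty}^{\infty} 4\cosh(\pi t) K_{2it}(4\pi x) h(t)\, d_{\spec} t$ and using the decomposition $h = h_1 h_2 h_3$ from \eqref{htdefeq}. The factor $h_1$ effectively restricts the $t$-support to $T_1 \ll |t| \ll T_2$, the factor $h_2$ supplies a smooth polynomial weight of size $|t|^{-3}$, and $h_3$ expands into four exponential phases $\exp(\pm i(R \pm r)t)$. Combining these with the saddle point of $K_{2it}(4\pi x)$, which lies in $t$ near $2\pi x$, stationary phase yields an explicit asymptotic for $(\Ks^{-} h)(x)$ localised near $x \asymp T_2$, with the derivative bounds \eqref{h1deriveq} and \eqref{h2deriveq} delivering the requisite decay away from this range.

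Then I would estimate the $m$-sum by splitting it dyadically. For $m$ with $\sqrt{m/D_2}\cdot T_2 \ll 1$, the trivial bound $|J_0| \leq 1$ suffices to control $I_m$. For $m$ with $\sqrt{m/D_2}\cdot T_1 \gg |D|^{\e}$, repeated integration by parts in $x$ exploits the rapid oscillation of $J_0$ against the smoothness of $(\Ks^{-} h)$ to render $I_m$ negligible. In the transitional range, which contributes the main term, one applies stationary phase to $J_0$ and tracks its interaction with the four phases of $h_3$. Combined with $\lambda_{\chi_1,\chi_2}(m,0) \ll_{\e} m^{\e}$ and summation over $m$ and over $D_1 D_2 = |D|$, this yields a bound of $O_{\e}((R - r)^{1 + \alpha + \e}|D|^{1/2 + \e})$ for the double sum, which against the prefactor of size $O_{\e}(|D|^{\e}/(r(R - r)^2 |D|))$ gives the claimed estimate.

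The main obstacle is the oscillatory analysis in the transitional range: the phase of $J_0$ competes with the four phases of $h_3$, creating multiple potential stationary points that must be classified carefully, and the interaction with the saddle of $K_{2it}$ requires a two-variable stationary phase analysis. A secondary difficulty is uniform control of the boundary regions of the $t$-support near $T_1$ and $T_2$, where $h_1$ transitions; this is handled through the derivative estimates \eqref{h1deriveq} and repeated integration by parts.
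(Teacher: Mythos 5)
Your opening reduction is exactly the paper's: undo the Mellin transform to reduce \eqref{Varshiftedeq} to bounding $\sum_{D_1D_2=|D|}\sum_m m^{\e}\bigl|\int_0^\infty(\Ks^-h)(x)J_0(4\pi\sqrt{m/D_2}\,x)\,dx\bigr|$ by $(R-r)^{1+\alpha-\e}|D|^{1/2+\e}$, and the paper likewise extracts an asymptotic for $(\Ks^-h)(x)$ whose oscillation consists of the phases $e(\pm2x\sinh\rho)$ with $\rho\in\{r,R,\tfrac{R\pm r}{2},0\}$ coming from $h_3$. But your range decomposition has a genuine gap. The critical terms are the resonances $\sqrt{m/D_2}\approx\sinh\rho$, i.e.\ $m\approx D_2\sinh^2\rho$. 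For the three shifts $\rho\in\{r,R,\tfrac{R+r}{2}\}$, all of size $\asymp r$, these $m$ satisfy $\sqrt{m/D_2}\,T_1\asymp rT_1\gg|D|^{1/3}$ (since $T_1\gg\max\{|D|^{5/12},r^{-2}\}$ and $r\gg|D|^{-1/12+\delta}$), so they lie squarely inside the range where you assert that repeated integration by parts renders $I_m$ negligible. It does not: $(\Ks^-h)(x)$ is not slowly varying there, its dominant derivative comes from the phase $e(\pm2x\sinh\rho)$, so integrating by parts against $J_0(4\pi\sqrt{m/D_2}\,x)\approx x^{-1/2}e(\pm2\sqrt{m/D_2}\,x)$ gains only $|\sqrt{m/D_2}\mp\sinh\rho|^{-1}$ per step, which degenerates at the resonance. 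One must excise a window $|m-D_2\sinh^2\rho|\ll D_2\sinh^2\rho/T_1$ around each resonance and bound it by absolute values; this window contributes $\asymp r^{3/2}T_1^{-5/2}|D|^{1+\e}$, which is not negligible --- it is one of the terms that pins down the admissible size of $\alpha$. Your ``transitional range'' $m\ll D_2|D|^{2\e}/T_1^{2}$ captures only the resonance from $\rho=\tfrac{R-r}{2}$; the other three are missed.

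Two further points. First, $(\Ks^-h)(x)$ is essentially supported on $T_1\ll x\ll T_2$ but has size $\asymp x^{-2}$ there (the weight $h_2(t)\asymp t^{-3}$ against the spectral measure $\asymp t\,dt$), so every $x$-integral is dominated by $x\asymp T_1$, not $x\asymp T_2$ as you assert; this mislocalisation would skew all the subsequent estimates. Second, the $m$-sum in \eqref{Varshiftedeq} is infinite, so for $m>D_2$ you need quantitative decay of $I_m$ in $m$ sufficient for convergence, with the boundary behaviour of $(\Ks^-h)$ as $x\to0$ and $x\to\infty$ under control; the paper does this by first integrating by parts twice (producing a $J_2$ and a factor $D_2/m$) and then treating $x\le1$, $x\ge T_2\log T_2$, and the middle range separately via the $I$- and $K$-Bessel expansions of $\JJ_t^-$. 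These pieces need to be supplied before the claimed bound follows.
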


This is smaller than the main term by at least $O_{\e}((R - r)^{\alpha - \e} |D|^{\e})$.

\begin{proof}
Via Mellin inversion, the result will follow upon showing that
\begin{equation}
\label{shiftedtoboundeq}
\sum_{D_1 D_2 = |D|} \sum_{\substack{m = 1 \\ m \neq D_2}}^{\infty} m^{\e} \left| \int_{0}^{\infty} \left(\Ks^{-} h\right)(x) J_0\left(4\pi \sqrt{\frac{m}{D_2}} x\right) \, dx\right|
\end{equation}
is $O_{\e}((R - r)^{1 + \alpha - \e} |D|^{1/2 + \e})$. As in the proof of \hyperref[momentprop]{Proposition \ref*{momentprop} (2)} for $T \gg |D|^{1/12}$, we break up the sums and integrals into different ranges and bound each individually.

\noindent\textit{Range I: $m \leq (R - r)^{\alpha} \sqrt{D_2}$.} The integral in \eqref{shiftedtoboundeq} is equal to
\[\int_{-\infty}^{\infty} h(t) \int_{0}^{\infty} 4\cosh \pi t K_{2it}(4\pi x) J_0\left(4\pi \sqrt{\frac{m}{D_2}} x\right) \, dx \, d_{\spec}t.\]
By \cite[8.432.1, 6.611.1, and 3.517.1]{GR15}, the inner integral is equal to
\[\frac{2\sqrt{2}}{\pi} \cosh \pi t \int_{0}^{\infty} \frac{\cos tu}{\sqrt{\cosh u + 1 + \frac{2m}{D_2}}} \, du = 2 P_{-\frac{1}{2} + it}\left(1 + \frac{2m}{D_2}\right).\]
We use \eqref{associatedLegendreboundseq} and \eqref{J0zboundseq} to bound this and \eqref{htupperboundseq} to bound $h(t)$. From this, the contribution of \eqref{shiftedtoboundeq} involving terms with $m \leq (R - r)^{\alpha} \sqrt{D_2}$ is $O_{\e}((R - r)^{1 + \alpha - \e} |D|^{1/2 + \e})$.

\noindent\textit{Range II: $m > D_2$ and $x \leq 1$.} Via integration by parts, the integral over $x$ in \eqref{shiftedtoboundeq} is equal to
\begin{equation}
\label{shiftedintbypartseq}
\frac{D_2}{16\pi^2 m} \int_{0}^{\infty} \frac{1}{x^2} \Ls(x) J_2\left(4\pi \sqrt{\frac{m}{D_2}} x\right) \, dx,
\end{equation}
where
\[\Ls(x) \coloneqq 3 \left(\Ks^{-} h\right)(x) - 3x \left(\Ks^{-} h\right)'(x) + x^2 \left(\Ks^{-} h\right)''(x).\]
By \cite[(A.2) and (A.4)]{BLM19}, we write
\[\frac{d^j}{dx^j} \JJ_t^{-}(x) = \frac{(2\pi)^j \pi i}{\sinh \pi t} \sum_{n = 0}^{j} \binom{j}{n} \left(I_{2it - j + 2n}(4\pi x) - I_{-2it - j + 2n}(4\pi x)\right).\]
We use the bound
\[\sech \pi t I_{2it - j + 2n}(4\pi x) \ll_{\Im(t),j} \frac{x^{-j + 2(n - \Im(t))}}{(|\Re(t)| + 1)^{\frac{1}{2} - j + 2(n - \Im(t))}},\]
which is valid for $0 < x \leq \sqrt{|t| + 1}$ \cite[(A.6)]{BLM19}. So by shifting the contour, we have that
\begin{equation}
\label{Ksderivsxsmalleq}
x^j \frac{d^j}{dx^j} \left(\Ks^{-} h\right)(x) \ll_j \sum_{\pm} \sum_{n = 0}^{j} x^{2(n - c_n)} \int\limits_{\Im(t) = \pm c_n} |h(t)| (|\Re(t)| + 1)^{j - 2(n - c_n) + \frac{1}{2}} \, dt
\end{equation}
for any choice of $c_n \in (-2M,2M)$. Choosing $c_n \leq n - 2M + \e$ and using the bounds \eqref{htupperboundseq}, we deduce that for $x \leq 1$,
\[\Ls(x) \ll_{\e} \frac{r^2 (R - r)^2 x^{4M - \e}}{T_1^{2M}}.\]
Since
\begin{equation}
\label{J2boundseq}
J_2(x) \ll \begin{dcases*}
x^2 & for $x \ll 1$,	\\
\frac{1}{\sqrt{x}} & for $x \gg 1$,
\end{dcases*}
\end{equation}
\cite[8.440 and 8.451.1]{GR15}, we see that the contribution to \eqref{shiftedintbypartseq} of the portion of the integral for which $x \leq 1$ is $O(r^2 (R - r)^2 T_1^{-2M} (D_2/m)^{5/4})$, which easily suffices to adequately bound the corresponding contribution to \eqref{shiftedtoboundeq}.

\noindent\textit{Range III: $m > D_2$ and $x \geq T_2 \log T_2$.} We write
\[\frac{d^j}{dx^j} \JJ_t^{-}(x) = (-2\pi)^j \sum_{n = 0}^{j} \binom{j}{n} 4 \cosh \pi t K_{2it - j + 2n}(4\pi x)\]
via \cite[(A.1)]{BLM19} and use the uniform bounds
\[4 \cosh \pi t K_{2it - j + 2n}(4\pi x) \ll_{\Im(t),j} e^{\min\{0,-\pi (4x - |\Re(t)|)\}} \left(\frac{1 + |\Re(t)| + 4\pi x}{4\pi x}\right)^{|2\Im(t) + j - 2n| + \frac{1}{10}}\]
for $t \in \C$ \cite[(A.3)]{BLM19}. From this and again using \eqref{htupperboundseq}, we see that for $x \geq 3T_2/4$,
\[\Ls(x) \ll (R - r) x^2 e^{-\frac{2\pi x}{3}} + \frac{T_2^2}{x} e^{-\left(\frac{4x}{3T_2}\right)^{2M}}.\]
So again using \eqref{J2boundseq}, we see that the contribution to \eqref{shiftedintbypartseq} of the portion of the integral for which $x \geq T_2 \log T_2$ is $O_{A}(T_2^{-A} (D_2/m)^{5/4})$ for any $A > 0$, which is more than enough to obtain the bound $O_{\e}((R - r)^{1 + \alpha - \e} |D|^{1/2 + \e})$ for the ensuing contribution to \eqref{shiftedtoboundeq}.

\noindent\textit{Range IV: $m > D_2$ and $1 < x < T_2 \log T_2$.} We begin with the identity
\[\left(\Ks^{-} h\right)(x) = 2\pi \int_{-\infty}^{\infty} e(2x \sinh \pi u) \int_{-\infty}^{\infty} h(t) e(-ut) \, d_{\spec}t \, du\]
from \cite[(A.8)]{BLM19} (cf.~\cite[Lemma 3.8]{BK17a} and \cite[Lemma 3.4]{BK17b}). Recalling the definition \eqref{h3defeq} of $h_3(t)$ and writing $\sin(2\pi x) = (2i)^{-1}(e(x) - e(-x))$, this is equal to
\begin{multline}
\label{Ks-hidentityeq}
\left(\Ks^{-} h\right)(x) = \frac{1}{16 \pi} \sum_{\pm} \left(\sum_{\rho \in \{R,r\}} - 2 \sum_{\rho \in \left\{\frac{R - r}{2}, \frac{R + r}{2}\right\}} + 2 \sum_{\rho = 0}\right)	\\
\times \int_{-\infty}^{\infty} e(2x \sinh(\pi u \pm \rho)) \int_{-\infty}^{\infty} \tilde{h}(t) e(-ut) \, dt \, du,
\end{multline}
where
\[\tilde{h}(t) \coloneqq h_1(t) h_2(t) t \tanh \pi t.\]
Note in particular that $\rho \ll 1$ in all cases. We now integrate by parts with respect to $u$, antidifferentiating $4\pi^2 ix \cosh(\pi u \pm \rho) e(2x \sinh(\pi u \pm \rho))$ and differentiating the rest, then multiply by $x$ and differentiate with respect to $x$. Doing this once more and taking an appropriate linear combination of the ensuing expressions yields the identity
\begin{multline}
\label{Kxidentityeq}
\Ls(x) = \frac{1}{16 \pi} \sum_{\pm} \left(\sum_{\rho \in \{R,r\}} - 2 \sum_{\rho \in \left\{\frac{R - r}{2}, \frac{R + r}{2}\right\}} + 2 \sum_{\rho = 0}\right)	\\
\times \int_{-\infty}^{\infty} e(2x \sinh(\pi u \pm \rho)) \int_{-\infty}^{\infty} \tilde{h}(t) (c_0 + c_1 t + c_2 t^2) e(-ut) \, dt,
\end{multline}
where
\begin{align*}
c_0 & \coloneqq 8 - 8 \tanh^2(\pi u \pm \rho) + 3\tanh^4(\pi u \pm \rho)	\\
c_1 & \coloneqq -14i \tanh(\pi u \pm \rho) + 6i \tanh^3(\pi u \pm \rho)	\\
c_2 & \coloneqq -4\tanh^2(\pi u \pm \rho).
\end{align*}

We break up the integrals over $u$ in \eqref{Kxidentityeq} into the ranges $|u| \leq v$ and $|u| > v$ for a parameter $v \in (0,1)$ to be chosen. For the portion of the integrals with $|u| > v$, we integrate by parts $2M + 1$ times with respect to $t$, antidifferentiating $e(-ut)$ and differentiating the rest, giving rise to a term of size $O((T_1 v)^{-2M} \log T_1)$ upon recalling \eqref{h1deriveq} and \eqref{h2deriveq}. Next, we employ a Taylor expansion to write
\[c_k e(2x \sinh(\pi u \pm \rho)) = e\left(\pm 2x \sinh\rho\right) e\left(2\pi xu \cosh \rho\right) \sum_{j = 0}^{2(J - 1)} u^j \sum_{\ell = 0}^{\lfloor \frac{j}{2}\rfloor} c_{j,k,\ell,\rho} x^{\ell} + O_J\left(x^J u^{2J}\right)\]
for $J \in \N$ and $k \in \{0,1,2\}$, where $c_{j,k,\ell,\rho} \in \C$ are uniformly bounded constants. The error term in this Taylor expansion contributes to \eqref{Kxidentityeq} a term of size $O_{J}(T_2 x^J v^{2J + 1})$. We extend the range of integration back to all of $\R$; if $J \leq M$, the portion of the integral with $|u| > v$ gives us a term of size $O(T_1^{-2M} v^{-2(M + 1 - J)} \log T_2)$ via integrating by parts $2M + 1$ times with respect to $t$. We take $v = T_1^{-\frac{2M}{2M + 3}} T_2^{-\frac{1}{2M + 3}} x^{-\frac{J}{2M + 3}}$, set $J = 2$, and evaluate the ensuing double integral via Fourier inversion, yielding
\begin{multline}
\label{Lsmaineq}
\Ls(x) = \frac{1}{16\pi} \sum_{\pm} \left(\sum_{\rho \in \{R,r\}} - 2 \sum_{\rho \in \left\{\frac{R - r}{2}, \frac{R + r}{2}\right\}} + 2 \sum_{\rho = 0}\right) \sum_{j = 0}^{2} (2\pi i)^{-j} \sum_{\ell = 0}^{\lfloor \frac{j}{2} \rfloor} x^{\ell} e\left(\pm 2x \sinh\rho\right)	\\
\times \sum_{k = 0}^{2} c_{j,k,\ell,\rho} \left. \frac{d^j}{dt^j} \right|_{t = 2\pi x \cosh \rho} \tilde{h}(t) t^k + O\left(x^{2 - \frac{10}{2M + 3}} T_1^{-5 + \frac{15}{2M + 3}} T_2^{1 - \frac{5}{2M + 3}} \log T_2\right).
\end{multline}

We insert this identity into \eqref{shiftedintbypartseq}, where the integral has been restricted to the range $1 \leq x \leq T_2 \log T_2$. Using \eqref{J2boundseq} to bound $J_2(x)$, the contribution from the error term in \eqref{Lsmaineq} to \eqref{shiftedtoboundeq} is
\[O_{\e}\left(T_1^{-5 + \frac{15}{2M + 3}} T_2^{\frac{3}{2} - \frac{15}{2M + 3} + \e} |D|^{1 + \e}\right).\]
Recalling \eqref{defandboundseq}, this is sufficient if $\alpha > 0$ is sufficiently small (in particular, it is readily checked that any $\alpha \leq 13/75$ suffices).

For the contribution from the main terms in \eqref{Lsmaineq}, we first break up the sum over $m > D_2$ dependent on $\rho$. For the terms for which either $D_2 < m \leq (1 - T_1^{-1}) D_2 \sinh^2 \rho$ or $m \geq (1 + T_1^{-1}) D_2 \sinh^2 \rho$, we use the identity \eqref{WatsonJkeq} for $J_2(x)$ in \eqref{shiftedintbypartseq} and integrate by parts, antidifferentiating $e(\pm_1 2x(\sqrt{m/D_2} \pm_2 \sinh \rho))$ and differentiating the rest. Bounding the main term in \eqref{Lsmaineq} via \eqref{h1deriveq} and \eqref{h2deriveq} and noting that for $x \gg 1$, $W_2(x) \ll 1$ and $W_2'(x) \ll 1/x^2$ with $W_2$ as in \eqref{WatsonWkeq}, we find that the integral is 
\[O\left(\frac{D_2^{7/4}}{T_1^{5/2} m^{5/4} \left|\sqrt{m} - \sqrt{D_2} \sinh \rho\right|}\right),\]
since the main contribution occurs when $x \asymp T_1$. The sums over $m$ in these ranges therefore contribute $O_{\e}(T_1^{-5/2 + \e} |D|^{1 + \e})$, which is sufficient provided that $\alpha > 0$ is sufficiently small (in particular, it is readily checked that $\alpha \leq 3/35$ suffices).

Finally, we bound the terms for which $(1 - T_1^{-1}) D_2 \sinh^2 \rho < m < (1 + T_1^{-1}) D_2 \sinh^2 \rho$. We use the bounds \eqref{h1deriveq} and \eqref{h2deriveq} to bound the main term in \eqref{Lsmaineq} and the bounds \eqref{J2boundseq} to bound $J_2(x)$ in \eqref{shiftedintbypartseq}, which combine to yield the bound $O_{\e}(T_1^{-5/2} |D|^{1 + \e})$ towards \eqref{shiftedtoboundeq} (again, the main contribution from the integral is when $x \asymp T_1$).

\noindent\textit{Range V: $(R - r)^{\alpha} \sqrt{D_2} < m < D_2$ and $x \leq 1$.} We follow the same strategy as for Range II, though we do not need to first integrate by parts in \eqref{shiftedtoboundeq}. We simply use \eqref{Ksderivsxsmalleq} with $j = 0$ and $c_0 = -2M + \e$ together with the bounds \eqref{J0zboundseq} for $J_0(x)$ to obtain the bounds $O_{\e}(r^2 (R - r)^2 T_1^{-2M} |D|^{1 + \e})$ towards \eqref{shiftedtoboundeq}.

\noindent\textit{Range VI: $(R - r)^{\alpha} \sqrt{D_2} < m < D_2$ and $x \geq T_2 \log T_2$.} Again, we follow the strategy as for Range III, from which we find that for $x \geq 3T_2/4$,
\[\left(\Ks^{-} h\right)(x) \ll (R - r) e^{-\frac{2\pi x}{3}} + \frac{T_2^2}{x^3} e^{-\left(\frac{4x}{3T_2}\right)^{2M}}.\]
From this and \eqref{J0zboundseq}, the contribution of this to \eqref{shiftedtoboundeq} is easily sufficiently small.

\noindent\textit{Range VII: $(R - r)^{\alpha} \sqrt{D_2} < m < D_2$ and $1 < x < T_2 \log T_2$.} Once more, our strategy is that of Range IV, from which we find that
\begin{multline}
\label{Ksmaineq}
\left(\Ks^{-} h\right)(x) = \frac{1}{16\pi} \sum_{\pm} \left(\sum_{\rho \in \{R,r\}} - 2 \sum_{\rho \in \left\{\frac{R - r}{2}, \frac{R + r}{2}\right\}} + 2 \sum_{\rho = 0}\right) \sum_{j = 0}^{2} (2\pi i)^{-j} \sum_{\ell = 0}^{\lfloor \frac{j}{2} \rfloor} x^{\ell} e\left(\pm 2x \sinh\rho\right)	\\
\times c_{j,\ell,\rho} \tilde{h}^{(j)}(2\pi x \cosh \rho) + O\left(x^{2 - \frac{10}{2M + 3}} T_1^{-6 + \frac{20}{2M + 3}} \log T_2\right).
\end{multline}

We use the bound \eqref{J0zboundseq} for $J_0(x)$ and recall the bounds \eqref{defandboundseq} in order to see that the contribution to \eqref{shiftedtoboundeq} from the error term in \eqref{Ksmaineq} is $O_{\e}((R - r)^{1 + \alpha - \e} |D|^{1/2 + \e})$ if $\alpha$ is sufficiently small (in particular, $\alpha \leq 3/10$ suffices).

For the main term, the integral in \eqref{shiftedtoboundeq} is trivially bounded for $1 < x \leq (4\pi)^{-1} \sqrt{D_2/m}$ by using the bounds \eqref{h1deriveq} and \eqref{h2deriveq} for $\tilde{h}^{(j)}(2\pi x \cosh \rho)$ together with the bound \eqref{J0zboundseq} for $J_0(x)$, noting that $m > (R - r)^{\alpha} \sqrt{D_2}$ implies that $x < T_1$.

In the remaining range $(4\pi)^{-1} \sqrt{D_2/m} < x < T_2 \log T_2$, we break up the sum over $(R - r)^{\alpha} \sqrt{D_2} < m < D_2$ dependent on $\rho$. For the terms for which either $m < (1 - T_1^{-1}) D_2 \sinh^2 \rho$ or $m > (1 + T_1^{-1}) D_2 \sinh^2 \rho$, we bound the integral in \eqref{shiftedtoboundeq} by inserting the identity \eqref{WatsonJkeq} for $J_0(x)$ and integrating by parts twice, antidifferentiating $e(\pm_1 2x(\sqrt{m/D_2} \pm_2 \sinh \rho))$ and differentiating the rest. Since $W_0(x) \ll 1$, $W_0'(x) \ll 1/x^2$, and $W_0''(x) \ll 1/x^3$ for $x \gg 1$ with $W_0$ as in \eqref{WatsonWkeq}, the integral is 
\[O\left(\frac{D_2^{5/4}}{T_1^{7/2} m^{1/4} \left|\sqrt{m} - \sqrt{D_2} \sinh \rho\right|^2}\right),\]
where once again the main contribution is when $x \asymp T_1$. The contributions from the ensuing sums over $m$ are
\[O_{\e}\left(T_1^{-7/2} (R - r)^{-\alpha/4 - \e} |D|^{9/8 + \e} + r^{-1} T_1^{-7/2} (R - r)^{\alpha/4} |D|^{9/8 + \e} + r^{-1/2} T_1^{-5/2} |D|^{1 + \e}\right).\]
This is sufficient provided that $\alpha$ is sufficiently small (in particular, $\alpha \leq 2/35$ suffices).

Finally, we must deal with the remaining terms for which $(1 - T_1^{-1}) D_2 \sinh^2 \rho < m < (1 + T_1^{-1}) D_2 \sinh^2 \rho$. We use \eqref{h1deriveq} and \eqref{h2deriveq} to bound $\tilde{h}^{(j)}(x)$ and \eqref{J0zboundseq} to bound $J_0(x)$; the ensuing integral over $(4\pi)^{-1} \sqrt{D_2/m} < x < T_2 \log T_2$ is $O(T_1^{-3/2} (D_2/m)^{1/4})$, and so the contribution to \eqref{shiftedtoboundeq} for the ensuing sum over $m$ in this range is $O_{\e}(r^{3/2} T_1^{-5/2} |D|^{1 + \e})$.
\end{proof}

\subsection{Proof of \texorpdfstring{\hyperref[variancethm1]{Theorem \ref*{variancethm1}}}{Theorem \ref{variancethm1}}}

The proof of \hyperref[variancethm1]{Theorem \ref*{variancethm1}} is similar to that of \hyperref[variancethm2]{Theorem \ref*{variancethm2}}, so we simply highlight the main differences.

\subsubsection{Construction of a test function}

Once again, we construct a test function that both satisfies the requirements of \hyperref[holomorphicmomentidentitycor]{Corollary \ref*{holomorphicmomentidentitycor}} and closely approximates $\tilde{h}_{r,R}(m)^2$; things are slightly simplified by the fact that we may choose this test function to be compactly supported. In particular, we take $h_1(x)$ to be a smooth compactly supported function that is bounded by $1$, equal to $1$ on $[T_1,T_2]$, vanishes for $x \leq T_1/2$ and $x \geq 2T_2$, and whose derivatives satisfy $h_1^{(j)}(x) \ll_j T_1^{-j}$ for $T_1/2 \leq x \leq T_1$ and $h_1^{(j)}(x) \ll_j T_2^{-j}$ for $T_2 \leq x \leq 2T_2$. We then take
\[h_2(k) \coloneqq \left(\frac{k - 1}{2}\right)^{-3}, \quad h_3(k) \coloneqq \sin^2 \frac{(R - r) (k - 1)}{4} \sin^2 \frac{(R + r) (k - 1)}{4}, \quad h_4(k) \coloneqq \frac{1 - i^k}{2},\]
and set
\[h^{\hol}(k) \coloneqq h_1(k) h_2(k) h_3(k) h_4(k).\]

\subsubsection{Comparison of the variance to a moment of $L$-functions}

Analogously to \hyperref[comparisonlemma]{Lemma \ref*{comparisonlemma}}, we find that $\Var(\widehat{\EE}(n); A_{r,R})$ is asymptotic to
\[\frac{\sigma(S^2)}{\sigma(A_{r,R}) \# \widehat{\EE}(n)} \frac{4\pi}{\sin \frac{R - r}{2} L(1,\chi_{-n})} \sum_{f \in \BB_{\hol}^{\ast}(\Gamma_0(2))} \left(-\eta_f(2)\right) \frac{L\left(\frac{1}{2},f\right) L\left(\frac{1}{2},f \otimes \chi_{-n}\right)}{L(1,\sym^2 f)} h^{\hol}(k).\]
We apply \hyperref[holomorphicmomentidentitycor]{Corollary \ref*{holomorphicmomentidentitycor}} to see that the right-hand side is the sum of the main term
\begin{equation}
\label{Varmaintermholeq}
\frac{\sigma(S^2)}{\sigma(A_{r,R}) \# \widehat{\EE}(n)} \frac{16\pi}{\sin \frac{R - r}{2}} \sum_{\substack{k = 2 \\ k \equiv 0 \hspace{-.25cm} \pmod{2}}}^{\infty} \frac{k - 1}{2\pi^2} h^{\hol}(k)
\end{equation}
and the shifted convolution sum
\begin{multline}
\label{Varshiftedholeq}
\frac{\sigma(S^2)}{\sigma(A_{r,R}) \# \widehat{\EE}(n)} \frac{16\pi i}{\sin \frac{R - r}{2} \sqrt{|D|} L(1,\chi_D)} \sum_{D_1 D_2 = |D|} \chi_1(-2)	\\
\times \sum_{\substack{m = 1 \\ m \equiv 0 \hspace{-.25cm} \pmod{2}}}^{D_2 - 1} \lambda_{\chi_1,\chi_2}\left(\frac{m}{D_2},0\right) \lambda_{\chi_1,\chi_2}(D_2 - m,0) \frac{1}{2\pi i} \int_{\sigma_1 - i\infty}^{\sigma_1 + i\infty} \widehat{\Ks^{\hol} h^{\hol}}(s) \widehat{\JJ_1^{\hol}}(1 - s) \left(\frac{m}{D_2}\right)^{\frac{s - 1}{2}} \, ds,
\end{multline}
where $D = -n$.

\subsubsection{Asymptotics for the main term}

Let $g(x) = (4x + 1) h_1(4x + 2) h_2(4x + 2) h_3(4x + 2)$, so that the main term \eqref{Varmaintermholeq} is
\[\frac{\sigma(S^2)}{\sigma(A_{r,R}) \# \widehat{\EE}(n)} \frac{4}{\pi \sin \frac{R - r}{2}} \sum_{m = -\infty}^{\infty} g(m).\]
We use the Poisson summation formula on the sum over $m$. From \eqref{mainterminteq}, we have that
\[\widehat{g}(0) = \int_{-\infty}^{\infty} \frac{1}{x^2} h_1(2x + 1) \sin^2 \frac{(R - r) x}{2} \sin^2 \frac{(R + r) x}{2} \, dx \sim \frac{\pi(R - r)}{8},\]
while for $m \in \N$, a simple integration by parts argument shows that for any $j \in \N$,
\[\widehat{g}(m) + \widehat{g}(-m) \ll_j \frac{1}{m^j T_1^{j + 1}}.\]
Thus \eqref{Varmaintermholeq} is asymptotic to $\sigma(S^2) / \sigma(A_{r,R}) \# \widehat{\EE}(n)$.

\subsubsection{Bounds for the shifted convolution sum}

We bound the shifted convolution sum \eqref{Varshiftedholeq} by the same method as in the proof of \hyperref[momentprop]{Proposition \ref*{momentprop} (2)} for $T \gg |D|^{1/12}$. Again, we may alter $h^{\hol}(k)$ to be $-i^k h_1(k) h_2(k) h_3(k)$ with impunity by \eqref{KsholtoLegendreeq} and \eqref{Legendretrickeq}, then break up the double sum and integral into four ranges. In this setting, Range I is $m \leq (R - r)^{\alpha} \sqrt{D_2}$, Range II is $(R - r)^{\alpha} \sqrt{D_2} < m < D_2$ and $x \leq \frac{T_1}{4\pi e} \exp(-\frac{5 \log T_1}{T_1})$, Range III is $(R - r)^{\alpha} \sqrt{D_2} < m < D_2$ and $x \geq T_2^2$, and Range IV is $(R - r)^{\alpha} \sqrt{D_2} < m < D_2$ and $\frac{T_1}{4\pi e} \exp(-\frac{5 \log T_1}{T_1}) < x < T_2^2$.

Ranges I, II, and III are bounded by the same methods as in the proof of \hyperref[momentprop]{Proposition \ref*{momentprop} (2)} for $T \gg |D|^{1/12}$. For Range IV, we recall the definition of $h_3(x)$ and write $\sin(2\pi x) = (2i)^{-1}(e(x) - e(-x))$ to obtain an identity akin to \eqref{Ks-hidentityeq} for $(\Ks^{\hol} h^{\hol})(x)$ instead of \eqref{Ksholidentityeq}. We then proceed just as in the proof of \hyperref[shiftedlemma]{Lemma \ref*{shiftedlemma}} for Range VII.

\section{Connections to subconvexity}
\label{connectionssect}

The bounds in \hyperref[momentprop]{Proposition \ref*{momentprop}} can be refined by taking test functions that localise to shorter intervals. In particular, one can show that
\begin{multline*}
\sum_{\substack{f \in \BB_0(\Gamma) \\ T \leq t_f \leq T + U}} \frac{L\left(\frac{1}{2},f\right) L\left(\frac{1}{2},f \otimes \chi_D\right)}{L(1,\sym^2 f)} + \frac{1}{2\pi} \int\limits_{T \leq |t| \leq T + U} \left|\frac{\zeta\left(\frac{1}{2} + it\right) L\left(\frac{1}{2} + it,\chi_D\right)}{\zeta(1 + 2it)}\right|^2 \, dt	\\
\ll_{\e} \begin{dcases*}
|D|^{\frac{1}{3} + \e} T^{1 + \e} U^{1 + \e} & for $1 \ll U \ll \min\left\{|D|^{\frac{1}{12}},T\right\}$,	\\
\frac{|D|^{\frac{1}{2} + \e} T^{1 + \e}}{U} & for $|D|^{\frac{1}{12}} \ll U \ll \min\left\{|D|^{\frac{1}{4}},T\right\}$,	\\
|D|^{\e} T^{1 + \e} U^{1 + \e} & for $|D|^{\frac{1}{4}} \ll U \ll T$
\end{dcases*}
\end{multline*}
and that the same bounds hold for
\[\sum_{\substack{f \in \BB_{\hol}^{\ast}(\Gamma_0(2)) \\ T \leq k_f \leq T + U \\ k_f \equiv 2 \hspace{-.25cm} \pmod{4}}} \frac{L\left(\frac{1}{2},f\right) L\left(\frac{1}{2},f \otimes \chi_D\right)}{L(1,\sym^2 f)}.\]
Choosing $U$ appropriately and dropping all but one term shows that for $D$ a squarefree fundamental discriminant and for $f \in \BB_0(\Gamma)$ or $t \in \R$,
\begin{align*}
L\left(\frac{1}{2},f\right) L\left(\frac{1}{2},f \otimes \chi_D\right) & \ll_{\e} \begin{dcases*}
|D|^{\frac{1}{3} + \e} (|t_f| + 1)^{1 + \e} & for $|t_f| \ll |D|^{\frac{1}{6}}$,	\\
|D|^{\frac{1}{2} + \e} & for $|D|^{\frac{1}{6}} \ll |t_f| \ll |D|^{\frac{1}{4}}$,	\\
|D|^{\frac{1}{4} + \e} |t_f|^{1 + \e} & for $|t_f| \gg |D|^{\frac{1}{4}}$,
\end{dcases*}	\\
\left|\zeta\left(\frac{1}{2} + it\right) L\left(\frac{1}{2} + it,\chi_D\right)\right|^2 & \ll_{\e} \begin{dcases*}
|D|^{\frac{1}{3} + \e} (|t| + 1)^{1 + \e} & for $|t| \ll |D|^{\frac{1}{6}}$,	\\
|D|^{\frac{1}{2} + \e} & for $|D|^{\frac{1}{6}} \ll |t| \ll |D|^{\frac{1}{4}}$,	\\
|D|^{\frac{1}{4} + \e} |t|^{1 + \e} & for $|t| \gg |D|^{\frac{1}{4}}$,
\end{dcases*}
\end{align*}
while for $D < 0$ a squarefree fundamental discriminant and for $f \in \BB_{\hol}^{\ast}(\Gamma_0(2))$,
\[L\left(\frac{1}{2},f\right) L\left(\frac{1}{2},f \otimes \chi_D\right) \ll_{\e} \begin{dcases*}
|D|^{\frac{1}{3} + \e} k_f^{1 + \e} & for $k_f \ll |D|^{\frac{1}{6}}$,	\\
|D|^{\frac{1}{2} + \e} & for $|D|^{\frac{1}{6}} \ll k_f \ll |D|^{\frac{1}{4}}$,	\\
|D|^{\frac{1}{4} + \e} k_f^{1 + \e} & for $k_f \gg |D|^{\frac{1}{4}}$.
\end{dcases*}\]
These bounds for these products of $L$-functions should be compared to the convexity bounds $O_{\e}(|D|^{1/2 + \e} (|t_f| + 1)^{1 + \e})$, $O_{\e}(|D|^{1/2 + \e} (|t| + 1)^{1 + \e})$, and $O_{\e}(|D|^{1/2 + \e} k_f^{1 + \e})$ respectively. Thus this gives hybrid subconvex bounds provided that there exists some $A > 0$ such that $|t_f| \ll |D|^A$, $|t| \ll |D|^A$, and $k_f \ll |D|^A$ respectively. Notably, these subconvex bounds are of Weyl-strength when $|t_f| \asymp |D|^{1/4}$, $|t| \asymp |D|^{1/4}$, and $k_f \asymp |D|^{1/4}$ respectively; that is, the bound is the conductor raised to the one-sixth. In general, Weyl-strength hybrid subconvex bounds can be achieved by bounding third moments of $L(\tfrac{1}{2},f)$ and of $L(\tfrac{1}{2},f \otimes \chi_D)$ \cite{Ivi01,Pen01,PY19,PY20,You17}.

Hybrid subconvex bounds via the first moment of $L(\tfrac{1}{2},f) L(\tfrac{1}{2},f \otimes \chi_D)$ (and more generally for Rankin--Selberg $L$-functions $L(\tfrac{1}{2},f \otimes g)$ with $g$ the theta lift of a class group Gr\"{o}\ss{}encharakter) have been previously achieved by Michel and Ramakrishnan \cite[Corollary 2]{MR12} using the relative trace formula; hybrid subconvex bounds in the level level aspect have also been obtained by Holowinsky and Templier \cite[Corollary 1]{HT14}. Michel and Ramakrishnan comment that they do not know of any application of such a hybrid subconvex bound \cite[p.~443]{MR12}. Here we find an application, \hyperref[variancethm1]{Theorems \ref*{variancethm1}} and \ref{variancethm2},
of bounds for the \emph{moment} that imply subconvexity, rather than an application of the \emph{individual} subconvex bounds for $L$-functions.

Finally, we comment on the obstacles towards improving \hyperref[aethm1]{Theorems \ref*{aethm1}} and \hyperref[aethm2]{\ref*{aethm2} (1)} to allow for the degenerate case of balls, so that $r = 0$. Proceeding via bounds for the variances and noting that $h_{0,R}(t) \gg 1$ for $t \leq 1/R$ and $\tilde{h}_{0,R}(m) \gg 1$ for $m \leq 1/R$, this would require showing that for any fixed $\delta > 0$ and for all $R \gg |D|^{-1/4 + \delta}$,
\begin{gather*}
\sum_{\substack{f \in \BB_0(\Gamma) \\ t_f \leq \frac{1}{R}}} \frac{L\left(\frac{1}{2},f\right) L\left(\frac{1}{2},f \otimes \chi_D\right)}{L(1,\sym^2 f)} + \frac{1}{2\pi} \int\limits_{|t| \leq \frac{1}{R}} \left|\frac{\zeta\left(\frac{1}{2} + it\right) L\left(\frac{1}{2} + it,\chi_D\right)}{\zeta(1 + 2it)}\right|^2 \, dt = o\left(\sqrt{|D|} L(1,\chi_D)^2\right),	\\
\sum_{\substack{f \in \BB_{\hol}^{\ast}(\Gamma_0(2)) \\ k_f \leq \frac{1}{R}}} \frac{L\left(\frac{1}{2},f\right) L\left(\frac{1}{2},f \otimes \chi_D\right)}{L(1,\sym^2 f)} = o\left(\sqrt{|D|} L(1,\chi_D)^2\right)
\end{gather*}
for squarefree fundamental discriminants $D < 0$. Were we able to obtain a stronger error term, namely a power-savings of the form $O(|D|^{1/2 - \alpha})$ for some $\alpha > 0$, then dropping all but one term would in particular imply the bounds
\[L\left(\frac{1}{2},f\right) L\left(\frac{1}{2},f \otimes \chi_D\right) \ll_{\e} |D|^{\frac{1}{2} - \alpha + \e} R^{-\e}\]
for $f \in \BB_0(\Gamma)$ with $t_f \approx 1/R$ or $f \in \BB_{\hol}^{\ast}(\Gamma_0(2))$ with $k_f \approx 1/R$. The conductor of the product of $L$-functions is $|D|^2 R^{-4}$, so for $R \asymp |D|^{-1/4 + \delta}$ with $\delta < 3\alpha/2$, this is a subconvex bound of sub-Weyl-strength. Proving such strong subconvex bounds is a well-known open problem; for this reason, improving \hyperref[variancethm1]{Theorems \ref*{variancethm1}} and \ref{variancethm2} via bounds for the variances appears to be highly challenging.

\phantomsection
\addcontentsline{toc}{section}{Acknowledgements}
\hypersetup{bookmarksdepth=-1}

\subsection*{Acknowledgements}

The authors would like to thank Alexandre de Faveri, Kimball Martin, Ian Petrow, Ze\'{e}v Rudnick, Abhishek Saha, Peter Sarnak, Rainer Schulze-Pillot, and Matthew Young for helpful discussions and comments, and to the referee for their careful reading of this paper. Special thanks are owed to Eren Mehmet K\i{}ral for preliminary work on this problem that led to this paper being written.

\hypersetup{bookmarksdepth}


\begin{thebibliography}{BSS-P03}

\bibitem[BB20]{BB20} V.~Blomer and F.~Brumley, \href{https://arxiv.org/abs/2009.07093v1}{``Simultaneous Equidistribution of Toric Periods and Fractional Moments of $L$-Functions''}, preprint (2020), 38 pages.

\bibitem[BLM19]{BLM19} V.~Blomer, X.~Li, and S.~D.~Miller, \href{https://doi.org/10.1016/j.jnt.2019.05.011}{``A Spectral Reciprocity Formula and Non-Vanishing for $L$-Functions on $\GL(4) \times \GL(2)$''}, \textit{Journal of Number Theory Prime} \textbf{205} (2019), 1--43.

\bibitem[BSS-P03]{BSS-P03} S.~B\"{o}cherer, P.~Sarnak, and R.~Schulze-Pillot, \href{https://doi.org/10.1007/s00220-003-0922-5}{``Arithmetic and Equidistribution of Measures on the Sphere''}, \textit{Communications in Mathematical Physics} \textbf{242}:1--2 (2003), 67--80.

\bibitem[BR12]{BR12} J.~Bourgain and Z.~Rudnick, \href{https://doi.org/10.1007/s00039-012-0186-3}{``Restriction of Toral Eigenfunctions to Hypersurfaces and Nodal Sets''}, \textit{Geometric and Functional Analysis} \textbf{22}:4 (2012), 878--937.

\bibitem[BRS17]{BRS17} J.~Bourgain, Z.~Rudnick, and P.~Sarnak, \href{http://bims.iranjournals.ir/article_1169.html}{``Spatial Statistics for Lattice Points on the Sphere I: Individual Results''}, \textit{Bulletin of the Iranian Mathematical Society} \textbf{43}:4 (2017), 361--386.

\bibitem[BK17a]{BK17a} J.~Buttcane and R.~Khan, \href{https://doi.org/10.1007/s00209-016-1721-y}{``A Mean Value of a Triple Product of $L$-Functions''}, \textit{Mathematische Zeitschrift} \textbf{285}:1 (2017), 565--591.

\bibitem[BK17b]{BK17b} J.~Buttcane and R.~Khan, \href{https://doi.org/10.1112/S0010437X17007199}{``On the Fourth Moment of Hecke Maass Forms and the Random Wave Conjecture''}, \textit{Compositio Mathematica} \textbf{153}:7 (2017), 1479--1511.

\bibitem[Byk98]{Byk98} V.~A.~Bykovski\u\i{}, \href{https://doi.org/10.1007/BF02358528}{``A Trace Formula for the Scalar Product of Hecke Series and Its Applications''}, \textit{Journal of Mathematical Sciences (New York)} \textbf{89}:1 (1998), 915--932.

\bibitem[BF17]{BF17} V.~A.~Bykovski\u\i{} and D.~A.~Frolenkov, \href{https://doi.org/10.1070/IM8426}{``Asymptotic Formulae for the Second Moments of $L$-Series of Holomorphic Cusp Forms on the Critical Line''}, \textit{Izvestiya: Mathematics} \textbf{81}:2 (2017), 239--268.

\bibitem[Cha96]{Cha96} F.~Chamizo, \href{https://doi.org/10.4064/aa-77-4-315-337}{``Some Applications of Large Sieve in Riemann Surfaces''}, \textit{Acta Arithmetica} \textbf{77}:4 (1996), 315--337.

\bibitem[CI00]{CI00} J.~B.~Conrey and H.~Iwaniec, \href{https://doi.org/10.2307/121132}{``The Cubic Moment of Central Values of Automorphic $L$-Functions''}, \textit{Annals of Mathematics} \textbf{151}:3 (2000), 1175--1216.

\bibitem[Dav80]{Dav80} H.~Davenport, \href{https://doi.org/10.1007/978-1-4757-5927-3}{\textit{Multiplicative Number Theory}}, Graduate Texts in Mathematics \textbf{74}, Springer--Verlag, New York, 1980.

\bibitem[DK20]{DK20} G.~Djankovi\'{c} and R.~Khan, \href{https://doi.org/10.1093/imrn/rny266}{``On the Random Wave Conjecture for Eisenstein Series''}, \textit{International Mathematics Research Notices} \textbf{2020}:23 (2020), 9694--9716.

\bibitem[Duk88]{Duk88} W.~Duke, \href{https://doi.org/10.1007/BF01393993}{``Hyperbolic Distribution Problems and Half-Integral Weight Maass Forms''}, \textit{Inventiones Mathematicae} \textbf{92}:1 (1988), 73--90.

\bibitem[DIT16]{DIT16} W.~Duke, \"{O}.~Imamo\={g}lu, and \'{A}.~T\'{o}th, \href{https://doi.org/10.4007/annals.2016.184.3.8}{``Geometric Invariants for Real Quadratic Fields''}, \textit{Annals of Mathematics} \textbf{184}:3 (2016), 949--990.

\bibitem[DS-P90]{DS-P90} W.~Duke and R.~Schulze-Pillot, \href{https://doi.org/10.1007/BF01234411}{``Representations of Integers by Positive Ternary Quadratic Forms and Equidistribution of Lattice Points on Ellipsoids''}, \textit{Inventiones Mathematicae} \textbf{99} (1990), 49--57.

\bibitem[EMV13]{EMV13} J.~S.~Ellenberg, Ph.~Michel, and A.~Venkatesh, ``Linnik's Ergodic Method and the Distribution of Integer Points on Spheres'', in \textit{Automorphic Representations and $L$-Functions}, editors D.~Prasad, C.~S.~Rajan, A.~Sankaranarayanan, and J.~Sengupta, Tata Institute of Fundamental Research Studies in Mathematics \textbf{22}, Tata Institute of Fundamental Research, Mumbai, 2013, 119--185.

\bibitem[Ell73]{Ell73} P.~D.~T.~A.~Elliott, \href{https://doi.org/10.1007/BF01418793}{``On the Distribution of the Values of Quadratic $L$-Series in the Half-Plane $\sigma > 1/2$''}, \textit{Inventiones Mathematicae}, \textbf{21}:4 (1973), 319--338.

\bibitem[FW09]{FW09} B.~Feigon and D.~Whitehouse, \href{https://doi.org/10.1215/00127094-2009-041}{``Averages of Central $L$-Values of Hilbert Modular Forms with an Application to Subconvexity''}, \textit{Duke Mathematical Journal} \textbf{149}:2 (2009), 347--410.

\bibitem[Fio15]{Fio15} D.~Fiorilli, \href{https://doi.org/10.1093/imrn/rnu074}{``The Distribution of the Variance of Primes in Arithmetic Progressions''}, \textit{International Mathematics Research Notices} \textbf{2015}:12 (2015), 4421--4448. 

\bibitem[GZ99]{GZ99} D.~Goldfeld and S.-W.~Zhang, \href{https://doi.org/10.4310/AJM.1999.v3.n4.a1}{``The Holomorphic Kernel of the Rankin--Selberg Convolution''}, \textit{Asian Journal of Mathematics} \textbf{3}:4 (1999), 729--748.	

\bibitem[GV97]{GV97} D.~A.~Goldston and R.~C.~Vaughan, \href{https://doi.org/10.1017/CBO9780511526091.010}{``On the Montgomery-Hooley Asymptotic Formula''} in \textit{Sieve Methods, Exponential Sums, and their Applications in Number Theory}, editors G.~R.~H.~Greaves, G.~Harman, and M.~N.~Huxley, London Mathematical Society Lecture Note Series \textbf{237}, Cambridge University Press, Cambridge, 1997, 117--142.

\bibitem[GF90]{GF90} E.~P.~Golubeva and M.~Fomenko, \href{https://doi.org/10.1007/BF02342921}{``Asymptotic Distribution of Integral Points on the Three-Dimensional Sphere''}, \textit{Journal of Soviet Mathematics} \textbf{52}:3 (1990), 3036--3048.

\bibitem[GF94]{GF94} E.~P.~Golubeva and M.~Fomenko, \href{https://doi.org/10.1090/trans2/159}{``On Two Conjectures of Linnik''}, in \textit{Proceedings of the St.~Petersburg Mathematical Society Volume II}, editor O.~A.~Ladyzhenskaya, American Mathematical Society Translations Series 2 \textbf{159}, American Mathematical Society, Providence, 1994, 45--52.

\bibitem[GR15]{GR15} I.~S.~Gradshteyn and I.~M.~Ryzhik, \href{https://doi.org/10.1016/C2010-0-64839-5}{\textit{Table of Integrals, Series, and Products, Eighth Edition}}, editors D.~Zwillinger and V.~Moll, Academic Press, Burlington, 2015.

\bibitem[GS03]{GS03} A.~Granville and K.~Soundararajan, \href{https://doi.org/10.1007/s00039-003-0438-3}{``The Distribution of Values of $L(1, \chi_{d})$''}, \textit{Geometric and Functional Analysis} \textbf{13}:5 (2003), 992--1028.

\bibitem[HM06]{HM06} G.~Harcos and Ph.~Michel, \href{https://doi.org/10.1007/s00222-005-0468-6}{``The Subconvexity Problem for Rankin--Selberg $L$-Functions and Equidistribution of Heegner Points. II''}, \textit{Inventiones Mathematicae} \textbf{163}:3 (2006), 581--655.

\bibitem[HB95]{HB95} D.~R.~Heath-Brown. \href{https://doi.org/10.4064/aa-72-3-235-275}{``A Mean Value Estimate for Real Character Sums''}, \textit{Acta Arithmetica} \textbf{72}:3 (1995), 235--275.

\bibitem[HLN20]{HLN20} J.~Hoffstein, M.~Lee, and M.~Nastasescu, \href{https://arxiv.org/abs/2008.12035v2}{``First Moments of Rankin-Selberg Convolution of Automorphic Forms on $GL(2)$''}, preprint (2020), 38 pages.

\bibitem[HT14]{HT14} R.~Holowinsky and N.~Templier, \href{https://doi.org/10.1007/s11139-012-9454-y}{``First Moment of Rankin--Selberg Central $L$-Values and Subconvexity in the Level Aspect''}, \textit{Ramanujan Journal} \textbf{33}:1 (2014), 131--151.

\bibitem[Hum18]{Hum18} P.~Humphries, \href{https://doi.org/10.1007/s00208-018-1677-9}{``Equidistribution in Shrinking Sets and $L^4$-Norm Bounds for Automorphic Forms''}, \textit{Mathematische Annalen} \textbf{371}:3--4 (2018), 1497--1543.

\bibitem[HK20]{HK20} P.~Humphries and R.~Khan, \href{https://doi.org/10.1007/s00039-020-00526-4}{``On the Random Wave Conjecture for Dihedral Maa\ss{} Forms''}, \textit{Geometric and Functional Analysis} \textbf{30}:1 (2020), 34--125.

\bibitem[Ivi01]{Ivi01} A.~Ivi\'{c}, \href{https://doi.org/10.5802/jtnb.333}{``On Sums of Hecke Series in Short Intervals''}, \textit{Journal de Th\'{e}orie des Nombres de Bordeaux} \textbf{13}:2 (2001), 453--468.

\bibitem[Iwa87]{Iwa87} H.~Iwaniec, \href{https://doi.org/10.1007/BF01389423}{``Fourier Coefficients of Modular Forms of Half-Integral Weight''}, \textit{Inventiones Mathematicae} \textbf{87}:2 (1987), 385--401.

\bibitem[Iwa97]{Iwa97} H.~Iwaniec, \href{https://doi.org/10.1090/gsm/017}{\textit{Topics in Classical Automorphic Forms}}, Graduate Studies in Mathematics \textbf{17}, American Mathematical Society, Providence, 1997.

\bibitem[Iwa02]{Iwa02} H.~Iwaniec, \href{https://doi.org/10.1090/gsm/053}{\textit{Spectral Methods of Automorphic Forms, Second Edition}}, Graduate Studies in Mathematics \textbf{53}, American Mathematical Society, Providence, 2002.

\bibitem[IK04]{IK04} H.~Iwaniec and E.~Kowalski, \href{https://doi.org/10.1090/coll/053}{\textit{Analytic Number Theory}}, American Mathematical Society Colloquium Publications \textbf{53}, American Mathematical Society, Providence, 2004.

\bibitem[KMV02]{KMV02} E.~Kowalski, Ph.~Michel, and J.~VanderKam, \href{https://doi.org/10.1215/S0012-7094-02-11416-1}{``Rankin--Selberg $L$-Functions in the Level Aspect''}, \textit{Duke Mathematical Journal} \textbf{114}:1 (2002), 123--191.

\bibitem[LT05]{LT05} Y-K.~Lau and K-M.~Tsang, \href{https://doi.org/10.4064/aa118-3-2}{``A Mean Square Formula for Central Values of Twisted Automorphic $L$-Functions''}, \textit{Acta Arithmetica} \textbf{118}:3 (2005), 232--262.

\bibitem[Lin68]{Lin68} Yu.~V.~Linnik, \textit{Ergodic Properties of Algebraic Fields}, Ergebnisse der Mathematik und ihrer Grenzgebiete \textbf{45}, Springer--Verlag, New York, 1968.

\bibitem[LPS86]{LPS86} A.~Lubotzky, R.~Phillips, and P.~Sarnak, \href{https://doi.org/10.1002/cpa.3160390710}{``Hecke Operators and Distributing Points on the Sphere I''}, \textit{Communications on Pure and Applied Mathematics} \textbf{39}:1 (1986), 149--186.

\bibitem[MW09]{MW09} K.~Martin and D.~Whitehouse, \href{https://doi.org/10.1093/imrn/rnn127}{``Central $L$-Values and Toric Periods for $\GL(2)$''}, \textit{International Mathematics Research Notices} \textbf{2009}:1 (2009), 141--191.

\bibitem[MR12]{MR12} Ph.~Michel and D.~Ramakrishnan, \href{https://doi.org/10.1007/978-1-4614-1260-1_20}{``Consequences of the Gross--Zagier Formulae: Stability of Average $L$-Values, Subconvexity, and Non-Vanishing mod $p$''}, in \textit{Number Theory, Analysis and Geometry}, editors D.~Goldfeld, J.~Jorgenson, P.~Jones, D.~Ramakrishnan, K.~A.~Ribet, and J.~Tate, Springer, New York, 2012, 437--459.

\bibitem[Miy06]{Miy06} T.~Miyake, \href{https://doi.org/10.1007/3-540-29593-3}{\textit{Modular Forms}}, Springer Monographs in Mathematics, Springer--Verlag, Berlin, 2006.

\bibitem[Mot03]{Mot03} Y.~Motohashi, \href{https://arxiv.org/abs/math/0310105v1}{``A Functional Equation for the Spectral Fourth Moment of Modular Hecke $L$-Functions''}, in \textit{Proceedings of the Session in Analytic Number Theory and Diophantine Equations}, Bonner mathematische Schriften \textbf{360}, Universit\"{a}t Bonn, Bonn, 2003, 19 pages.

\bibitem[Nel10]{Nel10} P.~D.~Nelson, \href{https://doi.org/10.1016/j.jnt.2013.01.001}{``Stable Averages of Central Values of Rankin--Selberg $L$-Functions: Some New Variants''}, \textit{Journal of Number Theory} \textbf{133}:8 (2010), 2588--2615.

\bibitem[Pen01]{Pen01} Z.~Peng, \href{https://search.proquest.com/docview/276446097/}{\textit{Zeros and Central Values of Automorphic $L$-Functions}}, Ph.D.~Thesis, Princeton University, 2001.

\bibitem[PY19]{PY19} I.~Petrow and M.~P.~Young, \href{https://doi.org/10.1007/s00208-018-1745-1}{``A Generalized Cubic Moment and the Petersson Formula for Newforms''}, \textit{Mathematische Annalen} \textbf{373}:1--2 (2019), 287--353.

\bibitem[PY20]{PY20} I.~Petrow and M.~P.~Young, \href{https://doi.org/10.4007/annals.2020.192.2.3}{``The Weyl Bound for Dirichlet $L$-Functions of Cube-Free Conductor''}, \textit{Annals of Mathematics} \textbf{192}:2 (2020), 437--486.

\bibitem[RR05]{RR05} D.~Ramakrishnan and J.~Rogawski, \href{https://doi.org/10.4310/PAMQ.2005.v1.n4.a1}{``Average Values of Modular $L$-Series via the Relative Trace Formula''}, \textit{Pure and Applied Mathematics Quarterly} \textbf{1}:4 (2005), 701--735.

\bibitem[RS15]{RS15} M.~Radziwi{\l\l} and K.~Soundararajan, \href{https://doi.org/10.1007/s00222-015-0582-z}{``Moments and Distribution of Central $L$-Values of Quadratic Twists of Elliptic Curves''}, \textit{Inventiones Mathematicae} \textbf{202}:3 (2015), 1029--1068.

\bibitem[Sar19]{Sar19} N.~T.~Sardari, \href{https://arxiv.org/abs/1904.08041v1}{``The Siegel Variance Formula for Quadratic Forms''}, preprint (2019), 32 pages.

\bibitem[Shu21]{Shu21} A.~Shubin, ``Variance Estimates in Linnik's Problem'', preprint (2021).

\bibitem[Sou08]{Sou08} K.~Soundararajan, \href{https://doi.org/10.1007/s00208-008-0243-2}{``Extreme Values of Zeta and $L$-Functions''}, \textit{Mathematische Annalen} \textbf{342}:2 (2008), 467--486.

\bibitem[Sze75]{Sze75} G.~Szeg\H{o}, \href{https://doi.org/10.1090/coll/023}{\textit{Orthogonal Polynomials, Fourth Edition}}, American Mathematical Society Colloquium Publications \textbf{23}, American Mathematical Society, Providence, 1975.

\bibitem[Wal81]{Wal81} J.-L.~Waldspurger, ``Sur les coefficients de Fourier des formes modulaires de poids demi-entier'', \textit{Journal de Math\'{e}matiques Pures et Appliqu\'{e}es} \textbf{60}:4 (1981), 375--484.

\bibitem[Wal85]{Wal85} J.-L.~Waldspurger, \href{http://www.numdam.org/item/CM_1985__54_2_173_0/}{``Sur les valeurs de certaines fonctions $L$ automorphes en leur centre de sym\'{e}trie''}, \textit{Compositio Mathematica} \textbf{54}:2 (1985), 173--242.

\bibitem[Wat44]{Wat44} G.~N.~Watson, \textit{A Treatise on the Theory of Bessel Functions}, Cambridge University Press, Cambridge, 1944.

\bibitem[Woo14]{Woo14} T.~D.~Wooley, \href{https://doi.org/10.1093/imrn/rnt132}{``On Linnik's Conjecture: Sums of Squares and Microsquares''}, \textit{International Mathematics Research Notices} \textbf{2014}:20 (2014), 5713--5736.

\bibitem[You17]{You17} M.~P.~Young, \href{https://doi.org/10.4171/JEMS/699}{``Weyl-Type Hybrid Subconvexity Bounds for Twisted $L$-Functions and Heegner Points on Shrinking Sets''}, \textit{Journal of the European Mathematical Society} \textbf{19}:5 (2017), 1545--1576.

\end{thebibliography}
\end{document}